\documentclass[12pt]{article}
\usepackage{geometry}                % See geometry.pdf to learn the layout options. There are lots.
\usepackage{graphicx,color}
\usepackage{amssymb,amsmath,amsthm,mathrsfs}
\usepackage[all,cmtip]{xy}
\usepackage{epstopdf, comment, esint, enumitem}

\usepackage[pdftex,bookmarks,pdfnewwindow,plainpages=false,unicode,pdfencoding=auto]{hyperref}

\usepackage[pdftex,bookmarks,pdfnewwindow,plainpages=false,unicode,pdfencoding=auto]{hyperref}

\newtheorem{theorem}{Theorem}[section]
\newtheorem{corollary}[theorem]{Corollary}
\newtheorem{lemma}[theorem]{Lemma}

\theoremstyle{remark}
\newtheorem*{remark}{Remark}

\numberwithin{equation}{section}

\DeclareMathOperator{\diam}{diam}
\DeclareMathOperator{\aut}{Aut}

\DeclareMathOperator{\hol}{Hol}
\DeclareMathOperator{\hyp}{hyp}

\DeclareMathOperator{\re}{Re }
\DeclareMathOperator{\im}{Im }

 \DeclareMathOperator{\lin}{lin }

 \DeclareMathOperator{\rough}{rough }
 
 \DeclareMathOperator{\thin}{thin }
 \DeclareMathOperator{\LS}{LS }

\begin{document}

\title{Dynamics of simply parabolic inner functions}

\author{Oleg Ivrii}

\date{June 17, 2026}

\maketitle

\begin{abstract}
We study the dynamics of Polya-Szeg\"o inner functions and discuss some of their basic properties such as equivalent conditions for simple and double parabolicity. We show that a simply parabolic Polya-Szeg\"o inner function admits forward and backward quotient half-cylinders, which allows one to enrich its dynamics with a Lavaurs map.

To proceed, we restrict our attention to simply parabolic inner functions with finite Lyapunov exponent: $\int_{\mathbb{R}} \log |F'| d\ell < \infty$. We define a geodesic flow on the Riemann surface lamination associated to the Lavaurs semigroup and show that it is ergodic. As an application, we establish the Orbit Counting Theorem up to a Ces\`aro average for Lavaurs semigroups. If we additionally assume that $F$ is a parabolic one component inner function, then the geodesic flow is mixing and the full Orbit Counting Theorem holds.
 \end{abstract}

\section{Introduction}

An {\em inner function} $f(z)$ is a  holomorphic self-map of the unit disk $\mathbb{D}$ such that for a.e.~$\theta \in [0, 2\pi)$, the radial boundary value $f(e^{i\theta}) := \lim_{r \to 1} f(re^{i\theta})$ exists and is unimodular. Throughout this paper, we assume that $f$ is neither M\"obius nor constant. Inner functions can be characterized into three types: hyperbolic, boundary hyperbolic and parabolic.

An inner function $f$ is {\em hyperbolic} if the Denjoy-Wolff fixed point $p$ lies in the unit disk. In the {\em boundary hyperbolic} case, the Denjoy-Wolff point lies on the unit circle and the angular derivative $f'(p) := \lim_{r \to 1} f(rp) < 1$, while in the {\em parabolic} case, the angular derivative $f'(p) = 1$.

The parabolic case can be split into two further sub-cases. Assume for a second that $f$ is analytic in a neighbourhood of $p$. If the Taylor expansion of $f$ centered at $p$ is
\begin{equation*}
\label{eq:1p-expansion}
z + c(z-p)^2 + \dots, \qquad c \ne 0,
\end{equation*}
then $f$ is {\em simply parabolic}\/. Otherwise, $f$ has the expansion
\begin{equation*}
\label{eq:2p-expansion}
f(z) = z + c(z-p)^3 + \dots,
\end{equation*}
and $f$ is {\em doubly parabolic}\/. We will momentarily explain how to distinguish between simple and double parabolicity when $f$ is not analytic in any neighbourhood of $p$.

We can view $f$ as a self-map of the upper half-plane $\mathbb{H}$ by conjugating it with the M\"obius transformation $m: \mathbb{D} \to \mathbb{H}$ which maps the unit disk to the upper half-plane and takes $p$ to $\infty$. In other words, we consider the map $F: \mathbb{H} \to \mathbb{H}$ given by $F = m \circ f \circ m^{-1}$. It is well known that the Lebesgue measure $\ell$ on the real line $\mathbb{R}$ is invariant under $F$.
The following chart presents several equivalent conditions, each of which may be taken as the definition of simple or double parabolicity:

\begin{table}[h!]
\centering
\begin{tabular}{|c|p{6cm}|p{6cm}|}
\hline
 & \textbf{Simply parabolic} & \textbf{Doubly parabolic} \\ \hline
1 & Positive hyperbolic step: \newline $\displaystyle \lim d_{\mathbb{H}}(z_n, z_{n+1}) > 0$ 
  & Zero hyperbolic step: \newline $\displaystyle \lim d_{\mathbb{H}}(z_n, z_{n+1}) = 0$ \\ \hline
2 & $\mathbb{H}/ \langle F \rangle \cong \mathbb{D}^*$ is a half-cylinder 
  & $\mathbb{H}/ \langle F \rangle \cong \mathbb{C}^*$ is a cylinder \\ \hline
3 & $\ell$ is not ergodic & $\ell$ is ergodic \\ \hline
4 & $F$ admits AC conjugacies & $F$ is rigid \\ \hline
\end{tabular}
\caption{The dichotomy between simply and doubly parabolic inner functions}
\label{tab:adm}
\end{table}

We only explain the first condition in detail. Let $(z_n)_{n=0}^\infty$ be a forward orbit. By the Schwarz lemma, the hyperbolic distance
$d_{\mathbb{H}}(z_n, z_{n+1})$ is decreasing. A parabolic inner function $F$ has {\em positive step} if for some (and hence every) forward orbit $(z_n)_{n=0}^\infty$,  
$\lim_{n \to \infty} d_{\mathbb{H}}(z_n, z_{n+1}) > 0$. Otherwise, $F$ has {\em zero step}. By \cite[Theorem G]{doering-mane}, $F$ has zero step if and only if
 $$d_{\mathbb{H}}(F^{\circ n}(z),F^{\circ n}(w)) \to 0, \qquad \text{for any }z, w \in \mathbb{H}.$$

The equivalence of the conditions (1) and (2) was established by C.~Cowen \cite{cowen} by constructing a fundamental set $V$ for a holomorphic self-map $F$ of the upper half-plane. The natural extension of $F|_V$ is conformally conjugate to one of the four standard model maps, with the simply parabolic case corresponding to $(\mathbb{H}, z \to z+1)$ and the doubly parabolic case corresponding to $(\mathbb{C}, z \to z+1)$.

The equivalence of (1) and (3) was established by J.~Aaronson \cite{aaronson81}. In fact, Aaronson proved the stronger statement that an inner function is ergodic if and only if it is exact.

The equivalence of (1) and (4) was established by M.~Shub and D.~Sullivan \cite{SS} for hyperbolic Blaschke products using techniques from one-dimensional dynamics, while the general case was proved later by D.~Hamilton \cite{hamilton-i} who employed quasiconformal deformations and a conjugacy result of Pommerenke \cite{pommerenke}.

For additional background on the classification of the parabolic types and related techniques, we refer the reader to the survey \cite{doering-mane}.

\subsection{Polya-Szeg\"o inner functions}

As explained in \cite{doering-mane}, a general parabolic inner function $F: \mathbb{H} \to \mathbb{H}$ has the form
\begin{equation}
\label{eq:general-inner}
F(z) = z + S - \int_{\mathbb{R}} \frac{1+az}{z - a} \, d\nu(a),
\end{equation}
where $S \in \mathbb{R}$ and $\nu \ge 0$ is a finite singular measure on the real line. We say that an inner function belongs to the {\em Polya-Szeg\"o class} if it can be written in the form
\begin{equation}
\label{eq:polya-szego}
F(z) = z + T - \int_{\mathbb{R}} \frac{d\mu(a)}{z - a},
\end{equation}
where $T \in \mathbb{R}$ and $\mu \ge 0$ is a finite singular measure on the real line. (To be honest, Polya and Szeg\"o considered the case when $\mu$ is supported on finitely many atoms, in which case $F$ is a rational function.) We refer to $T$ as the {\em Epstein phase} in honour of A.~Epstein \cite{epstein}, who used a similar normal form to study quadratic rational maps. 

\begin{remark}
For a holomorphic self-map $f$ of the unit disk with a Denjoy-Wolff fixed point $p \in \partial \mathbb{D}$, the Polya-Szeg\"o condition says that
$$
f(z) = z + (z-p) + c_2(z-p)^2 + c_3(z-p)^3 + o((z-p)^3), \qquad c_2, c_3 \in \mathbb{R},
$$
as $z \to p$ within a Stolz angle.
\end{remark}

From a dynamical perspective, general parabolic inner functions can be rather complicated, but Polya-Szeg\"o inner functions exhibit simpler behaviour, similar to that of finite Blaschke products.

From Julia's lemma, it follows that the {\em height} $\im z$ increases under forward iteration. This can also be seen directly from the explicit formula
\begin{equation}
\label{eq:imaginary-part-increasing}
\im F(z) - \im z \, = \, (1 + a^2) \int_{\mathbb{R}} \frac{\im z}{|z - a|^2} \, d\nu(a) \, > \, 0.
\end{equation}
We say that an inner function $F$ has {\em finite height} if for some (and hence every) forward orbit $(z_n)_{n=0}^\infty$, the imaginary parts $\im z_n$ are bounded. Otherwise, we say that $F$ has {\em infinite height}.

According to \cite[Lemma 3.1]{aaronson78} and \cite[Theorems 4.1 and 4.2]{doering-mane}, the dynamics of $F$ on the real line is governed by the convergence or divergence of the series
\begin{equation}
\label{eq:aaronson-sum-def}
\sum_{n=0}^\infty \frac{\im z_n}{|z_n|^2}, \qquad z_0 \in \mathbb{H}, \qquad z_n = F^{\circ n}(z_0).
\end{equation}
More precisely, if the series converges, then $F$ is recurrent on the real line, while if the series diverges, then under iteration, a.e.~point on the real line converges to the Denjoy-Wolff point. By the Schwarz lemma, the convergence of the series does not depend on the choice of $z_0 \in \mathbb{H}$.

\begin{theorem}
\label{adm2-thm}
For Polya-Szeg\"o inner functions, one has the following additional characterizations of simple and double parabolicity:

\begin{table}[h!]
\centering
\begin{tabular}{|c|p{6cm}|p{6cm}|}
\hline
 & \textbf{Simply parabolic} & \textbf{Doubly parabolic} \\ \hline
5 & Epstein phase $T \ne 0$ & Epstein phase $T = 0$ \\ \hline
6 & Finite height: $\lim_{n \to \infty} \im z_n < \infty$ & Infinite height: $\im z_n \to \infty$ \\ \hline
7 & $F^{\circ n}(x) \to \infty$ for a.e.~$x \in \mathbb{R}$ & $F: \mathbb{R} \to \mathbb{R}$ is recurrent \\ \hline
\end{tabular}
\caption{The dichotomy between simply and doubly parabolic inner functions (ctd.)}
\label{tab:adm2}
\end{table}
\end{theorem}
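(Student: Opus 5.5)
The plan is to prove that each of (5), (6), (7) is equivalent to condition (1), positive versus zero hyperbolic step. The main tool is the explicit formula \eqref{eq:polya-szego}. For $z=x+iy$ we have
$$
\im F(z) - \im z = \int_{\mathbb{R}} \frac{y}{|z-a|^2}\,d\mu(a), \qquad \re F(z) - \re z = T - \int_{\mathbb{R}} \frac{x-a}{|z-a|^2}\,d\mu(a).
$$
Let $M = \mu(\mathbb{R}) < \infty$. Then $|F(z) - z - T| \le M/\im z$. This yields a basic dichotomy: as long as $\im z_n$ is large, $F$ is close to the translation $z \mapsto z+T$.

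\textbf{(5)$\Leftrightarrow$(6).} First suppose $T \neq 0$. I will show that $\im z_n$ stays bounded. The height increment is at most
$$
\int \frac{y\,d\mu(a)}{(x-a)^2+y^2}.
$$
While $y_n$ is large, $x_n$ moves by roughly $T$ per step. Summing over $n$, the increments then look like a Riemann sum of $\int \frac{y\,dx}{x^2+y^2}\,d\mu(a)/|T|$, which is at most $\pi M/|T|$. To make this precise I will argue that if $y_n \to \infty$, then eventually $x_n = x_0 + nT + O\big(\sum 1/y_k\big)$. From this I expect to derive $\sum_n (y_{n+1}-y_n) < \infty$, a contradiction. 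A cleaner route may be the quantity $\im(z_n)$ versus $\re(z_n)/T$, bounding the growth by $\sum_n \int \frac{y\,d\mu}{(nT)^2 + y^2}$.

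Now suppose $T = 0$. Then $\re F(z) - \re z$ is $O(1/y)$ and $\im F(z) - \im z \gtrsim M y/(x^2+y^2)$ for $\mu$ concentrated on a compact set; the tails I will handle by truncation. Comparing with the model $F(z) = z - M/z$ suggests that $|z_n|^2$ grows roughly like $2Mn$ and $y_n \to \infty$. I will argue by contradiction: if $y_n \le C$, then $y_n$ converges, so the increments are summable. This forces $|x_n| \to \infty$ fast enough that $\sum y/x_n^2 < \infty$. But the real drift is at most $\sum_n O(1/y_n)$ per step, so $|x_n| = O(n)$ and $\sum 1/x_n^2$ diverges in the relevant averaged sense. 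I expect this step to be the most delicate, because $x_n$ could a priori grow fast through near-collisions with atoms of $\mu$. To control this I plan to use the monotone quantity $\im(-1/z_n)$ or Julia's lemma.

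\textbf{(6)$\Leftrightarrow$(1).} The step $d_{\mathbb{H}}(z_n, z_{n+1})$ is comparable to $|z_{n+1}-z_n|/y_n$. If $y_n \to \infty$ and $T=0$, then $|z_{n+1}-z_n| \le M/y_n$, so the step tends to $0$. If $T \ne 0$ and $y_n$ is bounded, then $\re(z_{n+1}-z_n)$ does not tend to zero along the orbit. Heights are bounded, and eventually $\re z_n$ moves by about $T$ once $|z_n|$ is large, since $\int \frac{d\mu(a)}{z-a} \to 0$ as $|z| \to \infty$ at bounded height, away from the support. This requires $|x_n| \to \infty$, which follows because $z_n$ converges to the Denjoy--Wolff point $\infty$. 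Hence the step is positive.

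\textbf{(7).} Here I use \eqref{eq:aaronson-sum-def}. In the doubly parabolic case, $y_n \to \infty$ with $|z_n|^2 \asymp n$ (heuristically $y_n \asymp \sqrt{n}$), so $\sum y_n/|z_n|^2 \approx \sum n^{-1/2}\cdot n^{-1/2}\dots$. This borderline case needs care. Alternatively, I will invoke (3): $\ell$ is ergodic, and a conservative ergodic system is recurrent. Conservativity would follow from the convergence/divergence dichotomy together with the fact that a.e. escape to $\infty$ contradicts ergodicity of an infinite invariant measure. In the simply parabolic case, $y_n$ is bounded and $x_n \approx nT$, so
$$
\sum \frac{y_n}{|z_n|^2} \asymp \sum \frac{1}{n^2} < \infty
$$
up to the convention in \cite{aaronson78}, and points escape to $\infty$. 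I must double-check the direction of the series criterion against the cited theorems, since they are stated in the order convergence $\Rightarrow$ escape. Since the two columns exhaust all cases, the remaining implications follow by contraposition.
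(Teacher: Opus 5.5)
Your $T\neq 0$ column essentially follows the paper (Corollary~\ref{pos-step-finite-height}, Lemma~\ref{convergence-lemma}). There are two genuine gaps, both in the $T=0$ column.

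\textbf{First gap: $T=0$ does not yet give infinite height.} Your proof of this step is wrong, and your derivation of zero step depends on it. Assuming $y_n\le C$, you bound the horizontal drift by $M/y_0$ per step, get $|x_n|=O(n)$, and claim that $\sum 1/x_n^2$ diverges. But $|x_n|\asymp n$ gives $\sum 1/x_n^2<\infty$. This is exactly the $T\neq0$ behaviour you use in row 7, so a linear bound on $x_n$ is compatible with summable height increments, and no averaging rescues it.

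The paper sidesteps heights entirely. Zero step is immediate, since $|F(z)-z|\le\int d\mu(a)/|z-a|\to0$ by dominated convergence as $|z|\to\infty$ in $\{\im z\ge\im z_0\}$, and $z_n\to\infty$ with $\im z_n\ge\im z_0$ (Lemma~\ref{structure-orbits-T0}). Infinite height then follows because finite height forces positive step (Corollary~\ref{pommerenke-conjuacy2}).

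If you want a direct argument, you need sublinear growth of $x_n$, and Cauchy--Schwarz supplies it. One has $|x_{n+1}-x_n|^2\le M\,(y_{n+1}-y_n)/y_n$, so bounded height gives $\sum_n(x_{n+1}-x_n)^2<\infty$ and hence $|x_n|=O(\sqrt n)$. Then $y_{n+1}-y_n\ge y_0\,\mu([-R,R])/\bigl((|x_n|+R)^2+C^2\bigr)\gtrsim 1/n$, which is not summable, a contradiction.

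\textbf{Second gap: recurrence in row 7 for $T=0$ is not established.} Your primary route stalls because nothing in your argument controls $|z_n|$ relative to $y_n$. The missing input is non-tangential convergence, $|z_n|\asymp y_n$, which the paper imports from \cite[Proposition 4.1]{AS21} (Lemma~\ref{structure-orbits-T0-up}). With it, $y_n^2\asymp n$ and $y_n/|z_n|^2\asymp n^{-1/2}$. This is not borderline; your $n^{-1/2}\cdot n^{-1/2}$ is a slip. So the Aaronson sum diverges and $F$ is recurrent (Corollary~\ref{divergence-corollary}). The direction you use (convergence $\Rightarrow$ a.e.\ escape, divergence $\Rightarrow$ recurrence) is the correct one.

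Your fallback through (3) is invalid. For non-invertible maps, ergodicity of an infinite invariant measure does not rule out a.e.\ escape. Take $(\omega,k)\mapsto(\sigma\omega,k+1)$ on $\{0,1\}^{\mathbb{N}}\times\mathbb{Z}$ with Bernoulli $\times$ counting measure. An invariant set has fibres $A_k=\sigma^{-n}A_{k+n}$ for all $n\ge0$, so these are tail events of equal measure, and the map is ergodic, yet every point escapes. So recurrence cannot be read off from ergodicity; you need quantitative control of the orbit, such as non-tangential convergence.
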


The proof of Theorem \ref{adm2-thm} is split between Lemma \ref{structure-orbits-T0} and Corollary \ref{divergence-corollary}, which address the case $T=0$, and Lemma \ref{pos-step-finite-height} and Corollary \ref{convergence-lemma}, which handle the case $T\neq 0$.

\begin{remark}
(i) A finite Blaschke product is doubly parabolic if and only if its Julia set is the extended real line $\mathbb{R} \cup \{ \infty \}$.

(ii) For general parabolic inner functions, one always has (Finite height) $\Rightarrow$ (Positive step) $\Rightarrow$ (Convergent).
For the first implication, suppose that $(z_n)_{n=0}^\infty$ is a forward orbit with $\lim_{n \to \infty} \im z_n = c$. Let $w_0 \in \mathbb{H}$ be a point in the upper half-plane with $\im w_0 > c$ and $w_n = F^{\circ n}(w_0)$ be its forward orbit. Since height is increasing under iteration, $\im w_n > \im w_0$ for all $n$. As the hyperbolic distance between $z_n$ and $w_n$ is bounded away from zero, $F$ cannot be zero step by \cite[Theorem G]{doering-mane}. The second implication is given in \cite[Corollary 4.3]{doering-mane}.
\end{remark}

We have the following dynamical characterization of simply parabolic Polya-Szeg\"o inner functions:

\begin{theorem}
\label{dyn-PS}
Let $F: \mathbb{H} \to \mathbb{H}$ be a parabolic inner function, with the parabolic fixed point at infinity. Then, $F$ has a bi-infinite orbit $(z_n)_{n=-\infty}^\infty$ with
$$
0 \, < \alpha \, := \, \lim_{n \to -\infty} \im z_{n} \, < \, \lim_{n \to \infty} \im z_{n} \, =: \, \beta \, < \, \infty
$$
and
$$
\lim_{n \to -\infty} d_{\mathbb{H}}(z_{n}, z_{n+1}) < \infty
$$
if and only if $F$ is a Polya-Szeg\"o inner function with $T \ne 0$.
\end{theorem}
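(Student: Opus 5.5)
We prove the two directions separately. Throughout we use the general representation \eqref{eq:general-inner} and the height increment formula \eqref{eq:imaginary-part-increasing}.

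\emph{Sufficiency ($\Leftarrow$).} Suppose $F$ is Polya-Szeg\"o with $T \ne 0$, say $T>0$. By Lemma \ref{pos-step-finite-height}, forward orbits have finite height. We need to produce a backward orbit along which the heights decrease to a positive limit $\alpha$. Take a horizontal line $\im z = y_0$ with $y_0$ large. For $|\re z|$ large, \eqref{eq:polya-szego} gives $F(z) = z + T + O(1/|z|)$. We look for a backward branch on the left end of this line: a region $\Omega = \{ \re z < -R,\ \im z > y_0/2\}$ on which $F$ is univalent with $F(\Omega) \supset \Omega - T$, for instance by Rouch\'e applied to $F(z) - w$. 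Its inverse $G$ satisfies $G(w) = w - T + O(1/|w|)$ and maps $\Omega$ into itself. Set $z_{-n} = G^{\circ n}(z_0)$. Then $\re z_{-n} \approx -nT$, so $|z_{-n}| \asymp n$. The height decrements are given by \eqref{eq:imaginary-part-increasing}: $\im z_{-n+1} - \im z_{-n} = \int \im z_{-n}/|z_{-n}-a|^2\, d\mu(a)$ (with $d\nu$ replaced by $d\mu$ in the Polya-Szeg\"o normalization). This is $O(1/n^2)$ once we check that the orbit stays away from the mass of $\mu$; dominated convergence handles that, since $\mu$ is finite and $|z_{-n}-a| \ge \im z_{-n}$. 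The total decrement is therefore finite. Choosing $y_0$ large enough relative to $\mu(\mathbb{R})$ makes the total decrement smaller than $y_0/2$, which keeps $\alpha>0$ and keeps the orbit inside $\Omega$; this self-consistency is proved by induction. We get $\alpha<\beta$ because every step strictly increases height. Finally, $d_{\mathbb{H}}(z_{-n}, z_{-n+1}) \to d_{\mathbb{H}}(x+i\alpha, x+T+i\alpha) < \infty$.

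\emph{Necessity ($\Rightarrow$).} Given the bi-infinite orbit, first note that bounded heights together with a bounded backward hyperbolic step force $|\re z_{n+1} - \re z_n|$ to be bounded as $n \to -\infty$. Then write $F(z) - z = S - \int \frac{1+az}{z-a}\,d\nu(a)$. The imaginary part along the backward orbit is summable: $\sum_{n \le 0} (\im z_{n+1}-\im z_n) = \beta' - \alpha < \infty$, where $\beta' = \im z_1$. By \eqref{eq:imaginary-part-increasing} this gives
\[
\sum_{n\le 0} \int \frac{(1+a^2)\,\im z_n}{|z_n-a|^2}\,d\nu(a) < \infty .
\]
We want to deduce $\int (1+a^2)\,d\nu(a) < \infty$, which is exactly the Polya-Szeg\"o condition. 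After expanding, $T = S - \int a\, d\nu$ and $\mu = (1+a^2)\nu$.

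The key step, which I expect to be the main obstacle, is showing that $\re z_n \to \mp\infty$ linearly. Once the bounded steps are nonzero in the limit, $|z_n|\asymp |n|$ and the orbit visits each unit-width region only boundedly often. By Fatou/Tonelli, the sum above then dominates $\int (1+a^2)\, c\, d\nu(a)$ for some $c>0$ depending on $\alpha$: for each $a$, summing $\alpha/|z_n-a|^2$ over an orbit with unit spacing at height about $\alpha$ gives a quantity $\gtrsim 1$. To get the linear escape, I would argue that $z_{n+1}-z_n$ converges: the quotient $\{z_n\}$ with heights converging to $\alpha$ and bounded steps should approach a translation, via normal-family limits of $F(z_n + \cdot) - z_n$. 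Along $\im = \alpha$ these limits are holomorphic self-maps of a strip preserving heights, hence real translations, and their amount is nonzero because $\alpha<\beta$ rules out a fixed point. Once the Polya-Szeg\"o form is established, $T \ne 0$ follows from Theorem \ref{adm2-thm}: if $T=0$, $F$ would have infinite height, contradicting $\beta<\infty$.
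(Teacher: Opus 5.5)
\textbf{Gap in the necessity direction.} You sum the height increments only over $n \le 0$ and claim this sum dominates $c\int(1+a^2)\,d\nu(a)$. That is false, because a half-orbit cannot control $\nu$ at both ends of $\mathbb{R}$. Suppose $z_{n+1}-z_n \to b>0$ as $n\to-\infty$, so the backward half escapes to $-\infty$. Then for large $a>0$ one has $\sum_{n\le 0}\alpha/|z_n-a|^2 \asymp \alpha/(b a)$. The backward sum therefore only yields $\int_{a>1} a\,d\nu(a)<\infty$, not $\int_{a>1} a^2\,d\nu(a)<\infty$.

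You must also use the forward half, whose increments sum to $\beta-\im z_0<\infty$. The missing idea is to prove that the two halves escape in \emph{opposite} horizontal directions. Your normal-family argument does show that each end escapes linearly. The limit $h$ satisfies $\im(h(w)-w)\ge 0$ with equality at an interior point, so it is a real translation. The translation is nonzero because $d_{\mathbb{H}}(z_n,z_{n+1})$ is non-increasing in $n$, hence bounded below by $d_{\mathbb{H}}(z_0,z_1)>0$; the reason is not $\alpha<\beta$. However, the two ends are analysed separately, and nothing in that analysis relates the direction at $-\infty$ to the direction at $+\infty$.

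The paper closes this in Step~1 of Lemma~\ref{dyn-PS-only-if} using the Schwarz lemma. If both halves went to the same end, then, since heights lie in $[\alpha,\beta]$ and real steps are bounded, there would be arbitrarily large $m,n$ with $d_{\mathbb{H}}(z_{-m},z_n)\le K$. Applying $F^{\circ m}$ gives $d_{\mathbb{H}}(z_0,z_{n+m})\le K$, contradicting $z_{n+m}\to\infty$. Only then does every interval of length $\tau$ contain some $\re z_n$ with $n\in\mathbb{Z}$. The full sum $\beta-\alpha=\sum_{n\in\mathbb{Z}}(\im z_{n+1}-\im z_n)$ then dominates $c\int(1+a^2)\,d\nu(a)$, as in Step~2.

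\textbf{Sufficiency.} This direction follows the paper's construction in Corollary~\ref{PS-existence-of-bi-infinite-orbits}: iterate the natural inverse branch from a point of large height and bound the total height loss. Your justification of summability is wrong, though. For a general finite $\mu$ the individual decrements need not be $O(1/n^2)$. For example, if $\mu$ has mass $2^{-k}$ near $-2^kT$, the step passing over that mass loses height of order $1/(n y_0)$. Dominated convergence cannot give such a rate, since there is no integrable majorant for $a$ near $\re z_{-n}$.

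Replace this with the Tonelli estimate of Section~\ref{sec:imaginary-forward}. Interchanging sum and integral gives $\sum_n \int \im z_{-n}\,|z_{-n}-a|^{-2}\,d\mu(a)\lesssim \mu(\mathbb{R})/T$, provided the heights stay above $y_0/2$ and the horizontal spacing stays near $T$. This bound does not shrink as $y_0\to\infty$; by Lemma~\ref{large-imaginary-part} the loss tends to $\pi\mu(\mathbb{R})/T$. It is, however, independent of $y_0$, which is all your induction needs.
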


The ``only if'' direction will be proved in Lemma \ref{dyn-PS-only-if}, while the ``if'' direction will be proved in Corollary \ref{PS-existence-of-bi-infinite-orbits}.

\subsection{Forward and backward cylinders}
\label{sec:fbc-intro}

We now focus on simply parabolic Polya-Szeg\"o inner functions. Without loss of generality, we may assume that $T > 0$. 
With help of the canonical conjugacies of Ch.~Pommerenke \cite{pommerenke} and P.~Poggi-Corradini \cite{pietro}, we can associate forward and backward quotient half-cylinders 
$$C_+, C_- \cong  \mathbb{H}/(z \to z + T)$$ to $F$. The following theorem describes the forward half-cylinder:

\begin{theorem}
\label{forward-cylinder-thm}
Let $F$ be a simply parabolic Polya-Szeg\"o parabolic inner function with $T > 0$. For any $z \in \mathbb{H}$, the limit
$$
\widetilde{P}_+(z) = \lim_{n \to \infty} F^{\circ n}(z) -  \re F^{\circ n}(i)
$$
exists and satisfies the functional equation $$\widetilde{P}_+(F(z)) = \widetilde{P}_+(z) + T.$$ The projection map $P_+: \mathbb{H} \to C_+$ given by
$
P_+(z) = \widetilde P_+(z) \!\! \mod T
$
is surjective. If $P_+(z_1) = P_+(z_2)$, then $F^{\circ k_1}(z_1) = F^{\circ k_2}(z_2)$ for some $k_1, k_2 \ge 0$.
\end{theorem}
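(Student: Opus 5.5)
The plan is to build $\widetilde P_+$ as a Fatou coordinate, using the Polya-Szeg\"o form \eqref{eq:polya-szego} together with finite height (Theorem \ref{adm2-thm}, item 6: since $T\neq 0$, every forward orbit has $\im z_n \uparrow \beta(z_0)<\infty$). For the Polya-Szeg\"o form, \eqref{eq:imaginary-part-increasing} reads
\begin{equation*}
\im F(z)-\im z=\im z\int_{\mathbb R}\frac{d\mu(a)}{|z-a|^2}.
\end{equation*}
Summing along an orbit $w_n=F^{\circ n}(z)$ gives
\begin{equation*}
\sum_{n\ge 0}\int_{\mathbb R}\frac{d\mu(a)}{|w_n-a|^2}\;\le\;\frac{\beta(z)-\im z}{\im z}\;<\;\infty .
\end{equation*}
This summability is the engine of the whole proof. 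In particular, Cauchy--Schwarz gives $\int d\mu/|w_n-a|\le \mu(\mathbb R)^{1/2}\bigl(\int d\mu/|w_n-a|^2\bigr)^{1/2}\to 0$, so $F(w_n)-w_n\to T$.

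\emph{Existence and the functional equation.} Set $u_n=F^{\circ n}(i)$. Then
\begin{equation*}
(w_{n+1}-u_{n+1})-(w_n-u_n)=(w_n-u_n)\int_{\mathbb R}\frac{d\mu(a)}{(w_n-a)(u_n-a)}.
\end{equation*}
By the Schwarz lemma $d_{\mathbb H}(w_n,u_n)\le d_{\mathbb H}(z,i)$, and both heights stay in a compact subinterval of $(0,\infty)$, so $|w_n-u_n|$ is bounded. By Cauchy--Schwarz the integrals above are summable in $n$. Hence $w_n-u_n$ converges, locally uniformly in $z$ (the bounds are uniform on compacta), and since $\im u_n$ converges, $\widetilde P_+(z)=\lim (w_n-\re u_n)$ exists and is holomorphic.

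Its imaginary part is $\lim\im w_n\ge \im z>0$, so $\widetilde P_+$ maps $\mathbb H$ into $\mathbb H$. The functional equation follows from
\begin{equation*}
\widetilde P_+(F(z))=\widetilde P_+(z)+\lim_{n\to\infty}(\re u_{n+1}-\re u_n),
\end{equation*}
together with $u_{n+1}-u_n\to T$.

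\emph{Surjectivity.} By the functional equation $\widetilde P_+(\mathbb H)$ is invariant under $z\mapsto z+T$, so it suffices to show $\widetilde P_+(\mathbb H)=\mathbb H$. This set is open and nonempty, so I will show it is relatively closed in $\mathbb H$.
\begin{itemize}
\item \emph{Uniform height comparison.} I need $c(\delta)\im z\le \im\widetilde P_+(z)\le C(\delta)\im z$ whenever $\im\widetilde P_+(z)\ge\delta$. The lower bound is automatic. For the upper bound I will use
\begin{equation*}
\im w_{n+1}=\im w_n\Bigl(1+\int_{\mathbb R}\frac{d\mu}{|w_n-a|^2}\Bigr)
\end{equation*}
and control $\sum_n\int d\mu/|w_n-a|^2$ by $\mu(\mathbb R)$ and the lower height $\delta$.
\item \emph{Normal families.} Suppose $\widetilde P_+(z_k)\to w\in\mathbb H$. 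Replace $z_k$ by an iterate $F^{\circ m_k}(z_k)$, subtracting the appropriate multiple of $T$ from the target. Since the orbit advances by roughly $T$ per step, the real parts can be brought into a window of length about $T$, suitably normalized against $\re u_{m_k}$. With the height comparison these points lie in a compact set, so a subsequence converges to some $z$, and continuity gives $\widetilde P_+(z)=w-jT$ for some integer $j$. By invariance $w$ lies in the image.
\end{itemize}
I expect the height comparison, together with controlling the real parts so that the normalized iterates stay in a compact set, to be the main obstacle. If the direct estimate fails, I would instead show that $\widetilde P_+(z)=z+c+o(1)$ as $\im z\to\infty$, which gives surjectivity near $\infty$, and then propagate to all heights using connectedness of the image in $C_+$ and properness.

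\emph{Fibres.} Suppose $P_+(z_1)=P_+(z_2)$, i.e.\ $\widetilde P_+(z_1)=\widetilde P_+(z_2)+jT$. Replacing $z_2$ by $F^{\circ j}(z_2)$ (or $z_1$ by $F^{\circ(-j)}(z_1)$ if $j<0$), we may assume $\widetilde P_+(z_1)=\widetilde P_+(z_2)$.
\begin{itemize}
\item Write $\widetilde P_+=\lim(F^{\circ n}-\re u_n)$. The derivatives satisfy $(F^{\circ n})'(w_0)\to\widetilde P_+'(w_0)$, and far along the orbit $F'(w_n)=1+\int d\mu/(w_n-a)^2\to 1$. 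So the tails $F^{\circ(n+m)}-F^{\circ n}$ converge to translations on hyperbolic balls around $w_n$ of a fixed radius.
\item Hence $\widetilde P_+$ is univalent on a hyperbolic ball of fixed radius around $w_N$ for large $N$, up to a composition with $F^{\circ N}$.
\item Since $d_{\mathbb H}(F^{\circ n}z_1,F^{\circ n}z_2)$ is nonincreasing and the differences converge to $0$, for large $n$ both points lie in such a univalence ball. There $\widetilde P_+$ takes the same value at both points, forcing $F^{\circ n}(z_1)=F^{\circ n}(z_2)$.
\end{itemize}
Undoing the normalization yields $F^{\circ k_1}(z_1)=F^{\circ k_2}(z_2)$ for some $k_1,k_2\ge 0$.
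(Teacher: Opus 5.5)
Your construction of $\widetilde P_+$, the functional equation and the fibre argument are sound. The summability of $\int d\mu(a)/|w_n-a|^2$ along orbits is a self-contained substitute for Pommerenke's theorem, and your local univalence near far-out orbit points plays the role of the paper's injectivity of $\widetilde P_+$ on $\Omega_{+,\delta}$. The gap is in surjectivity, in two places.

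\emph{The height comparison is false.} The bound $\im\widetilde P_+(z)\le C(\delta)\im z$ cannot hold. Since $\mu$ is singular, for $\mu$-a.e.\ $a_0$ the quantity $\im F(a_0+i\varepsilon)-\varepsilon=\int \varepsilon\,d\mu(a)/|a_0+i\varepsilon-a|^2$ tends to $\infty$ as $\varepsilon\to 0$; at an atom it is at least $\mu(\{a_0\})/\varepsilon$. So $\im\widetilde P_+(z)\ge \im F(z)$ is large while $\im z=\varepsilon$ is tiny. The first term $\int d\mu/|w_0-a|^2$ is simply not controlled by $\mu(\mathbb R)$ and $\delta$.

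\emph{The horizontal compactness is unjustified.} You can repair the heights by passing to forward iterates, since $\im F^{\circ m}(z_k)\uparrow \im\widetilde P_+(z_k)\to\im w$. But then the targets become $\widetilde P_+(z_k)+m_kT$ and the points run off to $+\infty$. Recentering by $\re u_{m_k}$ does not commute with $\widetilde P_+$; for compactly supported $\mu$, $\re(\widetilde P_+(z)-z)$ even grows like $(\mu(\mathbb R)/T)\log \re z$ far to the right. So nothing shows that the normalized points stay in a compact set, or that a limit point is sent to $w-jT$. This step presupposes exactly the uniform control of $\widetilde P_+$ near $+\infty$ that is missing.

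\emph{The fallback does not close the gap.} $P_+$ is not proper: its fibres contain $F^{-n}(F^{\circ n}(z))$ for every $n$. Moreover, an open connected subset of $C_+$ containing a neighbourhood of the end $\im w=\infty$ need not be all of $C_+$. Finally, $\widetilde P_+(z)=z+c+o(1)$ fails as $\im z\to\infty$. Indeed, $\im(\widetilde P_+(z)-z)$ is the total height gained along the orbit, which at the same large height is close to $\pi\mu(\mathbb R)/T$ far to the left and close to $0$ far to the right.

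What is missing is the content of Lemma \ref{properties-of-Pplus}: $\im\widetilde P_+(z)-\im z\to 0$ and $\widetilde P_+'(z)\to 1$ uniformly on right half-strips $S_+(R,\alpha,\beta)$ as $R\to\infty$. Your own tools give this. For $z=x+iy$ with $\alpha<y<\beta$ and $x$ large, the orbit moves right with steps close to $T$ at height at least $\alpha$. Comparing the sum with an integral as in Section \ref{sec:imaginary-forward} gives
\[
\sum_{n\ge 0}\int_{\mathbb R}\frac{d\mu(a)}{|F^{\circ n}(z)-a|^2}\;\lesssim_{\alpha}\;\int_{\mathbb R}\int_{x-T}^{\infty}\frac{ds\,d\mu(a)}{(s-a)^2+\alpha^2}\;\longrightarrow\;0,\qquad x\to\infty .
\]
This controls both the total height gain and $(F^{\circ n})'(z)=\prod_{j<n}\bigl(1+\int d\mu(a)/(F^{\circ j}(z)-a)^2\bigr)$. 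Hence $\widetilde P_+$ is univalent on $S_+(R,\alpha,\beta)$, and its image contains a half-strip $S_+(R',\alpha+\varepsilon,\beta-\varepsilon)$. That half-strip projects mod $T$ onto the whole annulus $\{\alpha+\varepsilon<\im w<\beta-\varepsilon\}$ of $C_+$. Letting $\alpha\to 0$ and $\beta\to\infty$ gives surjectivity of $P_+$ directly, with no closedness argument.

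Note also that the functional equation only gives $\widetilde P_+(\mathbb H)+T\subseteq \widetilde P_+(\mathbb H)$. Your target $\widetilde P_+(\mathbb H)=\mathbb H$ is therefore stronger than the theorem needs; it suffices that the image meets every coset $w+T\mathbb Z$.
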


Since inverse iteration not being globally defined, to construct the backward cylinder $C_-$, we work with inverse orbits. 
 We say that an inverse orbit ${\bf z} = (z_{-n})_{n=0}^\infty$ has {\em positive height} if $\im z_{-n}$ is bounded below by a positive constant. In the present setting, this condition implies that
 $$z_{-n} - z_{-n-1} \to T, \qquad \text{as }n \to \infty.$$
In particular, every such inverse orbit $(z_{-n})_{n=0}^\infty$ is a {\em backward iteration sequence with bounded steps} (BISBS):
 $$\lim_{n \to -\infty} d_{\mathbb{H}}(z_n, z_{n+1}) < \infty.$$
Conversely by \cite[Theorem 1.21]{pietro}, for any simply parabolic inner function, every BISBS  $(z_{-n})_{n=0}^\infty$ with $z_{-n} \to \infty$ as $n \to \infty$ has positive height.
 
\begin{theorem}
\label{backward-dynamics3}
Let $F$ be a simply parabolic Polya-Szeg\"o parabolic inner function with $T > 0$.
 Fix an inverse orbit ${\bf c} = (c_{-n})_{n=0}^\infty$ of positive height. For any inverse orbit of positive height ${\bf z} = (z_{-n})_{n=0}^\infty$, the limit
\begin{equation}
\label{eq:Pplus-lemma}
\widetilde{P}_-({\bf z}) = \lim_{n \to \infty} \bigl ( z_{-n} - \re c_{-n} \bigr )
\end{equation}
exists and satisfies the functional equation
$$
\widetilde{P}_-(\widehat{F}({\bf z})) = \widetilde{P}_-({\bf z}) + T,
$$
where $\widehat{F}$ is the map which applies $F$ to each coordinate. Furthermore, for any $w \in C_- \cong \mathbb{H}/(z \to z + T)$, there exists an inverse orbit ${\bf z}$ with
 $
w = P_-({\bf z}) := \widetilde{P}_-({\bf z}) \!\! \mod T.
$
This inverse orbit is unique up to shifting the coordinate, i.e.~replacing $z_n$ with $z_{n+k}$ for some $k \in \mathbb{Z}$.
\end{theorem}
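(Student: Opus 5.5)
Write $F(z)=z+T-\phi(z)$ with $\phi(z)=\int_{\mathbb{R}} \frac{d\mu(a)}{z-a}$. For an inverse orbit of positive height we have $z_{-n}=z_{-n-1}+T-\phi(z_{-n-1})$, so
$$
z_{-n}-\re c_{-n} \;=\; (z_0-\re c_0) - \sum_{k=0}^{n-1}\bigl(\phi(z_{-k-1})-\re\phi(c_{-k-1})\bigr).
$$
Existence of the limit in (\ref{eq:Pplus-lemma}) is therefore equivalent to convergence of $\sum_k \bigl(\phi(z_{-k})-\re \phi(c_{-k})\bigr)$. Split this as $\sum (\phi(z_{-k})-\phi(c_{-k}))$ plus $i\sum \im\phi(c_{-k})$.

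\textbf{Step 1: consequences of positive height.} Since $z_{-n}-z_{-n-1}\to T$, we have $\re z_{-n}\approx -nT$, and $\im z_{-n}$ is decreasing in $n$ and bounded in $[\alpha,\im z_0]$. The second series has nonnegative terms, because $-\im\phi = \int \im z\,|z-a|^{-2}d\mu$. It telescopes to $\im c_0-\lim \im c_{-n}$, which is finite, so it converges.

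\textbf{Step 2: the first series.} We have
$$
|\phi(z)-\phi(w)| \le |z-w|\int \frac{d\mu(a)}{|z-a||w-a|}.
$$
I would first show that $z_{-n}-c_{-n}$ stays bounded. Both orbits move by $\approx T$, and their difference satisfies $d_{-n}=d_{-n-1}-(\phi(z_{-n-1})-\phi(c_{-n-1}))$. By Cauchy–Schwarz, $\int \frac{d\mu}{|z-a||w-a|}$ is bounded by the geometric mean of $\int\frac{d\mu}{|z-a|^2}$ and $\int\frac{d\mu}{|w-a|^2}$. These equal $(\im F(z)-\im z)/\im z$, whose sums along each orbit converge by Step 1, since the heights are bounded below. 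A discrete Gronwall argument then gives that $|d_{-n}|$ is bounded, using $\sum \epsilon_k<\infty$ with $\epsilon_k$ the geometric mean. It follows that $\sum|\phi(z_{-k})-\phi(c_{-k})|\le \sup|d|\sum\epsilon_k<\infty$. The functional equation is immediate: $\widehat F(\mathbf z)$ has $(-n)$th coordinate $z_{-n+1}$, and $\re c_{-n+1}-\re c_{-n}\to T$.

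\textbf{Step 3: surjectivity and uniqueness.} This is the main obstacle. Given $w\in\mathbb{H}$, I would build the inverse orbit by a Pommerenke/Poggi-Corradini type limit. Define candidate points $\zeta_{-n}=w+\re c_{-n}$. For $w$ of large height, $F$ is close to translation by $T$ in the region $\im z>h$, $\re z\ll 0$, with error $O(\int d\mu/|z-a|^2)$. A Rouché/contraction argument then shows that the branch $F^{-1}$ near $z+T\mapsto z$ is well defined there. Take $z_{-n}=\lim_{m\to\infty}F^{\circ(m-n)}(\text{preimage-branch iterates of }\zeta_{-m})$, or better, solve for $z_{-m}$ with $F^{\circ(m-n)}$-compatibility using a normal-families argument and Hurwitz to pass to the limit. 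The estimates of Step 2 then show $\widetilde P_-(\mathbf z)=w$. For $w$ of small height, use the functional equation together with the open-mapping property: $\widetilde P_-$ is holomorphic in the parameter, and its image is invariant under $+T$. Alternatively, pull back a large-height inverse orbit and push forward along $\widehat F$, using Theorem~\ref{forward-cylinder-thm}-style surjectivity.

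For uniqueness, suppose two inverse orbits have the same $P_-$ value. After shifting indices, $z_{-n}-z'_{-n}\to 0$. Both are BISBS with positive height, so the hyperbolic distance $d_{\mathbb H}(z_{-n},z'_{-n})\to 0$. By the Schwarz lemma $d_{\mathbb H}(z_0,z'_0)\le d_{\mathbb H}(z_{-n},z'_{-n})$, hence $z_0=z'_0$, and similarly $z_{-k}=z'_{-k}$ for every $k$.
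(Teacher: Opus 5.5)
Your existence argument (Steps 1--2) is correct, and it is more elementary than the paper's, which relies on Poggi-Corradini's conjugacy. The identity $\int d\mu/|z-a|^2=(\im F(z)-\im z)/\im z$ makes $\epsilon_k$ summable, and the implicit Gronwall step $|d_{-n-1}|\le |d_{-n}|/(1-\epsilon_{n+1})$ bounds $d$. Your Schwarz-lemma proof of uniqueness is also sound; the paper instead uses injectivity of $\widetilde P_-$ far to the left. (The sign in your first display should be $+$, which is harmless.)

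The gap is surjectivity. For large $\im w$ you only sketch a construction, and ``the estimates of Step~2 then show $\widetilde P_-(\mathbf z)=w$'' does not follow. Step~2 shows that the limit exists for a given orbit, with constants depending on that orbit's height range. Identifying its value requires controlling the orbit segments starting at $w+\re c_{-m}$ uniformly in $m$.

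For small $\im w$ the argument fails outright. An open subset of $\mathbb H$ that is invariant under $z\mapsto z+T$ and contains $\{\im w>H\}$ need not be all of $\mathbb H$. You would also need closedness, i.e.\ a compactness argument for inverse orbits whose $\widetilde P_-$-values converge. Moving along $\widehat F$ does not help: it changes $\widetilde P_-$ only by multiples of $T$, never its height. This is not a formality. The image of $F^\infty$ (the set of values $P_+(\mathbf z)$ over BISBS $\mathbf z$) is open, $T$-invariant and contains all large heights. Yet in the example of Appendix~\ref{sec:bad-example} it misses the collar $\{\im w<c\}$.

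The missing idea is that $F$ is uniformly close to translation by $T$ not only high up but in every far-left half-strip $\{\re z<-X,\ \alpha<\im z<\beta\}$. Since $\mu$ is finite, $|F(z)-z-T|\le\int d\mu/|z-a|\to0$ there. Also, the height increments summed along a backward orbit staying in such a strip tend to $0$ as $X\to\infty$: split $\mu$ into its small mass on $(-\infty,-X/2]$ and the remainder, which is far from the orbit.

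This gives an injective natural inverse branch there and uniform convergence of $F^{-n}(z)-\re c_{-n}$. It also gives an injective holomorphic $\widetilde P_-$ with $\im\widetilde P_-(z)-\im z\to0$ and $\widetilde{P}_-'\to1$ as $\re z\to-\infty$, for heights in any compact range (Lemmas~\ref{backward-dynamics} and \ref{properties-of-Pminus}). Comparing images of boundaries of half-strips then shows that the image of $\widetilde P_-$ contains $\{\re w<-R,\ \alpha<\im w<\beta\}$ for every $0<\alpha<\beta$, i.e.\ a left lateral inner tangent. That yields surjectivity modulo $T$ at every height, small heights included.

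You need to supply this, or an equivalent uniform estimate that gives closedness of the image. Your Steps~1--2 and your uniqueness argument can then stay as they are.
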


\subsection{Lavaurs semigroups}
\label{sec:lavaurs-intro}

Consider the set
 $$
\mathscr S = \bigl \{ (n, k) : n \ge 0, k = 0 \text{ or } n \in \mathbb{Z}, k \ge 1 \bigr \},
$$
equipped with the order $(n_1, k_1) < (n_2, k_2)$ if $k_1 < k_2$ or $k_1 = k_2$, $n_1 < n_2$. For each $\sigma \in \mathbb{R}$, we construct an Abelian semigroup 
$$
\mathcal G \, = \, \mathcal G_\sigma \, = \, \{ G_{n,k} \, : \, (n, k) \in \mathscr S \} \, \subset \, \hol(\mathbb{H}, \mathbb{H}),
$$
 which contains the map $F = G_{1,0}$. Intuitively, the set
$\{ G_{n,k}(z) \}$ describes the {\em extended forward orbit} of $z$. 

\begin{figure}[h]
\centering
\includegraphics[scale=0.4]{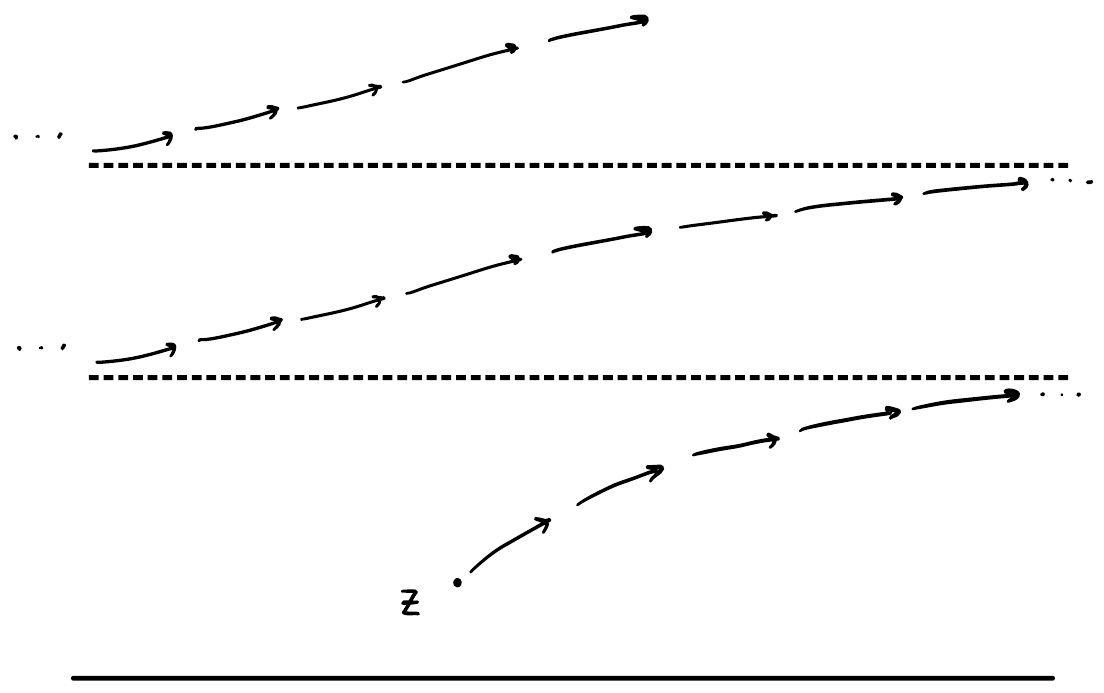}
 \caption{Iteration under a Lavaurs semigroup}
 \label{fig:curvature-lemma1}
\end{figure}

We now briefly explain how the Lavaurs semigroup comes about.
When we start iterating, the points $$z \to F(z) \to F^{\circ 2}(z) \to \dots$$
start moving right with asymptotic spacing $T$, while the imaginary parts of $\im F^{\circ n}(z)$ get ``stuck'' at a finite height. As discussed above,
$$
P_+(z) = \lim_{n \to \infty} F^{\circ n}(z) - \re F^{\circ n}(i) \!\! \mod T
$$
exists and defines a point in the forward half-cylinder $C_+ \cong \mathbb{H}/(z \to z + T)$. We can ``continue'' this orbit ``in the forward direction'' by a bi-infinite orbit 
${\bf z_1} = (z_{n,1})_{n=-\infty}^\infty$ which satisfies
$$
P_-({\bf z_1}) = P_+(z) + \sigma.
$$
However, in the forward direction the imaginary parts $\im z_{n, 1}$ remain bounded above, so $(z_{n, 1})$ can be continued by another bi-infinite orbit $(z_{n, 2})$ and so on.
The set of points obtained in the way constitute the extended orbit of $z$. For $n, k \in \mathscr S$, we set $G_{n,k}(z) := z_{n,k}$. For convenience write $G := G_{0,1}$.

\begin{remark}
Even though the labels of the maps $n, k$ depend on $\sigma \in \mathbb{R}$, the semigroup $\mathcal G$ only depends on $\sigma$ modulo $T$. We refer to the projection of $\sigma$ to 
$\mathbb{R}/T\mathbb{Z}$ as the {\em Lavaurs phase}\/.
\end{remark}

The map $G = G_\sigma$ induces a holomorphic self-map $L = L_\sigma$ of the forward cylinder $C_+$, which we call the {\em Lavaurs map}\/:
$$
L(P_+(z)) = P_+(G(z)), \qquad z \in \mathbb{H}.
$$
Since $C_+$ is conformally equivalent to a punctured disk $\mathbb{D}^* = \mathbb{D} \setminus \{ 0 \}$, we may view $L$ as a holomorphic self-map of the punctured disk. The following theorem summarizes the basic properties of Lavaurs semigroups and Lavaurs maps:

\begin{theorem}
\label{summary1}
Let $F$ be a simply parabolic Polya-Szeg\"o parabolic inner function with $T > 0$.

{\em (i)} The map $G$ is an infinite height parabolic self-map of the upper half-plane which commutes with $F$.

{\em (ii)} The Lavaurs map $L: \mathbb{D}^* \to \mathbb{D}^*$ has a removable singularity at the origin. The extension $L: \mathbb{D} \to \mathbb{D}$ is a holomorphic self-map of the unit disk with an attracting but not super-attracting Denjoy-Wolff fixed point at the origin. 

{\em (iii)} The quotient $\mathbb{H} / \mathcal G \cong \mathbb{D}^* / L$ is a torus.
\end{theorem}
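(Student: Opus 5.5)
I would prove the three parts in order, working in the coordinate $\widetilde P_+$ and reducing each claim to an explicit model of $G$ on a fundamental region. The definition of $G$ and Theorems \ref{forward-cylinder-thm} and \ref{backward-dynamics3} supply everything needed.

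\emph{Part (i).} First I would check that $G$ is well defined and holomorphic. For $z \in \mathbb{H}$, Theorem \ref{backward-dynamics3} provides an inverse orbit ${\bf z_1}$ of positive height with $P_-({\bf z_1}) = P_+(z)+\sigma$, unique up to shift. The shift is fixed by requiring $\widetilde P_-({\bf z_1}) = \widetilde P_+(z) + \sigma$ exactly, rather than only modulo $T$; the functional equation for $\widetilde P_-$ makes this normalization consistent. I then set $G(z) = z_{0,1}$. Holomorphy would come from writing $G$ locally as a composition of a branch of $\widetilde P_-^{-1}$ (a holomorphic local inverse, since $\widetilde P_-$ is a uniform limit of holomorphic maps on inverse orbits) with $\widetilde P_+ + \sigma$. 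Commutation with $F$ follows from the two functional equations:
\[
\widetilde P_+(F z) = \widetilde P_+(z) + T, \qquad \widetilde P_-(\widehat F {\bf z}) = \widetilde P_-({\bf z}) + T,
\]
which give $G(F(z)) = F(G(z))$.

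For the infinite height claim, observe that $\im G^{\circ k}(z) = \im z_{0,k}$. Each backward limit height $\alpha_k$ of the $k$-th orbit exceeds $\lim_n \im F^{\circ n}$ of the previous one. I would show quantitatively that heights grow by a definite amount at each concatenation, using \eqref{eq:imaginary-part-increasing} together with the fact that $\widetilde P_\pm$ differ from the identity by a bounded amount at large height. It then follows that $\im G^{\circ k}(z) \to \infty$. Since $G$ commutes with $F$ and has no fixed point in $\mathbb{H}$, its Denjoy--Wolff point must be $\infty$ (also by commutation), and the growing height forces $G$ to be parabolic rather than hyperbolic: the height grows roughly linearly, not exponentially.

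\emph{Part (ii).} Near the puncture of $C_+$, that is as $\im \widetilde P_+ \to \infty$, I would use the asymptotics $\widetilde P_+(z) = z + o(1)$ and $\widetilde P_-({\bf z}) = z_0 + o(1)$ for large $\im z$. These follow from \eqref{eq:polya-szego}, since $F(z) - z - T = O(1/\im z)$. Consequently, in the cylinder coordinate $w$,
\[
L(w) = w + \sigma + o(1),
\]
where the $o(1)$ term is $T$-periodic in $w$. Passing to $q = e^{2\pi i w/T}$, this says $L(q) = e^{2\pi i\sigma/T} q\,(1+o(1))$, which is bounded near $0$. Hence the singularity at $0$ is removable, with $L(0)=0$ and $|L'(0)| = 1$.

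This is where I expect the main obstacle. Removability as such is cheap. The claim of an \emph{attracting} but not super-attracting fixed point is the delicate part, because it requires the vertical shift in the expansion above to be a genuinely nonzero constant $ic$ with $c>0$, so that in fact $|L'(0)| = e^{-2\pi c/T}\in(0,1)$. This contradicts the naive $o(1)$ and needs justification. I would therefore compute the constant terms carefully. The correction $\widetilde P_+(z) - z$ tends to a constant as $\im z \to \infty$ that involves $\int \im F^{\circ n}$ increments, and the corresponding correction for $\widetilde P_-$ differs from it. I expect the height gain from \eqref{eq:imaginary-part-increasing}, summed along the bi-infinite orbit, to yield $\beta - \alpha > 0$ as the imaginary part of the shift. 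Granting this, $L$ cannot be an automorphism, and the Schwarz lemma gives that $0$ is the attracting Denjoy--Wolff point. Non-super-attracting means $L'(0)\neq 0$, which is immediate from the explicit form $e^{-2\pi c/T}e^{2\pi i \sigma/T}$.

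\emph{Part (iii).} I would use the identification $\mathbb{H}/\mathcal G = C_+/L$. This holds because $P_+$ identifies $F$-grand orbits by the last statement of Theorem \ref{forward-cylinder-thm}, and $G$ descends to $L$. Since $L$ has an attracting, non-superattracting fixed point, Koenigs linearization conjugates $L$ near $0$ to $q \mapsto \lambda q$ with $0<|\lambda|<1$. Every orbit in $\mathbb{D}^*$ enters this linearization domain and, by part (ii), never reaches $0$. The quotient is therefore $\mathbb{C}^*/\langle \lambda\rangle$, which is a torus. Finally, injectivity of $L$ on its orbits in the appropriate grand-orbit sense must be checked to identify the whole quotient with the local one; I would obtain this from the uniqueness statement in Theorem \ref{backward-dynamics3}.
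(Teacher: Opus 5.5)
Parts (i) and (iii) follow the paper's outline. Part (ii), however, has a genuine gap at exactly the step that carries the theorem.

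\textbf{The asymptotics in (ii) are false.} You start from $\widetilde P_+(z)=z+o(1)$ and $\widetilde P_-({\bf z})=z_0+o(1)$, but these do not hold. The bound $F(z)-z-T=O(1/\im z)$ controls a single step only. Along the orbit the errors $-\int d\mu(a)/(z_n-a)\approx-\mu(\mathbb{R})/z_n$ accumulate like a harmonic series. So $\widetilde P_+(z)-z$ contains a term of size roughly $\frac{\mu(\mathbb{R})}{T}\log z$, and it tends neither to $0$ nor to a constant. Even its imaginary part, the forward height gain $\lim_n\im F^{\circ n}(z)-\im z$, does not vanish: from $z=iy$ it tends to $\pi\mu(\mathbb{R})/(2T)$, by the computation of Lemma \ref{large-imaginary-part} applied to half the orbit.

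\textbf{The repair grants the needed statement.} Your fix (``I expect \dots\ Granting this'') leaves the key step unproved. The property you expect, $\beta-\alpha>0$, is also not the relevant one: it holds trivially for every orbit, because heights strictly increase. What you need is the behaviour of $\beta-\alpha$ as the height tends to infinity. This is Lemma \ref{large-imaginary-part}. Along the bi-infinite orbit through a point of large height $y_0$, the points are nearly equally spaced by $T$ at nearly constant height. Hence
\[
\beta-\alpha=\sum_{n\in\mathbb{Z}}\int_{\mathbb{R}}\frac{y_n\,d\mu(a)}{(x_n-a)^2+y_n^2}
\]
is a Riemann sum for $\frac{1}{T}\int_{\mathbb{R}}\int_{\mathbb{R}}\frac{y_0\,d\mu(a)\,dx}{(x-a)^2+y_0^2}=\pi\mu(\mathbb{R})/T$. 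Since $\im F^\infty(w)-\im w$ is precisely this gain, $\im L(w)-\im w\to\pi\mu(\mathbb{R})/T$ as $\im w\to\infty$. This gives $|L'(0)|=\exp(-2\pi^2\mu(\mathbb{R})/T^2)$.

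\textbf{You have also put the difficulty in the wrong place.} Being attracting is the cheap part and needs no asymptotics. We have $\im L(w)=\beta>\alpha=\im (w+\sigma)=\im w$, so $|L(q)|<|q|$ on $\mathbb{D}^*$. This gives removability and $L(0)=0$, and since $L$ is not a rotation the Schwarz lemma gives $|L'(0)|<1$. This is how the paper argues.

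It also gives $L^{\circ k}\to 0$, which yields the infinite height of $G$ in (i) with no ``definite amount'' estimate:
\[
\im G^{\circ(k+1)}(z)\;>\;\im \widetilde P_+(G^{\circ k}(z))\;=\;\im L^{\circ k}(P_+(z))\;\to\;\infty .
\]

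The substantive point is $L'(0)\neq 0$. That requires an upper bound on $\beta-\alpha$ for high orbits. The bound from Section \ref{sec:imaginary-forward} already suffices to show non-vanishing, and Lemma \ref{large-imaginary-part} gives the exact multiplier.

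The same upper bound is what you need in (i) to get $\im G(iy)/y\to 1$, and hence parabolicity via Julia's lemma. Note that $\widetilde P_\pm$ stay within bounded distance of the identity only in the imaginary direction, not in the real direction.

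\textbf{Part (iii).} Your Koenigs argument is fine. The injectivity check you mention is unnecessary: inside a forward-invariant linearizing disk, grand-orbit equivalence is already that of $q\mapsto\lambda q$.
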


 Let $(\widehat{F}, \widehat{\mathbb{R}}, \widehat{\ell})$ denote the natural extension of $(F, \mathbb{R}, \ell)$. A point in the natural extension $\widehat{\mathbb{R}}$ is a bi-infinite
 orbit ${\bf x} = (x_{n})_{n=-\infty}^\infty$. The map $\widehat{F}$ acts on $\widehat{\mathbb{R}}$ by applying $F$ to each coordinate. Finally, the natural extension measure $\widehat{\ell}$ is the unique 
 $\widehat{F}$ invariant measure on $\widehat{\mathbb{R}}$ that maps to $\ell$ under each coordinate.
  
\begin{theorem}
\label{summary2}
If additionally $\int_{\mathbb{R}} \log |F'| d\ell < \infty$, then:

{\em (i)} The maps $G: \mathbb{H} \to \mathbb{H}$ and $L: \mathbb{D} \to \mathbb{D}$ are inner functions.

{\em (ii)} The Lyapunov exponents of $F$ and $L$ are equal:
$$\int_{\mathbb{R}} \log |F'| d\ell = \int_{\partial \mathbb{D}} \log |L'| dm.$$

{\em (iii)} The Lebesgue measure on the real line $\ell$ is ergodic with respect to the Lavaurs semigroup $\mathcal G$.

{\em (iv)} For $\ell$ a.e.~$x \in \mathbb{R}$, $\lim_{n \to \infty} F^{\circ n}(x) - \re F^{\circ n}(i)$ exists. 

{\em (v)} For $\widehat{\ell}$ a.e.~inverse orbit ${\bf x} = (x_{-n})_{n=0}^\infty \in \widehat{\mathbb{R}}$, $\lim_{n \to \infty}  ( x_{-n}  - \re c_{-n} )$ exists, where
${\bf c}  = (c_{-n})_{n=0}^\infty$ is a fixed inverse orbit from Theorem \ref{backward-dynamics3}.
\end{theorem}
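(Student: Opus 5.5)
The plan is to prove (iv) and (v) first, since they are the analytic input for everything else, and then deduce (i)–(iii). For (iv), write $x_n = F^{\circ n}(x)$ and $c_n = \re F^{\circ n}(i)$. By (\ref{eq:polya-szego}),
$$x_{n+1}-x_n = T - \int \frac{d\mu(a)}{x_n - a}, \qquad c_{n+1}-c_n = T - \re\int \frac{d\mu(a)}{z_n-a},$$
where $z_n = F^{\circ n}(i)$. So it suffices to show that $\sum_n |\int d\mu(a)/(x_n-a)|$ and $\sum_n|\int d\mu/(z_n-a)|$ converge, the latter for $\ell$-a.e.~$x$ and the former unconditionally. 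The $z_n$ sum converges because $\re z_n \sim nT$ while $\im z_n$ is bounded (Lemma \ref{pos-step-finite-height}), and $\mu$ is finite; summability then follows from the Aaronson series (\ref{eq:aaronson-sum-def}) converging (Corollary \ref{convergence-lemma}) together with the analogous first-moment estimate.

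For the real orbit, I would use the key identity $F'(x) = 1 + \int d\mu(a)/(x-a)^2$. This gives
$$\log|F'(x)| \ge c\min\Bigl(1,\int \frac{d\mu}{(x-a)^2}\Bigr),$$
and $\int \log|F'|\,d\ell<\infty$ controls the total time orbits spend near the support of $\mu$. Concretely, consider $\Phi(x) = \int d\mu(a)/(x-a)$. I would prove $\sum_n |\Phi(x_n)|<\infty$ a.e.\ using a wandering-domain decomposition. Since $x_n\to\infty$ a.e.\ (Theorem \ref{adm2-thm}), the dynamics on $\mathbb{R}$ is totally dissipative, and there is a wandering set $W$ (for instance $[0,T)$-type fundamental intervals transported by $\widetilde P_+$ on $\mathbb{H}$ near $\mathbb{R}$) with $\mathbb{R}=\bigsqcup_n F^{-n}(W)$ mod $\ell$. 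Then $\sum_n\int_W |\Phi(x_n)|\,d\ell = \int_{\mathbb{R}}|\Phi|\,d\ell$ over the pieces, by invariance of $\ell$. This is infinite, since $\Phi\sim \mu(\mathbb{R})/x$, which is the main obstacle. To get around it, I would split $\Phi = \mu(\mathbb{R})/x + \Phi_1$ where $\Phi_1 = O(1/x^2)$ at infinity. The $\mu(\mathbb{R})/x_n$ term is matched by the same term in the $c_n$ recursion (as $\re z_n\approx x_n$ asymptotically), so it cancels in $x_n - c_n$. The local singular part of $\Phi_1$ near $\operatorname{supp}\mu$ is controlled by $\log|F'|$ via $|\Phi_1|\lesssim 1+ (\int d\mu/(x-a)^2)^{1/2}$-type bounds and integrability of $\log F'$ along excursions. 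I expect this bookkeeping to be the hardest step.

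Part (v) is the same argument run along inverse orbits in the natural extension, using $\widehat\ell$-invariance. For (i), the boundary values of $G$ are obtained by applying $P_+$, the $\sigma$-shift, and $P_-^{-1}$ to real points; (iv) and (v) say these limits land on $\mathbb{R}$ a.e., so $G$ is inner, and $L$ inherits this. For (ii), the derivative of $G$ on the boundary is the limit of $(F^{n})'$ times inverse branch derivatives, and the Lyapunov exponent equality follows from Birkhoff-type averaging over fundamental domains of $C_\pm$. For (iii), ergodicity of $\ell$ under $\mathcal G$ follows since a $\mathcal G$-invariant set descends to an $L$-invariant subset of $\partial\mathbb{D}$. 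Finally, $L$ is an inner function with interior fixed point (Theorem \ref{summary1}(ii)), hence ergodic.
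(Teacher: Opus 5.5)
\textbf{The central gap is in (i), and it carries over to (ii) and (iii).}

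Statement (v) only says that $P_-$ is \emph{defined} at $\widehat{\ell}$-a.e.\ real inverse orbit. That is a statement about its domain. To conclude that $F^\infty$, and hence $L=F^\infty\circ\tau_\sigma$, $\Psi$ and $G$, are inner, you need a statement about its \emph{image}: that $P_-(\LS)$ covers $\partial C_-$ up to a Lebesgue null set, so that $F^\infty$ has real boundary values a.e.\ on $\partial C_-$. Nothing in (iv)--(v) prevents a full-measure set of orbits from being sent onto a null subset of $\partial C_-$. Appendix~\ref{sec:bad-example} shows that innerness can genuinely fail for simply parabolic Polya-Szeg\"o functions, so this is exactly where the finite Lyapunov exponent has to do its work.

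The paper obtains this image statement from three ingredients:
\begin{itemize}
\item injectivity of $F$ on the thin set (Lemma~\ref{thin-sets}, via Aleksandrov-Clark measures);
\item wandering sets of measure $\ge T-\varepsilon$ (Lemma~\ref{T-minus-epsilon});
\item the identity $\ell(P_-(\widehat{A}\cap\LS))=\ell(A)$ of Lemma~\ref{wandering-equality}. Its upper bound is L\"owner's lemma for $\Psi$; its lower bound is Heins' Jacobian formula (Lemma~\ref{jacobian-lemma}) for $\Psi$ along orbits whose tails have log-derivative sum $<\varepsilon$.
\end{itemize}

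You also need to know that the orbit limits in (iv) and (v) \emph{are} the radial boundary values of $\widetilde{P}_+$ and $\Psi$, with angular derivatives $\prod F'(x_n)$. The mere existence of $\lim_n (x_n-\re F^{\circ n}(i))$ says nothing about $\widetilde{P}_+$ near $x_0$. This identification is Lemma~\ref{finite-log-sum}, which rests on the uniform Stolz-angle bound of Lemma~\ref{ad-estimate}. For $G$ you additionally need $\widetilde{P}_+$ to be inner (Hamilton).

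Part (ii) is then an exact change of variables through the measure isomorphism $(\widehat{\mathbb{R}},\widehat{\ell})/\langle\widehat{F}\rangle\cong(\partial C_-,\ell)$ of Lemma~\ref{c-minus-correspondence}(iii) (Corollary~\ref{equality-of-lyapunov-exponents}), not a Birkhoff average. In (iii), your ``descent'' of a $\mathcal{G}$-invariant subset of $\mathbb{R}$ to $\partial\mathbb{D}$ needs either that isomorphism or the Poisson-extension argument of Lemma~\ref{lavarus-ergodicity}. The latter uses that $F$ and $G$ are inner together with $L^{\circ n}\to 0$.

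\textbf{Your argument for (iv) also fails as written.}

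The opening claim is false. When $\mu$ is compactly supported, $\re\int d\mu(a)/(z_n-a)\sim\mu(\mathbb{R})/(nT)$, so $\sum_n|\int d\mu(a)/(z_n-a)|=\infty$. The Aaronson series (\ref{eq:aaronson-sum-def}) controls only imaginary parts.

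More seriously, $\sum_n|\Phi_1(x_n)|$ cannot be bounded by integrating $|\Phi_1|$ against $\ell$. Near an atom of $\mu$ at $a\neq 0$ with mass $m$, one has $|\Phi_1(x)|\approx m/|x-a|$, which is not locally integrable. Your majorant $(\int d\mu(a)/(x-a)^2)^{1/2}=(F'(x)-1)^{1/2}$ has the same singularity, so it is not controlled by $\log F'$.

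Nor is $\Phi_1=O(x^{-2})$ available. For $\mu=\sum_j m_j\delta_{a_j}$ one has $\int_{\mathbb{R}}\log F'\,d\ell\le 2\pi\sum_j\sqrt{m_j}$ regardless of the positions $a_j$. So a finite Lyapunov exponent is compatible with $\int|a|\,d\mu(a)=\infty$.

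The missing idea is to integrate $\log F'$ itself, which is integrable. For a wandering set $A$, the measure $\sum_{n\ge 0}(F^{\circ n})_*(\ell|_A)$ has density at most $1$, by the tree-weight (antichain) argument based on $\sum_{F(a)=b}1/F'(a)=1$. This inequality, not the equality you assert, is what invariance of $\ell$ gives. It yields $\sum_n\log F'(x_n)<\infty$ along a.e.\ forward orbit and along $\widehat{\ell}$-a.e.\ inverse orbit (Lemma~\ref{corr-log-sum}). The paper then concludes via Lemma~\ref{finite-log-sum}.

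Your increment approach can be rescued from this input. With $t_n=\int d\mu(a)/(x_n-a)^2$ summable:
\begin{itemize}
\item the spacing $x_{n+1}-x_n$ tends to $T$;
\item the close encounters are summable by Cauchy-Schwarz, $\sum_n\int_{|x_n-a|<1}\frac{d\mu(a)}{|x_n-a|}\le(\sum_n\mu([x_n-1,x_n+1]))^{1/2}(\sum_n t_n)^{1/2}$ over the tail of the orbit;
\item the far field cancels against the recursion for $\re F^{\circ n}(i)$ by a Gronwall argument.
\end{itemize}
But this gives only the existence of the limit, not the boundary-value identification that (i) requires.
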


An example in Appendix \ref{sec:bad-example} shows that $G$ and $L$ can fail to be inner for arbitrary simply parabolic Polya-Szeg\"o inner functions.

\begin{remark}
Historically, Lavaurs maps were introduced to study of perturbations of parabolic rational maps such as $z \to z^2+1/4$, see \cite{lavaurs, douady, oudkerk}. One intriguing difference between our work and the classical parabolic implosion literature is that parabolic basins of rational maps are conjugate to doubly parabolic inner functions, while we are primarily concerned with simply parabolic inner functions.
This leads to a different behaviour of commuting holomorphic maps: for $F(z) = z^2 + 1/4$, the maps $G_\sigma$ attempt to leave the cauliflower (by squeezing through the gate at the parabolic fixed point 1/2), while in our setting, the  maps $G_\sigma$ preserve the upper and lower half-planes.
\end{remark}

\subsection{Riemann surface laminations}

Suppose $f: \mathbb{D} \to \mathbb{D}$ is a hyperbolic inner function with Denjoy-Wolff point $p \in \mathbb{D}$. Let
$$
\widehat{\mathbb{D}} \, = \, \bigl \{ (z_n)_{n=-\infty}^\infty: z_n \in \mathbb{D}, \ z_{n+1} = f(z_{n}), \  |z_n| \to 1 \text{ as }n \to -\infty \bigr \}
$$
be the space of bi-infinite orbits of $f$ with the constant orbit of the Denjoy-Wolff point removed and $\widehat{f}: \widehat{\mathbb{D}} \to \widehat{\mathbb{D}}$ be the map which applies $f$ to each coordinate. The Riemann surface lamination associated to $f$ is defined as the quotient 
$
\widehat{X}_f = \widehat{\mathbb{D}} \setminus \widehat f.
$ 
According to Sullivan's dictionary, $\widehat{X}_f$ is an analogue of the unit tangent bundle $T_1 \Sigma$ of a compact Riemann surface $\Sigma$.

In the case when $f$ is a finite Blaschke product, McMullen \cite{mcmullen} defined a  volume form $\xi$ on $\widehat{X}_f$ as well as a geodesic flow $g_t: \widehat{X}_f \to \widehat{X}_f$, and proved that this flow is ergodic. 

Together with M.~Urba\'nski \cite{laminations}, the author showed that the geodesic flow $g_t: \widehat{X}_f \to \widehat{X}_f$ is ergodic for any hyperbolic inner function with 
$
\int_{\partial \mathbb{D}} \log |f'| d\omega_p < \infty,
$
where $\omega_p$ is the harmonic measure on the unit circle viewed from $p$. This assumption is natural since it guarantees that the total mass of $\xi$ is finite -- in fact,
$$\xi(\widehat{X}_f) = \int_{\partial \mathbb{D}} \log |f'| d\omega_p.$$ In the same paper, similar results were also obtained for doubly parabolic inner functions $F: \mathbb{H} \to \mathbb{H}$ satisfying $\int_{\mathbb{R}} \log |F'| d\ell < \infty$.

%McMullen used the ergodicity of the geodesic flow to study an analogue of the Weil-Petersson metric on spaces of Blaschke products, while the main application

It therefore remains to investigate the case of simply parabolic inner functions.
It is not difficult to show, and we will do so in Corollary \ref{Finite-Lyapunov-Polya-Szego}, that if a parabolic inner function $F$ has a finite Lyapunov exponent $\int_{\mathbb{R}} \log |F'| d\ell$, then it belongs to the Polya-Szeg\"o class, so the notions defined in Sections \ref{sec:fbc-intro} and \ref{sec:lavaurs-intro} make sense. Let $\mathcal G$ be the Lavaurs semigroup associated to $F$ and $\sigma \in \mathbb{R}/T\mathbb{Z}$. 
Recall from Section \ref{sec:lavaurs-intro} that the map $G_\sigma$ induces a hyperbolic inner function $L = L_\sigma: \mathbb{D} \to \mathbb{D}$ with a Denjoy-Wolff fixed point at  the origin.

In Section \ref{sec:laminations}, we associate a Riemann surface lamination $\widehat{X}_{\mathcal G}$ to $\mathcal G$.
The definition is similar to the ones for the hyperbolic and doubly parabolic inner functions but involves extended inverse orbits.
We notice that $\widehat{X}_{\mathcal G}$ can be naturally identified with $\widehat{X}_L$. With help of this identification, we transfer the volume form and geodesic flow from $\widehat{X}_L$ to $\widehat{X}_{\mathcal G}$. 
By Theorem \ref{summary2}(ii), the lamination $\widehat{X}_{\mathcal G}$ has total mass
 $$\xi_{\mathcal G}(\widehat{X}_{\mathcal G}) = \int_{\mathbb{R}} \log |F'| d\ell.$$
Since the geodesic flow on ${\widehat X}_L$ is ergodic, the geodesic flow on $\widehat{X}_{\mathcal G}$ is also ergodic.
Following the strategy laid out in \cite{laminations}, we use the ergodicity of the geodesic flow to study orbit counting:

\begin{theorem}
\label{main-thm3c}
For a bounded interval $I$ in the real line, let
$$
\mathcal N_I(z, R) = \# \bigl \{ w \in I \times [e^{-R}, 1] : G_{n,k}(w) = z \text{ for some }G_{n,k} \in \mathcal G \bigr \}.
$$
For $z \in \mathbb{H}$ outside a set of zero measure, we have
$$
\frac{1}{R} \int_0^R \frac{\mathcal N_I(z, S)}{e^S} dS \, \sim \,  |I| \cdot \frac{1}{\int_{\mathbb{R}} \log |F'| d\ell}
$$
as $R \to \infty$.
\end{theorem}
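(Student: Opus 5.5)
We plan to follow the strategy of \cite{laminations} and transfer everything to the hyperbolic inner function $L = L_\sigma$ on the forward cylinder. Via the identification $\widehat{X}_{\mathcal G} \cong \widehat{X}_L$, ergodicity of the geodesic flow on $\widehat{X}_{\mathcal G}$ follows from the ergodicity result for hyperbolic inner functions with finite Lyapunov exponent. That result applies to $L$ because, by Theorem~\ref{summary2}(i)--(ii), $L$ is inner and $\int_{\partial\mathbb{D}} \log|L'|\,dm = \int_{\mathbb{R}} \log|F'|\,d\ell < \infty$.

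The first step is to recast the counting function. Points $w$ with $G_{n,k}(w) = z$ correspond to extended inverse orbits of $z$. Each such $w$ lies in a small box $I \times [e^{-S}, 1]$. The count $\mathcal N_I(z,S)/e^S$ can therefore be written as a Birkhoff-type integral along a geodesic ray in the leaf of $\widehat{X}_{\mathcal G}$ through the lift of $z$. Concretely, we would set up, as in \cite{laminations}, a test function $\phi$ on $\widehat{X}_{\mathcal G}$ adapted to the window $I \times [e^{-1},1]$, or a smooth bump approximating it. Here $\phi(\hat z)$ counts, with weight, the preimages of the base point lying in a horoball-type region at the current scale. With this choice,
$$
\frac{1}{R}\int_0^R \frac{\mathcal N_I(z,S)}{e^S}\,dS \approx \frac{1}{R}\int_0^R \int \phi\bigl(g_{t}(\hat z_\theta)\bigr)\,d\theta\,dt ,
$$
where the inner integral averages over the directions $\theta$ (here, the real points $x \in I$) emanating from $z$. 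The factor $e^{-S}$ matches the Lebesgue density of horizontal segments at height $e^{-S}$.

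The second step applies the Birkhoff ergodic theorem for the flow $g_t$ with respect to the finite invariant measure $\xi_{\mathcal G}$. This gives, for a.e.\ starting point in the lamination, that the time average equals $\frac{1}{\xi_{\mathcal G}(\widehat{X}_{\mathcal G})}\int \phi\,d\xi_{\mathcal G}$. Using $\xi_{\mathcal G}(\widehat{X}_{\mathcal G}) = \int_{\mathbb{R}} \log|F'|\,d\ell$ and computing $\int \phi\,d\xi_{\mathcal G} = |I|$, which is a local computation unwinding the volume form on a fundamental domain, yields the constant $|I|/\int \log|F'|\,d\ell$. To pass from "a.e.\ point of the lamination" to "a.e.\ $z \in \mathbb{H}$", we would use that the projection from $\widehat{X}_{\mathcal G}$ to $\mathbb{H}/\mathcal G$, together with the choice of direction, pushes $\xi_{\mathcal G}$ to a measure equivalent to area times the measure on directions. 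Fubini then gives the statement for a.e.\ $z$ and a.e.\ direction. Integrating over directions (the points $x \in I$) is harmless, since ergodic averages are only being averaged further. Ergodicity alone yields Ces\`aro (time-averaged) statements, which is exactly why only the Ces\`aro version is claimed.

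The main obstacle is the approximation step. We must control the error between the sharp counting function and the smooth test function $\phi$, which includes boundary effects near $\partial I$ and near heights $e^{-S}$ and $1$. We must also justify that $\phi$ is integrable, which requires the finite Lyapunov exponent hypothesis. We would handle this by sandwiching: take $\phi_\varepsilon^- \le \mathbf 1 \le \phi_\varepsilon^+$ for slightly shrunk and enlarged windows, apply the ergodic theorem to each, and let $\varepsilon \to 0$. A further technical point is uniform control of the distortion of the maps $G_{n,k}$ on the relevant boxes. We expect this to follow from Koebe distortion, since preimages at small height have small hyperbolic size relative to the window.
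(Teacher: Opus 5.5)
Your reduction to $L$ for ergodicity and for the total mass $\xi_{\mathcal G}(\widehat X_{\mathcal G})=\int_{\mathbb R}\log|F'|\,d\ell$ matches the paper. The gap is at the step that turns Birkhoff averages on the lamination into the count in $I\times[e^{-R},1]$. You treat $\widehat{X}_{\mathcal G}$ like the unit tangent bundle of a homogeneous space: geodesics through a lift of $z$ are hyperbolic rays, the fibre over $z$ is a space of ``directions'' identified with the points $x\in I$, and Fubini over base points finishes the job. None of this is available here. The flow $g_{t,\mathcal G}$ is defined only by transport from $\widehat{\mathbb D}_L$, that is, through rescaling limits of $L$. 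Its trajectories, read in a coordinate of $\mathbb H$, are not geodesics. Moreover, rays issuing from $z$ and landing on $I$ would carry harmonic measure from $z$, not the Lebesgue measure that produces the factor $|I|$.

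The missing ingredient is Lemma~\ref{backard-trajectories-for-lavaurs-semigroups}. For $\xi_{\mathcal G}$-a.e.\ ${\bf z}$, the backward trajectory $g_{-t}({\bf z})$ lands on the extended solenoid $\widehat{\mathbb R}_{\mathcal G}$. Each coordinate path $g_{-t}({\bf z})_{n,k}$ is weakly shadowed, in Ces\`aro density, by the vertical geodesic over its landing point. The endpoint map $\zeta$ is non-singular. Proving this is the genuinely new work for Lavaurs semigroups. You transport Properties 1--3 from $\widehat{\mathbb D}_L$ through $\tau_\sigma$, lift along $\mathbb H\to C_-$, and push forward by $\Psi$. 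This uses that $\Psi$ has angular derivatives $\ell$-a.e.\ and the measure isomorphisms of Lemma~\ref{c-minus-correspondence}. Only then does ergodicity give Theorem~\ref{ergodic-theorem} in this setting: Ces\`aro averages of $h(x+iy)\,dy/y$ converge along the vertical geodesic over $\ell$-a.e.\ $x$. Integrating over $x\in I$ then gives the factor $|I|$. The genericity is in the landing point $x$, not in the base point $z$.

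Your test function and distortion control also need replacing. A function on $\widehat X_{\mathcal G}$ cannot be ``adapted to the window $I\times[e^{-1},1]$'' with $\int\phi\,d\xi_{\mathcal G}=|I|$, because that window is not $\mathcal G$-invariant. The right object is a bounded almost invariant function $h$ on $\mathbb H$ that is uniformly continuous in the hyperbolic metric. It is supported on a small box $\square_z$ around the target point and on its \emph{good} extended preimage boxes $\square_w$, where $G_{n,k}(w)=z$. Then $\int\widehat h\,d\xi_{\mathcal G}$ is essentially the mass of the bump on $\square_z$, and $|I|$ enters only through the integration over $I$.

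Koebe distortion does not supply the good boxes. By Schwarz--Pick, the preimage boxes are at least as large as $\square_z$ hyperbolically; only their Euclidean size is small. Inverse branches of $G_{n,k}$ may also fail to be univalent on them because of critical values. What you actually need is that the bad preimages, those admitting no $\varepsilon$-linear inverse branch, are negligible. This rests on asymptotic linearity of backward iteration along $\xi$-a.e.\ inverse orbit, which for $\mathcal G$ again has to be inherited from $L$ via $\widehat X_{\mathcal G}\cong\widehat X_L$.
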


\subsection{Parabolic one component inner functions}

An inner function $f: \mathbb{D} \to \mathbb{D}$ is called a {\em one component inner function}  if the set $$\{ z \in \mathbb{D} : |f(z)| < \rho \}$$ is connected for some $0 < \rho < 1$. A point $v \in \mathbb{D}$ is a {\em regular value} of $f$ if $f$ is a covering map over some open neighbourhood of $v$.
Points of the unit disk that are not regular values are called {\em singular values}\/.  Equivalently, an inner function 
is a one component inner function if the set of singular values is compactly contained in the unit disk. For further characterizations of one component inner functions and the equivalence of the two definitions, see \cite[Section 7]{inner-tdf}.

When dealing with parabolic inner functions, it is convenient to use a slightly different notion. A parabolic inner function $F: \mathbb{H} \to \mathbb{H}$ if called a {\em parabolic one component inner function} 
if the set $$F^{-1}(\mathcal H_{\rho}) = \{ z \in \mathbb{H} : \im F(z) > \rho \}$$ is connected for some $\rho > 0$. Equivalently, a parabolic inner function is a parabolic one component inner function if the set of singular values is contained in a horoball $\mathcal H_{\rho}$ for some $\rho > 0$. The equivalence of the two definitions is proved in the same way as for classical one component inner functions. 

\begin{lemma}
\label{p1c-structure}
Suppose $F: \mathbb{H} \to \mathbb{H}$ is a parabolic one component inner function.

{\em (i)} If $F$ has a finite Lyapunov exponent, $\int_{\mathbb{R}} \log |F'| d\ell < \infty$, then the measure $\mu$ in the Polya-Szeg\"o representation is compactly supported. Consequently, after conjugating to the unit disk, the corresponding map $f: \mathbb{D} \to \mathbb{D}$ extends analytically in a neighbourhood of the Denjoy-Wolff point $p$. 

{\em (ii)} If in addition, $F$ is simply parabolic, then for any $\sigma \in \mathbb{R}/T\mathbb{Z}$, the Lavaurs map $L_\sigma$ is a classical one component inner function.
\end{lemma}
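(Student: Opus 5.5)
For part (i), I would use the Polya-Szeg\"o form \eqref{eq:polya-szego}. By Corollary \ref{Finite-Lyapunov-Polya-Szego}, $F$ belongs to the Polya-Szeg\"o class, so $F(z) = z + T - \int \frac{d\mu(a)}{z-a}$. The goal is to show that $\operatorname{supp}\mu$ is bounded. The key observation concerns the intervals between consecutive points of $\operatorname{supp}\mu$. On a gap $J$ of the support, $F$ is real-analytic and increasing, since $F' = 1 + \int \frac{d\mu}{(x-a)^2} > 0$. At the endpoints of $J$ (or at atoms of $\mu$), $F$ tends to $\pm\infty$. Consequently each bounded gap is mapped by $F$ onto all of $\mathbb{R}$, and the unbounded gaps $(-\infty, \inf \operatorname{supp}\mu)$, $(\sup\operatorname{supp}\mu, \infty)$ map onto half-lines.

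Now suppose $\operatorname{supp}\mu$ is unbounded, say near $+\infty$. Then there are infinitely many disjoint bounded gaps $J_k \to \infty$, each mapped diffeomorphically onto $\mathbb{R}$. More generally, even if the support has no gaps in a region, the preimage structure is similar. Each such branch $F|_{J_k}^{-1}$ extends to a conformal map of $\mathbb{H}$ into $\mathbb{H}$ whenever the horoball $\mathcal H_\rho$ contains all singular values. Indeed, since $F^{-1}(\mathcal H_\rho)$ is connected and, by the one-component property, $F:\mathbb{H}\setminus F^{-1}(\overline{\mathcal H_\rho}) \to \mathbb{H}\setminus\overline{\mathcal H_\rho}$ is a covering, the components of the preimage of the strip $\{0<\im w<\rho\}$ are simply connected and touch $\mathbb{R}$ along the intervals $J_k$. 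These components are mutually disjoint and each is attached to the single connected set $F^{-1}(\mathcal H_\rho)$.

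I would use this to derive a contradiction with connectivity. Infinitely many strip-components going off to $+\infty$ would force $F^{-1}(\mathcal H_\rho)$ to wind between them. Together with the asymptotics $F(z) \approx z + T$ near $\infty$ (Julia's lemma in the angular sense), this would give infinitely many points $\infty$-close where $\im F$ is small, contradicting the form $F(z) - z - T \to 0$ uniformly on $\{\im z \ge \rho'\}$.

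The finite Lyapunov exponent should enter through a sum estimate. On the gaps $J_k$, the quantity $\int_{J_k}\log F'\,d\ell$ is bounded below by a fixed constant, because $F$ maps $J_k$ onto $\mathbb{R}$ with distortion controlled by Koebe, using the univalent extensions above. Summing over infinitely many $k$ gives $\int_{\mathbb{R}}\log|F'|\,d\ell = \infty$. I expect this distortion bound to be the main obstacle. It requires showing that the branch extends univalently to a definite hyperbolic neighbourhood of $J_k$ independent of $k$, which is where $\mathcal H_\rho$ containing the singular values is used. Once $\mu$ is compactly supported, $F(z) = z+T-\int\frac{d\mu}{z-a}$ is analytic in $\{|z|>R\}$ including $\infty$, and conjugating by $m$ gives analyticity of $f$ near $p$.

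For part (ii), I would show that the singular values of $L_\sigma$ lie in a compact subset of $\mathbb{D}$. The map $G=G_{0,1}$ is built from $P_+$ and an inverse branch of $P_-$. The singular values of $G$ come from singular values of $F$ pushed forward along orbits, together with the critical values of $P_+$, which are images of critical points of $F$. Since the singular values of $F$ lie in $\mathcal H_\rho$ and heights increase, all singular values of $F^{\circ n}$ lie in $\mathcal H_\rho$. Their images under $P_+$ therefore lie in $\{\im w > \rho - C\}/T\mathbb{Z}$. Near the boundary, $P_+$ is a local homeomorphism, using the bounded-height estimates from Theorem \ref{forward-cylinder-thm}. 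Passing to the disk picture, these singular values have modulus at most $e^{-2\pi(\rho-C)/T}<1$. This establishes that $L_\sigma$ is a one component inner function, given innerness from Theorem \ref{summary2}(i).
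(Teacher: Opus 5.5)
Your part (i) fails at exactly the step you flag, and not merely for technical reasons. There is no lower bound $\int_{J_k}\log F'\,d\ell\ge c_0$ with $c_0$ depending only on $\rho$ and the data. Take the finite Blaschke product $F_\varepsilon(z)=z+T-m/z-m/(z-\varepsilon)$. Its critical values in $\mathbb{H}$ sit at heights approximately $2\sqrt{2m}$ and $2m/\varepsilon$. So it is a parabolic one component inner function with $\rho$ uniform in $\varepsilon$, yet on the gap $(0,\varepsilon)$ one has $\int_0^\varepsilon\log F_\varepsilon'\,dx=O(\varepsilon\log(1/\varepsilon))\to 0$.

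The mechanism is this. Koebe, applied to the inverse branch over the strip, controls distortion on $w$-intervals of size $\asymp\rho$. These correspond to $x$-intervals of length $\asymp\rho/F'$, so where $F'$ is huge, both the interval and its contribution $\asymp\rho\log F'/F'$ are negligible.

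The gap decomposition is also shaky on its own terms. An endpoint of a gap need not be an atom, so $F(J_k)=\mathbb{R}$ requires proof, and the case of no gaps is waved away. The connectivity paragraph yields no contradiction either: components of $F^{-1}(\{0<\im w<\rho\})$ lie in $\{\im z<\rho\}$. Meanwhile $F(z)\approx z+T$ on $\{\im z>\delta,\ |z|>R\}$ only says $\im F(z)\approx\im z$ there, which is consistent with those components.

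The paper avoids gaps altogether. It argues pointwise that $F'(x_0)>c$ forces $F'>(1+c)/2$ on an interval about $x_0$ of length $r(c,\rho)$. A finite Lyapunov exponent then gives $F'(x)\to 1$ as $|x|\to\infty$, which is incompatible with the support of $\mu$ reaching infinity.

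In part (ii), you assert rather than prove that the singular values of $G$, hence of $L_\sigma$, are $P_+$-images of singular values of iterates of $F$. $L_\sigma=F^\infty\circ\tau_\sigma$, and $F^\infty$ involves the inverse of $\widetilde P_-$ (equivalently the Poggi-Corradini map $\Psi$), which your argument never examines. Moreover, $\widetilde P_\pm$ and $\Psi$ are limits of iterates, and such limits can acquire asymptotic values. Knowing that $P_+$ is a local homeomorphism near $\partial C_+$ excludes only critical points. What you need is that $L_\sigma$ is a covering over an annulus $\{r<|z|<1\}$, i.e.\ path lifting. (Also, $e^{-2\pi(\rho-C)/T}<1$ needs $\rho>C$, which is not given; the correct bound is $\im\widetilde P_+(z)\ge\im z$.)

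The paper instead builds the inverse branches by hand. Take a small hyperbolic ball $U\subset\{\im w<\rho\}$ in $C_+$ and a bi-infinite orbit ${\bf z}$ with $P_-({\bf z})=\tau_\sigma(w_0)$ and $P_+({\bf z})=w_1$. The paper lifts $U$ by $P_+$ to a neighbourhood of $z_{N,0}$ far to the right. It then pulls back one step at a time, using that $F$ is a covering over the strip $\{0<\im z<\rho\}$; the pull-backs stay in the strip because $F$ raises height. Finally it projects by $P_-$ from a neighbourhood of $z_{-N,0}$ far to the left.

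This requires $\widetilde P_+$ and $\widetilde P_-$ to be injective on the whole half-planes $\{\re z>X\}$ and $\{\re z<-X\}$, down to $\mathbb{R}$. That is exactly where the compact support from (i) is used, and your proposal never uses (i) in (ii).
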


As explained in \cite{laminations}, for one component inner functions with a Denjoy-Wolff fixed point in the unit disk not conjugate to $z^d$ for some $d \ge 2$, the geodesic flow on the Riemann surface lamination ${\widehat X}_{L_\sigma}$ is mixing. Since $\widehat{X}_{\mathcal G} \cong {\widehat X}_{L_\sigma}$, the geodesic flow on ${\widehat X}_{\mathcal G}$ is also mixing. The arguments in \cite{laminations} now yield the stronger conclusion:

\begin{theorem}
\label{main-thm3d}
For $z \in \mathbb{H}$ outside a set of zero measure, we have
$$
\mathcal N_I(z, R) \, \sim \,  |I| \cdot \frac{e^R}{\int_{\mathbb{R}} \log |F'| d\ell}
$$
as $R \to \infty$.
\end{theorem}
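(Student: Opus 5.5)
The plan is to transfer the mixing orbit-counting argument of \cite{laminations} for one component inner functions with an interior Denjoy-Wolff point to the Lavaurs map $L = L_\sigma$. First fix $\sigma$. By Lemma \ref{p1c-structure}(ii), $L_\sigma$ is a classical one component inner function with Denjoy-Wolff point at the origin. It is attracting and not super-attracting by Theorem \ref{summary1}(ii), so $L$ is not conjugate to $z^d$. The results of \cite{laminations} then apply and the geodesic flow on $\widehat X_{L}$ is mixing. Through the identification $\widehat X_{\mathcal G} \cong \widehat X_L$ of Section \ref{sec:laminations}, the geodesic flow on $\widehat X_{\mathcal G}$ is mixing as well. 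Its total mass is $\xi_{\mathcal G}(\widehat X_{\mathcal G}) = \int_{\mathbb{R}} \log|F'|\,d\ell$ by Theorem \ref{summary2}(ii).

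Next I would rerun the counting scheme used for Theorem \ref{main-thm3c}. For $z \in \mathbb{H}$, express $\mathcal N_I(z,R)$ as a sum over extended inverse orbits. The preimages $w$ of $z$ under the semigroup that lie in $I \times [e^{-R},1]$ correspond to leaves of the lamination. Specifically, they correspond to points at "geodesic time" about $R$ along the horocyclic segment determined by $I$ at height $1$. The standard step is to thicken the horocycle segment $I \times \{e^{-R}\}$ into a small box and push it forward by the geodesic flow for time $R$. One then counts intersections with a small flow box around the leaf through $z$, normalised by the volume of that box. The Cesàro version in Theorem \ref{main-thm3c} only needed ergodicity, through the ergodic theorem applied to these counts. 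Mixing upgrades this to pointwise convergence in $R$: $\xi(g_R(A)\cap B)\to \xi(A)\xi(B)/\xi(\widehat X_{\mathcal G})$. Here $A$ is the thickened horocycle piece of measure $\approx |I|\,e^{R}\cdot(\text{transverse size})$ and $B$ is the box at $z$. The result is $\mathcal N_I(z,R) e^{-R} \to |I|/\int \log|F'|\,d\ell$.

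The main obstacle is the equidistribution of expanding horocycle pieces, rather than of sets of positive measure, together with control of the error from thickening. Mixing gives convergence only for measurable sets. Passing to the thin horocyclic sets and to individual points $z$ requires a sandwiching argument: approximate $\mathcal N_I(z,R)$ from above and below by counts over slightly larger or smaller $I$ and heights. One also needs bounded distortion of the semigroup maps on small hyperbolic balls, which is where the compact support of $\mu$ from Lemma \ref{p1c-structure}(i) and the uniform behaviour near $\infty$ enter. This is exactly the part carried out in \cite{laminations} for $L$. I would therefore transport $I \times [e^{-R},1]$ through the forward projection $P_+$ into a region of $\mathbb{D}$ for $L$. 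Theorem \ref{forward-cylinder-thm} and Theorem \ref{summary2}(iv) show that preimages under $G_{n,k}$ correspond, up to finitely many $F$-iterates, to preimages under $L^{\circ k}$. I would check that the Jacobian of $P_+$ near the real line tends to $1$ in the sense needed, so that the count $|I|$ is preserved. The exceptional set of $z$ of measure zero comes from the full-measure sets in Theorem \ref{summary2}(iv),(v) and from the pointwise counting theorem of \cite{laminations}.
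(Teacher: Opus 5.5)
Your first paragraph matches the paper. Lemma \ref{p1c-structure}(ii) makes $L_\sigma$ a one component inner function, $L_\sigma'(0)\neq 0$ rules out $z^d$, and mixing passes to $\widehat X_{\mathcal G}$ through $\widehat X_{\mathcal G}\cong\widehat X_L$. The gap is in getting from mixing of the abstract flow to the count in $\mathbb H$.

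Your concrete mechanism does not work. You propose to push $I\times[e^{-R},1]$ through $P_+$ into the disk and invoke the counting theorem for $L$, after checking that the Jacobian of $P_+$ near $\mathbb R$ tends to $1$.
\begin{itemize}
\item By Lemma \ref{finite-log-sum}(i), the boundary derivative of $\widetilde P_+$ at $x_0$ is $\prod_{n\ge0}F'(x_n)$. By (\ref{eq:derivative-general}), $F'>1$ on $\mathbb R$. So this Jacobian is strictly larger than $1$ at every point of $I$ where it exists, and it is not close to $1$ in general.
\item $\widetilde P_+$ is far from injective on $\mathbb R$ and raises imaginary parts. So the box $I\times[e^{-R},1]$ is not carried to a thin boundary region where the count for $L$ applies.
\item There is no clean correspondence between $\mathcal G$-preimages of $z$ and $L^{\circ k}$-preimages of $P_+(z)$. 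The relation $P_+\circ G_{n,k}=L^{\circ k}\circ P_+$ only gives $P_+(w)\in L^{-k}(P_+(z))$. Since $P_+\circ F=P_+$, each fibre of $P_+$ is a union of grand $F$-orbits, and the $F$-preimages ($k=0$) all collapse onto $P_+(z)$.
\item Theorem \ref{summary2}(iv),(v) concern a.e.\ points and inverse orbits on $\mathbb R$. They give neither such a correspondence nor the exceptional set of $z\in\mathbb H$.
\end{itemize}

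What is needed, and what the paper supplies, is the strong shadowing property $2'$ for backward trajectories in $\widehat{\mathbb H}_{\mathcal G}$ itself. This says $d_{\mathbb H}\bigl(g_{-t}({\bf z})_{n,k},\overline\gamma(t_0+t)\bigr)\to0$, where $\overline\gamma$ is the vertical geodesic to the landing point.
\begin{itemize}
\item The hyperbolic analogue of Theorem \ref{parabolic-mixing-theorem} gives strong shadowing on $\widehat{\mathbb D}_L$.
\item One transfers it to $\mathbb H$ exactly as in Lemma \ref{backard-trajectories-for-lavaurs-semigroups}: apply $\tau_\sigma$, lift to $\mathbb H$, and apply $\Psi$. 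Since $\Psi$ contracts hyperbolic distance and has angular derivatives a.e.\ on $\mathbb R$, it carries paths that strongly shadow vertical geodesics to paths that strongly shadow vertical geodesics.
\end{itemize}
This link between the lamination's geodesics and genuine vertical geodesics in $\mathbb H$ is what converts mixing into equidistribution of the horizontal segments $I\times\{y\}$. Concretely, it gives $\lim_{y\to0}\int_I h(x+iy)\,dx=\frac{|I|}{\int_{\mathbb R}\log|F'|\,d\ell}\int\widehat h\,d\xi_{\mathcal G}$ for almost invariant functions $h$ built from extended iteration. The count then follows as in \cite{laminations}, working directly in $\mathbb H$ rather than in the disk.

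Your Margulis-type thickening sketch presupposes exactly this link: a horocyclic piece of $\widehat X_{\mathcal G}$ corresponds to $I\times\{e^{-R}\}$ only if trajectories stay asymptotically close to vertical geodesics. The weak shadowing that suffices for the Ces\`aro version (Theorem \ref{main-thm3c}) does not give the non-averaged limit.
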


\section{Canonical conjugacies}
\label{sec:canonical-conjugacies}

In this section, we review the canonical conjugacies introduced by Ch.~Pommerenke \cite{pommerenke} and P.~Poggi-Corradini \cite{pietro}. %For simplicity, we state the results for parabolic inner functions; however, they hold for all holomorphic self-maps of the upper half-plane with a parabolic fixed point at infinity.

\begin{theorem}[Pommerenke]
\label{pommerenke-conjuacy}
Suppose $F: \mathbb{H} \to \mathbb{H}$ is a parabolic self-map of the upper half-plane and $(z_n)_{n=0}^\infty$ is a forward orbit with positive step: $$\lim_{n \to \infty} d_{\mathbb{H}}(z_n, z_{n+1}) > 0.$$
Write: $F^{\circ n}(i) = x_n + iy_n$. 
Then the rescaled iterates
$$
\phi(z) = \lim_{n \to \infty} \frac{ F^{\circ n}(z) - \re F^{\circ n}(i) }{y_n}
$$
converge to a holomorphic function $\phi: \mathbb{H} \to \mathbb{C}$ which satisfies the functional equation
$$
\phi(F(z)) = \phi(z) + b, \qquad
\text{where }
b = \lim_{n \to \infty} \frac{x_{n+1} - x_n}{y_n}.
$$
\end{theorem}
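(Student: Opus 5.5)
We will prove the statement by normal families: we show that every subsequential limit of the rescaled iterates satisfies the functional equation, and then that all subsequential limits coincide. Set $z_n = F^{\circ n}(i) = x_n + iy_n$ and
$$
\phi_n(z) = \frac{F^{\circ n}(z) - x_n}{y_n}.
$$
Each $\phi_n$ is a holomorphic self-map of $\mathbb{H}$ with $\phi_n(i) = i$. By Schwarz–Pick, $\{\phi_n\}$ is a normal family whose limits are holomorphic maps into $\overline{\mathbb{H}}$. A limit cannot be a real constant, since it takes the value $i$ at $i$. So every subsequential limit is a holomorphic map $\mathbb{H} \to \mathbb{H}$ fixing $i$. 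Note that the positive step hypothesis is stated for an arbitrary orbit $(z_n)$; by Schwarz–Pick the property does not depend on the orbit, so we may take the orbit of $i$.

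\textbf{The one-step rescaled maps.} Define
$$
h_n(\zeta) = \frac{F(x_n + y_n\zeta) - x_n}{y_n},
\qquad\text{so that}\qquad
\phi_{n+1}(z)\,\frac{y_{n+1}}{y_n} + \frac{x_{n+1}-x_n}{y_n} = h_n(\phi_n(z)).
$$
The heart of the proof is to show that $h_n(\zeta) - \zeta \to b$ locally uniformly, where $b$ is real. This in particular gives $y_{n+1}/y_n \to 1$ and the existence of $b = \lim (x_{n+1}-x_n)/y_n$.

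To prove this, we write $h_n(\zeta) - \zeta = (F(w)-w)/y_n$ with $w = x_n + y_n\zeta$, and use the Nevanlinna form \eqref{eq:general-inner}. The imaginary part is
$$
\frac{\im(F(w)-w)}{y_n} = \im \zeta \cdot q(w),
\qquad
q(w) = \int \frac{1+a^2}{|w-a|^2}\,d\nu(a).
$$
The function $q(w)\im w$ is positive harmonic, so Harnack's inequality makes $q$ comparable on hyperbolic balls of fixed radius. We then need $q(z_n) = (y_{n+1}-y_n)/y_n \to 0$. Since the angular derivative at $\infty$ equals $1$, the quantity $\log y_n$ grows and is controlled by the positive step. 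I would try to show that $\sum_n q(z_n)$ telescopes against $\log y_n$ to something that forces $q(z_n) \to 0$, using that $d_{\mathbb{H}}(z_n,z_{n+1})$ decreases to a positive limit.

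The real part is handled in the same way. The positive step hypothesis ensures that $b_n := (x_{n+1}-x_n)/y_n$ stays bounded. The Harnack and Poisson-integral comparison shows that the derivative of $(F(w)-w)/y_n$ in $\zeta$ is $O(q(z_n))$ on compacts, so $h_n(\zeta) - \zeta - b_n \to 0$ locally uniformly.

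\textbf{Passing to limits.} For any subsequential limit $\phi = \lim \phi_{n_j}$, the identity above gives, along a further subsequence with $b_{n_j} \to b'$ and $\phi_{n_j+1} \to \psi$, the relation $\psi = \phi + b'$. To identify the limits, I would use the hyperbolic distances $d_{\mathbb{H}}(F^{\circ n}z, F^{\circ n}w)$. These are nonincreasing and invariant under the rescaling, so their limits are attained by every subsequential limit $\phi$. Combined with $\phi(i) = i$, this rigidity should force uniqueness in the style of Pommerenke: two limits $\phi, \phi'$ satisfy $d(\phi(z),\phi(w)) = d(\phi'(z),\phi'(w))$ for all $z,w$. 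So they differ by a Möbius map fixing $i$, and a real translation/dilation normalization at $\infty$ pins this map down as the identity. It then follows that $b_n$ converges to $b$ and that $\phi(F(z)) = \lim \phi_{n+1}(z) + b = \phi(z) + b$, using $\phi_n(F(z)) = \phi_{n+1}(z)\,y_{n+1}/y_n + b_n$.

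The main obstacle is the step $q(z_n) \to 0$, equivalently $y_{n+1}/y_n \to 1$, along a possibly tangential orbit. If the telescoping argument fails, I would fall back on Pommerenke's original monotonicity argument for $\im$ along orbits.
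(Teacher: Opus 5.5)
The paper gives no proof of this statement; it is quoted from Pommerenke. Your proposal therefore has to stand on its own, and it does not yet. The step you flag as the main obstacle, $q(z_n)=(y_{n+1}-y_n)/y_n\to 0$, is the real content of the theorem, and the route you suggest cannot deliver it.

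Telescoping only gives $\sum_{n<N}\log(1+q(z_n))=\log(y_N/y_0)$. This controls the terms only when the orbit has finite height, and positive-step parabolic maps can have infinite height. For $F(z)=z+\sqrt z$ one finds $y_n\asymp n$ and $q(z_n)\asymp 1/n$, while the hyperbolic step tends to a positive limit; here the identity says nothing about individual terms.

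Harnack only transports smallness of $q$ across hyperbolic balls; it does not produce it. Moreover, $q$ need not tend to $0$ under tangential approach to $\infty$: take $\nu$ with atoms $m_k$ at $t_k\to\infty$ and $(1+t_k^2)m_k=1$, so that $q(t_k+i)\ge 1$. Without $y_{n+1}/y_n\to 1$ you have neither $h_n(\zeta)-\zeta-b_n\to 0$ nor the normalized equation $\phi\circ F=\phi+b$.

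There is also a slip: $h_n\circ\phi_n=\lambda_n\phi_{n+1}+b_n$ together with $h_n(\zeta)\approx\zeta+b_n$ gives $\psi=\phi$, not $\psi=\phi+b'$.

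The missing step follows from the Schwarz--Pick rigidity you already invoke later, applied to the one-step maps. Your maps $h_n=L_n^{-1}\circ F\circ L_n$, with $L_n(\zeta)=x_n+y_n\zeta$, satisfy $\phi_n\circ F=h_n\circ\phi_n$ exactly. They also satisfy $d_{\mathbb{H}}(i,h_n(i))=d_{\mathbb{H}}(z_n,z_{n+1})$ and $d_{\mathbb{H}}(h_n(i),h_n^{2}(i))=d_{\mathbb{H}}(z_{n+1},z_{n+2})$.

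For a subsequential limit $h$, both distances equal the limiting step $s_\infty>0$. Equality in Schwarz--Pick at the distinct points $i,h(i)$ then forces $h\in\aut(\mathbb{H})$. Since $\im h_n(\zeta)\ge\im\zeta$, the automorphism $h$ cannot move $\infty$, so $h(\zeta)=\lambda\zeta+b'$ with $\lambda\ge 1$.

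For the corresponding limit $\phi$ you get $\phi\circ F=h\circ\phi$. Hence $d_{\mathbb{H}}(F(w),w)\ge d_{\mathbb{H}}(h(\phi(w)),\phi(w))\ge\log\lambda$ for every $w$. But $d_{\mathbb{H}}(F(iY),iY)\to 0$ as $Y\to\infty$, because the angular derivative at $\infty$ is $1$. So $\lambda=1$, $h$ is translation by $b'$, and $b'\neq 0$ since $d_{\mathbb{H}}(i,i+b')=s_\infty$.

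This $b'\neq 0$ is also what your uniqueness step needs in place of the vague ``normalization at $\infty$''. Suppose $\phi'=M\circ\phi$ with $M(i)=i$. The two functional equations give $M(\zeta+b')=M(\zeta)+b''$ on the open set $\phi(\mathbb{H})$, hence everywhere. So $M$ conjugates one translation to another and must fix $\infty$, which forces $M=\mathrm{id}$ and $b''=b'$.

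Two smaller corrections. Boundedness of $b_n$ comes from Schwarz--Pick alone; positive step is used exactly to get $s_\infty>0$ and $b'\ne 0$. Orbit-independence of positive step is a known but nontrivial fact, not a consequence of Schwarz--Pick.
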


If the imaginary parts $y_n = \im z_n$ are bounded above, then dividing by $y_n$ is not necessary and Pommerenke's theorem takes the following simplified form:

\begin{corollary}
\label{pommerenke-conjuacy2}
Suppose $F: \mathbb{H} \to \mathbb{H}$ is a parabolic self-map of the upper half-plane and $(z_n)_{n=0}^\infty$ is a forward orbit of finite height, i.e.~$\lim_{n \to \infty} \im z_n < \infty$.  Then $\lim_{n \to \infty} d_{\mathbb{H}}(z_n, z_{n+1}) > 0$ and
$$
\widetilde{P}_+(z) = \lim_{n \to \infty}  \bigl ( F^{\circ n}(z) - \re F^{\circ n}(i) \bigr )
$$
satisfies the functional equation
$$
\widetilde{P}_+(F(z)) = \widetilde{P}_+(z) + b, \qquad
\text{where }
b = \lim_{n \to \infty} (x_{n+1} - x_n).
$$
\end{corollary}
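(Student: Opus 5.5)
Our plan is to deduce Corollary \ref{pommerenke-conjuacy2} from Theorem \ref{pommerenke-conjuacy}. This takes two ingredients: first, finite height forces positive step; second, the rescaling by $y_n$ can be removed because $y_n$ converges to a positive finite limit.

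\emph{Step 1: positive step.} Let $(z_n)$ be a forward orbit with $\lim \im z_n = c < \infty$. The limit exists because the height increases by \eqref{eq:imaginary-part-increasing}, or by Julia's lemma for general parabolic self-maps. We argue as in Remark (ii) after Theorem \ref{adm2-thm}. Pick $w_0$ with $\im w_0 > c+1$. Then $\im w_n \ge \im w_0$ while $\im z_n \le c$ for all $n$. Since $d_{\mathbb{H}}(z,w) \ge |\log(\im z/\im w)|$, the distance $d_{\mathbb{H}}(z_n,w_n)$ is bounded below by $\log((c+1)/c) > 0$. By \cite[Theorem G]{doering-mane}, $F$ therefore cannot have zero step, so $\lim d_{\mathbb{H}}(z_n,z_{n+1}) > 0$. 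Since the step limit does not depend on the orbit, the orbit of $i$ also has positive step. We also need the orbit of $i$ to have finite height. For this, note that $d_{\mathbb{H}}(F^{\circ n}(i), z_n) \le d_{\mathbb{H}}(i, z_0)$ by the Schwarz lemma. Together with the bound $|\log(y_n/\im z_n)| \le d_{\mathbb{H}}$, this shows that $y_n$ is bounded above and below by positive constants. Hence $y_n \nearrow y_\infty \in (0,\infty)$.

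\emph{Step 2: removing the normalization.} Apply Theorem \ref{pommerenke-conjuacy} to the orbit of $i$, which has positive step by Step 1. It gives
$$
\phi(z) = \lim_{n\to\infty} \frac{F^{\circ n}(z) - x_n}{y_n}, \qquad b' = \lim_{n\to\infty} \frac{x_{n+1}-x_n}{y_n}.
$$
Since $y_n \to y_\infty \in (0,\infty)$, multiplying through by $y_n$ shows that $\widetilde P_+(z) = \lim (F^{\circ n}(z) - x_n)$ exists and equals $y_\infty \phi(z)$. Likewise $b = \lim (x_{n+1} - x_n)$ exists and equals $y_\infty b'$. The functional equation $\widetilde P_+(F(z)) = \widetilde P_+(z) + b$ then follows by multiplying $\phi\circ F = \phi + b'$ by $y_\infty$. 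One can also check it directly: $\widetilde P_+(F(z)) = \lim (F^{\circ (n+1)}(z) - x_{n+1}) + \lim (x_{n+1} - x_n)$.

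\emph{Main obstacle.} The only delicate point is Step 1. The hypothesis concerns one particular orbit $(z_n)$, while Theorem \ref{pommerenke-conjuacy} is normalized along the orbit of $i$, so we must transfer both positive step and finite height from $(z_n)$ to the orbit of $i$. The Schwarz lemma comparison handles finite height. For positive step, we rely on the fact that the step limit is independent of the orbit; this holds because zero step is characterized by the orbit-independent condition in \cite[Theorem G]{doering-mane}. Everything else is a routine limit manipulation.
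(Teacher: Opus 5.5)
Your proposal is correct and follows the route the paper intends. Finite height forces positive step by the argument of Remark (ii) in the introduction, and since $y_n = \im F^{\circ n}(i)$ then increases to a finite positive limit, the $y_n$-normalization in Pommerenke's theorem can simply be multiplied out. Your Schwarz-lemma transfer of finite height from $(z_n)$ to the orbit of $i$ makes explicit a step the paper leaves implicit; the only quibble is one of wording: it is the positivity of $\lim_{n\to\infty} d_{\mathbb{H}}(z_n,z_{n+1})$, not its value, that is independent of the orbit.
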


We now turn to backward iteration.

\begin{theorem}[Poggi-Corradini]
\label{pietro-conjuacy}
Suppose $F: \mathbb{H} \to \mathbb{H}$ is a parabolic self-map of the upper half-plane and $(z_{-n})_{n=0}^\infty$ is a {\em backward iteration sequence with bounded steps} (BISBS):
 $$\lim_{n \to -\infty} d_{\mathbb{H}}(z_n, z_{n+1}) < \infty.$$
  Write: $z_{-n}= x_{-n} + iy_{-n}$.
Then the rescaled iterates
 $$
\psi(z) = \lim_{n \to \infty} F^{\circ n}(x_{-n} + y_{-n} z)
$$
converge to a holomorphic function $\psi: \mathbb{H} \to \mathbb{C}$ which satisfies the functional equation
$$
\psi(z + b) = F(\psi(z)), \qquad
\text{where }
b = \lim_{n \to -\infty} \frac{x_{n+1} - x_{n}}{y_n}.
$$
\end{theorem}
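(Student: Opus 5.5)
The statement to prove is the Poggi-Corradini theorem (Theorem \ref{pietro-conjuacy}). I will adapt Pommerenke's normal-families argument, run backwards along the BISBS.

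\textbf{Step 1: the rescaled maps and their normality.} Set
$$\psi_n(z) = F^{\circ n}(x_{-n} + y_{-n} z), \qquad A_n(z) = x_{-n} + y_{-n} z .$$
Each $A_n$ is an isometry of $\mathbb{H}$ sending $i$ to $z_{-n}$, so $\psi_n(i) = z_0$ for every $n$. Each $\psi_n$ is a holomorphic map $\mathbb{H} \to \mathbb{H}$ and a hyperbolic contraction, so $\{\psi_n\}$ is a normal family fixing $i \mapsto z_0$. The limits cannot escape to the boundary, since $\psi_n(i) = z_0$ is fixed.

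\textbf{Step 2: behaviour of the normalized steps.} Put $w_n = A_n^{-1}(z_{-n+1})$, which equals $\bigl((x_{-n+1}-x_{-n}) + i y_{-n+1}\bigr)/y_{-n}$. By the Schwarz lemma the steps $d_{\mathbb{H}}(z_{-n}, z_{-n+1})$ are nondecreasing in $n$, and by the BISBS hypothesis they are bounded. So $d_{\mathbb{H}}(i, w_n)$ increases to a finite limit $\delta$. Since $z_{-n} \to \infty$ (the parabolic point), Julia's lemma gives that $y_{-n}$ is nonincreasing along the backward orbit and $y_{-n+1}/y_{-n} \ge 1$. From here I want to show that $y_{-n+1}/y_{-n} \to 1$ and that $(x_{-n+1}-x_{-n})/y_{-n}$ converges to some $b$.

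The model is the Poggi-Corradini / Cowen argument. Take subsequential limits $w_{n_j} \to w$ and compare with the limit map $\psi$. The relation
$$\psi_{n}(z) = \psi_{n-1}\bigl(A_{n-1}^{-1} F A_n(z)\bigr)$$
shows that $\psi$ semi-conjugates the limiting maps $h = \lim A_{n-1}^{-1} F A_n$ to $F$. Here $A_{n-1}^{-1} F A_n$ is a self-map of $\mathbb{H}$ sending $i$ to $i$, which is not obviously useful. Instead I will use
$$\psi_n(z) = \psi_{n-1}\bigl(F\text{-step}\bigr), \qquad\text{that is,}\qquad \psi_{n-1}(z) = F\bigl(\psi_n(A_n^{-1}A_{n-1} z)\bigr),$$
where $A_n^{-1}A_{n-1}$ is an affine map with translation part $-w_n$-related. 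Along a subsequence this gives $\psi(\lambda z + c) = F(\psi(z))$ for some affine map $z \mapsto \lambda z + c$. The equality of the step limits, which follows from the monotone convergence of $d_{\mathbb{H}}(i,w_n)$ together with the isometric rescaling, forces this affine map to be an isometry moving $i$ by the full amount $\delta$.

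\textbf{Step 3: uniqueness of limits.} This is the main obstacle: all subsequential limits $\psi$ and affine maps $z \mapsto \lambda z + c$ must agree. First, $\lambda = 1$, since $y_{-n+1}/y_{-n} \to 1$. I plan to derive this from the parabolic hypothesis: if the ratio stayed above $1+\epsilon$, the backward orbit would approach $\infty$ geometrically in height, contradicting angular derivative $1$ at $\infty$ via the Julia–Carathéodory inequality applied along $z_{-n}$.

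Given $\lambda = 1$, a limit $\psi$ satisfies $\psi(z+c) = F(\psi(z))$ with $c \in \mathbb{R}$, and $c = b$ is the limit of the horizontal normalized steps. To get uniqueness I follow Poggi-Corradini and show that the sequence $\psi_n$ itself is Cauchy. The maps $\psi_{n+1}\circ A_{n+1}^{-1}A_n$ and $\psi_n$ agree on nothing a priori. However, the composition $\psi_{n+m} = \psi_n \circ (\text{near-translation})$, and the near-translations $A_{n+m}^{-1} (\cdots)$ converge to the identity on compacts once $\lambda \to 1$ and $(x_{-n+1}-x_{-n})/y_{-n}$ converges. The convergence of $(x_{-n+1}-x_{-n})/y_{-n}$ comes from the monotonicity of the steps, exactly as $b$ is obtained in Theorem \ref{pommerenke-conjuacy}. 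I would first try to prove the convergence of $(x_{-n+1}-x_{-n})/y_{-n}$ directly from the monotone hyperbolic steps plus $y_{-n+1}/y_{-n}\to 1$, since a point at fixed hyperbolic distance from $i$ with imaginary part tending to $1$ has real part determined up to sign, and the sign is eventually constant by continuity. Holomorphy of $\psi$ then follows from Vitali's theorem.
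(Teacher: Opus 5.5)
The paper gives no proof of this theorem; it is quoted from Poggi-Corradini. Your sketch therefore has to stand on its own, and as written it has genuine gaps.

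The central missing ingredient is exactly the map you set aside, $h_n:=A_{n-1}^{-1}\circ F\circ A_n$. It fixes $i$ and sends $v_{n+1}:=A_n^{-1}(z_{-n-1})$ to $v_n=A_{n-1}^{-1}(z_{-n})$. Both $d_{\mathbb{H}}(i,v_{n+1})$ and $d_{\mathbb{H}}(i,v_n)$ tend to the limiting step $\delta>0$. So every subsequential limit of $h_n$ is, by the equality case of Schwarz--Pick, a rotation about $i$. Julia's inequality $\im h_n(z)\ge (y_{-n}/y_{-n+1})\im z\ge e^{-\delta}\im z$ then forces that rotation to fix $\infty$, so it is the identity. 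This gives $h_n\to\mathrm{id}$ and $d_{\mathbb{H}}(v_{n+1},v_n)\to 0$.

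Without this lemma, several of your steps have nothing to rest on:
\begin{itemize}
\item Nothing in your Step 2 identifies the limit of $\psi_{n_j}$ with that of $\psi_{n_j+1}$, so you never get a single $\psi$ with $\psi(\lambda z+c)=F(\psi(z))$.
\item Your displayed identity also has $F$ on the wrong side; the correct one is $F(\psi_{n-1}(z))=\psi_n(A_n^{-1}A_{n-1}z)$.
\item Saying the affine map is ``forced to be an isometry'' is vacuous, since every $z\mapsto\lambda z+c$ with $\lambda>0$ already is one.
\item ``The sign is eventually constant by continuity'' has no continuity in $n$ to appeal to.
\end{itemize}

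Your Cauchy step in Step 3 also fails as stated. You have $\psi_{n+m}=\psi_n\circ H_{n,m}$ with $H_{n,m}=A_n^{-1}\circ F^{\circ m}\circ A_{n+m}=h_{n+1}\circ\cdots\circ h_{n+m}$. This is a composition of $m$ maps, and knowing each factor is near the identity gives no bound uniform in $m$. What works instead: $H_{n,m}$ fixes $i$ and sends $v_{n+m+1}$ to $v_{n+1}$. Once $v_k\to v$ with $d_{\mathbb{H}}(i,v)=\delta>0$, Schwarz--Pick rigidity gives $H_{n,m}\to\mathrm{id}$, locally uniformly and uniformly in $m$. Hence $d_{\mathbb{H}}(\psi_{n+m}(z),\psi_n(z))\le d_{\mathbb{H}}(H_{n,m}(z),z)\to 0$.

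The second gap is $y_{-n+1}/y_{-n}\to 1$, and your argument for it does not work.
\begin{itemize}
\item It points the wrong way: along the backward orbit heights \emph{decrease}, so the scenario to exclude is $y_{-n}\to 0$, not growth.
\item Since $y_{-n}\le y_0$ while $|z_{-n}|\to\infty$, the BISBS tends to $\infty$ tangentially. There Julia--Carath\'eodory says nothing about $\im F(z)/\im z$.
\item Ruling out ``ratio $\ge 1+\epsilon$ for all large $n$'' does not rule out oscillation.
\end{itemize}
The persistent case is in fact excluded elementarily. Geometric decay of $y_{-n}$ together with bounded steps gives $\sum|x_{-n+1}-x_{-n}|\lesssim\sum y_{-n}<\infty$, so $z_{-n}$ would converge to a finite real point. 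But even combined with $d_{\mathbb{H}}(v_{n+1},v_n)\to 0$, this still allows $v_n$ to drift slowly along the circle $\{d_{\mathbb{H}}(i,\cdot)=\delta\}$, spending ever longer stretches with $\im v_n<1$.

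Some global input is therefore required. Where the paper uses the theorem, that input is positive height of the BISBS (Corollary \ref{pietro-conjuacy2}, and Poggi-Corradini's result quoted in the introduction). Under positive height, $y_{-n}$ is nonincreasing and bounded below, so the ratio tends to $1$ trivially, and the argument above then goes through.

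Finally, $z_{-n}\to\infty$ does not follow from the stated hypotheses and must be assumed. For $F(z)=z+1-1/z$ there is a BISBS converging non-tangentially to the repelling fixed point $x=1$. Along it $y_{-n+1}/y_{-n}\to F'(1)=2$, and the rescaled limit satisfies $F\circ\psi=\psi\circ T$ with $T(z)=2z+c$, a dilation rather than a translation.
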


Similarly, if the imaginary parts $y_n = \im z_n$ are bounded below by a positive constant, then dividing by $y_n$ is not necessary:

\begin{corollary}
\label{pietro-conjuacy2}
Suppose $F: \mathbb{H} \to \mathbb{H}$ is a parabolic self-map of the upper half-plane and $(z_{-n})_{n=0}^\infty$ is a BISBS of non-zero height, i.e.~$\lim_{n \to -\infty} \im z_n > 0$. Then
$
\Psi(z) = \lim_{n \to \infty}  F^{\circ n}(x_{-n} + z)
$
satisfies the functional equation
$$
\Psi(z + b) = F(\Psi(z)), \qquad
\text{where }
b = \lim_{n \to -\infty} (x_{n+1} - x_{n}).
$$
\end{corollary}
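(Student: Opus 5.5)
Corollary \ref{pietro-conjuacy2} follows from Theorem \ref{pietro-conjuacy} by a rescaling argument. First we show that the heights $y_{-n}$ converge to a finite positive limit. By Julia's lemma (cf.\ \eqref{eq:imaginary-part-increasing}), $\im F(z) \ge \im z$, so $y_{-n} = \im z_{-n}$ is non-increasing in $n$. By hypothesis it is bounded below by a positive constant, hence $y_{-n} \to \eta := \lim_{n \to -\infty} \im z_n \in (0,\infty)$.

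Second, we identify $b$. Since the steps are hyperbolically bounded and $y_{-n} \to \eta > 0$, the Euclidean differences $x_{n+1}-x_n$ are bounded. Theorem \ref{pietro-conjuacy} asserts that $b' := \lim_{n\to-\infty} (x_{n+1}-x_n)/y_n$ exists. Hence $b = \lim (x_{n+1}-x_n) = \eta b'$ exists as well.

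Third, we compare $\Psi$ with $\psi$. Fix $z \in \mathbb{H}$. Set $w_n = z/y_{-n}$, so that $x_{-n} + z = x_{-n} + y_{-n} w_n$, and $w_n \to z/\eta$. Then
$$F^{\circ n}(x_{-n}+z) = F^{\circ n}(x_{-n} + y_{-n} z/\eta) + \bigl[F^{\circ n}(x_{-n}+y_{-n}w_n) - F^{\circ n}(x_{-n}+y_{-n}z/\eta)\bigr].$$
The first term tends to $\psi(z/\eta)$ by Theorem \ref{pietro-conjuacy}. For the bracket, the points $x_{-n}+y_{-n}w_n$ and $x_{-n}+y_{-n}z/\eta$ lie at hyperbolic distance tending to $0$, because the affine map $u \mapsto x_{-n}+y_{-n}u$ is a hyperbolic isometry and $w_n \to z/\eta$. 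By the Schwarz--Pick lemma, their images under $F^{\circ n}$ are also at hyperbolic distance tending to $0$.

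This is the main obstacle: converting hyperbolic closeness into Euclidean closeness requires the images to stay at controlled height. The image $F^{\circ n}(x_{-n}+y_{-n}z/\eta)$ converges to $\psi(z/\eta)$. We first check that $\psi$ maps into $\mathbb{H}$, not merely into $\overline{\mathbb{H}}$; this holds because $\psi$ is a locally uniform limit of holomorphic maps into $\mathbb{H}$ and is nonconstant, as it conjugates a translation to $F$. So this image lies in a compact subset of $\mathbb{H}$, where hyperbolic and Euclidean distances are comparable. Hence the bracket tends to $0$, and $\Psi(z) = \psi(z/\eta)$ exists. The convergence is locally uniform, since everything above is uniform for $z$ in compact sets.

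Finally, we verify the functional equation:
$$\Psi(z+b) = \psi(z/\eta + b') = F(\psi(z/\eta)) = F(\Psi(z)).$$
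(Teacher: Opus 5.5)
Your proof is correct. It is the rescaling the paper has in mind: the paper gives no separate proof, only the remark that when $\im z_n$ stays bounded below, ``dividing by $y_n$ is not necessary.'' Your identity $\Psi(z)=\psi(z/\eta)$, obtained via Schwarz--Pick together with $b=\eta b'$, is the natural way to make that remark precise.

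One point could be tightened: why $\psi$ maps into $\mathbb{H}$. This follows directly from $\psi(i)=\lim_{n\to\infty}F^{\circ n}(x_{-n}+iy_{-n})=\lim_{n\to\infty}F^{\circ n}(z_{-n})=z_0\in\mathbb{H}$ and the minimum principle for $\im\psi\ge 0$. That is cleaner than deducing nonconstancy from the functional equation.
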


\begin{remark}
(i) The theorems of Pommerenke and Poggi-Corradini do not imply that the limits $\lim_{n \to \infty} F^{\circ n}(i) - nT$
and $\lim_{n \to \infty} c_{-n} + nT$ exist, or that these quantities even remain bounded as $n \to \infty$.

(ii) If $\widetilde{P}_-$ is the inverse of $\Psi$ on some subdomain of $\mathbb{H}$, then $\widetilde{P}_-$ satisfies the same functional equation as $\widetilde{P}_+$\,:
$$
\widetilde{P}_-(F(z)) = \widetilde{P}_-(z) + b.
$$

(iii)
In \cite{hamilton-iii},  D.~Hamilton observed that if $F$ is a simply parabolic inner function, then the Pommerenke conjugacy $\widetilde{P}_+: \mathbb{H} \to \mathbb{H}$ is also an inner function. This result may also be deduced from the work of G.~Ferreira \cite{ferreira} on the limits of non-autonomous forward iterates of inner functions. From a recent work by D.~Kraus, A.~Moucha and O.~Roth \cite{KMR}, it follows that the Pommerenke conjugacy is an indestructible or maximal Blaschke product if and only if $F$ is. In contrast, backward iteration appears to be much more delicate and we were only able to show that $\Psi$ is an inner function by imposing much stronger restrictions on $F$.

\end{remark}

\section{The Polya-Szeg\"o class}

In this section, we show that if a parabolic inner function has finite Lyapunov exponent, then it belongs to the Polya-Szeg\"o class. We then show that for Polya-Szeg\"o inner functions with  $T = 0$, forward orbits have infinite height and the dynamics on the real line is recurrent. Finally, we show that if $F$ possesses a bi-infinite orbit satisfying the conditions of Theorem \ref{dyn-PS}, then $F$ is a simply parabolic Polya-Szeg\"o inner function.

\subsection{Basic observations}

The following lemma describes Polya-Szeg\"o functions as a subset of parabolic inner functions:

\begin{lemma}
\label{polya-szego-functions}
Let $F$ be a parabolic inner function of the form (\ref{eq:general-inner}). Then, $F$ is a Polya-Szeg\"o function if and only if
$$
(1+a^2) \nu(a)
$$
is a finite measure. In this case,
$$
\mu = (1+a^2) \nu(a), \qquad \text{and} \qquad
T = S - \int_{\mathbb{R}} a \, d\nu(a).
$$
\end{lemma}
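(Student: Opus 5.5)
The proof rests on the algebraic identity
$$
\frac{1+az}{z-a} \;=\; \frac{1+a^2}{z-a} + a,
$$
which follows from $1+az = a(z-a) + (1+a^2)$. Substituting it into (\ref{eq:general-inner}) gives, at least formally,
$$
F(z) = z + S - \int_{\mathbb{R}} a\,d\nu(a) - \int_{\mathbb{R}} \frac{(1+a^2)\,d\nu(a)}{z-a}.
$$
This is exactly the Polya-Szeg\"o form (\ref{eq:polya-szego}) with $\mu = (1+a^2)\nu$ and $T = S - \int a\,d\nu$. The proof therefore splits into the two directions, and the work lies in justifying this splitting of integrals and in showing that the representing measures are unique.

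\textbf{($\Leftarrow$)} Suppose $(1+a^2)\nu$ is finite. Then $\int |a|\,d\nu \le \int (1+a^2)\,d\nu < \infty$, so $\int a\,d\nu$ is a finite real number. For fixed $z \in \mathbb{H}$, the integral $\int (1+a^2)/|z-a|\,d\nu$ is also finite, because $|z-a| \ge \im z > 0$. Both pieces of the split are thus absolutely convergent, and the computation above is legitimate. The measure $\mu = (1+a^2)\nu$ is finite and positive. It is singular because it is absolutely continuous with respect to the singular measure $\nu$. Hence $F$ is Polya-Szeg\"o with the stated $\mu$ and $T$.

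\textbf{($\Rightarrow$)} Suppose $F$ also admits a representation (\ref{eq:polya-szego}) with some finite singular $\mu \ge 0$ and some $T \in \mathbb{R}$. I would rewrite it in the Nevanlinna form (\ref{eq:general-inner}) by setting $\nu' = \mu/(1+a^2)$. This $\nu'$ is finite, and $\int |a|\,d\nu' < \infty$ because $|a|/(1+a^2) \le 1/2$. Reversing the identity then gives
$$
F(z) = z + \Big(T + \int a\,d\nu'\Big) - \int \frac{1+az}{z-a}\,d\nu'(a).
$$
The uniqueness of the Nevanlinna representation then yields $\nu' = \nu$ and $S = T + \int a\,d\nu$.

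That uniqueness is the one point needing a citation or argument. Taking imaginary parts, $\im F(z) - \im z$ is the Poisson integral of the measure $(1+a^2)\nu$; this is formula (\ref{eq:imaginary-part-increasing}). A positive harmonic function on $\mathbb{H}$ determines its boundary measure uniquely, by the Herglotz/Poisson representation, and the linear coefficient at infinity here is $1$ in both representations. So $(1+a^2)\nu = (1+a^2)\nu' = \mu$, and in particular $(1+a^2)\nu$ is finite. Comparing real parts, for instance at $z = i$, then determines $S$ and gives the formula for $T$.

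I expect the main obstacle to be only this uniqueness step. It is standard and can be quoted from the Nevanlinna representation theorem; the rest is the bookkeeping of absolute convergence described above.
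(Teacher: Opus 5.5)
Your argument is correct. The paper states this lemma without proof and evidently regards it as immediate from the identity $\frac{1+az}{z-a} = a + \frac{1+a^2}{z-a}$, which is exactly your approach; you also supply two details the paper leaves implicit, namely the absolute-convergence checks and, for the ``only if'' direction, the uniqueness of the Herglotz/Nevanlinna representation of $\im F(z) - \im z$.
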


\begin{corollary}
\label{Finite-Lyapunov-Polya-Szego}
Any parabolic inner function $F: \mathbb{H} \to \mathbb{H}$ with finite Lyapunov exponent
$
\int_{\mathbb{R}} \log |F'(x)| d\ell < \infty
$
belongs to the Polya-Szeg\"o class.
\end{corollary}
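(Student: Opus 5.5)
The plan is to reduce everything to Lemma \ref{polya-szego-functions}. It suffices to show that $\int_{\mathbb{R}} (1+a^2)\, d\nu(a) < \infty$, where $\nu$ is the singular measure in (\ref{eq:general-inner}). Differentiating (\ref{eq:general-inner}) and using $\frac{d}{dz}\frac{1+az}{z-a} = -\frac{1+a^2}{(z-a)^2}$, we get
$$
F'(z) = 1 + \int_{\mathbb{R}} \frac{1+a^2}{(z-a)^2}\, d\nu(a).
$$
By the standard Julia--Carath\'eodory description of boundary derivatives of inner functions, for a.e.~$x \in \mathbb{R}$ this gives
$$
|F'(x)| = 1 + \int_{\mathbb{R}} \frac{1+a^2}{(x-a)^2}\, d\nu(a) \ \ge\ 1 .
$$
In particular $\log |F'| \ge 0$, so we may freely restrict to subsets of $\mathbb{R}$ and drop parts of $\nu$ when bounding from below. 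The part of $\nu$ on $[-2,2]$ is harmless since $\nu$ is finite. The task is therefore to control the tail $\nu$ on $|a| \ge 2$.

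\textbf{Dyadic decomposition.} For $j \ge 1$, let $A_j = \{2^j \le |a| < 2^{j+1}\}$ and $m_j = \nu(A_j)$. We want $\sum_j 4^j m_j < \infty$. For $a \in A_j$ we have $1+a^2 \ge 4^j$. Hence, for every $x$,
$$
\log|F'(x)| \ \ge\ \log\Bigl(1 + \sum_j u_j(x)\Bigr), \qquad u_j(x) = \int_{A_j} \frac{4^j}{(x-a)^2}\,\mathbf 1_{\{|x-a|<2^j\}}\, d\nu(a).
$$
A given $x$ has $u_j(x) \neq 0$ only when $|x| \asymp 2^j$, so only for at most $K$ indices $j$, with $K$ an absolute constant. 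Therefore
$$
\log\Bigl(1+\sum_j u_j\Bigr) \ \ge\ \frac1K \sum_j \log(1+u_j).
$$

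\textbf{Jensen on each annulus.} Apply Jensen's inequality to the probability measure $\nu|_{A_j}/m_j$ and the concave function $\log(1+\cdot)$, then integrate in $x$ using Tonelli:
$$
\int_{\mathbb{R}} \log(1+u_j)\,dx \ \ge\ \frac{1}{m_j}\int_{A_j} \int_{|t|<2^j} \log\Bigl(1+\frac{m_j 4^j}{t^2}\Bigr)\,dt\,d\nu(a).
$$
We have the exact formula $\int_{\mathbb{R}} \log(1+c/t^2)\,dt = 2\pi\sqrt c$. When $\sqrt{c}$ is at most comparable to the truncation length $L$, the truncated integral over $|t|<L$ is still $\ge c_0\sqrt c$. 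Here $c = m_j 4^j$ and $L = 2^j$, so $\sqrt c \le L$ exactly when $m_j \le 1$. This holds for all large $j$ because $\nu$ is finite. So the inner integral is $\gtrsim 2^j\sqrt{m_j}$, and summing over $j$ gives
$$
\sum_j 2^j \sqrt{m_j} \ \lesssim\ K\int_{\mathbb{R}} \log|F'|\,d\ell \ <\ \infty .
$$

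\textbf{Conclusion and main obstacle.} Put $\varepsilon_j = 2^j\sqrt{m_j}$. The previous step says $(\varepsilon_j)$ is summable, hence square-summable. Since $4^j m_j = \varepsilon_j^2$, we get $\sum_j 4^j m_j < \infty$. Therefore $(1+a^2)\nu$ is finite, and Lemma \ref{polya-szego-functions} finishes the proof. The main obstacle is the mismatch between ``log of a sum'' and ``sum of logs''. A single global Jensen over all of $\nu$ only yields $\int |a|\,d\nu < \infty$, which is too weak. This is why the dyadic localization with bounded overlap is needed, together with the square-root gain from $\int \log(1+c/t^2)\,dt \asymp \sqrt c$, so that summability of $2^j\sqrt{m_j}$ upgrades to summability of $4^j m_j$.
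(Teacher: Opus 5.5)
Your route is different from the paper's and works in outline, but the bounded-overlap step is false as written. With the cutoff $|x-a|<2^j$ and $|a|\ge 2^j$ you only get $0<|x|<3\cdot 2^j$, not $|x|\asymp 2^j$. For example, if $\nu$ has an atom at $2^j+\tfrac12$ for every $j\ge 1$, then $u_j(x)\neq 0$ for all $j$ and all $x\in(\tfrac12,\tfrac92)$. So there is no absolute $K$, and the inequality $\log(1+\sum_j u_j)\ge K^{-1}\sum_j\log(1+u_j)$ is not justified.

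The repair is to cut off at $|x-a|<2^{j-1}$ instead. Then $2^{j-1}<|x|<5\cdot 2^{j-1}$, so each $x$ sees at most three indices. Your Jensen--Tonelli step goes through verbatim with $L=2^{j-1}$. Using $\int_{|t|<L}\log(1+c/t^2)\,dt\ge 2\log 2\,\min(L,\sqrt c)$, you get $\sum_j 2^j\min\bigl(\sqrt{m_j},\tfrac12\bigr)\lesssim\int_{\mathbb{R}}\log|F'|\,d\ell$. After that your conclusion stands.

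The paper's proof is much shorter. It localizes to unit intervals instead of dyadic blocks and never uses Jensen. For $x\in[n,n+1)$ and $a\in[n,n+1]$ one has $(x-a)^{-2}\ge 1$ and $1+a^2\gtrsim 1+n^2$. Hence $\log F'(x)\gtrsim\min\bigl(1,(1+n^2)\nu([n,n+1])\bigr)$ on $[n,n+1)$. Integrating over the disjoint intervals $[n,n+1)$ gives $\sum_n\min\bigl(1,(1+n^2)\nu([n,n+1])\bigr)<\infty$. So the minimum is eventually the second entry, and $(1+a^2)\nu$ is finite.

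The ``log of a sum'' obstacle you single out is thus handled just by $\log(1+s)\gtrsim\min(1,s)$ on pieces of bounded length. The square-root gain is not needed for the corollary.

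What your route buys is a sharper quantitative statement. The condition $\sum_j 2^j\sqrt{\nu(A_j)}<\infty$ is strictly stronger than $\sum_j 4^j\nu(A_j)<\infty$; take $\nu(A_j)=4^{-j}j^{-2}$ to see the difference. By subadditivity of $\log(1+\cdot)$, your bound is sharp up to constants for $\nu=\sum_j m_j\delta_{2^j}$ with small $m_j$.
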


\begin{proof}
Differentiating (\ref{eq:general-inner}), we get
\begin{equation}
\label{eq:derivative-general}
F'(z) = 1 + \int_{\mathbb{R}} \frac{1+a^2}{(z - a)^2} \, d\nu(a).
\end{equation}
From the above formula, it is clear that
$$
\log F'(z) \gtrsim \min \Bigl (1, \, (1+n^2)\nu([n, n+1]) \Bigr ), \qquad z \in [n, n+1).
$$
The finiteness of the Lyapunov exponent forces the measure $(1+a^2)\nu(a)$ to be finite. The corollary now follows from Lemma \ref{polya-szego-functions}.
\end{proof}

\subsection{Polya-Szeg\"o inner functions with  $T = 0$}

We now take a brief look at Polya-Szeg\"o inner functions with $T = 0$.

\begin{lemma}
\label{structure-orbits-T0}
Let $F$ be a Polya-Szeg\"o inner function with $T = 0$. Every bi-infinite orbit $(z_n)_{n=-\infty}^\infty$ satisfies 
$$
\im_{n \to -\infty} \im z_{n} = 0, \qquad \lim_{n \to \infty} \im z_{n} = \infty,
$$
$$
 \lim_{n \to \infty}  d_{\mathbb{H}}(z_n, z_{n+1}) = 0.
$$
\end{lemma}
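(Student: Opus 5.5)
The plan is to work directly with the Polya-Szeg\"o formula (\ref{eq:polya-szego}) with $T=0$. Writing $z = x+iy$ and $M = \mu(\mathbb{R})$, we have
$$
F(z) - z = -\int_{\mathbb{R}} \frac{d\mu(a)}{z-a}, \qquad \im F(z) - \im z = \int_{\mathbb{R}} \frac{y}{|z-a|^2}\, d\mu(a), \qquad |F(z)-z| \le \frac{M}{y}.
$$
The key quantity is $y^2$. Set $q(z) = \int \frac{d\mu(a)}{|z-a|^2}$. Then $\im F(z) = y(1+q(z))$, and
$$
|F(z)-z|^2 \le \Bigl(\int \frac{d\mu}{|z-a|}\Bigr)^2 \le M\, q(z)
$$
by Cauchy--Schwarz. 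Thus the horizontal displacement is controlled by the vertical gain, and the vertical gain is $y\,q$. For the forward statement I will prove $(\im z_{n+1})^2 - (\im z_n)^2 \ge 2 y_n^2 q(z_n) \ge \mathrm{const}\cdot |z_{n+1}-z_n|^2 \cdot y_n^2/M$.

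Forward direction. Suppose for contradiction that $y_n \nearrow \beta < \infty$. Then $\sum_n y_n q(z_n) = \sum (y_{n+1}-y_n) < \infty$, so $\sum q(z_n) < \infty$. This gives $\sum |z_{n+1}-z_n|^2 < \infty$, although not summability of the steps themselves. I would instead use the Denjoy--Wolff property: $z_n \to \infty$ in $\overline{\mathbb{H}}$, and with bounded height this forces $|x_n| \to \infty$. For $|x_n|$ large, the mass of $\mu$ near $x_n$ is small in the sense of weak decay. Writing $F(z_n) - z_n = -\int \frac{d\mu}{z_n - a}$, the real part is $\approx -\int \frac{x_n - a}{|z_n-a|^2} d\mu$, which has sign opposite to $x_n$ up to the local contribution. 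A cleaner route is to pass to the limit via Corollary \ref{pommerenke-conjuacy2}. Bounded height gives positive step and a conjugacy with $b = \lim (x_{n+1}-x_n)$. But $x_{n+1}-x_n \to 0$ because $|F(z_n)-z_n| \le (M\,q(z_n))^{1/2} \to 0$, so the step $d_{\mathbb{H}}(z_n,z_{n+1}) \to 0$ (bounded-below height), contradicting positive step. Hence $\im z_n \to \infty$. Then $|z_{n+1}-z_n| \le M/y_n \to 0$ while $\im$ grows, so $d_{\mathbb{H}}(z_n,z_{n+1}) \lesssim |z_{n+1}-z_n|/y_n \to 0$.

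Backward direction. Suppose $\im z_{-n} \searrow \alpha > 0$. The same telescoping gives $\sum_n q(z_{-n}) < \infty$, hence $|z_{-n}-z_{-n-1}| \to 0$ and the backward steps tend to zero hyperbolically. So $(z_{-n})$ is a BISBS, and it escapes to $\infty$: the backward orbit cannot accumulate in $\mathbb{H}$, since at an accumulation point $w$ we would have $F(w)=w$, which is impossible for a parabolic map. By Corollary \ref{pietro-conjuacy2}, $\Psi(z+b) = F(\Psi(z))$ with $b = \lim(x_{n+1}-x_n) = 0$. This gives $\Psi = F\circ \Psi$, so $F$ fixes the nonconstant open image of $\Psi$, which is a contradiction. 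I expect the main obstacle to be justifying that $\Psi$ is nonconstant, or alternatively replacing this step with the cited fact \cite[Theorem 1.21]{pietro} that a BISBS has a positive-step type behaviour, which is incompatible with zero steps.
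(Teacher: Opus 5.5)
Your argument is correct, but it obtains the small steps differently from the paper. The paper proves zero step first. It uses $|F(z)-z| \le \int d\mu(a)/|z-a| \to 0$ as $|z|\to\infty$ in $\{\im z \ge c\}$ (by dominated convergence), together with the Denjoy--Wolff theorem, which gives $|z_n| \to \infty$. Infinite height then follows because finite height forces positive step.

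You instead argue from the telescoping identity $y_{n+1}-y_n = y_n q(z_n)$. If $y_n$ converges, then $\sum q(z_n)<\infty$, so $|z_{n+1}-z_n| \le (Mq(z_n))^{1/2} \to 0$. This is zero step, which contradicts finite height $\Rightarrow$ positive step. Zero step for the infinite-height orbit then follows from $|F(z)-z|\le M/\im z$. Your route avoids Denjoy--Wolff and dominated convergence, and it treats forward and backward orbits with the same computation. The conjugacy constant $b$ and your remarks about the mass of $\mu$ near $x_n$ play no role and can be dropped.

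In the backward direction you stop one line short of the finish. Once you have $d_{\mathbb{H}}(z_{-n-1},z_{-n}) \to 0$, the Schwarz lemma gives $d_{\mathbb{H}}(z_0,z_1) \le d_{\mathbb{H}}(z_{-n-1},z_{-n})$ for every $n$. Hence $F(z_0)=z_0$, which is impossible since $\im F(z) > \im z$. This is exactly how the paper concludes. The paper gets vanishing steps only along a subsequence, from non-trapping of the backward orbit plus the decay of $|F(z)-z|$. Your telescoping gives vanishing steps along the whole sequence.

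The escape to $\infty$, the Poggi-Corradini conjugacy with $b=0$, and the worry about non-constancy of $\Psi$ are all unnecessary. The worry is also not a real obstacle. Since $d_{\mathbb{H}}(x_{-n}+i\alpha, z_{-n}) = \log(y_{-n}/\alpha) \to 0$, you get $\Psi(i\alpha) = z_0 \in \mathbb{H}$, and then $\Psi = F\circ\Psi$ again yields $F(z_0)=z_0$.
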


\begin{proof}
The inequality
$$
|F(z) - z| \le \int_{\mathbb{R}} \frac{d\mu(a)}{|z-a|}
$$
shows that the hyperbolic distance $d_{\mathbb{H}}(z, F(z)) \to 0$ as $|z| \to \infty$, provided that $\im z$ stays bounded below by a positive constant.

Since any forward orbit $(z_n)_{n=0}^\infty$ converges to the Denjoy-Wolff point at infinity and $\im z_n \ge \im z_0$ is bounded below, 
this implies that the hyperbolic distance $$d_{\mathbb{H}}(z_n, z_{n+1}) \to 0, \qquad \text{as }n \to \infty.$$
As discussed in the introduction, the zero step condition implies that forward orbits of $F$ have infinite height.

Suppose that $(z_n)_{n=-\infty}^0$ is a backward orbit such that  $\im z_{n}$ are bounded below by a positive constant. Since $(z_n)_{n=-\infty}^0$ cannot be trapped in a compact subset of $\mathbb{H}$, there exists a subsequence $n_k \to \infty$ such that $$d_{\mathbb{H}}(z_{n_k}, z_{n_k+1}) \to 0, \qquad \text{as }n \to -\infty,$$ which contradicts the Schwarz lemma.
\end{proof}

The following lemma appears in \cite[Proposition 4.1]{AS21}:

\begin{lemma}
\label{structure-orbits-T0-up}
Suppose $F$ is a Polya-Szeg\"o inner function with $T = 0$. Then, every forward orbit $(z_n)_{n=0}^\infty$ tends to infinity non-tangentially, 
that is, $$\liminf_{n \to \infty} \frac{\im z_n}{\re z_n} > 0.$$
\end{lemma}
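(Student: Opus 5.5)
The plan is to show that, for large $|z|$ in the relevant region, $F$ pushes points upward by an amount comparable to the horizontal displacement; a sector $\{\im z \ge c\,|\re z|\}$ will then be forward invariant and absorbing. With $T=0$,
$$
F(z) - z = -\int_{\mathbb{R}} \frac{d\mu(a)}{z-a},
$$
and since $\mu$ is finite, for $|z|$ large compared to the bulk of $\mu$ we have $F(z)-z \approx -M/z$ with $M=\mu(\mathbb{R})$. Write $z = re^{i\theta}$. Then $-M/z = (M/r)e^{i(\pi-\theta)}$, so the increment has real part $-(M/r)\cos\theta$ and imaginary part $(M/r)\sin\theta$. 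The increment therefore always moves $z$ toward the imaginary axis in argument: if $\re z>0$ the real part decreases, and if $\re z<0$ it increases. In terms of angles, the map $z\mapsto z - M/z$ is the time-step of the flow $\dot z = -M/z$, i.e.\ $\frac{d}{dt} z^2 = -2M$. Along this flow $\im(z^2)=2xy$ is conserved and $\re(z^2)=x^2-y^2$ decreases linearly, so orbits follow hyperbolas $xy=\text{const}$ and asymptotically become vertical. This is the heuristic reason for non-tangential, indeed asymptotically vertical, escape.

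The first step is to make this rigorous with error terms. Since $\im z_n \ge \im z_0 > 0$ and $z_n \to \infty$ (the Denjoy--Wolff point), we control $\int d\mu(a)/(z-a) - M/z$. Split $\mu$ into the part on $[-R,R]$, which contributes $O(R M/|z|^2)$, and the tail, which has small mass $\varepsilon(R)$ and contributes at most $\varepsilon(R)/\im z$ in modulus. Tracking $Q_n = \re(z_n^2)$ and $\im(z_n^2)$, one computes
$$
z_{n+1}^2 = z_n^2 - 2M + O\bigl(R M/|z_n| + \varepsilon(R)|z_n|/\im z_n + M^2/|z_n|^2\bigr).
$$
The aim is to show that $\re(z_n^2)\to -\infty$ at a linear rate while $\im(z_n^2)$ grows sublinearly, which gives $y_n^2 - x_n^2 \gtrsim n$ and $|x_n y_n| = o(n)$, hence $|x_n|/y_n \to 0$, in particular bounded.

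The main obstacle is the tail term $\varepsilon|z_n|/\im z_n$, which is not small when the orbit is nearly tangential ($\im z_n \ll |z_n|$), precisely the regime we must exclude. To handle it, I would first use (\ref{eq:imaginary-part-increasing}) in its Polya--Szeg\"o form, $\im F(z)-\im z = \im z\int d\mu(a)/|z-a|^2$, exactly, with no error. Combined with the crude bound $|z_{n+1}-z_n| \le M/\im z_n \le M/\im z_0$, this gives $|z_n| = O(n)$. Plugging this in yields $y_{n+1}-y_n \gtrsim y_n/n^2$ contributions from the compact part of $\mu$, which is not yet enough. Instead I would argue by phases. While the orbit is tangential, $|x_n|\gg y_n$ and $x_n$ is far from most of $\mu$; there the compact-part approximation dominates the tail only if $|x_n|$ stays away from the tail's support, which is not guaranteed. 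So in that regime I would use the exact sign structure: each $a$ with $a<\re z$ pushes $\re z$ to the left when $\re z>0$, with the horizontal push $-\int (x-a)\,d\mu/|z-a|^2$ always toward the masses. The fallback is to compare $\mathrm{d}(y^2)$ with $\mathrm{d}(x^2)$ pointwise: $\Delta(y^2) - \Delta(x^2) \approx 2\int \bigl(y^2 - x(x-a)\bigr)\,d\mu/|z-a|^2$, summing positive contributions. I expect this bookkeeping, showing $\sum \Delta(y_n^2 - x_n^2)$ diverges to $+\infty$ while the fluctuations of $x_n$ remain bounded relative to $y_n$, to be the delicate part. Since this is \cite[Proposition 4.1]{AS21}, one can alternatively simply cite it.
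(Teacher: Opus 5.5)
The paper gives no argument of its own for this lemma: it is simply quoted from \cite[Proposition 4.1]{AS21}, so your closing sentence is exactly the paper's treatment. As a self-contained proof, however, your sketch has a genuine gap. It sits precisely in the tangential regime you flag, and the fallback you propose does not close it.

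First, there is a sign error. To first order
\begin{equation*}
\Delta\bigl(y^2 - x^2\bigr) \, \approx \, 2\re \int_{\mathbb{R}} \frac{z\, d\mu(a)}{z-a} \, = \, 2\int_{\mathbb{R}} \frac{y^2 + x(x-a)}{|z-a|^2}\, d\mu(a),
\end{equation*}
with $y^2 + x(x-a)$, not $y^2 - x(x-a)$.

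Second, even with the correct sign this is not a sum of positive terms. The numerator equals $|z|^2 - ax$, which is negative for every mass with $ax > |z|^2$. If $x \gg y$, a mass $m$ at $a = x + t$ with $t \asymp y$ contributes about $-mx/y$. This swamps the drift $2M$ as soon as $m \gtrsim My/x$, and in the nearly tangential regime arbitrarily small tail masses qualify. Geometrically, such a mass drags the orbit outward, raising $x^2$ far more than $y^2$, so $y_n^2 - x_n^2$ genuinely decreases. The positivity you want to sum is not there, and controlling these losses over time is the entire difficulty.

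The missing idea is to track the scale-invariant quantity $\arg z_n$ instead of $z_n^2$. Write $F(z) = z(1+w)$ with $w = -\int_{\mathbb{R}} \frac{d\mu(a)}{z(z-a)}$. Then one has exactly
\begin{equation*}
\im w = \frac{y}{|z|^2}\int_{\mathbb{R}} \frac{2x - a}{|z-a|^2}\, d\mu(a), \qquad |w|^2 \le \frac{\mu(\mathbb{R})}{|z|^2}\int_{\mathbb{R}}\frac{d\mu(a)}{|z-a|^2},
\end{equation*}
the second by Cauchy--Schwarz. The argument then goes as follows.

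For $x > 0$, only masses with $a > 2x$ lower the argument. Since $|z - a| \ge a - x > x$ for them, their total contribution to $\im w$ is at least $-y\,\mu((2x,\infty))/(x|z|^2)$. The nearby masses that spoiled $\re(z^2)$ now help.

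Fix $R$ with $\mu([-R,R]) > 0$. The mass in $[-R,R]$ contributes at least a constant times $y\,\mu([-R,R])/(x|z|^2)$ when $y \le x$ and $x \ge 2R$.

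The error term is absorbed as follows. For $a \le 3x/2$ one has $y(2x-a) \ge xy/2 \ge \mu(\mathbb{R})$ once $xy$ is large, and for $a > 3x/2$ one has $|z - a| \ge x/2$. Hence the error in $\arg(1+w) \ge \im w - |w|^2$ is controlled; this inequality is valid for $|w| \le 1/2$, which holds because $|w| \le \mu(\mathbb{R})/(y|z|)$. You get $\arg F(z) > \arg z$ on $\{x \ge X_0,\ \im z_0 \le y \le x\}$ for $X_0$ large. Symmetrically, $\arg F(z) < \arg z$ on the mirror region $\{x \le -X_0,\ \im z_0 \le y \le |x|\}$.

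Outside these two regions, $\im z/|\re z| \ge \min(1, \im z_0/X_0)$. Each step has length at most $\mu(\mathbb{R})/\im z_0 < X_0$, so the orbit cannot jump between the regions and enters them with the ratio still bounded below. The ratio then increases while the orbit stays inside.

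Hence $\inf_n \im z_n/|\re z_n| > 0$. This needs neither the asymptotic verticality $|x_n|/y_n \to 0$ you were aiming for, nor the infinite height from Lemma \ref{structure-orbits-T0}.
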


We note that the case when $\mu$ is compactly supported has been considered in \cite[Theorem 6.4.1]{aaronson-book}.

\begin{corollary}
\label{divergence-corollary}
Suppose $F$ is a Polya-Szeg\"o inner function with $T = 0$. Then, $F$ is recurrent on the real line.
\end{corollary}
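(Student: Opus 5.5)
The plan is to use the Aaronson criterion recalled in the introduction. If the series $\sum_{n\ge 0} \im z_n/|z_n|^2$ converges along one (hence every) forward orbit, then $F$ is recurrent on the real line. So it suffices to show that this series converges when $T=0$. Lemma \ref{structure-orbits-T0-up} says the orbit tends to infinity non-tangentially: there is $c>0$ with $\im z_n \ge c\,|\re z_n|$ for large $n$. Hence $|z_n|\asymp \im z_n =: y_n$, and the series is comparable to $\sum 1/y_n$. The whole task therefore reduces to a lower bound on the growth of the heights $y_n$, namely $\sum 1/y_n<\infty$.

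To get the growth, I would use (\ref{eq:imaginary-part-increasing}) rewritten in Polya-Szeg\"o form, $\im F(z)-\im z = \int_{\mathbb{R}} \frac{\im z}{|z-a|^2}\,d\mu(a)$. For $z=z_n$ with $|z_n|\asymp y_n\to\infty$, the dominated convergence theorem gives $\int \frac{y_n^2}{|z_n-a|^2}\,d\mu(a) \to \mu(\mathbb{R})\cdot \lim y_n^2/|z_n|^2$. This is bounded below by some $\kappa\,\mu(\mathbb{R})>0$, with $\kappa$ depending only on the non-tangential constant $c$. Consequently
$$
y_{n+1}-y_n \,\ge\, \frac{\kappa\,\mu(\mathbb{R})}{2\,y_n}
$$
for large $n$, so $y_{n+1}^2 \ge y_n^2 + \kappa\mu(\mathbb{R})$ and $y_n \gtrsim \sqrt{n}$. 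Unfortunately $\sum 1/\sqrt{n}$ diverges, so this crude bound is not enough, and it signals that I must examine the real parts too.

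The main obstacle is that convergence is genuinely sharper than this. The right quantity is $w_n = z_n^2$ (or $-1/z_n$). Expanding, $F(z)=z-\mu(\mathbb{R})/z + O(\cdots)$ for non-tangential $z\to\infty$, so $z_{n+1}^2 = z_n^2 - 2\mu(\mathbb{R}) + o(1)$. This gives $z_n^2 \approx -2\mu(\mathbb{R})\,n$, i.e.\ $z_n \approx i\sqrt{2\mu(\mathbb{R})n}$, and then $\im z_n/|z_n|^2\approx 1/\sqrt{n}$ diverges. This contradicts the plan. So I would reverse the reading of Aaronson's criterion: divergence of the series means a.e.\ point escapes to the Denjoy-Wolff point, and for a conservative (recurrent) conclusion one needs the criterion the other way round. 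I would therefore re-check \cite[Theorems 4.1, 4.2]{doering-mane}, where divergence of $\sum \im z_n/|z_n|^2$ corresponds to conservativity/recurrence of $\ell$. This is consistent with the corollary being named ``divergence-corollary''.

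With that reading, the asymptotics $z_n^2 = -2\mu(\mathbb{R})n + o(n)$ suffice. To justify the $o(1)$ error, I would use dominated convergence on $\int \bigl(\frac{1}{z-a}-\frac1z\bigr)\,d\mu(a)\cdot z$ along the non-tangential orbit. Summing these errors via Ces\`aro averages gives $|z_n|^2\asymp n$ and $\im z_n\asymp\sqrt n$. The series then diverges like $\sum n^{-1/2}$, and the cited theorem yields recurrence.
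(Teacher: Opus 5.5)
Your final argument is correct and is the paper's own: non-tangential approach to $\infty$ (Lemma~\ref{structure-orbits-T0-up}) gives $\im z_n \asymp n^{1/2}$, so the Aaronson terms are $\asymp n^{-1/2}$, and your expansion $z_{n+1}^2 = z_n^2 - 2\mu(\mathbb{R}) + o(1)$ (with dominated convergence justified by non-tangentiality) is a clean way to prove the growth rate that the paper only asserts. You were also right to flip the criterion: divergence of $\sum \im z_n/|z_n|^2$ is what gives conservativity, as the paper's own use of it for $T \neq 0$ in Lemma~\ref{convergence-lemma} (convergent sum, hence escape to $\infty$) confirms, so the paper's word ``finite'' in its proof of this corollary, and the orientation it states in the introduction, are slips.
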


\begin{proof}
Lemma \ref{structure-orbits-T0-up} implies that $\im z_n \asymp n^{1/2}$ and $\frac{\im z_n}{|z_n|^2} \asymp n^{-1/2}$. As the Aaronson sum (\ref{eq:aaronson-sum-def}) is finite, $F$ is recurrent on the real line.
\end{proof}

\subsection{Dynamical characterization of simply parabolic Polya-Szeg\"o inner functions}
\label{sec:dyn-char-T0}

We now show the ``only if'' direction of Theorem \ref{dyn-PS}:

\begin{lemma}
\label{dyn-PS-only-if}
Suppose $F: \mathbb{H} \to \mathbb{H}$ is parabolic inner function. If $(z_n)_{n=-\infty}^\infty$ is a BISBS with $\im z_n \in (\alpha, \beta)$, $\alpha, \beta > 0$, then $F$ is a Polya-Szeg\"o inner function with $T \ne 0$.
\end{lemma}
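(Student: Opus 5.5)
The plan is to sum the height increments along the bi-infinite orbit so that they telescope, and then apply Lemma \ref{polya-szego-functions}. Write $F$ in the form (\ref{eq:general-inner}) with singular measure $\nu$. The formula (\ref{eq:imaginary-part-increasing}) gives
$$
\im z_{n+1}-\im z_n \;=\; \im z_n \int_{\mathbb{R}} \frac{1+a^2}{|z_n-a|^2}\, d\nu(a).
$$
Summing over $n \in \mathbb{Z}$, the left side telescopes to at most $\beta-\alpha$, and on the right $\im z_n \ge \alpha$. By Tonelli,
$$
\int_{\mathbb{R}} \Bigl( \sum_{n\in\mathbb{Z}} \frac{1+a^2}{|z_n-a|^2}\Bigr)\, d\nu(a) \;\le\; \frac{\beta-\alpha}{\alpha}.
$$
It therefore suffices to find $c_0>0$ with $\Sigma(a):=\sum_n \frac{1+a^2}{|z_n-a|^2}\ge c_0$ for every $a\in\mathbb{R}$. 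Then $\int (1+a^2)\,d\nu<\infty$, and Lemma \ref{polya-szego-functions} shows that $F$ is Polya-Szeg\"o.

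For the lower bound on $\Sigma(a)$, I first need geometric information about the orbit.

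\emph{The forward orbit escapes.} Since $\im F(w)>\im w$ strictly on $\mathbb{H}$, the forward orbit cannot accumulate at any $w\in\mathbb{H}$: continuity would force $\im F(w)=\im w$. As heights lie in $(\alpha,\beta)$, the points $z_n$ for $n \ge 0$ escape to $\infty$ inside a horizontal strip.

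\emph{Steps are bounded above and below.} Bounded hyperbolic steps together with $\alpha<\im z_n<\beta$ give $|z_{n+1}-z_n|\le C$. Corollary \ref{pommerenke-conjuacy2} gives positive step in the forward direction, and with heights at most $\beta$ this yields $|z_{n+1}-z_n|\ge c>0$ for large $n$. Moreover $\re z_{n+1}-\re z_n\to b$ with $b \neq 0$, because otherwise the steps would become hyperbolically small. Hence $\re z_n\to+\infty$ or $\re z_n \to -\infty$ monotonically for large $n$, with increments eventually in $[|b|/2,C]$. Say the limit is $+\infty$; the other case is symmetric.

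\emph{Lower bound on $\Sigma(a)$.} I split into two cases.
\begin{itemize}
\item If $a\ge \re z_N$ for a suitable $N$, some $z_n$ has $|\re z_n-a|\le C$. That single term gives $\Sigma(a)\ge (1+a^2)/(C^2+\beta^2)$.
\item If $a<\re z_N$, set $d=\re z_N-a$. The points $z_{N+k}$ satisfy $|z_{N+k}-a|\le d+Ck+\beta$. Hence
$$
\Sigma(a)\gtrsim \sum_{k\ge 0} \frac{1+a^2}{(d+\beta+Ck)^2}\gtrsim \frac{1+a^2}{C(d+\beta+C)}.
$$
Since $d\le |a|+|\re z_N|$, this is bounded below uniformly in $a$: it grows like $|a|$ as $|a|\to\infty$.
\end{itemize}
This gives the required $c_0>0$. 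Only the forward half of the orbit was used here; the backward half matters only for the telescoping bound through the lower height bound $\alpha$.

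Finally, $T\ne 0$: if $T=0$, Lemma \ref{structure-orbits-T0} says $\im z_n\to\infty$ as $n\to\infty$, contradicting $\im z_n<\beta$.

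The main point requiring care is the uniform lower bound on $\Sigma(a)$, and in particular the claim that the forward real parts eventually move monotonically with steps comparable to $1$. I would derive this from Corollary \ref{pommerenke-conjuacy2}, using $b=\lim(x_{n+1}-x_n)$ and $b\neq 0$ from positive step at bounded height. If that were problematic, the cruder facts that consecutive points are within $C$ and $|\re z_n|\to\infty$ already show that the real parts form a $C$-net of a half-line. Such a net contains at least $\gtrsim R/C$ points in each window of length $R$, which is enough for the same estimate.
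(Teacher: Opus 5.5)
\textbf{There is a genuine gap in your lower bound for $\Sigma(a)$.} Your telescoping and Tonelli set-up is sound; it is essentially the paper's Step 2. The reduction that follows is not. From $\int \Sigma\,d\nu \le (\beta-\alpha)/\alpha$ and $\Sigma(a)\ge c_0$ you only get $\nu(\mathbb{R})<\infty$, which is known from the start. To conclude $\int(1+a^2)\,d\nu<\infty$ you need $\sum_n |z_n-a|^{-2}\ge c_0$ uniformly in $a$, i.e.\ $\Sigma(a)\ge c_0(1+a^2)$. Your first case gives this. Your second case gives only $\Sigma(a)\gtrsim |a|$ as $a\to-\infty$ (when $\re z_n\to+\infty$), hence only $\int_{-\infty}^0 |a|\,d\nu(a)<\infty$. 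Your fallback ($C$-net of a half-line) has the same defect.

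Sharper estimates cannot fix this: no argument using only the forward half-orbit can work, because for a forward orbit the lower height bound is automatic ($\im z_n\ge \im z_0$). For instance, take $\nu=\sum_{k\ge1}k^{-3}\delta_{-k}$ and choose $S$ so that $S-\int a\,d\nu=1$. This gives the parabolic inner function $F(z)=z+1-\sum_k \frac{1+k^2}{k^3(z+k)}$. Its forward orbits starting far to the right move to $+\infty$ with real steps near $1$ and bounded height, since the total height gain is $\lesssim \sum_k k^{-1}(X+k)^{-1}<\infty$. Yet $\int(1+a^2)\,d\nu=\infty$, so $F$ is not Polya-Szeg\"o. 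Your remark that the backward half ``matters only for the telescoping bound'' is exactly where the proof breaks.

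\textbf{The missing ingredient is the paper's Step 1: the backward half-orbit must sweep out the opposite half-line.}
\begin{enumerate}
\item The total height gain along the whole orbit is at most $\beta-\alpha$, while on each box $J\times[\alpha,\beta]$ the height increases by a definite amount per step. So only finitely many $z_n$, $n\in\mathbb{Z}$, lie in any such box.
\item Steps are uniformly bounded, so $\re z_{-m}$ also eventually stays on one side, tending to $+\infty$ or $-\infty$.
\item Suppose both halves went to $+\infty$. Then for arbitrarily large $m$ there would be $n\to\infty$ with $d_{\mathbb{H}}(z_{-m},z_n)$ bounded. The Schwarz lemma would give $d_{\mathbb{H}}(z_0,z_{n+m})\le d_{\mathbb{H}}(z_{-m},z_n)$ bounded, contradicting $z_{n+m}\to\infty$.
\item Hence $\re z_{-m}\to-\infty$, and the real parts of the full bi-infinite orbit form a $C$-net of $\mathbb{R}$. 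This gives $\sum_{n\in\mathbb{Z}}|z_n-a|^{-2}\ge (C^2+\beta^2)^{-1}$ for every $a\in\mathbb{R}$.
\end{enumerate}
Your telescoping bound then yields $\int(1+a^2)\,d\nu\le (C^2+\beta^2)(\beta-\alpha)/\alpha$. Your deduction of $T\neq 0$ from Lemma \ref{structure-orbits-T0} is fine as written.
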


\begin{proof}
{\em Step 1.}  We write $z_n = x_n + iy_n$. In this step, we show that
as $n \to \infty$, either $$x_n \to \infty, \quad x_{-n} \to -\infty \qquad \text{or} \qquad x_n \to -\infty, \quad x_{-n} \to \infty.$$
Let $J \subset \mathbb{R}$ be a bounded interval in the real line. By compactness, for any point $z$ in $J \times [\alpha, \beta]$, the imaginary part of $z$ increases by a definite amount:
$$y_{n+1} - y_n > c(J, F) > 0.$$
Consequently, only finitely many points of bi-infinite orbit $(z_n)_{n=-\infty}^\infty$ can lie inside $J  \times [\alpha, \beta]$.

If $J$ is sufficiently long so that $\diam_{\hyp} (J \times [\alpha, \beta]) > \sup_{n \in \mathbb{Z}} d_{\mathbb{H}}(z_n, z_{n+1})$, then the real parts $x_n = \re z_n$ are eventually contained in one of the two components of $\mathbb{R} \setminus J$.
Therefore, we either have $x_n \to \infty$ or $x_n \to -\infty$.
The same argument also shows that $x_{-n} \to -\infty$ or $x_{-n} \to \infty$. 

Without loss of generality, we assume that $x_n \to \infty$.
If additionally $x_{-n} \to \infty$ as $n \to \infty$, then there would exist arbitrarily large integers $m,n$ for which the points $z_n$ and $z_{-m}$ are a bounded hyperbolic distance apart. However, by the Schwarz lemma, this would imply that the hyperbolic distance between $x_0$ and $x_{n+m}$ remains bounded, which is a contradiction.

\medskip

{\em Step 2.}
From Step 1, it follows that there exists a $\tau > 0$ so that each interval $I \subset \mathbb{R}$ of length $\tau$ contains $\re z_n$ for some $n \in \mathbb{Z}$.
According to (\ref{eq:imaginary-part-increasing}), the increment of the imaginary part in a single step is
$$
\im (z_{n+1} - z_n) = \int_{\mathbb{R}} (1 + a^2) \frac{\im z}{|z - a|^2} \, d\nu(a).
$$
Since $\alpha \le \im z_n \le \beta$,
$$
\im (z_{n+1} - z_n) \gtrsim (1+a^2) \nu(I).
$$
However, this forces $(1+a^2) \nu(I)$ to be a finite measure: otherwise, the total increment 
$$
\sum_{n=-\infty}^\infty \im (z_{n+1} - z_n)
$$
would be infinite. By Lemma \ref{polya-szego-functions}, $F$ is a Polya-Szeg\"o inner function, while Lemma \ref{structure-orbits-T0} implies that $T \ne 0$.
\end{proof}

\section{\texorpdfstring{Polya-Szeg\"o inner functions with $T \ne 0$}{Polya-Szeg\"o inner functions with T ≠ 0}}                         
\label{sec:1p-polya-szego}

In this section, we examine Polya-Szeg\"o inner functions with $T \ne 0$. Without loss of generality, we assume that $T > 0$ as the case $T < 0$ is similar. Throughout this section, we treat 
$\mu(\mathbb{R})$ and $T$ as fixed constants, and therefore, we do not indicate the dependence of the error terms on $\mu(\mathbb{R})$ and $T$. We first examine the behaviour of forward and backward orbits when $\im z$ is large, before looking at the general case. 

With help of our analysis, we complete the proof of Theorem \ref{adm2-thm} by showing that forward orbits have finite height and under iteration, a.e.~point on the real line converges to infinity. We also complete the proof of Theorem \ref{dyn-PS} by constructing a BISBS $(z_n)_{n=-\infty}^\infty$ with $\im z_n \in (\alpha, \beta)$, $\alpha, \beta > 0$. Finally, we also prove Theorems \ref{forward-cylinder-thm} and \ref{backward-dynamics3} on forward and backward half-cylinders. These theorems mostly follow from the canonical conjugacies of Pommerenke and Poggi-Corradini described in Section \ref{sec:canonical-conjugacies}, except for the issue of surjectivity.

\subsection{Basic estimates}

The estimates
$$
\label{eq:explicit-formula}
F(z) - z - T \, \le \, \int_{\mathbb{R}} \frac{d\mu(a)}{|z - a|}
\qquad
\text{and}
\qquad 
F'(z) - 1 \, \le \, \int_{\mathbb{R}} \frac{d\mu(a)}{|z - a|^2}
$$
imply that $F(z)$ is close to a translation by $T$ on a large subset of the upper half-plane:

\begin{lemma}
\label{basic-estimate-new}
For every $\delta > 0$, there exists an $R > 0$ such that 
\begin{equation}
\label{eq:forward-iteration-is-nice-new}
   |F(z) - z - T| < \delta
 \qquad \text{and} \qquad
 |F'(z) - 1| < \delta,
 \end{equation}
whenever $\im z > \delta$ and $|z| > R$.
\end{lemma}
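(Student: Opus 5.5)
The plan is to estimate the two integrals
$$\int_{\mathbb{R}} \frac{d\mu(a)}{|z-a|} \qquad\text{and}\qquad \int_{\mathbb{R}} \frac{d\mu(a)}{|z-a|^2}$$
directly. Since $F(z) - z - T = -\int d\mu(a)/(z-a)$ and $F'(z) - 1 = \int d\mu(a)/(z-a)^2$, this is enough. The idea is to split the measure $\mu$ into a part near $\re z$, where the lower bound $\im z > \delta$ controls the kernel, and a far part, where $|z - a|$ is large.

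Fix $\delta > 0$ and let $\eta > 0$ be a small parameter to be chosen in terms of $\delta$. Because $\mu$ is a finite measure, there is an $M > 0$ with $\mu(\mathbb{R} \setminus [-M, M]) < \eta$. Split $\mu = \mu|_{[-M,M]} + \mu|_{\mathbb{R}\setminus[-M,M]}$.

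\emph{Compact part.} For $a \in [-M, M]$ and $|z| > R \ge 2M$ we have $|z-a| \ge |z| - M \ge R/2$. Its contribution to the first integral is therefore at most $2\mu(\mathbb{R})/R$, and to the second at most $4\mu(\mathbb{R})/R^2$.

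\emph{Tail.} For every $a \in \mathbb{R}$, $|z - a| \ge \im z > \delta$. The tail contributes at most $\eta/\delta$ to the first integral and at most $\eta/\delta^2$ to the second.

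Now choose $\eta = \delta^3/2$, which makes both tail contributions smaller than $\delta/2$ when $\delta \le 1$. Then take $R \ge 2M$ so large that $2\mu(\mathbb{R})/R < \delta/2$ and $4\mu(\mathbb{R})/R^2 < \delta/2$. For $\delta > 1$ the statement follows from the case $\delta = 1$, since the inequalities only weaken.

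There is no real obstacle. The one point to handle is that near-real points with $\re a \approx \re z$ are not far from $z$, and it is exactly the hypothesis $\im z > \delta$ together with the tail smallness of the finite measure $\mu$ that takes care of them. Uniformity holds because $M$ depends only on $\mu$ and $\delta$.
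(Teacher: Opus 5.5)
Your proof is correct and takes essentially the same route as the paper, which only notes that $|F(z)-z-T| \le \int_{\mathbb{R}} d\mu(a)/|z-a|$ and $|F'(z)-1| \le \int_{\mathbb{R}} d\mu(a)/|z-a|^2$ and leaves the estimate to the reader. Your splitting of the finite measure $\mu$ into a compact part and a small tail, together with the reduction of the case $\delta > 1$ to $\delta = 1$, is exactly the verification the paper omits.
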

 
If $\delta > 0$ is chosen sufficiently small, for instance $0 < \delta < 2/\pi$, then the second estimate
in (\ref{eq:forward-iteration-is-nice-new}) implies that
$F$ is injective on
$$
\Delta_{\delta, R} \, = \, \bigl \{z \in \mathbb{H} \, : \, \im z > \delta, \, |z| > R \bigr \}.
$$
Indeed, given two points $z_1, z_2 \in \Delta_{\delta, R}$, one can connect them by a curve $\gamma \subset \Delta_{\delta, R}$ of length at most $(\pi/2)|z_2-z_1|$ and then apply the fundamental theorem of calculus to show that they have distinct images under $F$. We leave the details to the reader.
Meanwhile, the first estimate in (\ref{eq:forward-iteration-is-nice-new}) implies that $F(\Delta_{\delta, R}) \supset \Delta_{2\delta, R'}$ where $R' = R + T + \delta$. Therefore, one can define a branch of $F^{-1}$ on $\Delta_{2\delta, R'}$
which satisfies
\begin{equation}
\label{eq:natural-inverse}
 |F^{-1}(z) - (z-T)| < \delta, \qquad z \in \Delta_{2\delta, R'}.
\end{equation}

\begin{remark}
We say that $w \in \mathbb{H}$ is the {\em natural inverse} of $z \in \mathbb{H}$ if
\begin{equation}
\label{eq:natural-inverse2}
\im w > (1/2)\im z.
\end{equation}
As explained in Corollary \ref{exceptional3}, for any $z \in \mathbb{H}$, one has
$$
\sum_{F(w)=z} \im w \le \im z.
$$
Hence, there can be at most one pre-image of $z$ which satisfies (\ref{eq:natural-inverse2}). The inverse branch constructed in (\ref{eq:natural-inverse}) selects precisely this pre-image.
\end{remark}

\subsection{Imaginary part along forward orbits}
\label{sec:imaginary-forward}

From the definition of a Polya-Szeg\"o function (\ref{eq:polya-szego}), for any $z = x + iy \in \mathbb{H}$, we have
\begin{equation}
\label{eq:imaginary-increment-forward}
\im F(z) - \im z \, =\, - \im \int_{\mathbb{R}} \frac{d\mu(a)}{z - a} \, =\,  \int_{\mathbb{R}} \frac{y \, d\mu(a)}{(x-a)^2 + y^2}
\end{equation}
and
\begin{equation}
\label{eq:real-increment-forward}
\re F(z) - \re z - T  \, =\, - \re \int_{\mathbb{R}} \frac{d\mu(a)}{z - a} \, \le \,  \int_{\mathbb{R}} \frac{d\mu(a)}{|z - a|}.
\end{equation}
In particular, if $$\im z \, > \,  Y \, := \, (2/T) \cdot \mu(\mathbb{R})$$ is large, then the real part increases by an amount comparable to $T$, while the imaginary part increases by at most
$$
\int_{\mathbb{R}} \frac{Y \, d\mu(a)}{(x-a)^2 + Y^2}.
$$

Consider a forward orbit $(z_n)_{n = 0}^\infty$ with $\im z_0 > Y$. If $z_n = x_n + iy_n$, then
\begin{align*}
\sum_{n=0}^\infty (y_{n+1} - y_n) & \le \sum_{n= 0}^\infty \int_{\mathbb{R}} \frac{Y d\mu(a)}{(x_n-a)^2 + Y^2} \\
& \lesssim \int_{\mathbb{R}} \int_{\mathbb{R}} \frac{Y d\mu(a)}{(x-a)^2+Y^2} \, dx \\
& \lesssim \int_{\mathbb{R}}  \int_{\mathbb{R}} \frac{Y d\mu(a)}{x^2+Y^2} \, dx \\
& \lesssim \int_{\mathbb{R}}  \frac{Y}{x^2+Y^2} \, dx \\
& \le C,
\end{align*}
where $C > 0$ depends on $T$ and $\mu(\mathbb{R})$. Therefore, the imaginary parts $y_n$ remain bounded under iteration, while by Lemma \ref{basic-estimate-new}, the horizontal increments satisfy $x_{n+1} - x_n \to T$ as $n \to \infty$.
Intuitively, the forward orbit $(z_n)_{n = 0}^\infty$ resembles equally spaced points on a horizontal line at height  $y_\infty = \lim y_n$.

\begin{corollary}
\label{pos-step-finite-height}
A Polya-Szeg\"o inner function $F$ with $T \ne 0$ has positive step and finite height. 
\end{corollary}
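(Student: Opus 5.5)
The plan is to combine the computation just carried out with the implications recorded in part (ii) of the remark following Theorem~\ref{adm2-thm}. By symmetry (conjugating by $z \mapsto -\bar z$ replaces $T$ by $-T$ and $\mu$ by its reflection), we may assume $T > 0$. We first establish finite height for a single well-chosen orbit, and then transfer it to all orbits.

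\textbf{Step 1: one orbit of finite height.} Pick $z_0$ with $\im z_0 > Y = (2/T)\mu(\mathbb{R})$. Since height increases along orbits, every $z_n$ has $\im z_n > Y$. The estimate displayed above then shows that $\sum_n (y_{n+1} - y_n) \le C$, so $\lim y_n < \infty$.

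The one step requiring care is the comparison of the sum over $n$ with the integral over $x$. For this we use (\ref{eq:real-increment-forward}): when $\im z > Y$,
\[
|\re F(z) - \re z - T| \le \mu(\mathbb{R})/\im z < T/2,
\]
so $x_{n+1} - x_n \ge T/2$. Hence the points $x_n$ are $T/2$-separated, and each unit interval contains $O(1/T+1)$ of them. The summand $Y/((x-a)^2+Y^2)$ varies by a bounded factor over intervals of length $\le Y$. Together these justify $\sum_n \lesssim \int dx$, uniformly in $a$, after which Fubini gives the bound $\pi\mu(\mathbb{R})$ up to constants. We also need to replace $y_n$ in the numerator by $Y$. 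This follows because $y/((x-a)^2+y^2) \le \max(1/y, \ldots)$; more simply, $y/((x-a)^2+y^2)$ is bounded by $2Y/((x-a)^2+Y^2)$ whenever $y \ge Y$, since this function is dominated by the Poisson kernel at height $Y$ up to the factor $y/Y$, which is itself controlled once $y \le Y + C$. To avoid circularity, I would bound the partial sums by a bootstrap, assuming $y_n \le 2Y$ as long as the cumulative increment is small, and otherwise starting with $\im z_0 \in (Y, 2Y)$ and choosing $Y$ large.

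\textbf{Step 2: finite height for every orbit.} Finite height does not depend on the starting point, for the following reason. For arbitrary $w_0$, pick $z_0 = \re w_0 + iy$ with $y > \max(Y, \im w_0)$. By Schwarz, $d_{\mathbb{H}}(w_n, z_n) \le d_{\mathbb{H}}(w_0, z_0) =: D$. Since $\im z_n$ is bounded, and points within hyperbolic distance $D$ have comparable imaginary parts (ratio at most $e^D$), $\im w_n$ is bounded too.

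\textbf{Step 3: positive step.} Positive step follows from finite height by the first implication in Remark~(ii), or directly from Corollary~\ref{pommerenke-conjuacy2}. Alternatively, along the orbit of Step 1 we have $x_{n+1} - x_n \to T$ (Lemma~\ref{basic-estimate-new}) while $y_n \to y_\infty < \infty$, so $d_{\mathbb{H}}(z_n, z_{n+1})$ tends to the positive hyperbolic length of a horizontal step of size $T$ at height $y_\infty$.

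The main obstacle is the uniform comparison of the sum with the integral in Step 1, including the bootstrap keeping $y_n$ comparable to $Y$. The remaining steps are soft.
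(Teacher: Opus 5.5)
Your proposal is correct and follows the paper's own route: bound the total imaginary increment along an orbit starting above $Y=(2/T)\mu(\mathbb{R})$ by comparing the sum of Poisson-kernel terms at the $T/2$-separated points $x_n$ with an integral, deduce finite height, and get positive step from the implication (finite height) $\Rightarrow$ (positive step) recorded in the remark after Theorem~\ref{adm2-thm}. Your bootstrap keeping $y_n$ comparable to the initial height is a worthwhile addition, since the paper's claim that the increment at height $y>Y$ is at most the Poisson kernel at height $Y$ holds only up to the factor $y/Y$ (it fails when $|x-a|^2>yY$).
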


\subsection{Imaginary increments along bi-infinite orbits}
\label{sec:imaginary-increments-bi-infinite-orbits}

Let $Y, C > 0$ be the constants from Section \ref{sec:imaginary-forward}.
On the half-plane $\{ \im z > Y + C \}$, the natural inverse $F^{-1}$ satisfies
$$
|\re F^{-1}(z) - \re z - T| < T/2
$$
and
$$
 |\im F^{-1}(z) - \im z| < \int_{\mathbb{R}} \frac{Y d\mu(a)}{(\re F^{-1}(z) -a)^2 + Y^2}.
$$
Let $z_0 \in \mathbb{H}$ with $\im z_0 > Y + 2C$. We define the first backward iterate $z_{-1} = F^{-1}(z_0)$ by using the natural branch of the inverse.
Since $\im z_{-1} > Y + C$, we can also define $z_{-2} = F^{-1}(z_1)$. Continuing in this way, we obtain an inverse orbit $(z_{-n})_{n=0}^\infty$ with $\im z_{-n} > Y + C$ bounded below.

\begin{corollary}
\label{PS-existence-of-bi-infinite-orbits}
Any point $z_0 \in \mathbb{H}$ with $\im z_0$ sufficiently large can be uniquely continued to a bi-infinite orbit $(z_n)_{n=-\infty}^\infty$ such that
\begin{equation}
\label{eq:nice}
|F(z_n) - z_n - T| < T/2, \qquad n \in \mathbb{Z},
\end{equation}
and
$$
\lim_{n \to \infty} \im z_n - \lim_{n \to -\infty} \im z_n < C.
$$
\end{corollary}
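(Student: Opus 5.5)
The plan is to take the backward construction from the paragraph preceding the statement and the forward estimates of Section~\ref{sec:imaginary-forward}, and then prove uniqueness separately. Fix a threshold $H$, to be chosen large compared with $Y + 2C$. Apply Lemma~\ref{basic-estimate-new} with $\delta < T/2$ and also $\delta < Y$. Then there is an $R$ such that $|F(z) - z - T| < \delta$ whenever $\im z > Y$ and $|z| > R$. Once $H > R$, every point with $\im z > H$ satisfies both conditions, so the error is uniformly below $T/2$ on the whole half-plane $\{\im z > H\}$. The main task is to show that the entire bi-infinite orbit of $z_0$ stays inside a slab where this estimate applies.

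\emph{Forward direction.} Since $\im F(z) > \im z$, the forward orbit $z_n = F^{\circ n}(z_0)$ satisfies $\im z_n \ge \im z_0 > Y$. The telescoping bound of Section~\ref{sec:imaginary-forward} then gives $\lim \im z_n - \im z_0 \le C$. To justify that bound one needs $x_{n+1} - x_n \ge T/2$, so that the sum over $n$ is dominated by an integral in $x$. This follows from (\ref{eq:real-increment-forward}), because $\int d\mu/|z-a| \le \mu(\mathbb{R})/\im z < T/2$ as soon as $\im z > Y$. By the same computation, (\ref{eq:nice}) holds for every $n \ge 0$.

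\emph{Backward direction.} Take the natural inverses $z_{-n-1} = F^{-1}(z_{-n})$ as defined just before the statement. I plan to bound the total loss of height by an induction. Suppose $\im z_{-k} > Y + C$ for all $k \le n$. Then each step satisfies $\re z_{-k} - \re z_{-k-1} \in (T/2, 3T/2)$, so the real parts $\re z_{-k-1}$ are $T/2$-separated. Summing the imaginary decrements
$$\int_{\mathbb{R}} \frac{Y\, d\mu(a)}{(\re z_{-k-1} - a)^2 + Y^2}$$
over $k$ is therefore again at most $C$, by the same integral comparison. Consequently $\im z_{-n-1} > \im z_0 - C > Y + C$, and the induction continues. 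The induction hypothesis is exactly what is needed for $F^{-1}$ to be defined, and it is preserved because $\im z_0 > Y + 2C$.

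Combining the two directions, all $\im z_n$ lie in $(\im z_0 - C, \im z_0 + C)$. Both limits exist by monotonicity of the heights. Their difference is bounded by the sum of the imaginary increments over the whole bi-infinite orbit. The same integral estimate, applied once to the whole $T/2$-separated sequence $(\re z_n)_{n \in \mathbb{Z}}$, bounds this sum by $C$, after enlarging $C$ by a harmless absolute factor if necessary. Property (\ref{eq:nice}) for negative $n$ holds because every $z_n$ lies in $\{\im z > Y\}$.

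\emph{Uniqueness.} This is the step I expect to need the most care. The forward half of the orbit is forced. For the backward half, suppose $(w_n)$ is another continuation satisfying (\ref{eq:nice}) and the height condition. I would show by induction that $w_{-n} = z_{-n}$. The height condition gives $\im w_{-n} > \lim_{m \to \infty} \im w_m - C \ge \im z_0 - C$, which exceeds $\tfrac12 \im w_{-n+1}$ provided $\im z_0 \gg C$. Hence $w_{-n}$ is the natural inverse of $w_{-n+1}$. The Remark after (\ref{eq:natural-inverse}), based on $\sum_{F(w)=z} \im w \le \im z$ from Corollary~\ref{exceptional3}, says there is only one such preimage. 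Therefore $w_{-n} = z_{-n}$ by induction. This is where the requirement ``$\im z_0$ sufficiently large'' enters, and besides $H > R$ it is the only other constraint on $H$.
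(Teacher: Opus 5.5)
Your proposal is correct and follows essentially the paper's argument: the forward height bound from Section~\ref{sec:imaginary-forward}, then backward continuation by the natural inverse branch, with the inductive bound $\im z_{-n} > Y + C$ secured by $\im z_0 > Y + 2C$. The paper leaves uniqueness implicit in the Remark on natural inverses, and your explicit induction (via Corollary~\ref{exceptional3}, at most one preimage exceeds half the height) fills that in.
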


\begin{lemma}
\label{large-imaginary-part}
Let $F$ be a Polya-Szeg\"o inner function with $T \ne 0$. Suppose $(z_n)_{n=-\infty}^\infty$ is a bi-infinite orbit constructed in Corollary \ref{PS-existence-of-bi-infinite-orbits}.
As $\im z_0 \to \infty$, the imaginary increment
$$
\lim_{n \to \infty} \im z_n - \lim_{n \to -\infty} \im z_n \, \to \,  \frac{\pi}{T} \cdot \mu(\mathbb{R}).
$$
\end{lemma}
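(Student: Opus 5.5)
The plan is to write the total imaginary increment as a telescoping sum and recognize it as a Riemann sum for an integral of the Poisson kernel. By (\ref{eq:imaginary-increment-forward}),
$$
\lim_{n\to\infty}\im z_n-\lim_{n\to-\infty}\im z_n \, = \, \sum_{n\in\mathbb{Z}}\int_{\mathbb{R}}\frac{y_n\,d\mu(a)}{(x_n-a)^2+y_n^2},
$$
and by Fubini (all terms are nonnegative) this equals $\int_{\mathbb{R}} S(a)\, d\mu(a)$ with $S(a)=\sum_n y_n/((x_n-a)^2+y_n^2)$. Since $\int_{\mathbb{R}} \frac{y\,dx}{(x-a)^2+y^2}=\pi$ for every $y>0$, the target value $\frac{\pi}{T}\mu(\mathbb{R})$ is exactly what one gets if the points $x_n$ are spaced by $T$ and $y_n$ is essentially constant. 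So it suffices to show that $S(a)\to\pi/T$ for each fixed $a$ as $\im z_0\to\infty$, together with a uniform bound on $S(a)$, and then to apply dominated convergence with respect to the finite measure $\mu$.

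\emph{Step 1 (orbit geometry).} Set $H=\im z_0$. By Corollary \ref{PS-existence-of-bi-infinite-orbits}, all the heights satisfy $y_n\in[H-C,H+C]$, so $y_n/H\to1$ uniformly in $n$ as $H\to\infty$. For the horizontal steps, (\ref{eq:real-increment-forward}) gives $|x_{n+1}-x_n-T|\le \mu(\mathbb{R})/y_n\le \mu(\mathbb{R})/(H-C)=:\varepsilon_H\to0$. Consequently $x_{n+1}-x_n\in[T-\varepsilon_H,T+\varepsilon_H]$ for all $n\in\mathbb{Z}$, and the points $x_n$ are strictly increasing with gaps uniformly close to $T$.

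\emph{Step 2 (Riemann sum).} Fix $a$ and put $\phi_y(x)=y/((x-a)^2+y^2)$. Since $y_n$ varies only within $\pm C$, compare $S(a)$ with $\sum_n \phi_H(x_n)$. The error is at most $\sum_n|\phi_{y_n}-\phi_H|(x_n)\lesssim (C/H)\sum_n \phi_{2H}(x_n)$, using $|\partial_y\phi_y|\le \phi_y/y$ together with a comparability of the kernels $\phi_y$ for $y\asymp H$. Next compare $\sum_n \phi_H(x_n)$ with $\frac1T\int\phi_H=\pi/T$. Let $I_n=[x_n,x_{n+1}]$. On $I_n$ one has $|\phi_H(x)-\phi_H(x_n)|\le |I_n|\sup|\phi_H'|$, and $\int|\phi_H'|\,dx = 2/H$. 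Hence
$$
\Bigl|\sum_n\phi_H(x_n)|I_n|-\pi\Bigr| \, \lesssim \, \frac{T}{H},
$$
where the bounded-variation estimate is summed over the intervals $I_n$, which tile $\mathbb{R}$ because $x_n\to\pm\infty$ with bounded gaps. Replacing $|I_n|$ by $T$ then costs a relative error of at most $\varepsilon_H/T$. Together these give $S(a)=\pi/T+o(1)$ uniformly in $a$, and in particular $S(a)$ is uniformly bounded.

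\emph{Step 3.} Integrating against $\mu$ gives the limit $\frac{\pi}{T}\mu(\mathbb{R})$. Here the uniformity in $a$ makes even dominated convergence unnecessary, because of the translation invariance of the estimates.

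I expect the main bookkeeping obstacle to be Step 2, specifically making the Riemann-sum comparison uniform over the bi-infinite index set with varying $y_n$. The key input is Step 1's uniform gap control, and it must hold for all $n\in\mathbb{Z}$, including the backward direction. That follows from (\ref{eq:real-increment-forward}) applied at $z_{n}$ for $n<0$, since these are genuine orbit points with large imaginary part.
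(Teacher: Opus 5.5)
Your proposal is correct and takes the same route as the paper: telescope the imaginary increments from (\ref{eq:imaginary-increment-forward}), freeze the heights at $\im z_0$, use that the horizontal gaps are uniformly close to $T$, and replace the sum over $n$ by $\frac{1}{T}\int_{\mathbb{R}}$ of the Poisson kernel to obtain $\pi\mu(\mathbb{R})/T$. The paper only sketches this with ``$\approx$''. Your Tonelli step, the bound $|\partial_y \phi_y| \le \phi_y/y$, and the total-variation Riemann-sum estimate supply the missing uniform-in-$a$ error control; in that estimate, read the local bound on each $I_n$ as $\int_{I_n}|\phi_H'|$ (or $\sup_{I_n}|\phi_H'|$), not a global supremum.
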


\begin{proof}
For any $\delta > 0$, we can request $y_0 = \im z_0$ to be sufficiently large, so that the horizontal spacing between the consecutive points $x_{n+1} - x_n \in (T - \delta, T + \delta)$. In the $n$-th step, the imaginary part increases by approximately
$$
y_{n+1} - y_n \, = \, \int_{\mathbb{R}} \frac{y_n \, d\mu(a)}{(x_n-a)^2 + y_n^2} \, \approx \, \int_{\mathbb{R}} \frac{y_0 \, d\mu(a)}{(x_n-a)^2 + y_0^2}.
$$
Summing these increments over $n \in \mathbb{Z}$ and replacing the discrete sum by an integral in $x$, we obtain
$$
\frac{1}{T} \int_{\mathbb{R}} \int_{\mathbb{R}} \frac{y_0 \, d\mu(a)}{(x-a)^2 + y_0^2} \, dx = \pi \cdot \frac{\mu(\mathbb{R})}{T},
$$
as desired.
\end{proof}

\subsection{Behaviour on the real line}

\begin{lemma}
\label{convergence-lemma}
Suppose $F$ is a Polya-Szeg\"o inner function with $T \ne 0$. Then, for a.e.~$x \in \mathbb{R}$, the iterates $F^{\circ n}(x) \to \infty$.
\end{lemma}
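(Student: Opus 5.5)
The plan is to invoke the Aaronson dichotomy recalled in the introduction (\cite[Lemma 3.1]{aaronson78}, \cite[Theorems 4.1 and 4.2]{doering-mane}). If the series (\ref{eq:aaronson-sum-def}) diverges for some (hence every) $z_0 \in \mathbb{H}$, then under iteration a.e.~point of $\mathbb{R}$ converges to the Denjoy-Wolff point $\infty$. So it is enough to find one forward orbit along which
$$
\sum_{n=0}^\infty \frac{\im z_n}{|z_n|^2} = \infty.
$$

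Take $T>0$ and choose $z_0$ with $\im z_0 > Y$, with $Y$ as in Section \ref{sec:imaginary-forward}. The computation there shows two things along this orbit. First, the heights $y_n = \im z_n$ increase and stay bounded by $\im z_0 + C$. Second, the real increments satisfy $x_{n+1}-x_n \to T$, which follows from Lemma \ref{basic-estimate-new} because $\im z_n \ge \im z_0$ and $|z_n|\to\infty$. Consequently $x_n = nT + o(n)$, and for $n$ large we have $|z_n|^2 \le 2x_n^2 + 2y_n^2 \lesssim n^2$. Since also $y_n \ge y_0 > 0$, we get
$$
\frac{\im z_n}{|z_n|^2} \gtrsim \frac{y_0}{n^2}.
$$

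This lower bound is only $n^{-2}$, which is summable, so it does not give divergence. The crude estimate therefore fails, and this is the main obstacle. At this point I need the precise statement of the dichotomy. In \cite{doering-mane}, the divergence criterion for a.e.~escape is phrased in the normalization where the Denjoy-Wolff point sits on $\partial\mathbb{D}$. Transporting it to $\mathbb{H}$ with $p=\infty$, the quantity that matters is $1-|w_n|$ for the disk orbit $w_n$. Up to constants this equals $\im z_n/|z_n+i|^2$, which is $\asymp 1/n^2$ here, hence summable. So with $p$ at $\infty$ the test must be applied in the right normalization. I would recheck the orientation: the relevant criterion is convergence of $\sum (1-|w_n|)$ measured relative to the Denjoy-Wolff point, and convergence of that sum corresponds to the \emph{transient} case. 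In this normalization, summability of $\im z_n/|z_n|^2$ should mean that Lebesgue measure is dissipative, i.e.~a.e.~point escapes to $\infty$. This is consistent with Corollary \ref{divergence-corollary}, where $\im z_n/|z_n|^2\asymp n^{-1/2}$ is non-summable and gives recurrence. The sentence in the introduction then has the words ``converges'' and ``diverges'' interchanged, and the estimate above is exactly what is needed.

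So the proof reduces to the bound $\im z_n/|z_n|^2 \lesssim (\im z_0 + C)/(nT/2)^2$ for large $n$. This holds because $x_n \ge nT/2$ eventually and $y_n \le \im z_0 + C$. Summing gives convergence of the series, so $F$ is transient on $\mathbb{R}$ and a.e.~$x$ has $F^{\circ n}(x)\to\infty$. As a backup, I would give a direct argument that does not depend on the orientation of the cited dichotomy. The harmonic measure of $\mathbb{R}$ seen from $z_n$ is invariant under $F$: for an interval $I$, $\omega_{z_0}(F^{-n}I)=\omega_{z_n}(I)\lesssim |I|\, y_n/|z_n|^2$. Summing over $n$ then shows, by Borel--Cantelli, that $\omega_{z_0}$-a.e.~point (hence Lebesgue-a.e.~point) visits $I$ only finitely often. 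Applying this to $I=[-N,N]$ for every $N$ gives $F^{\circ n}(x)\to\infty$ a.e. The case $T<0$ is symmetric.
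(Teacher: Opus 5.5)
Your proposal is correct and is essentially the paper's argument. Once you drop the opening reduction and use Aaronson's criterion in the orientation you settle on (a summable series $\sum \im z_n/|z_n|^2$ gives a.e.\ escape to $\infty$; the sentence in the introduction does have ``converges'' and ``diverges'' swapped, while the paper's own proof of this lemma uses the correct orientation), the proof is the same: along a finite-height orbit whose horizontal spacing tends to $T$ one has $\im z_n/|z_n|^2 \asymp n^{-2}$. The differences are cosmetic: the paper uses the forward half of a bi-infinite orbit from Corollary~\ref{PS-existence-of-bi-infinite-orbits} instead of an orbit started above height $Y$, and your harmonic-measure/Borel--Cantelli backup is a self-contained proof of exactly the half of the dichotomy that is needed.
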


\begin{proof}
Suppose $(z_n)_{n=-\infty}^\infty$ is a bi-infinite orbit satisfying (\ref{eq:nice}). For large $n$, the points $z_n = x_n + iy_n$ are approximately equally spaced on the horizontal line $$\bigl \{ z \in \mathbb{H} : \im z = y_\infty = \lim_{n \to \infty} y_n \bigr \},$$ with spacing $T$. In particular, $\frac{\im z_n}{|z_n|^2} \asymp 1/n^2$ for $n \ge 1$. Since the Aaronson sum
(\ref{eq:aaronson-sum-def}) is finite, under iteration, a.e.~point on the real line converges to the Denjoy-Wolff point at infinity.
\end{proof}

\subsection{Forward iteration}

By Lemma \ref{basic-estimate-new}, far to the right, the map $F$ looks like translation by $T$. We now examine the long-term behaviour of the forward iterates:

\begin{lemma}
\label{forward-dynamics}
For every $0 < \delta < T/3$, there exists an $X > 0$ such that the region
$$
\Omega_{+, \delta} \, = \, \{ x + iy \, : \, x > X, \, y > \delta \}
$$
is forward-invariant and
\begin{equation}
\label{eq:forward-dynamics1}
|F(z) - z - T| < \delta, \qquad z \in \Omega_{+, \delta}.
\end{equation}
Moreover,
\begin{equation}
\label{eq:forward-dynamics2}
0 \, < \, \im F^{\circ n}(z) - \im z \, < \, \delta, \qquad n \ge 1, \qquad z \in \Omega_{+, \delta}.
\end{equation}
\end{lemma}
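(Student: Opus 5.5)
The plan is to choose $X$ large enough for three things to hold on $\Omega_{+,\delta}$: the pointwise estimate (\ref{eq:forward-dynamics1}), forward invariance, and the bound on the accumulated vertical drift in (\ref{eq:forward-dynamics2}). The first comes directly from Lemma \ref{basic-estimate-new}. Applied with $\delta$, that lemma gives an $R$ such that $|F(z)-z-T|<\delta$ whenever $\im z>\delta$ and $|z|>R$. Any $X\ge R$ ensures that $z\in\Omega_{+,\delta}$ has $|z|\ge \re z>R$, which yields (\ref{eq:forward-dynamics1}).

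Forward invariance follows quickly. By (\ref{eq:imaginary-increment-forward}), $\im F(z)>\im z>\delta$. By (\ref{eq:forward-dynamics1}) and $\delta<T/3$, $\re F(z)>\re z+T-\delta>\re z+2T/3>X$. So $F(\Omega_{+,\delta})\subset\Omega_{+,\delta}$. Iterating, for $z=z_0\in\Omega_{+,\delta}$ with $z_n=x_n+iy_n$, we get $x_n\ge x_0+2nT/3\ge X+2nT/3$ and $y_n>\delta$ for all $n$. The strict lower inequality $\im F^{\circ n}(z)-\im z>0$ in (\ref{eq:forward-dynamics2}) is immediate from (\ref{eq:imaginary-increment-forward}), since $\mu\neq 0$ because $F$ is not a translation.

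The main step is the upper bound on the total vertical drift, uniform in $z\in\Omega_{+,\delta}$. Using (\ref{eq:imaginary-increment-forward}) and the elementary bound $\frac{y}{(x-a)^2+y^2}\le \min\bigl(\frac1y,\frac{y}{(x-a)^2}\bigr)$, I will split $\mu=\mu|_{(-\infty,X/2]}+\mu|_{(X/2,\infty)}$. Dropping the lower restriction on $x_n$ for the second piece, I obtain
$$
y_{n}-y_0=\sum_{k=0}^{n-1}(y_{k+1}-y_k)\le \sum_{k\ge0}\int_{\mathbb{R}}\frac{y_k\,d\mu(a)}{(x_k-a)^2+y_k^2}.
$$

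For atoms $a\le X/2$ I have $x_k-a\ge X/2+2kT/3$. Controlling the factor $y_k$ in the numerator needs a little care. I plan to bound $\frac{y_k}{(x_k-a)^2+y_k^2}\le \frac{1}{2(x_k-a)}$ via AM--GM. This gives the non-summable bound $\sum_k 1/(X/2+2kT/3)$, which diverges logarithmically, so that route fails. Instead I will use an a priori bound on the height. Section \ref{sec:imaginary-forward} shows that heights stay bounded along orbits: if $y_k\le Y'$ for a uniform $Y'$ (say $Y'=\max(y_0,Y)+C$, with $C$ from Section \ref{sec:imaginary-forward}), then the term is $\le Y'/(x_k-a)^2$. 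Summing over $k$ gives $\lesssim Y'/(X\cdot T)\cdot\mu(\mathbb{R})$, which is small for large $X$. The difficulty is that $y_0$ is unbounded on $\Omega_{+,\delta}$. To handle this I will treat the quantity $\frac{y_k}{(x_k-a)^2+y_k^2}$ uniformly with $y_k$ as a free parameter. It is at most $\frac{1}{y_k}$, and since $y_k\ge y_0$, large $y_0$ makes it small. Concretely, each term is bounded by $\sup_{y\ge\delta}\frac{y}{(x_k-a)^2+y^2}$ with $y$ close to $y_0$ (the drift is small). Comparing the sum to $\frac1{T'}\int_{X/2}^\infty \frac{y\,dt}{t^2+y^2}\le \frac{1}{T'}\arctan$-tail terms, I get $\frac{1}{T'}\bigl(\frac\pi2-\arctan\frac{X}{2y}\bigr)$, plus one extra term $\le 1/y$. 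This still fails to be small when $y\gg X$. For that regime I will use Lemma \ref{large-imaginary-part}-type reasoning: the total increment is then $\approx \pi\mu(\mathbb{R})/T$. This is not small, so the uniform claim would need a separate argument. This is the step I expect to be the obstacle. My first attempt will be to run a bootstrap: assume $y_k\le y_0+\delta$ up to time $n$, and estimate the sum by $\int \frac{(y_0+\delta)}{(x-a)^2+y_0^2}$ over $x>X$. If that proves insufficient for huge $y_0$, I will enlarge the region's definition implicitly by choosing $X$ depending on $\delta$ and checking where the bound is used.

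For atoms $a>X/2$, I use the cruder bound $\sum_k \frac{y_k}{(x_k-a)^2+y_k^2}\lesssim \frac{1}{T}\int_{\mathbb{R}}\frac{y\,dt}{t^2+y^2}+\frac{1}{\delta}\lesssim 1+\frac1\delta$, uniformly, since the points $x_k$ are $2T/3$-separated. Multiplying by $\mu((X/2,\infty))\to0$ makes this contribution smaller than $\delta/2$ for $X$ large.
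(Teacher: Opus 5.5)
You handle (\ref{eq:forward-dynamics1}), forward invariance and the strict lower bound in (\ref{eq:forward-dynamics2}) correctly. Splitting $\mu$ at $X/2$ is also the right way to control the upper bound for starting points of bounded height. But the step you flagged is a genuine gap, and no bootstrap or other estimate can close it, because the uniform upper bound in (\ref{eq:forward-dynamics2}) is false on $\Omega_{+,\delta}$ as defined.

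Take $F(z)=z+T-m/z$, so $\mu=m\delta_0$, and let $z_0=X+iy$ with $X$ fixed and $y\to\infty$. Since $|F(z)-z-T|\le m/\im z$, the horizontal steps along the orbit are $T+O(1/y)$ and $y_k=y+O(1)$. A Riemann sum then gives
\begin{equation*}
\lim_{n\to\infty}\im F^{\circ n}(z_0)-y \,=\, \sum_{k\ge 0}\frac{m\,y_k}{x_k^2+y_k^2} \,\longrightarrow\, \frac{m}{T}\int_0^\infty\frac{ds}{1+s^2} \,=\, \frac{\pi m}{2T},
\end{equation*}
and this limit does not depend on $X$. With $m=T^2$ it exceeds $T/3>\delta$, so for this $F$ no choice of $X$ works for any admissible $\delta$. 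For general $\mu$ the same computation gives $\pi\mu(\mathbb{R})/(2T)$. This is half of the bi-infinite increment in Lemma \ref{large-imaginary-part}, not the full $\pi\mu(\mathbb{R})/T$ you quoted.

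The paper's one-line proof has the same hole. It appeals to the estimate of Section \ref{sec:imaginary-forward}, which only shows that the total increment is at most a constant $C$ depending on $\mu(\mathbb{R})$ and $T$, never that it is below $\delta$. So the statement itself must be weakened, for instance to the following: for every $\delta$ and $H$ there is $X=X(\delta,H)$ such that (\ref{eq:forward-dynamics2}) holds for $z\in\Omega_{+,\delta}$ with $\im z<H$. Do not try to adjust the region implicitly.

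Your decomposition proves exactly this weaker version once you run the bootstrap $y_k<y_0+\delta$:
\begin{itemize}
\item the atoms $a\le X/2$ contribute at most $(H+\delta)\,\mu(\mathbb{R})\,\bigl(4X^{-2}+3(XT)^{-1}\bigr)$;
\item the atoms $a>X/2$ contribute $\lesssim(1/T+1/\delta)\,\mu((X/2,\infty))$.
\end{itemize}

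The weaker form is all that Lemma \ref{properties-of-Pplus} actually uses. Injectivity needs only forward invariance and $|F'-1|<\delta$, and the lateral inner tangent is checked on strips $S_+(R,\alpha,\beta)$ of bounded height. The same example also shows that the uniformity in $y>\delta$ asserted for $\im\widetilde P_+(x+iy)-y$ in Lemma \ref{properties-of-Pplus}(i) fails.
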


\begin{proof}
When $X > 0$ is large, the difference $F(z) - z - T$ is small by Lemma \ref{basic-estimate-new}, while the total increment of the imaginary part along the forward orbit can be estimated as in Section \ref{sec:imaginary-forward}. Since the real and imaginary parts are increasing on $\Omega_{+, \delta}$, the region $\Omega_{+, \delta}$ is forward-invariant.
\end{proof}

The following lemma says that every forward orbit eventually enters one of these forward-invariant regions:

\begin{lemma}
\label{forward-dynamics2}
For any $w\in \mathbb{H}$ with $\im w > \delta$, the forward orbit $w_n = F^{\circ n}(w)$ is eventually contained in the region $\Omega_{+, \delta}$.
\end{lemma}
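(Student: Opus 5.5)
Fix $w\in\mathbb{H}$ with $\im w>\delta$ and write $w_n=F^{\circ n}(w)=x_n+iy_n$. Since height increases under iteration by (\ref{eq:imaginary-part-increasing}), we have $y_n\ge \im w>\delta$ for all $n$. So it suffices to show that $x_n>X$ for some $n$; by the forward invariance of $\Omega_{+,\delta}$ in Lemma \ref{forward-dynamics}, the orbit then stays in $\Omega_{+,\delta}$ from that time on.

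\emph{Step 1: $w_n\to\infty$.} The orbit converges to the Denjoy-Wolff point, so $|w_n|\to\infty$. (This is the Denjoy-Wolff theorem: $F$ is not a M\"obius map and is parabolic with fixed point at $\infty$.) Its imaginary parts are bounded by Corollary \ref{pos-step-finite-height}, so $|x_n|\to\infty$.

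\emph{Step 2: $x_n\to+\infty$.} Choose $R$ from Lemma \ref{basic-estimate-new} with parameter $\min(\delta,T/3)$. This applies because $\im w_n>\delta$, so once $|w_n|>R$ we get
$$|w_{n+1}-w_n-T|<T/3,$$
and in particular $x_{n+1}-x_n>2T/3>0$. By Step 1, there is an $N$ with $|w_n|>R$ for all $n\ge N$. From then on the real parts increase by at least $2T/3$ at each step, so $x_n\to+\infty$. Hence $x_n>X$ for some $n$, and therefore $w_n\in\Omega_{+,\delta}$.

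The main point to watch is that Lemma \ref{basic-estimate-new} requires the lower bound $\im z>\delta$ on the imaginary part. That bound is supplied by the monotonicity of height. One could worry that the orbit might drift to $-\infty$, but this is excluded because the real parts are eventually monotone increasing; we do not need an argument about escaping through a bounded gap.
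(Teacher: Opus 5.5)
Your proof is correct, and it takes a mildly different route from the paper's.

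\textbf{The paper's route.} The paper gets horizontal escape without the Denjoy--Wolff theorem:
\begin{enumerate}
\item It uses the finite-height bound $\im w_n\le Y_\infty$ from Corollary \ref{pos-step-finite-height}, together with the fact that on each compact box $J\times[\im w_0,Y_\infty]$ the height increases by a definite amount $c_J>0$. So the orbit visits each box only finitely often, and $|\re w_n|\to\infty$.
\item It uses the Schwarz lemma (bounded hyperbolic steps) to conclude that $\re w_n$ tends to $+\infty$ or to $-\infty$.
\item It excludes $-\infty$ by Lemma \ref{basic-estimate-new}.
\end{enumerate}

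\textbf{Your route.} You get $|w_n|\to\infty$ directly from the Denjoy--Wolff theorem, which the paper itself invokes elsewhere, e.g.\ in the proof of Lemma \ref{structure-orbits-T0}. You then apply Lemma \ref{basic-estimate-new} once, with monotonicity of height supplying $\im w_n>\delta$, to see that the real parts eventually increase by at least $2T/3$ per step. This merges the ``choose a side'' and ``exclude $-\infty$'' steps and makes the Schwarz-lemma argument unnecessary.

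Your Step 1 conclusion $|x_n|\to\infty$, and with it the appeal to finite height, is never used. Step 2 only needs $|w_n|\to\infty$ and $\im w_n>\delta$.

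\textbf{Trade-off.} Your argument is shorter, independent of Corollary \ref{pos-step-finite-height}, and gives the quantitative bound $x_{n+1}-x_n>2T/3$ along the tail; the cost is quoting Denjoy--Wolff. The paper's argument is self-contained once finite height is known, and it makes the mechanism explicit: bounded height forces escape in the horizontal direction.
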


\begin{proof}
Let $w \in \mathbb{H}$. By Corollary \ref{pos-step-finite-height}, the imaginary parts $\im w_n$ are bounded above by some constant $Y_\infty$.
By compactness, for any bounded interval $J \subset \mathbb{R}$, there exists a constant $c = c_J > 0$ such that
$$
\im F(z) - \im z > c, \qquad z \in J \times [\im w_0, Y_\infty].
$$
Consequently, the forward orbit of $w$ must eventually leave every compact set in the horizontal direction, i.e.~
$$
|\re w_n| \to \infty, \qquad \text{as }n \to \infty.
$$
Since $d_{\mathbb{H}}(w_n, w_{n+1})$ is bounded above by the Schwarz lemma, either $\re w_n \to \infty$ or $\re w_n \to -\infty$.
However, the latter case is excluded by Lemma \ref{basic-estimate-new}.
\end{proof}

For $0 < \alpha < \beta$ and $R \in \mathbb{R}$, let $S_+(R, \alpha, \beta)$ denote the right half-strip $\{ \re z > R, \, \alpha < \im z < \beta \}$. 
Following \cite{pietro}, we say that a domain $U_+$ has a {\em right lateral inner tangent} if for every $0 < \alpha < \beta$, there exists an $R = R(\alpha, \beta)$ such that $S_+(R, \alpha, \beta) \subset U_+$. Domains possessing left lateral inner tangents are defined similarly using left half-strips $S_-(R, \alpha, \beta) = \{ \re z < -R, \, \alpha < \im z < \beta \}.$

\begin{lemma}
\label{properties-of-Pplus}
The map 
$
\widetilde{P}_+(z) = \lim_{n \to \infty} F^{\circ n}(z) -  \re F^{\circ n}(i)
$
enjoys the following properties:

{\em (i)} For any $\delta > 0$,
\begin{equation}
\label{eq:forward-derivative}
\im \widetilde P_+(x+iy) - y \to 0
\qquad \text{and} \qquad
 \widetilde P'_+(x+iy) \to 1,
\end{equation}
uniformly as $x \to \infty$ and $y > \delta$.

{\em (ii)} 
$\widetilde{P}_+(z)$ is injective on the regions $\Omega_{+, \delta}$ from Lemma \ref{forward-dynamics}.

{\em (iii)} 
The domain $\widetilde{P}_+(\bigcup_{\delta > 0} \Omega_{+, \delta})$ possesses a right lateral inner tangent.

{\em (iv)} 
The quotient map
$
P_+(z) = \widetilde P_+(z) \!\! \mod T
$
onto $C_+$ is surjective. 
\end{lemma}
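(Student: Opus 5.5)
The plan is to work on the forward-invariant regions $\Omega_{+,\delta}$ of Lemma \ref{forward-dynamics}, where $F$ is uniformly close to translation by $T$. We then pass the estimates of Lemma \ref{basic-estimate-new} to the limit along orbits.

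Write $x_n + iy_n = F^{\circ n}(i)$. By Corollary \ref{pos-step-finite-height}, the heights $y_n$ increase to a finite limit $y_\infty$. Existence of the limit $\widetilde P_+$ and the functional equation come from Corollary \ref{pommerenke-conjuacy2}, which gives $b = T$ because $x_{n+1}-x_n \to T$.

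\textbf{(i).} Fix $z = x+iy \in \Omega_{+,\delta}$ with $x$ large. Then
$$
\im \widetilde P_+(z) - y \;=\; \lim_n \bigl(\im F^{\circ n}(z) - y\bigr).
$$
By the summation argument of Section \ref{sec:imaginary-forward}, applied to the orbit of $z$ (which stays to the right of $x - O(1)$ with spacing about $T$), this increment is bounded by
$$
\sum_{n} \int_{\mathbb{R}} \frac{y\,d\mu(a)}{(x+nT/2-a)^2+y^2}.
$$
The sum tends to $0$ uniformly as $x \to \infty$ for $y > \delta$, since $\mu$ is finite and the tails $\int \min(1, \delta/(x' - a)^2)\,d\mu(a)$ vanish. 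This is a refinement of (\ref{eq:forward-dynamics2}).

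For the derivative, write
$$
\widetilde P_+'(z) = \lim_n \prod_{k<n} F'(F^{\circ k}(z)).
$$
Since $|F'(w)-1| \le \int d\mu(a)/|w-a|^2$, we have
$$
\sum_k |F'(z_k)-1| \lesssim \sum_k \int \frac{d\mu(a)}{(x+kT/2-a)^2+\delta^2},
$$
which is summable and tends to $0$ as $x \to \infty$. Hence the product converges to a number close to $1$, uniformly. Locally uniform convergence of the holomorphic functions $F^{\circ n}(z) - x_n$ legitimises the termwise differentiation.

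\textbf{(ii).} Since $\widetilde P_+ \circ F = \widetilde P_+ + T$ and $F$ is injective on $\Omega_{+,\delta}$ (the argument after Lemma \ref{basic-estimate-new}), it suffices to show injectivity on $\{\re z > X'\}\cap\Omega_{+,\delta}$ for large $X'$. Indeed, if $\widetilde P_+(z_1) = \widetilde P_+(z_2)$, then $\widetilde P_+(F^{\circ n} z_1) = \widetilde P_+(F^{\circ n} z_2)$, and both forward orbits eventually lie far right. Far right, (i) gives $|\widetilde P_+' - 1| < 1/2$ on a convex-ish region, and the curve argument used for $F$ yields injectivity. Then injectivity of $F^{\circ n}$ pulls the conclusion back.

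\textbf{(iii).} By (i), $\widetilde P_+(z) - z$ is nearly constant in the real direction: it is a bounded holomorphic function whose derivative is small. Thus $\widetilde P_+$ maps the half-strip $S_+(R,\alpha',\beta')$ onto a set containing $S_+(R',\alpha,\beta)$ when $\alpha'<\alpha$ and $\beta'>\beta$ are close. This follows from a degree / Rouch\'e argument on large rectangles, using $|\widetilde P_+(z) - (z+c(x))|$ small and $\im \widetilde P_+ \approx \im z$. One should first check that $\re(\widetilde P_+(z) - z)$ stays bounded, or at least varies slowly. This is the main obstacle, since Remark (i) after Corollary \ref{pietro-conjuacy2} warns that $x_n - nT$ need not be bounded. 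I would handle it by noting that only the image of a translate matters: $\re \widetilde P_+(x+iy)$ is continuous, increasing in $x$ (derivative near $1$) and tends to $+\infty$. So for each height $\eta \in (\alpha,\beta)$, the image of a horizontal line at height $\approx \eta$ covers a right half-line at height $\eta$, and connectedness fills the strip.

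\textbf{(iv).} Given $w \in \mathbb{H}$, translate by a multiple of $T$ to obtain $w + kT \in S_+(R(\alpha,\beta),\alpha,\beta)$ with $\alpha < \im w < \beta$. By (iii), $w + kT = \widetilde P_+(z)$ for some $z$, so $P_+(z) = w \bmod T$.
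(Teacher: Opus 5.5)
The step that fails is the uniform decay in the first half of (i).

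\textbf{The gap in (i).} Your majorant $\sum_n\int_{\mathbb{R}}\frac{y\,d\mu(a)}{(x+nT/2-a)^2+y^2}$ does not tend to $0$ uniformly in $y>\delta$. For fixed $x$ and $y\to\infty$, on the order of $y/T$ indices have $|x+nT/2-a|\le y$, each contributing $\asymp 1/y$, so the sum tends to $\frac{\pi}{T}\mu(\mathbb{R})$. Your tail bound $\int\min(1,\delta/(x'-a)^2)\,d\mu(a)$ puts $\delta$ where $y$ belongs, which is the wrong direction since $y>\delta$. In any case it controls one term, not the sum over $n$.

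No argument can rescue the claim on that range. For $F(z)=z+T-m/z$ and $z=x+iy$ with $x$ fixed, the orbit is essentially $x+nT+iy$. Then $\im\widetilde P_+(z)-y\approx\sum_{n\ge0}\frac{my}{(x+nT)^2+y^2}\to\frac{\pi m}{2T}$ as $y\to\infty$, which is half the bi-infinite increment of Lemma \ref{large-imaginary-part}.

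So the uniformity must be restricted to $\delta<y<\beta$ with $\beta$ fixed, and there your summation does work once done carefully:
A bootstrap keeps the orbit heights below $2\beta$.
The abscissae $x_n\ge x$ have gaps greater than $2T/3$, so $\sum_n\bigl((x_n-a)^2+\delta^2\bigr)^{-1}$ is $O_{\delta,T}(1)$ uniformly in $a$, and $O_T\bigl((x-a)^{-1}\bigr)$ for $a\le x-1$.
Dominated convergence then gives uniform decay as $x\to\infty$.
Replacing $x_n$ by $x+nT/2$ termwise is not legitimate when $a>x$, but that is cosmetic.

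Bounded heights are all that (ii)--(iv) use. The paper's proof takes this half of (i) from (\ref{eq:forward-dynamics2}) in Lemma \ref{forward-dynamics}, which overreaches on the unbounded region $\Omega_{+,\delta}$ in exactly the same way. So it is the statement that needs the restriction, not your approach specifically.

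\textbf{The other parts.} These are sound, and two of them differ from the paper's route.

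For $\widetilde P_+'\to1$ you estimate $\prod_k F'(z_k)$ directly. With the same gap counting (now with no numerator) this is uniform over all $y>\delta$. The paper instead deduces the derivative from the imaginary part by a normal-families argument, so as written it only gets the derivative where the first half holds.

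For (ii) the paper uses Hurwitz: the iterates $F^{\circ n}$ are injective on the forward-invariant set $\Omega_{+,\delta}$, so the limit is injective or constant. Your argument pushes both points far right via the functional equation, uses $|\widetilde P_+'-1|<1/2$ on the convex set $\{x>X',\,y>\delta\}$, and pulls back by injectivity of $F^{\circ n}$. It is equally valid and trades Hurwitz for the quantitative derivative bound.

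In (iii), drop the assertion that $\widetilde P_+(z)-z$ is bounded. Heuristically its real part grows like $\frac{\mu(\mathbb{R})}{T}\log x$, so your worry about the drift is justified. Also replace ``connectedness fills the strip'': the image of a single horizontal ray is a curve within $\varepsilon$ of a line, not a half-line. Use the winding-number argument you allude to instead:
Take $\alpha'+\varepsilon<\im w<\beta'-\varepsilon$ and $\max_y\re\widetilde P_+(R_1+iy)<\re w<\min_y\re\widetilde P_+(R_2+iy)$.
Let $\phi$ be affine and increasing with $\phi(R_1)<\re w<\phi(R_2)$.
On $\partial([R_1,R_2]\times[\alpha',\beta'])$, the straight-line homotopy from $\widetilde P_+$ to $x+iy\mapsto\phi(x)+iy$ never passes through $w$.
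By the argument principle, $w$ lies in the image.

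This is the rigorous form of the paper's ``boundary close to a half-strip, plus conformality'' step. Like it, it needs only that $\re\widetilde P_+$ increases to $+\infty$ along horizontal lines, not a bound on the drift. Part (iv) is as in the paper.
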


\begin{proof}
(i) The first statement in (\ref{eq:forward-derivative}) follows from Lemma \ref{forward-dynamics}.
By elementary complex analysis, a holomorphic function $h(z)$ defined on an open set $U \subset \mathbb{C}$ with $\im h(z) = \im z$ for $z \in U$ has the form $h(z) = z + c$ for some $c \in \mathbb{C}$, and so has constant derivative 1. From here, the second statement in (\ref{eq:forward-derivative}) follows from a normal families argument.

(ii) As the composition of injective maps is injective, each iterate $F^{\circ n}(z)$, $n \ge 1$, is injective on on $\Omega_{+, \delta}$. Consequently,
so is $F^{\circ n}(z) -  \re F^{\circ n}(i)$. By Hurwitz's theorem, the limit map $\widetilde{P}_+(z)$ is either injective or constant. Since (\ref{eq:forward-derivative}) prevents the latter possibility, $\widetilde{P}_+(z)$ is injective on  $\Omega_{+, \delta}$.

(iii) Fix $0 < \alpha < \beta$. By (i), for any $0 < \varepsilon < 1$,
we can choose $R > 0$ sufficiently large such that $|\im (\widetilde{P}_+(z) - z)| < \varepsilon$ and $|\widetilde{P}_+'(z) - 1| < \varepsilon$ on $S_+(R, \alpha, \beta)$.

For such an $R$, the image of the vertical segment $R \times [\alpha, \beta]$ is
$\varepsilon$-close to a vertical segment $R' \times [\alpha, \beta]$ in the Hausdorff metric, while the images of the horizontal  sides of $\partial S_+(R, \alpha, \beta)$ are $\varepsilon$-close to the horizontal sides of $\partial S_+(R', \alpha, \beta)$. Since $\widetilde{P}_+$ is conformal on $S_+(R, \alpha, \beta)$,
$$
S_+(R' + \varepsilon, \alpha + \varepsilon, \beta - \varepsilon) \subset \widetilde{P}_+ \bigl (S_+(R, \alpha, \beta) \bigr ).
$$
Since $\alpha, \beta, \varepsilon > 0$ were arbitrary, $\widetilde{P}_+(\Omega_{+, \delta})$ possesses a right lateral inner tangent.

(iv) follows directly from (iii).
\end{proof}

Theorem \ref{forward-cylinder-thm} now follows from Pommerenke's conjugacy theorem (Corollary \ref{pommerenke-conjuacy2}).
together with Lemma \ref{properties-of-Pplus}(iv).

 \subsection{Backward iteration}
 
As in the case of forward iteration, Lemma \ref{basic-estimate-new} implies that far to the left, $F^{-1}$ looks like translation by $-T$. We now examine the long-term behaviour of the backward iterates:

 \begin{lemma}
 \label{backward-dynamics}
For every $0 < \delta < T/3$, there exists an $X > 0$ sufficiently large such that $F$ is injective in the region
$$
\Omega_{-, X, \delta} \, = \, \{ x + iy \, : \, x < -X, \, y  > \delta \}
$$
and admits an inverse branch which satisfies
\begin{equation}
\label{eq:backward-iteration-is-nice}
|F^{-1}(z) - z - T| < \delta, \qquad z \in \Omega_{-, X, \delta}.
\end{equation}
Moreover, for any $z \in \Omega_{-, X, 2\delta}$, the backward orbit $F^{-n}(z)$ is contained in $\Omega_{-, X, \delta}$
and satisfies
\begin{equation}  
\label{eq:backward-iteration-is-nice2}
0 \, < \, \im z - \im F^{-n}(z) \, < \, \delta, \qquad n \ge 1.
\end{equation}
\end{lemma}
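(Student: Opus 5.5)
The plan is to mirror the proof of Lemma \ref{forward-dynamics}, with the natural inverse branch replacing $F$. The argument has three parts: a local statement (injectivity and the existence of an inverse branch), an inductive confinement of the backward orbit, and a summation of imaginary decrements. The last part carries the uniformity in the height, and I am not sure it holds as stated.

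\textbf{Local statement.} Apply Lemma \ref{basic-estimate-new} with a small $\delta' \le \delta/2$, and take $X > R(\delta')$. Then
$$|F(z)-z-T|<\delta' \quad\text{and}\quad |F'(z)-1|<\delta' \qquad \text{on } \{\re z<-X+T,\ \im z>\delta'\}.$$
\begin{enumerate}[label=(\alph*)]
\item Injectivity on $\Omega_{-,X,\delta}$ follows from the curve-length argument given after Lemma \ref{basic-estimate-new}.
\item Surjectivity onto the target region follows from the argument principle on small squares: $F$ is $\delta'$-close to the translation $z\mapsto z+T$, so every $z\in\Omega_{-,X,\delta}$ has a preimage $w$ with $|w-(z-T)|<\delta'$ and $\re w<-X$.
\end{enumerate}
This gives the inverse branch and the estimate (\ref{eq:backward-iteration-is-nice}), in the sense $|F^{-1}(z)-(z-T)|<\delta$. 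This is the natural inverse from the Remark following (\ref{eq:natural-inverse}).

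\textbf{Confinement of the backward orbit.} Fix $z\in\Omega_{-,X,2\delta}$ and write $F^{-k}(z)=x_{-k}+iy_{-k}$. I argue by induction on $k$. As long as $y_{-k}>\delta$, the previous step applies, so
$$x_{-k}\le -X-k(T-\delta)\le -X-2kT/3.$$
By (\ref{eq:imaginary-increment-forward}) the heights strictly decrease backwards, which gives the left inequality $0<\im z-\im F^{-n}(z)$. Hence it suffices to show that the total loss is less than $\delta$. Once that holds, we get $y_{-k}>2\delta-\delta=\delta$, which closes the induction and also gives membership in $\Omega_{-,X,\delta}$.

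\textbf{Summing the imaginary decrements.} The total loss is
$$\sum_{k\ge1}\int_{\mathbb{R}}\frac{y_{-k}\,d\mu(a)}{(x_{-k}-a)^2+y_{-k}^2}.$$
Along the orbit the heights satisfy $y_{-k}\in(y/2,y]$, where $y=\im z$. So each term is at most
$$4\,P_{y/2}(x_{-k}-a), \qquad P_s(t)=\frac{s}{t^2+s^2}.$$
Split $\mu=\mu|_{[-M,\infty)}+\mu|_{(-\infty,-M)}$, where $M$ is chosen so that the tail mass is tiny.
\begin{enumerate}[label=(\alph*)]
\item \emph{Tail part.} Use $\sum_k P_s(x_{-k}-a)\le \tfrac{3}{2T}\int_{\mathbb R}P_s+\sup P_s\lesssim 1+1/\delta$, uniformly in $s\ge\delta$. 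This contribution is small because the tail mass is small.
\item \emph{Bulk part.} For $a\ge -M$, every distance satisfies $|x_{-k}-a|\ge X-M+2kT/3$. Comparison with an integral gives a bound by $\tfrac{3}{2T}\bigl(\tfrac{\pi}{2}-\arctan\tfrac{X-M}{y/2}\bigr)+P_{y/2}(X-M)$. This is small once $X\gg M$, provided $y$ is not large compared with $X$.
\end{enumerate}

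\textbf{Main obstacle: uniformity in the height.} The bulk estimate degenerates as $y\to\infty$. In fact, Lemma \ref{large-imaginary-part} indicates that the backward half of the imaginary increment tends to about $\pi\mu(\mathbb R)/(2T)$ as the height grows, rather than to $0$. So I would first try to obtain (\ref{eq:backward-iteration-is-nice2}) for $y\le X^{1/2}$, with $X$ large; there the arctan term is $O(X^{-1/2})$. For larger heights the goal is only to retain confinement: the loss is at most the constant $C$ of Section \ref{sec:imaginary-increments-bi-infinite-orbits}, and $y-C\gg\delta$, so the orbit stays in $\Omega_{-,X,\delta}$.

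If the uniform bound over all $y>2\delta$ is really required, I would test whether the statement must be read with the region truncated at a height depending on $X$. This is exactly the place where the argument can fail, and I would check the paper's later uses of this lemma to see which reading is needed.
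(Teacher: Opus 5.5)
Your route is the one the paper intends. Its proof is only ``similar to Lemma \ref{forward-dynamics}'': the one-step estimate from Lemma \ref{basic-estimate-new}, then summing imaginary increments as in Section \ref{sec:imaginary-forward}. Your local step, confinement induction and bulk/tail splitting are sound, and you were right to read the first display as $|F^{-1}(z)-(z-T)|<\delta$.

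Your suspicion about uniformity in the height is also correct, and it can be made definitive. Take $F(z)=z+T-m/z$, i.e.\ $\mu=m\delta_0$, and $z=-X-1+iy$. Along the natural backward orbit one has $y_{-k}\in[y-C,y]$ and $x_{-k}=x_0-kT(1+O(1/y))$. So for every fixed $X$ the total loss is
$$\sum_{k\ge1}\frac{m\,y_{-k}}{x_{-k}^2+y_{-k}^2}\,=\,\frac{m}{T}\arctan\frac{y}{X+1}+o(1)\ \longrightarrow\ \frac{\pi m}{2T},\qquad y\to\infty .$$
With $m=T^2$ this limit is $\pi T/2>T/3>\delta$. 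Hence (\ref{eq:backward-iteration-is-nice2}) fails for every admissible $\delta$ and every $X$. The mechanism is the $\tfrac{\mu(\mathbb{R})}{T}\log z$ term in the Fatou coordinate, whose imaginary part does not decay along vertical lines; compare Lemma \ref{large-imaginary-part}.

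So the defect lies in the statement and in the paper's sketch, not in your argument. Section \ref{sec:imaginary-forward} only bounds the total increment by a constant $C(T,\mu(\mathbb{R}))$, never by $\delta$. The same overstatement occurs in (\ref{eq:forward-dynamics2}) of Lemma \ref{forward-dynamics}, and in the imaginary-part claims of Lemmas \ref{properties-of-Pplus}(i) and \ref{properties-of-Pminus}(i).

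The correct formulation is the one you prove:
\begin{itemize}
\item confinement of the backward orbit in $\Omega_{-,X,\delta}$ for all heights $y>2\delta$;
\item the bound $<\delta$ only when $y$ is small compared with $|x|$, e.g.\ for $y$ in a fixed range $[\alpha,\beta]$ once $x<-X(\alpha,\beta,\delta)$.
\end{itemize}
For the first point, your tail estimate applied to all of $\mu$ already gives a uniform loss bound for large $y$, so you need not cite Section \ref{sec:imaginary-increments-bi-infinite-orbits}.

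This weaker version is all the paper actually uses:
\begin{itemize}
\item injectivity of $\widetilde P_-$ needs only confinement and Hurwitz;
\item the lateral inner tangent in Lemma \ref{properties-of-Pminus}(iii) only involves half-strips $S_-(R,\alpha,\beta)$ of bounded height;
\item the derivative claim $\widetilde P_-'\to 1$ remains true uniformly in $y>2\delta$, since $\sum_k |F'(z_{-k})-1|\lesssim \mu(\mathbb{R})/(Ty)$ for large $y$.
\end{itemize}
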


The proof is similar to that of Lemma \ref{forward-dynamics}.

\begin{lemma}
Any inverse orbit ${\bf z} = (z_{-n})_{n=0}^\infty$ of positive height is eventually contained in a region $\Omega_{-, X, \delta}$ satisfying the conclusion of Lemma \ref{backward-dynamics}. More precisely,
there exists an $n_0 > 0$ and $\delta > 0$ such that 
$$
z_{-n} \in \Omega_{-, X, \delta}, \qquad F^{-1}(z_{-n}) = z_{-n-1}, \qquad n \ge n_0,
$$
where $F^{-1}$ denotes the natural inverse branch.
\end{lemma}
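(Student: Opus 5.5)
The plan mirrors the proof of Lemma \ref{forward-dynamics2}, run backwards along the inverse orbit. Write $z_{-n} = x_{-n} + iy_{-n}$. Positive height gives a constant $\delta_0 > 0$ with $y_{-n} > 2\delta_0$ for all $n$. Since height increases under $F$, the sequence $y_{-n}$ is decreasing in $n$, hence bounded above by $y_0$. Thus the whole inverse orbit lies in the horizontal strip $\{2\delta_0 < \im z \le y_0\}$.

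\textbf{Step 1: the orbit leaves every compact set horizontally.} Fix a bounded interval $J$. On the compact set $J \times [2\delta_0, y_0]$, formula (\ref{eq:imaginary-increment-forward}) shows that the increment $\im F(z) - \im z$ is bounded below by some $c_J > 0$. Each visit of the inverse orbit to $J\times[2\delta_0,y_0]$ therefore costs a definite amount of the total height drop, and that total is at most $y_0 - 2\delta_0$. So only finitely many $z_{-n}$ lie there, and $|x_{-n}| \to \infty$.

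\textbf{Step 2: $x_{-n} \to -\infty$.} By the Schwarz lemma, $d_{\mathbb{H}}(z_{-n-1}, z_{-n})$ is non-increasing as $n$ decreases, but it need not be bounded as $n$ grows. To control it I would use Lemma \ref{basic-estimate-new} instead. Suppose the points have large modulus and $\im > \delta_0$. Then $|F(z_{-n-1}) - z_{-n-1} - T| < \delta_0$, so $z_{-n} - z_{-n-1}$ is within $\delta_0$ of $T$. Hence, once $|x_{-n}|$ is large, the steps have bounded Euclidean length inside a strip of bounded height. The sign of $x_{-n}$ therefore cannot flip while $|x_{-n}|$ stays large, and it stays large by Step 1.

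It remains to rule out $x_{-n} \to +\infty$. In that case, going backwards, $x_{-n-1} \approx x_{-n} - T$ decreases by roughly $T$ at every step. This contradicts $x_{-n} \to +\infty$ (it would force $x_{-n} \to -\infty$ linearly). Hence $x_{-n} \to -\infty$.

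\textbf{Step 3: match with Lemma \ref{backward-dynamics}.} Choose $\delta \le \delta_0$ with $\delta < T/3$, and let $X$ be the constant given by Lemma \ref{backward-dynamics}. By Step 2 there is $n_0$ such that $z_{-n} \in \Omega_{-,X,2\delta}$ for all $n \ge n_0$. Moreover $y_{-n-1} > 2\delta_0 \ge \tfrac12 y_{-n}$ as long as $y_{-n} \le 4\delta_0$. To cover the general case, I would take $\delta_0$ so that $2\delta_0$ is the infimum height and note that $y_{-n}$ decreases to this infimum. Then for large $n$ we have $y_{-n-1} > \tfrac12 y_{-n}$, so $z_{-n-1}$ is the natural inverse of $z_{-n}$ in the sense of (\ref{eq:natural-inverse2}). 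Since Corollary \ref{exceptional3} gives uniqueness of the natural preimage, and the branch of Lemma \ref{backward-dynamics} selects exactly that preimage, $F^{-1}(z_{-n}) = z_{-n-1}$ for all $n \ge n_0$.

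The main obstacle is Step 2: excluding $x_{-n} \to +\infty$ and the possibility of oscillation. The argument relies on the near-translation estimate of Lemma \ref{basic-estimate-new} holding uniformly at large modulus in the strip.
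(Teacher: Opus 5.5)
Your proposal is correct and follows the route the paper indicates, namely mirroring Lemma \ref{forward-dynamics2}: compactness forces $|\re z_{-n}| \to \infty$, and the near-translation estimate then fixes the direction. You also rightly notice that the Schwarz lemma no longer bounds the steps in the backward direction. Replacing it by Lemma \ref{basic-estimate-new}, which gives $z_{-n} - z_{-n-1} \to T$ and hence $x_{-n} \to -\infty$ monotonically, is exactly the adjustment needed.

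One small fix: in Step 2, apply Lemma \ref{basic-estimate-new} with a constant below $T$ (not necessarily your $\delta_0$). Also, the identification of $z_{-n-1}$ with the branch of Lemma \ref{backward-dynamics} could equally be done via injectivity of $F$ on $\Omega_{-,X,\delta}$.
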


The proof is similar to that of Lemma \ref{forward-dynamics2}.

\begin{lemma}
\label{properties-of-Pminus}
Fix an inverse orbit ${\bf c} = (c_{-n})_{n=0}^\infty$ of positive height, whose existence is guaranteed by Corollary \ref{PS-existence-of-bi-infinite-orbits}.
The limit
\begin{equation*}
\widetilde{P}_-(z) = \lim_{n \to \infty} \bigl ( F^{-n}(z) - \re c_{-n} \bigr )
\end{equation*}
exists and defines a holomorphic function on $\Omega_{-, X, 2\delta}$ which satisfies the functional equation
$$
\widetilde{P}_-(F(z)) = \widetilde{P}_-(z) + T.
$$
and enjoys the following properties:

{\em (i)} For any $\delta > 0$,
\begin{equation}
\label{eq:back-derivative}
\im \widetilde P_-(x+iy) - y \to 0
\qquad \text{and} \qquad
 \widetilde P'_-(x+iy) \to 1,
\end{equation}
uniformly as $x \to -\infty$ and $y > 2\delta$.

{\em (ii)} 
$\widetilde{P}_-(z)$ is injective on the regions $\Omega_{-, X, 2\delta}$ from Lemma \ref{backward-dynamics}.

{\em (iii)} 
The domain $\widetilde{P}_-(\bigcup_{\delta > 0} \Omega_{-, X, 2\delta})$ possesses a left lateral inner tangent.

{\em (iv)} 
The quotient map
$
P_-(z) = \widetilde P_-(z) \!\! \mod T
$
onto $C_-$ is surjective. 
\end{lemma}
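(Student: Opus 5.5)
The plan mirrors the proof of Lemma \ref{properties-of-Pplus}, with Corollary \ref{pietro-conjuacy2} (Poggi-Corradini) in place of Pommerenke's theorem. The delicate point is the existence of the limit defining $\widetilde P_-$, which is not given by Corollary \ref{pietro-conjuacy2} directly. That corollary produces $\Psi(z)=\lim F^{\circ n}(\re c_{-n}+z)$ with $\Psi(z+T)=F(\Psi(z))$; the lemma instead concerns the inverse normalisation.

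\textbf{Existence and the functional equation.} Fix $z\in\Omega_{-,X,2\delta}$. By Lemma \ref{backward-dynamics}, the natural inverse orbit $F^{-n}(z)$ stays in $\Omega_{-,X,\delta}$. Along it, $F^{-1}$ differs from translation by $-T$ by an error tending to zero, since $\re F^{-n}(z)\to-\infty$ and Lemma \ref{basic-estimate-new} applies. The imaginary parts are monotone and bounded, so they converge by \eqref{eq:backward-iteration-is-nice2}. The same holds for $c_{-n}$, which lies in some $\Omega_{-,X,\delta'}$ for $n\ge n_0$ by the preceding lemma.

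To show that $h_n(z)=F^{-n}(z)-\re c_{-n}$ converges, I would use hyperbolic geometry as follows.
\begin{itemize}
\item $F^{-1}$ is a branch of an inverse of a self-map, so it is \emph{expanding} or distance-nonincreasing only in a weak sense. Instead, use that $F$ contracts $d_{\mathbb H}$. Hence $d_{\mathbb H}(F^{-n}(z),c_{-n})$ is nondecreasing in $n$.
\item This distance is bounded: both points lie in a horizontal strip of bounded height, and their real parts differ by a bounded amount. The bound on real parts comes from comparing increments, both $\approx -T$ with summable-in-the-aggregate deviation, via the estimate of Section \ref{sec:imaginary-forward}.
\item Both points have convergent heights. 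Together with the monotone, convergent hyperbolic distance, this forces the real part difference to converge up to sign. The sign is fixed by continuity in $z$ along a path from $c_{-n_0}$.
\end{itemize}
This gives pointwise convergence. Locally uniform convergence and holomorphy then follow from Vitali's theorem, since the family $h_n$ is normal on $\Omega_{-,X,2\delta}$: its values lie in a fixed strip shifted by bounded amounts. The functional equation follows by replacing $z$ with $F(z)$, which gives $h_n(F(z))=h_{n-1}(z)-\re c_{-n}+\re c_{-n+1}$. Here $\re c_{-n+1}-\re c_{-n}\to T$ by Lemma \ref{basic-estimate-new}.

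\textbf{Properties (i)--(iv).} For (i), \eqref{eq:backward-iteration-is-nice2} with $\delta$ arbitrarily small, together with the convergence of $\im c_{-n}$, shows that $\im\widetilde P_-(z)-\im z$ tends to a constant difference. After shifting the normalisation (via $\re$ only, so $\im c$ does not enter), this gives $\im\widetilde P_-(x+iy)-y\to0$. The derivative statement then follows by the same normal-families argument as in Lemma \ref{properties-of-Pplus}(i). For (ii), each $F^{-n}$ is injective on $\Omega_{-,X,2\delta}$, and Hurwitz's theorem with (i) excludes the constant limit. Part (iii) is the argument of Lemma \ref{properties-of-Pplus}(iii) applied to left half-strips $S_-(R,\alpha,\beta)$. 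Part (iv) follows from (iii): the image contains a left half-strip of every height range, and any such strip of length greater than $T$ surjects onto $\mathbb H/(z\mapsto z+T)$.

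\textbf{Main obstacle.} I expect the main obstacle to be the existence of the limit, specifically controlling $\re F^{-n}(z)-\re c_{-n}$. The first attempt is the monotone hyperbolic distance argument above. If it fails, the fallback is to invert $\Psi$ from Corollary \ref{pietro-conjuacy2} on a left half-strip and show that $\widetilde P_-=\Psi^{-1}$ there, using Remark (ii) after Corollary \ref{pietro-conjuacy2}.
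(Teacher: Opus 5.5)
Your proofs of (i)--(iv) follow the paper: they transplant the argument of Lemma~\ref{properties-of-Pplus} to left half-strips. Your fallback of inverting $\Psi$ is in line with the paper's appeal to Corollary~\ref{pietro-conjuacy2}. The gap is in your primary existence argument, at the step you flag. Since $d_{\mathbb{H}}(F^{-n}(z), c_{-n})$ is nondecreasing, its boundedness is the whole content of the argument, and your justification for it fails. The estimate of Section~\ref{sec:imaginary-forward} controls imaginary parts only because of the factor $y$ in the numerator of $y/((x-a)^2+y^2)$. For real parts, $\re\bigl(F(w)-w-T\bigr) = -\re \int_{\mathbb{R}} d\mu(a)/(w-a)$ is of order $\mu(\mathbb{R})/|\re w|$ far to the left (e.g.\ for compactly supported $\mu$). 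Along a backward orbit this is of order $\mu(\mathbb{R})/(nT)$, which is not summable. This is exactly why part (i) of the Remark after Corollary~\ref{pietro-conjuacy2} warns that $c_{-n}+nT$ need not stay bounded. Only the \emph{difference} of two backward orbits stays bounded, and you give no argument for that. The fallback does not close the gap either: the Remark after Corollary~\ref{pietro-conjuacy2} only says what happens \emph{if} $\widetilde P_-$ inverts $\Psi$. It does not show that $F^{-n}(z)-\re c_{-n}$ converges.

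Either of two arguments closes it.

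\textbf{Option 1 (Schwarz--Pick).} View $F^{-n}$ as a holomorphic map $\Omega_{-,X,2\delta}\to\Omega_{-,X,\delta}$ (Lemma~\ref{backward-dynamics}) and use the hyperbolic metrics of these two domains. Compare $z$ with $c_{-n_0}\in\Omega_{-,X,2\delta}$. Since $F^{-n}(c_{-n_0})=c_{-n-n_0}$, you get $d_{\Omega_{-,X,\delta}}(F^{-n}(z),c_{-n-n_0})\le d_{\Omega_{-,X,2\delta}}(z,c_{-n_0})$. Now $\Omega_{-,X,\delta}\subset\{\im w>\delta\}$, and both points have imaginary parts in a fixed compact subinterval of $(\delta,\infty)$. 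Hence $|F^{-n}(z)-c_{-n-n_0}|$ is bounded, while $c_{-n-n_0}-c_{-n}\to -n_0T$.

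\textbf{Option 2 (derivative bound).} Use $|F'(w)-1|\le\int_{\mathbb{R}}d\mu(a)/|w-a|^2$. Along a backward orbit in $\Omega_{-,X,\delta}$ the heights exceed $\delta$ and the spacing is roughly $T$. The computation of Section~\ref{sec:imaginary-forward} then bounds $\sum_k|F'(F^{-k}(z))-1|$ uniformly. So $(F^{-n})'$ converges locally uniformly to a non-vanishing limit, and $F^{-n}(z)-F^{-n}(c_{-n_0})$ converges. This gives existence and holomorphy directly. The same uniform control of $F^{-n}$ is also what you would need to identify $\lim(F^{-n}(z)-\re c_{-n})$ with $\Psi^{-1}(z)$ in your fallback.

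Once boundedness is established, your monotone-distance argument does go through. The sign of $\re(F^{-n}(z)-c_{-n})$ is fixed simply because its increments tend to $0$, not by continuity in $z$. In (i) there is no constant offset to remove: $\im\widetilde P_-(z)=\lim_n\im F^{-n}(z)$, and \eqref{eq:backward-iteration-is-nice2} gives $|\im\widetilde P_-(z)-\im z|<\delta$ on the corresponding region $\Omega_{-,X(\delta),2\delta}$.
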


The proof is similar to that of Lemma \ref{properties-of-Pplus}.
Theorem \ref{backward-dynamics3} follows from Poggi-Corradini's conjugacy theorem (Corollary \ref{pietro-conjuacy2}) together with Lemma \ref{properties-of-Pminus}(iv).

\subsection{\texorpdfstring{Parabolicity of the maps $\widetilde{P}_+$ and $\Psi$}{Parabolicity of the maps P̃₊ and Ψ}}
\label{sec:checking-parabolicity}

Recall from Corollary \ref{pommerenke-conjuacy2} that
$$
\widetilde{P}_+(z) = \lim_{n \to \infty} \bigl \{ F^{\circ n}(z) - \re F^{\circ n}(i) \bigr \}.
$$
In Section \ref{sec:imaginary-forward}, we saw that for a point $z \in \mathbb{H}$ with $\im z > Y$,
$$\im z \, < \, \im F^{\circ n}(z) \, < \, C + \im z, \qquad n = 0, 1, 2, \dots.$$  
As the orbit $F^{\circ n}(i)$ has bounded height, we have
$$1 \, < \, \im F^{\circ n}(i) \, < \, C' + 1, \qquad n = 0, 1, 2, \dots,$$
for some constant $C' > 0$. Consequently, $$\frac{\im \widetilde{P}_+(z)}{\im z} \to 1, \qquad \text{as } \im z \to \infty.$$
 By Julia's lemma, $\widetilde{P}_+(\infty) = \infty$ with angular derivative 1. A similar argument shows that 
 Poggi-Corradini's conjugacy
  $$
 \Psi = \lim_{n \to \infty} F^{\circ n}(z + \re c_{-n})
 $$
 from Corollary \ref{pietro-conjuacy2} is parabolic.

\section{Lavaurs semigroups}
\label{sec:lavaurs}

In this section, we associate forward and backward quotient half-cylinders $C_+, C_- \cong  \mathbb{H}/(z \to z + T)$ to a simply parabolic Polya-Szeg\"o inner function $F$. Given $\sigma \in \mathbb{R}$, we construct a commuting holomorphic map $G_\sigma$ and define a semigroup $\mathcal G_\sigma$.
The map $G_\sigma$ induces a holomorphic self-map $L_\sigma$ of the forward half-cylinder $C_+$, which we call the {\em Lavaurs map}\/. We show that $L_\sigma$ extends to a holomorphic self-map of the unit disk with a Denjoy-Wolff fixed point at the origin.
The main results of this section are summarized
in Theorem \ref{summary1} in the introduction.

\subsection{The Lavaurs semigroup}

Without loss of generality, we assume that $T > 0$.
Recall from Theorem \ref{forward-cylinder-thm}, that
$$
\widetilde{P}_+(z) = \lim_{n \to \infty} F^{\circ n}(z) - \re F^{\circ n}(i)
$$
defines a holomorphic function on $\mathbb{H}$ which satisfies the functional equation $\widetilde{P}_+(F(z)) = \widetilde{P}_+(z) + T$.
Similarly, by Lemma \ref{properties-of-Pminus}, the map 
$$
\widetilde{P}_-(z) = \lim_{n \to \infty} F^{-n}(z) - \re c_{-n}
$$
is defined on a domain $\Omega_- \subset \mathbb{H}$ where it is injective and satisfies
$\widetilde{P}_-(F(z)) = \widetilde{P}_-(z) + T.$
 Moreover, for every $\alpha, \beta > 0$, there exists an $R = R(\alpha, \beta) > 0$ such that 
$$
\bigl \{ z \in \mathbb{H} : \re z < -R, \, \alpha < \im z < \beta \bigr \}  \, \subset \, \widetilde{P}_-(\Omega_-).
$$

Fix $z \in \mathbb{H}$. For $\sigma <\!\!< 0$, the translate $\widetilde{P}_+(z) + \sigma$ lies in $\widetilde{P}_-(\Omega_-)$, so we may define
$$
G_\sigma(z) := \widetilde{P}_-^{-1} ( \widetilde{P}_+(z) + \sigma  ).
$$
For a general $\sigma \in \mathbb{R}$, we choose an integer $k >\!\!> 0$ sufficiently large so that
$\widetilde{P}_+(z) + \sigma - kT \in \widetilde{P}_-(\Omega_-)$ and define
$$
G_\sigma(z) := F^{\circ k}(G_{\sigma - kT}(z))
$$
This definition is independent of the choice of $k$ since iteration by $F$ corresponds to translation by $T$ in the coordinate $\widetilde{P}_+$. From this construction, it is immediate that $G_\sigma$ is a holomorphic self-map of $\mathbb{H}$ which commutes with $F$ and the semigroup $\mathcal G_\sigma$ constructed in Section \ref{sec:lavaurs-intro} depends only on $\sigma  \!\! \mod T$.

\subsection{The Lavaurs map}
\label{sec:the-lavaurs-map}

We now construct the Lavaurs map. Recall that by Theorem \ref{backward-dynamics3}, for any point $w \in C_- \cong  \mathbb{H}/(z \to z + T)$, there is a bi-infinite orbit
${\bf z} = (z_n)_{n=-\infty}^\infty$ with
$$
\lim_{n \to -\infty} (z_{n+1} - z_n) = T, \qquad \lim_{n \to \-\infty} z_n \!\!\mod T = w,
$$
which is unique up to replacing $(z_n)_{n=-\infty}^\infty$ with  $(z_{n+k})_{n=-\infty}^\infty$ for some $k \in \mathbb{Z}$. We define $F^\infty: C_- \to C_+$ as the map which takes
$w = P_-({\bf z})$ to $P_+({\bf z})$.

Let $\tau_\sigma: C_+ \to C_-$ be the translation  $w \to w + \sigma \!\! \mod T$. The Lavaurs map $L_\sigma: C_+ \to C_+$ is then defined as the composition $F^\infty \circ \tau_\sigma$. By construction, $F^\infty$ and $L_\sigma$ are holomorphic and $L_\sigma$ satisfies $L_\sigma(P_+(z)) = P_+(G_\sigma(z))$. In other words, $L_\sigma$ is the quotient of $G_\sigma$ under the forward projection.

We now pass to the punctured unit disk model.
The exponential map $z \to e^{2\pi i z/T}$ identifies $C_+$ conformally with $\mathbb{D}^*$. Under this identification, $L_\sigma$ becomes a holomorphic self-map of the punctured disk.
As the Lavaurs map $L_\sigma: C_+ \to C_+$ increases
the imaginary part in the cylinder model, $L_\sigma: \mathbb{D}^* \to \mathbb{D}^*$ decreases the absolute value in the punctured disk model. It follows that the origin is a removable singularity and the extension $L_\sigma: \mathbb{D} \to \mathbb{D}$ 
has an attracting fixed point at the origin. To compute its multiplier, note that
\begin{equation*}
\lim_{y \to \infty} \im F^\infty(x+iy) - y = (\pi/T) \mu(\mathbb{R})
\end{equation*}
 by Lemma \ref{large-imaginary-part}, which translates to
$$
|L_\sigma'(0)| = \exp \bigl ( -2\pi^2 \cdot \mu(\mathbb{R})/T^2 \bigr ).
$$
Since the multiplier is strictly between 0 and 1, the origin is an attracting but not super-attracting fixed point and the quotient $\mathbb{H} / \mathcal G_\sigma \cong \mathbb{D}^* / L_\sigma$ is a torus.

Finally, Lemma \ref{large-imaginary-part}, also yields the estimate
$$\im z + 2C \, > \, \im G^{\circ 2}(z) \, > \, \im z + C,$$
 for $z \in \mathbb{H}$ with sufficiently large imaginary part.
As in Section \ref{sec:checking-parabolicity}, this implies that that $G_\sigma$ is a parabolic inner function of infinite height.

\section{Inner functions with finite Lyapunov exponent}
\label{sec:FLE}

In this section, we study simply parabolic inner functions with finite Lyapunov exponent. The results are summarized in Theorem \ref{summary2}.

\subsection{Finite derivative along orbits}

We denote by $S_{\pi/2}(x) = \bigl \{ z \in \mathbb{H} : \im z > |\re (z- x)| \bigr \}$ the Stolz angle with vertex at $x \in \mathbb{R}$ and opening $\pi/2$. We write $S_{\pi/2}(x, h) := S_{\pi/2}(x) \cap \{ \im z < h \}$ for the Stolz angle truncated at height $h$. 

\begin{lemma}
\label{finite-log-sum}
Suppose $F: \mathbb{H} \to \mathbb{H}$ is a simply parabolic inner function with finite Lyapunov exponent. 

{\em (i)} If $(x_n)_{n=0}^\infty$ is a forward orbit on the real line such that
$$
\sum_{n=0}^\infty \log F'(x_n) < \infty,
$$
then
$
\widetilde{P}_+(z)
$
has a finite angular derivative at $x_0$ with
$$
\widetilde{P}_+(x_0) \, := \, \lim_{y \to 0} \widetilde{P}_+(x_0 + iy) \, = \, \lim_{n \to \infty} (x_{n} - \re c_{n}).
$$ 
and
$$
\widetilde{P}'_+(x_0) \, := \, \lim_{y \to 0} \widetilde{P}'_+(x_0 + iy) \, = \, \prod_{n=0}^\infty F'(x_n).
$$ 

{\em (ii)} If ${\bf x} = (x_{-n})_{n=0}^\infty$ is an inverse orbit on the real line such that
$$
\sum_{n=1}^\infty \log F'(x_{-n}) < \infty,
$$
then there exists a truncated Stolz angle $S = S_{\pi/2}(x_0, h)$ of opening $\pi/2$ with a vertex at $x_0$, and inverse branches
$(F^{-n})_{n=0}^\infty$, defined on $S$ with
$$
\lim_{y \to 0} \, F^{-n}(x_0 + iy) = x_{-n}, \qquad n = 0, 1, 2, \dots.
$$
Furthermore, the function
$$
\widetilde{P}_-(z) = \lim_{n \to \infty} \bigl ( F^{-n}(z) - \re c_{-n} \bigr ), \qquad z \in S,
$$
satisfies
 $$
 \widetilde{P}_-({\bf x}) \, := \, \lim_{y \to 0} \widetilde{P}_-(x_0 + iy) \, = \,  \lim_{n \to \infty} (x_{-n} - \re c_{-n})
$$
and
$$
 (\widetilde{P}_-)'({\bf x}) \, := \, \lim_{y \to 0} \, (\widetilde{P}_-)'(x_0 + iy) \, = \, \biggl (\prod_{n=1}^\infty F'(x_{-n}) \biggr )^{-1}.
 $$
As a result, the inverse mapping $\Psi: \mathbb{H} \to \mathbb{H}$ has a finite angular derivative at $\widetilde P_-({\bf x})$ with
$$\Psi'(\widetilde P_-({\bf x})) = \prod_{n=1}^\infty F'(x_{-n}).$$

{\em (iii)}
 If ${\bf x} = (x_{n})_{n=-\infty}^\infty$ is a bi-infinite orbit on the real line such that
$$
\sum_{n=-\infty}^\infty \log F'(x_{n}) < \infty,
$$
then $F^\infty: C_- \to C_+$ maps $P_-({\bf x})$ to $P_+({\bf x})$ and satisfies 
$$
(F^\infty)'(P_-({\bf x})) \, := \, (F^\infty)'(P_-({\bf x})) \, = \, \prod_{n=-\infty}^\infty F'(x_{n}).
$$
\end{lemma}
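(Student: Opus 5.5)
The plan is to reduce all three parts to a distortion estimate for long compositions of $F$ (or of its inverse branches) near a real orbit with summable $\log F'$. Everything is then transferred to the conjugacies through the limits defining $\widetilde P_\pm$.

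The basic tool is a one-step Koebe/Julia-type estimate. Since $F$ is inner with the representation (\ref{eq:polya-szego}), at a point $x$ where $F'(x)<\infty$ we have $F'(x)=1+\int d\mu(a)/(x-a)^2$. A Stolz angle $S_{\pi/2}(x,h)$ of height $h\lesssim 1/\sqrt{F'(x)-1}$ is mapped by $F$ univalently into a slightly wider Stolz angle at $F(x)$, and on it
$$
\bigl|F(z)-F(x)-F'(x)(z-x)\bigr| \, \lesssim \, (F'(x)-1)\,|z-x|^2/h_0,
$$
where $h_0$ is the distance from $x$ to the bulk of $\mu$. The summability $\sum \log F'(x_n)<\infty$ gives $\sum (F'(x_n)-1)<\infty$. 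So the angular distortions multiply to a convergent product, and the opening angles deteriorate only by a summable amount.

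For (i), I would start from the point $z=x_0+iy$ and write $F^{\circ n}(z)-x_n$ through these local linearizations. Once $y\prod_{k<n}F'(x_k)$ reaches unit size, the orbit is at bounded height and Lemma \ref{forward-dynamics} takes over. This gives the bound
$$
|F^{\circ n}(x_0+iy)-x_n - i y\textstyle\prod_{k<n}F'(x_k)| \, = \, o(y),
$$
uniformly in $n$. Passing to the limit defining $\widetilde P_+$, I obtain that $\widetilde P_+(x_0+iy)-\lim(x_n-\re c_n)$ is asymptotic to $iy\prod F'(x_n)$. Here one also needs to check that $x_n-\re F^{\circ n}(i)$ converges. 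I expect this to follow because $\widetilde P_+$ extends continuously nontangentially at $x_0$: the estimate is uniform and $\widetilde P_+(x_0+iy)$ is Cauchy as $y\to0$. The Julia–Carathéodory theorem then converts the nontangential difference quotient into the angular derivative. It also gives $\widetilde P_+'(x_0+iy)\to\prod F'(x_n)$.

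For (ii), I would build the branches $F^{-n}$ inductively on a single truncated Stolz angle at $x_0$. At each step, the branch of $F^{-1}$ sending a Stolz angle at $x_{-n}$ to one at $x_{-n-1}$ exists by the one-step estimate, because $F$ is univalent there. The key point is that the heights scale by $F'(x_{-n})^{-1}$, and the product of these stays bounded below by summability. Hence one height $h$ works for all $n$ after shrinking by the product. Far back the inverse orbit lies at positive height in $\Omega_{-,X,\delta}$ after adding $iy$, so Lemma \ref{properties-of-Pminus} applies and the same argument as in (i) gives the boundary value and the angular derivative $(\prod F'(x_{-n}))^{-1}$. The statement for $\Psi$ is the inverse-function version via Julia–Carathéodory. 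Part (iii) then follows by composing (i) and (ii): $F^\infty=P_+\circ\Psi$ locally near $P_-({\bf x})$, and the chain rule at the boundary for angular derivatives multiplies the two products.

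The main obstacle is the uniformity in $n$ of the distortion estimate in (ii). We must show the Stolz angles do not shrink to nothing and that the inverse branches are single-valued. A nearby atom of $\mu$ could pinch the angle, and this is prevented only by $F'(x_{-n})-1\ge \mu$-mass$/\mathrm{dist}^2$. My plan is to use exactly this inequality: it bounds the pinching by $F'(x_{-n})-1$, whose sum is finite.
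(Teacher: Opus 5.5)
Your overall plan is the paper's: linearize along the real orbit inside Stolz angles with per-step error governed by $F'(x_k)-1$, get estimates uniform in $n$, pass to the limit defining $\widetilde P_\pm$, and obtain (iii) by composing (i) and (ii). The gap is in the one-step estimate that everything rests on. From the Polya--Szeg\"o form one has
\[
F(z)-F(x)-F'(x)(z-x) \,=\, -(z-x)^2\int_{\mathbb{R}}\frac{d\mu(a)}{(x-a)^2(z-a)}.
\]
So a quadratic bound $\lesssim (F'(x)-1)|z-x|^2/h_0$ needs something like $\int d\mu(a)/|x-a|^3<\infty$, controlled along the whole orbit. $F'(x_k)<\infty$ does not give this. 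Finite Lyapunov exponent even allows $\mu$ to have dense support (atoms $w_q$ at the rationals with $\sum\sqrt{w_q}<\infty$), so there is no ``distance to the bulk of $\mu$'' to use. Likewise, truncating at height $h\lesssim (F'(x)-1)^{-1/2}$ does not give univalence when $F'(x)$ is large. Take an atom of mass $m$ at distance $r$ from $x$ with $r^2<m\le (F'(x)-1)r^2$, and the rest of $\mu$ far away. It produces a critical point of $F$ near $a+i\sqrt m$, inside $S_{\pi/2}(x)$ at height $\le r\sqrt{F'(x)-1}$, which can be arbitrarily small. What is true is Lemma~\ref{ad-estimate}: $|F'(z)-1|\le 2(F'(x)-1)$ on the \emph{whole} Stolz angle $S_{\pi/2}(x)$, because $|x-a|\le\sqrt2\,|z-a|$ there. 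No truncation is needed, and $F$ is univalent on $S_{\pi/2}(x)$ as soon as $F'(x)<3/2$.

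That first-order bound gives a relative error $O(F'(x_k)-1)$ per step: summable, but not tending to $0$ as $y\to0$. Multiplying these errors therefore gives neither the uniform $o(y)$ you claim nor a total angular drift small enough to keep the orbit in Stolz angles, so it cannot identify the value $\prod F'(x_k)$. (A uniform $O(y)$ bound, which is all the boundary value needs, is free from Julia's lemma: $|F^{\circ n}(x_0+iy)-x_n|\le y\prod_{k<n}F'(x_k)$.)

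The missing step is the splitting in Step 2 of the paper. Choose $N$ with $\sum_{k\ge N}\log F'(x_k)<\varepsilon$. Handle the first $N$ steps by the angular derivative of $F^{\circ N}$ at $x_0$, which gives error and drift $o(1)$ as $y\to0$. Handle the tail by Lemma~\ref{ad-estimate}, which gives error and drift $O(\varepsilon)$. In (ii) the same split deals with the finitely many steps where $F'(x_{-k})\ge 3/2$, using a point-dependent height.

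Also, the hand-off to Lemma~\ref{forward-dynamics} ``once $y\prod_{k<n}F'(x_k)$ reaches unit size'' never happens. The product is bounded, so the heights stay $O(y)$ and the linearization must run for all $n$.

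Finally, ending with Julia--Carath\'eodory instead of the paper's uniform estimate of $(F^{\circ n})'(x_0+iy)$ is a fine variant for $\widetilde P_+$, which is a self-map of $\mathbb{H}$ with real boundary value at $x_0$. But $\widetilde P_-$ is only defined on $S$. So in (ii), apply Julia--Carath\'eodory to $\Psi$ along the nontangential curve $\widetilde P_-(x_0+iy)$, on which $\Psi(\widetilde P_-(x_0+iy))=x_0+iy$, and then invert (or use Cauchy estimates on a smaller Stolz angle). Do not go the other way round.
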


The proof relies on the following elementary lemma:

\begin{lemma}
\label{ad-estimate}
Suppose $F: \mathbb{H} \to \mathbb{H}$ is a parabolic inner function. If $F(0) = 0$ and $F'(0) < \infty$, then
$$
|F'(z) - 1| < 2(F'(0) - 1),
$$
for any $z$ in the Stole angle $S_{\pi/2}(0) = \{ z \in \mathbb{H} : \im z > |\re z| \}$.
\end{lemma}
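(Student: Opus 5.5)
The plan is to work directly with the Nevanlinna-type representation (\ref{eq:general-inner}) and its derivative (\ref{eq:derivative-general}), namely
$$
F'(z) - 1 \, = \, \int_{\mathbb{R}} \frac{1+a^2}{(z-a)^2}\, d\nu(a).
$$
The argument has three steps: identify the angular derivative $F'(0)$ in terms of $\nu$, bound $|z-a|$ from below on the Stolz angle, and integrate.

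\emph{Step 1: identify $F'(0)$.} Since $F(0)=0$ is real and $F$ has a finite angular derivative at $0$, Julia--Carathéodory gives $F'(0) = \lim_{y \to 0} \im F(iy)/y$. By (\ref{eq:imaginary-part-increasing}) with $z = iy$,
$$
\frac{\im F(iy)}{y} \, = \, 1 + \int_{\mathbb{R}} \frac{1+a^2}{a^2+y^2}\, d\nu(a).
$$
The integrand increases as $y \downarrow 0$, so monotone convergence yields
$$
F'(0) - 1 \, = \, \int_{\mathbb{R}} \frac{1+a^2}{a^2}\, d\nu(a) \, < \, \infty.
$$
In particular, $\nu(\{0\}) = 0$. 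Moreover, $F'(0) - 1 > 0$, since $\nu \ne 0$ because $F$ is not Möbius.

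\emph{Step 2: geometric bound on the Stolz angle.} For $z \in S_{\pi/2}(0)$ and real $a \ne 0$, I claim that
$$
|z-a| \, > \, |a|/\sqrt 2 .
$$
The cone $S_{\pi/2}(0)$ is bounded by the rays $\arg z = \pi/4$ and $\arg z = 3\pi/4$. The distance from the real point $a$ to the closed cone is $|a|\sin(\pi/4) = |a|/\sqrt2$, attained only at a point on a boundary ray. Hence the inequality is strict for $z$ in the open cone. Consequently $|z-a|^2 > a^2/2$.

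\emph{Step 3: integrate.} Combining the two steps,
$$
|F'(z) - 1| \, \le \, \int_{\mathbb{R}} \frac{1+a^2}{|z-a|^2}\, d\nu(a) \, < \, 2\int_{\mathbb{R}} \frac{1+a^2}{a^2}\, d\nu(a) \, = \, 2(F'(0)-1).
$$
The strict inequality is legitimate because $\nu$ is a nonzero measure, which has no atom at $0$, and the pointwise inequality of Step 2 is strict on $\mathbb{R}\setminus\{0\}$.

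The only step needing care is Step 1, i.e.\ justifying that the angular derivative equals $1 + \int (1+a^2)a^{-2}\, d\nu$. I would handle it via the imaginary-part formula and monotone convergence as above, rather than passing to the limit in $F'(iy)$ directly, since the latter is complex-valued and monotonicity is not available there.
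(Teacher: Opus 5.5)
Your proof is correct and takes the same approach as the paper: the paper also uses the formula $F'(z)-1=\int_{\mathbb{R}}(1+a^2)(z-a)^{-2}\,d\nu(a)$, the absence of an atom of $\nu$ at $0$, and the pointwise bound $|a|<\sqrt{2}\,|z-a|$ on the Stolz angle. Your Step~1 makes explicit two points the paper leaves implicit: the identification $F'(0)-1=\int_{\mathbb{R}}(1+a^2)a^{-2}\,d\nu(a)$, obtained via the imaginary part and monotone convergence, and the need for $\nu\neq 0$ to get a strict inequality.
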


Before giving the proof, we mention two simple consequences of the above lemma. By the fundamental theorem of calculus, 
$$
F(z_2) - F(z_1) = (z_2 - z_1) + \int_{z_1}^{z_2} (F'(w) - 1) dw, \qquad z_1, z_2 \in S_{\pi/2}(0).
$$
Therefore, if $F'(0) < 3/2$, then $F$ is injective in the Stolz angle $S_{\pi/2}(0)$. Taking $z_2 = z$, $z_1 = i \varepsilon$ and taking $\varepsilon \to 0$, we get
$$
\biggl | \frac{F(z)}{z} - 1 \biggr | \le 2(F'(0) - 1), \qquad z \in S_{\pi/2}(0).
$$

\begin{proof}
Suppose $F$ has the representation (\ref{eq:general-inner}). Recall from (\ref{eq:derivative-general}) that
$$
F'(z) - 1= \int_{\mathbb{R}} \frac{1+a^2}{(z - a)^2} \, d\nu(a).
$$
Since $F$ has a finite angular derivative at zero, $\nu$ does not have a point mass at zero.
The lemma follows as
$|0 - a| < \sqrt{2} \, |z - a|$ for any $a \in \mathbb{R} \setminus \{ 0 \}$ and $z \in S_{\pi/2}(0)$.
\end{proof}

\begin{proof}[Proof of Lemma \ref{finite-log-sum}]
(i) {\em Step 1. Identification of the boundary value.} In this step, we show that $x_n - \re c_n$ converges as $n \to \infty$, and that the limit agrees with the vertical boundary value of $\widetilde{P}_+(z)$ at $x_0$. For $y > 0$ and $n = 0, 1, 2, \dots$, we may write
$$
(x_n - \re c_n) = (F^{\circ n}(x_0) - F^{\circ n}(x_0 + iy)) + (F^{\circ n}(x_0 + iy) - \re c_n).
$$
Since the second term on the right converges to $\widetilde{P}_+(x_0 + iy)$ by Corollary \ref{pommerenke-conjuacy2}, it suffices to show that the first term tends to 0 as $y \to 0$, uniformly in $n$. This follows from Lemma \ref{ad-estimate} and the fundamental theorem of calculus:
\begin{align*}
|F^{\circ n}(x_0) - F^{\circ n}(x_0 + iy)| & \le y \cdot \max_{z \in S_{\pi/2}(x_0)} |(F^{\circ n})'(z)| \\
& \le y \cdot \biggl (2 \prod_{k=0}^\infty F'(x_{k}) -1 \biggr ).
\end{align*}

\medskip

{\em Step 2. Existence of the angular derivative.}
We now turn to showing the existence of the angular derivative $\widetilde{P}'_+(x_0) := \lim_{y \to 0} \widetilde{P}'_+(x_0 + iy)$. Given $\varepsilon > 0$, choose $N = N(\varepsilon) > 0$ sufficiently large such that
\begin{equation}
\label{eq:derivative-tail-estimate}
\sum_{k = N}^\infty \log F'(x_k) < \varepsilon.
\end{equation}
Since $F^{\circ N}$ has an angular derivative at $x_0$, for $0 < y < y_0(\varepsilon, N)$ sufficiently small, we have
 $F^{\circ N}(x_0 + iy) \in S_{\pi/2}(x_N)$ and
\begin{equation}
\label{eq:important-ratio}
1 - O(\varepsilon) \, < \, \frac{(F^{\circ N})'(x_0 + iy)}{(F^{\circ N})'(x_0)} \, < \, 1 + O(\varepsilon).
\end{equation}
For $n \ge N$, we factor
$$
(F^{\circ n})'(x_0+iy) = (F^{\circ N})'(x_0+iy) \cdot (F^{\circ (n-N)})'(F^{\circ N}(x_0 + iy)).
$$

By (\ref{eq:important-ratio}) and (\ref{eq:derivative-tail-estimate}),
\begin{align*}
(F^{\circ N})'(x_0+iy) & = (1+O(\varepsilon)) \cdot (F^{\circ N})'(x_0) \\
& = (1+O(\varepsilon)) \prod_{k=0}^{N-1} F'(x_k) \\ 
& =  (1+O(\varepsilon)) \prod_{k=0}^\infty F'(x_k).
\end{align*}
Similarly, by Lemma \ref{ad-estimate} and (\ref{eq:derivative-tail-estimate}),
$$
(F^{\circ (n-N)})'(F^{\circ N}(x_0 + iy)) \, = \, (1+O(\varepsilon)) \cdot (F^{\circ (n-N)})'(F^{\circ N}(x_0))
\, = \, 1+O(\varepsilon).
$$
Combining the above observations, we see that for any $\varepsilon > 0$, when $n \ge N(\varepsilon)$ is sufficiently large and $y < y_0(\varepsilon)$ is sufficiently small,
$$
(1 - O(\varepsilon)) \prod_{k=0}^\infty F'(x_k) \, < \, (F^{\circ n})'(x_0+iy) \, < \, (1+O(\varepsilon)) \prod_{k=0}^\infty F'(x_k),
$$
or alternatively,
$$
(1 - O(\varepsilon))  \prod_{k=0}^\infty F'(x_k) \, < \,  F_n'(x_0 + iy) \, < \, (1+O(\varepsilon))  \prod_{k=0}^\infty F'(x_k).
$$
where $F_n(z) := F^{\circ n}(z + x_0) - \re c_n$. Taking the limit as $n \to \infty$, we see that the above inequality holds with $\widetilde{P}_+$ in place of $F_n$\,:
$$
(1 - O(\varepsilon))  \prod_{k=0}^\infty F'(x_k) \, < \, \widetilde P_+'(x_0 + iy) \, < \, (1+O(\varepsilon))  \prod_{k=0}^\infty F'(x_k).
$$
Since $\varepsilon > 0$ was arbitrary, $\lim_{y \to 0} \widetilde P'_+(x_0 + iy) = \prod_{k=0}^\infty F'(x_k)$ as desired.

\medskip

The proof of (ii) is similar,  while (iii) is an immediate consequence of (i) and (ii).
\end{proof}

The following lemma due to M.~Heins \cite{heins77} identifies the angular derivative as the measure-theoretic Jacobian:

\begin{lemma}
\label{jacobian-lemma}
Suppose $\varphi: \mathbb{H} \to \mathbb{H}$ is a holomorphic mapping which has an angular derivative at each point of $A$. If the vertical extension of $\varphi$ is injective on $A$, then
$$
\ell(\varphi(A)) = \int_A |\varphi'(x)| d\ell(x).
$$
\end{lemma}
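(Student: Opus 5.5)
The plan is to deduce the identity from the disintegration of Lebesgue measure into Clark (Aleksandrov) measures of $\varphi$. The atoms of these measures are governed by angular derivatives, and the injectivity hypothesis makes the disintegration collapse to a single atom on $A$ for each value in $\varphi(A)$.

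For $\alpha \in \mathbb{R}$, the function
$$
u_\alpha(z) = \frac{1}{\pi}\, \im \frac{-1}{\varphi(z) - \alpha}
$$
is positive and harmonic on $\mathbb{H}$. By the Herglotz representation, $u_\alpha(z) = c_\alpha \im z + \int_{\mathbb{R}} P_z(t)\, d\sigma_\alpha(t)$, where $P_z$ is the Poisson kernel of the upper half-plane, $c_\alpha \ge 0$ and $\sigma_\alpha \ge 0$. The normalization by $1/\pi$ is chosen so that for $\varphi(z) = z$ one gets $\sigma_\alpha = \delta_\alpha$. I will use three standard facts.

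\emph{(a) Aleksandrov's disintegration.} For every Borel set $E \subset \mathbb{R}$, $\int_{\mathbb{R}} \sigma_\alpha(E)\, d\ell(\alpha) = \ell(E)$. This is proved by integrating the Herglotz formula in $\alpha$: one checks that $\int u_\alpha(z)\, d\alpha = \im z \cdot 0 + P$-integral of $\ell$, i.e.\ equals $1$ because $\int \im\frac{-1}{w-\alpha}\,d\alpha = \pi$ for every $w \in \mathbb{H}$, and then uses uniqueness in the Herglotz representation.

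\emph{(b) Atoms.} By the Julia--Carath\'eodory theorem, if $\varphi$ has a finite angular derivative at $x$ with $\varphi(x) = \alpha$, then $\sigma_\alpha(\{x\}) = 1/\varphi'(x)$. The atom is read off as $\lim_{y \to 0} y\, u_\alpha(x+iy)$, and near $x$ one has $\varphi(x+iy) - \alpha \approx \varphi'(x)\, iy$.

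\emph{(c) Structure on $A$.} The absolutely continuous part of $\sigma_\alpha$ has density equal to the boundary value of $u_\alpha$. On $A$ the boundary values of $\varphi$ are real, and $\varphi(x) \ne \alpha$ for all but at most one $x \in A$, so this density vanishes a.e.\ on $A$. The singular part of $\sigma_\alpha$ is carried by the set where $\varphi \to \alpha$ nontangentially, and this set meets $A$ only in $A \cap \varphi^{-1}(\alpha)$. By injectivity, $A \cap \varphi^{-1}(\alpha)$ is empty for $\alpha \notin \varphi(A)$ and a single point $x_\alpha$ for $\alpha \in \varphi(A)$.

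Now apply (a) to the density $g = |\varphi'|\, \mathbf 1_A$, extended from indicator functions by monotone convergence:
$$
\int_A |\varphi'|\, d\ell \;=\; \int_{\mathbb{R}} \biggl( \int_A |\varphi'|\, d\sigma_\alpha \biggr) d\ell(\alpha).
$$
By (c), $\sigma_\alpha|_A = \sigma_\alpha(\{x_\alpha\})\, \delta_{x_\alpha}$ for a.e.\ $\alpha$. By (b), the inner integral is therefore $|\varphi'(x_\alpha)| / \varphi'(x_\alpha) = 1$ when $\alpha \in \varphi(A)$, and $0$ otherwise. Integrating in $\alpha$ gives $\ell(\varphi(A))$.

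Two side issues must be settled along the way. First, $A$ and $\varphi(A)$ must be measurable, or at least we must work with measurable hulls. The set of points with finite angular derivative is Borel, and $\varphi$ restricted to it is a Borel function, so $\varphi(A)$ is analytic and hence measurable. Second, the angular derivative $\varphi'(x)$ is positive at such points.

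The main obstacle is (c): showing rigorously that $\sigma_\alpha$ restricted to $A$ has no singular continuous part. The approach I would try first is the standard Poltoratski-type fact that the singular part of $\sigma_\alpha$ is concentrated on $\{x : \lim_{y \to 0} \varphi(x+iy) = \alpha\}$. This follows from $u_\alpha(x+iy) \to \infty$ at $\sigma_\alpha^s$-a.e.\ point, which forces $\varphi(x+iy) \to \alpha$. Combined with injectivity, this reduces $\sigma_\alpha|_A$ to at most one atom.
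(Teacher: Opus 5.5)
Your proof is correct, but it does not follow the route the paper points to. The paper gives no argument of its own: it attributes the lemma to Heins and refers to Craizer's proof via almost uniform differentiability. That is a local real-variable argument: make the boundary map nearly linear on large pieces of $A$, compare the measure of the image of each small piece with $|\varphi'|$ times its measure, and sum using injectivity.

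You instead derive the identity globally from the Clark measures $\sigma_\alpha$ and Aleksandrov's disintegration. These are the same tools the paper uses in Appendix~\ref{sec:ac-measures} for Lemma~\ref{thin-sets} and L\"owner's lemma. Your route gains several things:
\begin{itemize}
\item You keep the term $c_\alpha\,\mathrm{Im}\,z$ and the absolutely continuous part of $\sigma_\alpha$, so nothing requires $\varphi$ to be inner or parabolic. This is the generality actually needed, since the lemma is applied to $\Psi$ before $\Psi$ is shown to be inner.
\item Measurability of $\varphi(A)$ for Borel $A$ is automatic, because $\mathbf 1_{\varphi(A)}(\alpha)=\int_A|\varphi'|\,d\sigma_\alpha$ is measurable in $\alpha$.
\item Without injectivity, the same reasoning applies to any Borel $B$ inside the angular-derivative set. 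Each point of $B\cap\varphi^{-1}(\alpha)$ is an atom of $\sigma_\alpha$ of mass $1/\varphi'$, so this set is countable and carries no continuous singular mass. This gives the multiplicity formula $\int_B\varphi'\,d\ell=\int_{\mathbb R}\#\bigl(B\cap\varphi^{-1}(\alpha)\bigr)\,d\ell(\alpha)$.
\item Applied to Borel null sets, that formula yields Lusin's property (N). This is exactly what makes your ``measurable hulls'' remark precise for merely Lebesgue-measurable $A$.
\end{itemize}
The almost-uniform-differentiability argument is closer to the classical change-of-variables formula and needs no Clark-measure theory. In exchange, it has to handle the covering and measurability issues by hand.

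Two minor points. First, with your normalization the atom is $\lim_{y\to 0}\pi y\,u_\alpha(x+iy)$, not $\lim_{y\to 0} y\,u_\alpha(x+iy)$. The value $1/\varphi'(x)$ you state is still correct, as the test case $\varphi(z)=z$ confirms. Second, the step you flag as the main obstacle is elementary, for three reasons:
\begin{itemize}
\item $\int P_{x+iy}\,d\sigma_\alpha\ge\sigma_\alpha((x-y,x+y))/(2\pi y)$;
\item the symmetric derivative of $\sigma_\alpha$ is $+\infty$ at $\sigma_\alpha^s$-a.e.\ point;
\item $\pi u_\alpha\le 1/|\varphi-\alpha|$, so $u_\alpha\to\infty$ forces $\varphi(x+iy)\to\alpha$.
\end{itemize}
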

 
For a proof via almost uniform differentiability, see \cite[Lemma 1.6]{craizer}. Below, we will apply this to $\varphi = \Phi$.

\subsection{Wandering sets}

By a {\em wandering set} $A \subset \mathbb{R}$, we mean a measurable set which does not intersect any of its images under $F$, i.e.~
$$
A \cap F^{\circ n}(A) = \emptyset, \qquad \text{for any }n > 0.$$
It is easy to see that this condition implies that the inverse images of $A$ are pairwise disjoint: 
$$
F^{-m}(A) \cap F^{-n}(A) = \emptyset, \qquad \text{for any }n > m \ge 0.
$$
Since under iteration, a.e.~$x \in \mathbb{R}$ converges to infinity by Lemma \ref{convergence-lemma}, any bounded set 
$A \subset \mathbb{R}$ of positive Lebesgue measure contains a wandering subset of positive measure, for instance, one can take $A_0 = A \setminus \bigcup_{n \ge 1} F^{\circ n}(A)$. From here, it follows that any set $A \subset \mathbb{R}$ of finite Lebesgue measure can be decomposed into a union of countably many basic sets and a set of measure zero.

\begin{lemma}
\label{T-minus-epsilon}
Suppose $F: \mathbb{H} \to \mathbb{H}$ is a simply parabolic inner function with finite Lyapunov exponent. For any $\varepsilon > 0$, there exists a wandering set $A \subset \mathbb{R}$ with $\ell(A) \ge T - \varepsilon$.
\end{lemma}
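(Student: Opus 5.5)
The plan is to build the wandering set as a fundamental domain for the translation $z \mapsto z + T$, transported to the real line via the boundary values of $\widetilde{P}_+$. By Theorem \ref{summary2}(iv), which in effect is Lemma \ref{finite-log-sum}(i), for $\ell$-a.e.~$x$ the forward orbit has $\sum_n \log F'(x_n) < \infty$. Here $\log F' \ge 0$ because $F'(x) \ge 1$ on $\mathbb{R}$ by (\ref{eq:derivative-general}). The summability holds a.e.~on the set of points escaping to infinity (all points, by Lemma \ref{convergence-lemma}) in the following sense: since $\ell$ is $F$-invariant, one has $\int \sum_{n} \log F'(F^{\circ n}(x))\, d\ell$ over a wandering set $W$ equal to $\int_{\bigcup_n F^{\circ n} W} \log F' \, d\ell \le \int_{\mathbb{R}} \log F'\, d\ell < \infty$. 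Hence on any wandering set the sum is a.e.~finite, and we obtain boundary values $\widetilde{P}_+(x) \in \mathbb{R}$ and angular derivatives $\widetilde{P}_+'(x) = \prod_{n \ge 0} F'(x_n) \ge 1$. These satisfy $\widetilde{P}_+(F(x)) = \widetilde{P}_+(x) + T$ and $\widetilde{P}_+'(F(x))\,F'(x) = \widetilde{P}_+'(x)$.

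Next, the plan is to show that the image of $\mathbb{R}$ under $\widetilde{P}_+$ covers almost all of $\mathbb{R}$ modulo $T$. I would do this by taking a long interval $J = [X, X + NT]$ far to the right. On $\Omega_{+,\delta}$ the map $\widetilde{P}_+$ is close to a translation (Lemma \ref{properties-of-Pplus}), and near $\mathbb{R}$ the angular derivative $\prod_{n\ge0} F'(x_n)$ is close to $1$ for most $x \in J$. This should follow from the tail estimate $\int_{[X,\infty)} \log F' \, d\ell \to 0$, combined with invariance of $\ell$ in the form: for a.e.~$x \ge X$, the whole forward orbit stays to the right of $X - O(1)$, which is a heuristic based on $x_{n+1} - x_n \approx T$. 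By Lemma \ref{jacobian-lemma}, $\ell(\widetilde{P}_+(A)) = \int_A \widetilde{P}_+' \, d\ell \ge \ell(A)$ whenever $\widetilde{P}_+$ is injective on $A$.

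The construction is then to choose a fundamental interval $I = [t_0, t_0 + T)$ and set $A = \{x \in [X, \infty) : \widetilde{P}_+(x) \in I,\ \text{sum finite}\}$, restricted to a subset on which $\widetilde{P}_+$ is injective.

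1. $A$ is wandering: if $F^{\circ n}(x) = y$ with $x, y \in A$ and $n > 0$, then $\widetilde{P}_+(y) = \widetilde{P}_+(x) + nT$, contradicting $I$ having length $T$.

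2. To bound $\ell(A) \ge T - \varepsilon$, apply Lemma \ref{jacobian-lemma} to $\widetilde{P}_+|_A$. This gives $\ell(\widetilde{P}_+(A)) = \int_A \widetilde{P}_+' \, d\ell$, so it suffices to show two things: that $\widetilde{P}_+(A)$ has measure at least $T - \varepsilon/2$ in $I$, and that $\widetilde{P}_+' \le 1 + \varepsilon/(2T)$ on most of $A$. Then $\ell(A) \ge (T - \varepsilon/2)/(1 + \varepsilon/(2T)) \ge T - \varepsilon$.

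The main obstacle is the covering claim, namely that the boundary values of $\widetilde{P}_+$ on a far-right region fill almost all of a period $I$, with injectivity. My first attempt would compare with the strip $S_+(R, \alpha, \beta)$ and let $\alpha \to 0$. Since $\widetilde{P}_+$ is injective on $\Omega_{+,\delta}$ and close to a translation there, its restriction near $\mathbb{R}$ is a conformal map whose image contains a lateral strip. Letting $\delta \to 0$, I would use a harmonic measure argument to control the boundary correspondence: Lebesgue measure of $I$ is the limit of harmonic measures seen from high points, and $\widetilde{P}_+$ preserves harmonic measure from $\infty$ up to the factor $\widetilde{P}_+'$. If injectivity down to the boundary fails, I would fall back on a multiplicity version of Lemma \ref{jacobian-lemma}: $\int_A \widetilde{P}_+' \ge \ell(\widetilde{P}_+(A))$, which holds without injectivity. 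I would then pick a measurable selection of $A$ to make $\widetilde{P}_+$ injective, reducing everything to showing that the almost-every-$x$ image of $\widetilde{P}_+$ has full measure mod $T$.
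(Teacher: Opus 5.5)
Your step 1 is sound: on orbits with $\sum_n\log F'(x_n)<\infty$, Lemma~\ref{finite-log-sum}(i) identifies $\widetilde P_+(x)$ with $\lim_n(x_n-\re c_n)$. Hence $\widetilde P_+(F(x))=\widetilde P_+(x)+T$, and the preimage of a half-open period is wandering.

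The gap is the step you yourself call the main obstacle, the lower bound $\ell(A)\ge T-\varepsilon$. None of the tools you list closes it.

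\begin{itemize}
\item Lemma~\ref{properties-of-Pplus} gives only interior information on $\Omega_{+,\delta}$, with no uniformity as $\delta\to0$. It says nothing about how $\widetilde P_+$ distributes boundary measure.
\item A harmonic-measure argument needs $\widetilde P_+$ to be inner; otherwise part of $\mathbb R$ is sent into $\mathbb H$ and measure is lost, as happens for $G$ and $L$ in Appendix~\ref{sec:bad-example}. You never establish or invoke innerness.
\item The heuristic that orbits starting at $x\ge X$ stay to the right of $X-O(1)$ is not justified. The quantity $F(x)-x-T=-\int d\mu(a)/(x-a)$ is small only off a set of finite (not zero) measure, and it is unbounded below near the support of $\mu$. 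So points of $[X,\infty)$ can be thrown far to the left, and showing that such excursions cost only $O(\varepsilon)$ in measure along entire orbits is precisely the content of the lemma.
\item ``$\widetilde P_+'\le 1+\varepsilon/(2T)$ on most of $A$'' is not enough: a small-measure part of $A$ with large derivative can carry a large part of the image.
\item Your justification of a.e.~summability is wrong as written. The integral $\int_W\log F'(F^{\circ n}(x))\,d\ell(x)$ is in general not $\int_{F^{\circ n}(W)}\log F'\,d\ell$, because the change of variables carries weights $1/|(F^{\circ n})'|$. Also, the images $F^{\circ n}(W)$ need not be pairwise disjoint. The correct bound is the weighted pre-image tree argument of Lemma~\ref{corr-log-sum}.
\end{itemize}

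\textbf{How the paper argues.} It avoids $\widetilde P_+$ entirely and works on $\mathbb R$. By Cauchy--Schwarz, $|F(x)-x-T|\le\mu(\mathbb R)^{1/2}(F'(x)-1)^{1/2}$, so $B=\{|F(x)-x-T|\ge\varepsilon/2\}\cup\{F'\ge 2\}$ has finite measure. One picks $t$ with $\ell(B\cap[t,\infty))<\varepsilon/2$ and lets $A\subset[t+\varepsilon/2,t+T)$ be the points whose forward orbits avoid $B$. These orbits march right with steps in $(T-\varepsilon/2,T+\varepsilon/2)$, so $A$ is wandering. For the first-entry sets $E_n$, invariance of $\ell$ gives $\ell(E_n)\le\ell(F^{\circ n}(E_n))$. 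The sets $F^{\circ n}(E_n)\subset B\cap[t,\infty)$ are pairwise disjoint by Lemma~\ref{thin-sets}, so $\sum_n\ell(E_n)\le\varepsilon/2$. This first-entry bookkeeping is the rigorous replacement for your orbit heuristic.

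\textbf{How your own framework can be closed.} Add one observation. Your first paragraph, with Lemma~\ref{corr-log-sum} supplying the summability, shows that $\widetilde P_+$ has real vertical limits a.e., i.e.~it is inner. Section~\ref{sec:checking-parabolicity} shows it has angular derivative $1$ at $\infty$. L\"owner's lemma in the form of Appendix~\ref{sec:ac-measures} then gives $\ell(\widetilde P_+^{-1}(I))=\ell(I)=T$ for $I=[t_0,t_0+T)$. Restricted to the full-measure set of orbits with summable $\log F'$, your step 1 shows this set is wandering. This route needs no restriction to $[X,\infty)$, no injectivity and no derivative bounds, and it even yields $\ell(A)=T$.
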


The proof relies on the following lemma:

\begin{lemma}
\label{thin-sets}
Let $F: \mathbb{H} \to \mathbb{H}$ be a parabolic inner function with a Denjoy-Wolff fixed point infinity. Then $F$ is injective on the set $\mathbb{R}_{\thin} = \{ x \in \mathbb{R} : |F'(x)| < 2 \}$.
\end{lemma}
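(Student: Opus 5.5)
The plan is to derive injectivity from a mass bound coming from the Aleksandrov--Clark (Nevanlinna) representation. The bound to aim for is the following: for every $y \in \mathbb{R}$,
$$
\sum_{x \in \mathbb{R}:\, F(x) = y} \frac{1}{|F'(x)|} \, \le \, 1,
$$
where $F(x) = y$ means that $F$ has non-tangential limit $y$ at $x$ and $F'(x)$ denotes the (finite, positive) angular derivative there. Granting this, injectivity on $\mathbb{R}_{\thin}$ is immediate. If $x_1 \ne x_2$ in $\mathbb{R}_{\thin}$ had $F(x_1) = F(x_2) = y$, then each term $1/|F'(x_i)|$ would exceed $1/2$. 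The sum would then exceed $1$, a contradiction.

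To prove the bound, I fix $y \in \mathbb{R}$ and consider $h_y(z) = -1/(F(z) - y)$. This is a holomorphic self-map of $\mathbb{H}$, since $F(z) - y \in \mathbb{H}$. By the Nevanlinna representation,
$$
h_y(z) = b z + c + \int_{\mathbb{R}} \Bigl( \frac{1}{a - z} - \frac{a}{1+a^2} \Bigr) d\sigma_y(a), \qquad b \ge 0,\ c \in \mathbb{R},
$$
where $\sigma_y$ is a positive measure with $\int d\sigma_y/(1+a^2) < \infty$. Three facts are needed.

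\begin{enumerate}[label=(\alph*)]
\item The coefficient $b$ vanishes. Indeed, $b = \lim_{t\to\infty} h_y(it)/(it)$, and $F(it)/(it) \to 1$ because $\infty$ is the Denjoy--Wolff point with angular derivative $1$. So $|h_y(it)| \to 0$.
\item The total mass $\sigma_y(\mathbb{R})$ is finite and equals
$$
\lim_{t\to\infty} t \, \im h_y(it) \, = \, \lim_{t \to \infty} \frac{t \, \im F(it)}{|F(it) - y|^2} \, = \, 1,
$$
again using $F(it) = it(1+o(1))$ in the sense that $\im F(it)/t \to 1$ and $|F(it)|/t \to 1$. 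This is the standard monotone-convergence computation with the Poisson-type kernel $t^2/(a^2+t^2) \uparrow 1$.
\item At each $x$ with $F(x) = y$ and finite angular derivative, $\sigma_y$ has an atom of mass $1/F'(x)$. Indeed, $\sigma_y(\{x\}) = \lim_{s \to 0} s \, \im h_y(x + is)$. Since $F(x+is) - y = F'(x)\, is\,(1+o(1))$, this gives $\sigma_y(\{x\}) = 1/F'(x)$.
\end{enumerate}

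Summing the atoms then gives $\sum 1/F'(x) \le \sigma_y(\mathbb{R}) = 1$, which is the desired bound.

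The only delicate point is step (c) and the interpretation of ``$|F'(x)| < 2$'' as the angular derivative at a point where $F$ has a real non-tangential limit. At points where the derivative is finite, Julia--Carath\'eodory guarantees both the real boundary value and the asymptotic $F(x + is) - y \sim F'(x)\, is$. For the atom computation I will quote the Julia--Carath\'eodory theorem rather than redo it. Points where $F'(x)$ does not exist or is infinite are simply not in $\mathbb{R}_{\thin}$.
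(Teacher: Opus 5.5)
Your proof is correct and is essentially the paper's argument. The paper (Appendix~B) takes the Aleksandrov--Clark measure $\mu_\xi$ of $i/(F-\xi)$, which is your $\sigma_y$ up to the harmless rotation $i/(F-\xi) = -i\cdot\bigl(-1/(F-\xi)\bigr)$; it shows $\mu_\xi$ is a probability measure because $F(iy)=iy+o(y)$, and notes that two thin preimages of $\xi$ would give two atoms of mass greater than $1/2$. The only difference is that you verify the total mass and the atom masses $1/F'(x)$ directly from the Nevanlinna representation and Julia--Carath\'eodory, whereas the paper imports these facts from the classical disk theory by a M\"obius change of variables.
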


When $F: \mathbb{H} \to \mathbb{H}$ is a finite Blaschke product, the lemma follows from the invariance the Lebesgue measure on the real line: since $\sum_{F(\zeta) = \xi} |F'(\zeta)|^{-1} = 1$ for any $\xi \in \mathbb{R}$, there cannot be two points in the thin set which have the same image under $F$. The general case is similar, but uses Aleksandrov-Clark measures. The proof will be given in Appendix  \ref{sec:ac-measures}.

\begin{remark}
The thick-thin decomposition was introduced by C.~McMullen \cite[Section 4]{mcmullen2} for finite Blaschke products with $F(0) = 0$. There, McMullen showed that the restriction of $F : 
\partial \mathbb{D} \to \partial \mathbb{D}$ to the thin part of $F$ extends to a homeomorphism of the unit circle.
\end{remark}

\begin{proof}[Proof of Lemma \ref{T-minus-epsilon}]
Fix an $0 < \varepsilon < 2T$. By the Cauchy-Schwarz inequality,
$$
\int_{\mathbb{R}} \frac{d\mu(a)}{|x-a|} \, \le \, \biggl ( \int_{\mathbb{R}} d\mu(a) \biggr)^{1/2} \biggl ( \int_{\mathbb{R}} \frac{d\mu(a)}{|x-a|^2} \biggr)^{1/2}
\, = \, \mu(\mathbb{R})^{1/2} ( |F'(x)| - 1 )^{1/2}.
$$
Since $\mu(\mathbb{R})$ and the Lyapunov exponent of $F$ is finite, the sets
$$
\bigl \{ x \in \mathbb{R} : |F(x) - x - T| < \varepsilon/2 \bigr \} \qquad \text{and} \qquad  \mathbb{R}^c_{\thin} = \{ x \in \mathbb{R} : |F'(x)| \ge 2 \}
$$ 
have finite Lebesgue measure. Let 
$$
B = \bigl \{ x \in \mathbb{R} : |F(x) - x - T| < \varepsilon/2 \bigr \} \cup \mathbb{R}^c_{\thin}
$$ 
be their union and choose $t > 0$ sufficiently large so that $\ell(B \cap [t, \infty)) < \varepsilon/2$. Define 
$$A \subset [t+\varepsilon/2, t + T)$$
as the set of points that do not belong to $B$ and whose forward orbit never enters $B$. 
Finally, for each $n \ge 0$, let $E_n$ be the set of points $x \in [t+\varepsilon/2, t+T)$ for which $F^{\circ n}(x) \in B$
but $F^{\circ k}(x) \notin B$ for $0 \le k < n$. Then, $[t+\varepsilon/2, t+T) = A \sqcup \bigsqcup_{n \ge 0} E_n$.

Since $$T - \varepsilon/2 \, \le \, F^{\circ (n+1)}(x) - F^{\circ n}(x) \, \le \, T+\varepsilon/2, \qquad x \in A, \quad n \ge 0,$$
the set $A$ is disjoint from its forward iterates. As $F$ is injective on the thin set of $F$, the sets $B_n = F^{\circ n}(E_n) \subset B$, $n \ge 0$, are disjoint. Since $\ell$ is $F$-invariant, $\ell(E_n) \le  \ell(B_n)$ and so
$$
\sum_{n=0}^\infty \ell(E_n) \, \le \, \sum_{n=0}^\infty \ell(B_n) \, \le \, \varepsilon/2.
$$
Hence, $\ell(A) \ge T - \varepsilon$ as desired.
\end{proof}

\subsection{\texorpdfstring{Correspondence between $\partial C_-$ and $\partial C_+$}{Correspondence between ∂C₋ and ∂C₊}}
  
Previously in Section \ref{sec:the-lavaurs-map}, for a simply parabolic Polya-Szeg\"o inner function, we constructed a holomorphic map $F^\infty: C_- \to C_+$ by considering BISBS in the upper half-plane. More precisely, if ${\bf z} = (z_n)_{n=-\infty}^\infty$ is a BISBS, then
$F^\infty$ maps $P_-({\bf z}) \in C_-$ to $P_+({\bf z}) \in C_+$. 
The purpose of this section is to understand the boundary behaviour of this correspondence when $F$ has a finite Lyapunov exponent. 
Recall that by Lemma \ref{finite-log-sum} above, if ${\bf x} = (x_{-n})_{n=0}^\infty \in \widehat{\mathbb{R}}$ is a bi-infinite orbit with
\begin{equation}
\label{eq:corr-log-sum}
\sum_{n=-\infty}^\infty \log |F'(x_n)| < \infty,
\end{equation}
then $F^\infty$ maps $P_-({\bf x}) \in \partial C_-$ to $P_+({\bf x}) \in \partial C_+$. We denote by $\LS$ the set of bi-infinite orbits satisfying
(\ref{eq:corr-log-sum}).

We first show that $\widehat{\ell}$ a.e.~bi-infinite orbit ${\bf x} \in \widehat{\mathbb{R}}$ satisfies this summability condition. We then show that these ``good'' orbits are abundant in the sense that they cover $\partial C_-$ up to a set of Lebesgue measure zero, allowing us to identify $\partial C_-$ with
 $\widehat{\mathbb{R}}/\langle \widehat{F} \rangle$ up to null sets. 
Finally, we show that under this identification, the Lebesgue measure on
 $\partial C_- \cong \mathbb{R} / (z \to z+T)$ corresponds to the quotient of the natural extension measure $\widehat{\ell}$ on 
  $\widehat{\mathbb{R}}/\langle \widehat{F} \rangle$. We conclude that $F^\infty: C_- \to C_+$ is an inner function.

\begin{lemma}
\label{corr-log-sum}
For $\widehat{\ell}$ a.e.~bi-infinite orbit ${\bf x} = (x_{-n})_{n=0}^\infty \in \widehat{\mathbb{R}}$, the sum
\begin{equation}
\label{eq:corr-log-sum2}
\sum_{n=-\infty}^\infty \log |F'(x_n)| < \infty.
\end{equation}
In particular, for $\ell$ a.e.~$x \in \mathbb{R}$, the sum over its forward orbit 
$$
\sum_{n=0}^\infty \log |F'(x_n)| < \infty.
$$
\end{lemma}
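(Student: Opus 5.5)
The plan is to reduce the statement to an integrability estimate over a fundamental domain for the dissipative dynamics. Since a.e.\ point of $\mathbb{R}$ escapes to infinity (Lemma \ref{convergence-lemma}), the system $(F,\mathbb{R},\ell)$ is totally dissipative. We will therefore work with wandering sets. For a wandering set $A$, the sets $F^{-n}(A)$, $n \ge 0$, are pairwise disjoint. Applied to the function $\log|F'| \ge 0$, this gives
$$
\int_A \sum_{n=0}^\infty \log|F'(F^{\circ n}(x))| \, d\ell(x) \; \le \; \int_{\mathbb{R}} \log|F'|\, d\ell \; < \; \infty.
$$
Indeed, by $F$-invariance of $\ell$ we have $\int_A \log|F'|\circ F^{\circ n}\,d\ell \le \int_{F^{-n}(A)} \log|F'| \,d\ell$ (more precisely, $\int_A g\circ F^{\circ n} = \int_{F^{-n}A} g$ is not what invariance gives directly). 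The clean version uses invariance in the form $\int g \circ F^{\circ n}\cdot 1_{A}\circ\ldots$. To avoid this subtlety, we instead sum over inverse images: $\sum_n \int_{F^{-n}(A)} \log|F'|\,d\ell \le \int \log|F'|$ by disjointness, and $\int_{F^{-n}(A)} g \, d\ell = \int \sum_{F^{\circ n}(y)=x, y\in\ldots}$. The tidiest route is to pass to the natural extension, as follows.

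We take $\widehat{A}=\{{\bf x}: x_0\in A\}$. The sets $\widehat F^{\,n}(\widehat A)$, $n\in\mathbb{Z}$, are pairwise disjoint, because $x_0 \in A$ and $x_n \in A$ cannot both hold since $A$ is wandering. Define $g({\bf x}) = \log|F'(x_0)|$. By $\widehat F$-invariance of $\widehat\ell$,
$$
\int_{\widehat A} \sum_{n\in\mathbb{Z}} g(\widehat F^{\,n}{\bf x})\, d\widehat\ell \;=\; \sum_{n\in\mathbb{Z}} \int_{\widehat F^{\,n}\widehat A} g \, d\widehat\ell \;\le\; \int_{\widehat{\mathbb{R}}} g\, d\widehat\ell \;=\; \int_{\mathbb{R}}\log|F'|\,d\ell \;<\;\infty.
$$
The last equality holds because $\widehat\ell$ projects to $\ell$ under the coordinate $x_0$. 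Consequently the two-sided sum in (\ref{eq:corr-log-sum2}) is finite for $\widehat\ell$-a.e.\ ${\bf x}\in\widehat A$. The same conclusion then holds on every $\widehat F^{\,k}(\widehat A)$, since finiteness of the sum is a shift-invariant property of the orbit.

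It remains to check that countably many such sets $\widehat A$ exhaust $\widehat{\mathbb{R}}$ up to a null set. By the discussion preceding Lemma \ref{T-minus-epsilon}, each bounded interval $[m,m+1)$ decomposes, up to a null set, into countably many wandering sets $A_{m,j}$. Hence the cylinders $\{x_0\in A_{m,j}\}$ cover $\widehat{\mathbb{R}}$ up to a $\widehat\ell$-null set, because their union covers every ${\bf x}$ with $x_0$ outside an $\ell$-null set. This gives the first claim.

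For the second claim, note that $\sum_{n\ge0}\log|F'(x_n)|$ depends only on $x_0$ and is bounded by the two-sided sum. Since $\widehat\ell$ projects to $\ell$, the set of $x_0$ where the one-sided sum diverges is $\ell$-null.

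The main point requiring care is the exhaustion by wandering sets, which relies on the complete dissipativity supplied by Lemma \ref{convergence-lemma}; everything else is a direct Fubini/invariance computation.
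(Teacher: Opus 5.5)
Your argument is correct. For the forward half of the sum it takes a different route from the paper.

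\textbf{How the paper argues.} The paper splits the two-sided sum into two halves.
\begin{itemize}
\item For $n<0$ it does exactly what your computation does. Each $x_{-k}$ of an orbit in $\widehat A$ lies in $F^{-k}(A)$, these preimages are pairwise disjoint, and integrating a function of one coordinate against $\widehat\ell$ reduces to integrating against $\ell$.
\item For $n\ge 0$ it stays on the base. It rewrites $\int_A\sum_{n\ge0}\log|F'(F^{\circ n}x)|\,d\ell$ as $\int_{\mathbb{R}}\log|F'(\xi)|\,W(\xi)\,d\ell(\xi)$. Here $W(\xi)$ is the sum of $1/|(F^{\circ n})'(a)|$ over the points $a\in A$ in the preimage tree of $\xi$. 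It then shows $W\le 1$ by weight conservation down that tree, using $\sum_{F(a)=b}1/|F'(a)|=1$ a.e., together with the fact that a wandering set is an anti-chain in the tree.
\end{itemize}

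\textbf{How you argue.} You use that $\widehat F$ is an invertible shift preserving $\widehat\ell$. You also use that the sets $\widehat F^{\,n}(\widehat A)=\{{\bf y}: y_{-n}\in A\}$, $n\in\mathbb{Z}$, are pairwise disjoint. This handles both halves in one computation. The tree argument is in effect built into the consistency of $\widehat\ell$: its fibres over $y_0=\xi$ carry the preimage weights $1/|(F^{\circ n})'|$.

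\textbf{Comparison.} Your version is shorter. It also gives the bound $\int_{\mathbb{R}}\log|F'|\,d\ell$ directly, without the factor $2$ that the paper only remarks can be removed. The paper's Step 2 has the small advantage of proving the forward (``in particular'') statement on the base itself, with the transfer-operator mechanism made explicit. Your argument uses only the defining properties of $\widehat\ell$ (projection to each coordinate and invariance under $\widehat F$), which the paper already assumes.

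\textbf{One cleanup.} Delete your opening paragraph. Its displayed inequality is true: it follows from your later computation by keeping only $n\ge 0$ and projecting to $x_0$. However, the termwise justification $\int_A\log|F'|\circ F^{\circ n}\,d\ell\le\int_{F^{-n}(A)}\log|F'|\,d\ell$ is not what invariance gives, and it can fail. Nothing later in your proof depends on it.
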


\begin{proof}
 {\em Step 1.} Let $A \subset \mathbb{R}$ be a wandering set and $\widehat{A} \subset \widehat{\mathbb{R}}$ be the set of bi-inverse orbits ${\bf x} = (x_{n})_{n=-\infty}^\infty$ with $x_0 \in A$. From the definition of the natural extension, it follows that if a function on $\widehat{\mathbb{R}}$ only depends on a single coordinate, say $\phi({\bf x}) = \phi_k(x_k)$, then integrating over $\widehat{\mathbb{R}}$ reduces to integration on the base space:
$$
\int_{\widehat{\mathbb{R}}} \phi({\bf x}) \, d\widehat{\ell}({\bf x}) = \int_{\mathbb{R}} \phi_k (x_k) \, d\ell(x_k).
$$
Suppose $k \ge 1$. Since $\widehat{A} = \bigl \{ {\bf x} \in \widehat{\mathbb{R}} : x_k \in F^{-k}(A) \bigr \}$, replacing $\phi$ by $\phi \cdot \chi_{\widehat{A}}$ gives
$$
\int_{\widehat{A}} \phi({\bf x}) \, d\widehat{\ell}({\bf x}) = \int_{F^{-k}(A)} \phi_k (x_k) \, d\ell(x_k).
$$
As the sets $\{ F^{-k}(A) : k \ge -1 \}$ are disjoint,
$$
\int_{\widehat{A}} \biggl \{ \sum_{k=-\infty}^{-1} \log |F'(x_{k})| \biggr \} d\widehat{\ell} \, = \, 
  \sum_{k=-\infty}^{-1} \int_{F^{\circ k}(A)} \log |F'(x)| d\ell 
\, \le \, \int_{\mathbb{R}} \log |F'(x)| d\ell.
$$
The interchange of summation and integration can be justified by the monotone convergence theorem since all terms are non-negative.

\medskip

\noindent {\em Step 2.}
By the change of variables formula,
$$
\int_A \biggl \{ \sum_{n = 0}^\infty \log |F'(x_n)|\biggr \} d\ell(x)
=
\int_{\mathbb{R}} \log |F'(\xi)| \cdot W(\xi) \, d\ell(\xi),
$$
where
$$
W(\xi) = \sum_{\substack{n \ge 0, \, a \in A \\ F^{n(a)}(a)=\xi}}
\frac{1}{|(F^{n(a)})'(a)|}.
$$
It remains to show that $W(\xi) \le 1$ for almost every $\xi \in \mathbb{R}$, which will imply
$$
\int_A \biggl \{ \sum_{n = 0}^\infty \log |F'(x_n)|\biggr \} d\ell(x)
\, \le \,
\int_{\mathbb{R}} \log |F'(\xi)|\, d\ell(\xi) \, < \, \infty.
$$

Fix a point $\xi \in \mathbb{R}$ and consider the tree $T_\xi$ of pre-images of $\xi$. The vertices of $T_\xi$ are points $\zeta \in \mathbb{R}$ that map to $\xi$ under some iterate of $F$. We view $\xi$ as the root vertex of $T_\xi$.
 Two vertices $\zeta$ and $\eta$ are connected by an edge if $F(\zeta) = \eta$ or vice versa.

Since Lebesgue measure is invariant under $F$, for a.e.~$b \in \mathbb{R}$, we have
\begin{equation}
\label{eq:preimage-1-sum}
\sum_{F(a)=b} \frac{1}{|F'(a)|} = 1.
\end{equation}
By excluding a set of $\xi$'s of measure zero, we may assume that $F$ has an angular derivative at every vertex $\zeta \in T_\xi$ and that (\ref{eq:preimage-1-sum}) holds with $b=\zeta$. To a vertex $\zeta \in T_v$, we assign the weight
$$
w(\zeta):=\frac{1}{|(F^{\circ n(\zeta)})'(\zeta)|}, \qquad \text{where }F^{\circ n(\zeta)}(\zeta)=\xi.
$$
From (\ref{eq:preimage-1-sum}) and the chain rule, it follows that the weight of any vertex is equal to the sum of the weights of its children.

Since $A$ is a wandering set, no point in $A$ can be a descendant of any other point of $A$. (In other words, vertices in $A$ form an anti-chain). Since the weight is conserved down the tree, the total weight carried by any anti-chain is bounded above by the weight of the root. As the root has weight 1, we have $W(\xi) \le 1$.

\medskip

\noindent {\em Conclusion.}
Putting the estimates in Steps 1 and 2 together, we get
$$
\int_{\widehat{A}} \biggl \{ \sum_{n=-\infty}^\infty \log |F'(x_{k})| \biggr \} d\widehat{\ell} \, \le \, 2 \int_{\mathbb{R}} \log |F'(x)| d\ell(x) \, < \, \infty. 
$$
(A careful inspection of the above argument shows that the factor of 2 is not necessary.)
Consequently, for $\widehat{\ell}$ a.e.~bi-infinite orbit ${\bf x} \in \widehat{A}$, the sum
$$
\sum_{n=-\infty}^\infty \log |F'(x_n)| < \infty.
$$
Since the real line can be expressed as a countable union of wandering sets and a null set, the above sum is finite for  $\widehat{\ell}$ a.e.~${\bf x} \in \widehat{\mathbb{R}}$.
\end{proof}

By Lemma \ref{finite-log-sum}, we have:

\begin{lemma}
For $\widehat{\ell}$ a.e.~bi-infinite orbit ${\bf x} = (x_{-n})_{n=-\infty}^\infty \in \widehat{\mathbb{R}}$, the limits
$$
\widetilde{P}_+({\bf x}) := \lim_{n \to \infty} \bigl ( x_n - \re F^{\circ n}(i) \bigr ), \qquad \widetilde{P}_-({\bf x}) := \lim_{n \to \infty}  \bigl (  x_{-n}  - \re c_{-n} \bigr ) $$ exist. Moreover, $F^{\infty}(P_-({\bf x})) = P_+({\bf x})$
and \,$\log |(F^{\infty})'(P_-({\bf x}))| = \sum_{n=-\infty}^\infty \log |F'(x_n)|.$
\end{lemma}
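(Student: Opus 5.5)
The statement combines Lemma \ref{corr-log-sum} with Lemma \ref{finite-log-sum}. By Lemma \ref{corr-log-sum}, the set $\LS$ of bi-infinite orbits ${\bf x}$ with $\sum_{n\in\mathbb{Z}} \log|F'(x_n)| < \infty$ has full $\widehat{\ell}$ measure. I will intersect it with a further full-measure set on which the boundary values needed in Lemma \ref{finite-log-sum} are defined. Concretely, I discard the orbits for which some coordinate $x_n$ lies in the null set where $F$ fails to have an angular derivative, or where $F^{\circ k}$ fails to have a radial limit. The coordinate projections push $\widehat{\ell}$ forward to $\ell$, so a countable union of such conditions still excludes only an $\widehat{\ell}$-null set.

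For ${\bf x}$ in this good set, the forward tail $(x_n)_{n\ge 0}$ satisfies $\sum_{n\ge0}\log F'(x_n)<\infty$. Lemma \ref{finite-log-sum}(i) then shows that $\lim_n (x_n - \re c_n)$ exists and equals the vertical boundary value $\widetilde P_+(x_0)$.

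One small point needs checking: Lemma \ref{finite-log-sum}(i) normalizes with $\re c_n$ where ${\bf c}$ is a reference orbit, whereas the present statement uses $\re F^{\circ n}(i)$. I will reconcile the two by noting that $\re F^{\circ n}(i) - \re c_n$ converges, because both are forward orbits of finite height. Indeed, Corollary \ref{pommerenke-conjuacy2} applied at the point $c_0$ gives that $c_n - \re F^{\circ n}(i)$ converges to $\widetilde P_+(c_0)$. So the two normalizations differ by a constant and both limits exist.

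Similarly, the backward tail satisfies $\sum_{n\ge1}\log F'(x_{-n})<\infty$. By Lemma \ref{finite-log-sum}(ii), $\lim_n (x_{-n}-\re c_{-n})$ exists and equals $\lim_{y\to0}\widetilde P_-(x_0+iy)$, computed along the inverse branches defined on a truncated Stolz angle at $x_0$.

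It remains to identify $F^\infty(P_-({\bf x}))$ with $P_+({\bf x})$ and to compute the derivative, which is Lemma \ref{finite-log-sum}(iii). To make this explicit, I take $z = x_0+iy$ with $y$ small. The points $F^{-n}(z)$, $n \ge 0$, together with the forward iterates $F^{\circ n}(z)$ form a bi-infinite orbit of positive height; I need to check it is a BISBS, and this holds because the inverse branches eventually coincide with the natural inverse in $\Omega_{-,X,\delta}$. Hence
$$F^\infty\bigl(P_-(z)\bigr) = P_+(z),$$
where $P_-(z)=\widetilde P_-(z) \bmod T$. Letting $y\to0$ and using the boundary values just established gives $F^\infty(P_-({\bf x}))=P_+({\bf x})$ as a nontangential limit. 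For the derivative, $F^\infty = P_+\circ \widetilde P_-^{-1}$ locally, so by the chain rule
$$(F^\infty)'\bigl(P_-({\bf x})\bigr) = \widetilde P_+'(x_0)\cdot (\widetilde P_-)'({\bf x})^{-1} = \prod_{n\ge0}F'(x_n)\prod_{n\ge1}F'(x_{-n}).$$
Taking logarithms yields the stated sum.

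The main obstacle is justifying that the derivative of the composition converges to this product in the angular sense. The curve $y\mapsto \widetilde P_-(x_0+iy)$ approaches $\widetilde P_-({\bf x})$ only nontangentially, not vertically. I plan to handle this with the conformality estimates from Lemma \ref{ad-estimate}: these show the image of a Stolz angle contains a slightly smaller Stolz angle at the image point. The convergence of $\widetilde P_+'$ within Stolz angles, obtained as in Step 2 of the proof of Lemma \ref{finite-log-sum}, then transfers to nontangential convergence of $(F^\infty)'$.
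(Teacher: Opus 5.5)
Your proposal is correct and follows the paper's route: the paper deduces this lemma directly by applying Lemma \ref{finite-log-sum}(i)--(iii) on the full-measure set $\LS$ supplied by Lemma \ref{corr-log-sum}. Your extra steps fill in details the paper leaves implicit: you reconcile the normalizations $\re F^{\circ n}(i)$ and $\re c_n$ via Corollary \ref{pommerenke-conjuacy2}, and you unpack part (iii) through the angular chain rule for $\widetilde P_+ \circ \Psi$.
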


\begin{lemma}
\label{wandering-equality}
For any wandering set $A \subset \mathbb{R}$,
$$
\ell(P_-(\widehat{A} \cap \LS)) = \ell(A).
$$
\end{lemma}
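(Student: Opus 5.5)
The plan is to compute $\ell(P_-(\widehat{A} \cap \LS))$ by passing to the real line through the boundary map of $\widetilde{P}_-$ and then using the Jacobian formula of Lemma \ref{jacobian-lemma}. Two facts are needed. First, the map ${\bf x} \mapsto P_-({\bf x})$ is injective on $\widehat{A} \cap \LS$. Second, its measure-theoretic Jacobian with respect to $\widehat{\ell}$ is $1$. Injectivity comes from wandering: if ${\bf x}, {\bf y} \in \widehat{A}$ have $\widetilde P_-({\bf x}) - \widetilde P_-({\bf y}) \in T\mathbb{Z}$, then by the functional equation $\widetilde{P}_-(\widehat F({\bf x})) = \widetilde P_-({\bf x}) + T$ we may shift one of them. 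We then need uniqueness of inverse orbits with a given boundary value $\widetilde{P}_-$, which is the boundary analogue of the uniqueness in Theorem \ref{backward-dynamics3}. That uniqueness should come from the fact that $\Psi$ has finite angular derivative $\prod_{n \ge 1} F'(x_{-n})$ at $\widetilde{P}_-({\bf x})$ (Lemma \ref{finite-log-sum}(ii)), so the nontangential limit $\Psi(\widetilde{P}_-({\bf x}) + kT) = x_k$ recovers the orbit. The shifted orbit then has $x_k$ and $y_0$ both in $A$ with $F^{\circ k}$ relating them, which contradicts wandering unless $k = 0$ and ${\bf x} = {\bf y}$.

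For the measure computation, I would apply Lemma \ref{jacobian-lemma} to $\varphi = \Psi$ (the map denoted $\Phi$ in the paper's remark), restricted to a fundamental interval $[s, s+T)$ for $C_-$. Let $E = \widetilde P_-(\widehat A \cap \LS)$, reduced mod $T$ and lifted into $[s, s+T)$ using shifts by $\widehat F$. At each point of $E$, $\Psi$ has a finite angular derivative, and its boundary value is the coordinate $x_0$ of the corresponding shifted orbit. The goal is the identity
$$
\ell(A) \, = \, \widehat{\ell}(\widehat{A}) \, = \, \int_{E} \prod_{n=1}^\infty \bigl (F'(x_{-n})\bigr )^{-1} \cdots
$$
More concretely, I would decompose $\widehat A$ according to the inverse branches. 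Fix a finite chain of cylinder sets $\{x_0 \in A,\ x_{-1} \in I_1, \dots, x_{-N} \in I_N\}$, where the $I_j$ are small intervals on which the relevant inverse branches are injective. The $\widehat\ell$-measure of such a cylinder equals $\ell$ of the set of corresponding $x_{-N}$ values, by the defining property of the natural extension. On the other hand, $x_{-N} - \re c_{-N}$ approximates $\widetilde P_-({\bf x})$. Since $\Psi$ realizes the conjugacy, the $\ell$-measure of the set of $x_{-N}$ converges to $\ell$ of the corresponding piece of $E$ as $N\to\infty$. The errors are controlled by the tails $\sum_{n>N}\log F'(x_{-n})$, which tend to $0$ on $\LS$ by Lemma \ref{corr-log-sum}, together with the uniform estimate of Lemma \ref{ad-estimate}. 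The cleanest packaging is Lemma \ref{jacobian-lemma} applied to $\Psi$ on $E$: it gives $\ell(\Psi(E)) = \int_E |\Psi'|\,d\ell$, and the change of variables ${\bf x}\mapsto \widetilde{P}_-({\bf x})$ converts the right-hand side into the $\widehat\ell$-measure, with Jacobian $\prod F'(x_{-n})^{-1}$ cancelling $|\Psi'|$.

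Summing over the countable decomposition and using $\widehat{\ell}(\widehat{A} \setminus \LS) = 0$ from Lemma \ref{corr-log-sum}, together with $\widehat\ell(\widehat A)=\ell(A)$, yields $\ell(P_-(\widehat{A}\cap\LS)) = \ell(A)$. The main obstacle is the middle step: showing that the pushforward of $\widehat\ell|_{\widehat A}$ under ${\bf x}\mapsto\widetilde P_-({\bf x})$ is exactly Lebesgue measure, i.e.\ that the Jacobian is $1$. If the limiting argument through cylinders proves delicate, a fallback is to prove only the inequality $\ell(P_-(\widehat A\cap\LS)) \ge \ell(A)$ via Fatou-type lower semicontinuity. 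The reverse inequality would then come from the fact that $x_{-N}$ ranges over disjoint sets $F^{-N}(A)$, which gives $\ell(P_-(\cdots))\le\liminf \ell(\text{range of }x_{-N}) \le \ell(A)$.
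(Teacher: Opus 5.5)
Your injectivity step is fine. It is the paper's observation that no two points of $\widetilde P_-(\widehat A\cap\LS)$ differ by a nonzero multiple of $T$, so $\ell(P_-(\widehat A\cap\LS))=\ell(\widetilde P_-(\widehat A\cap\LS))$. The gap is the measure comparison itself, which you flag as the main obstacle and leave open.

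The ``cleanest packaging'' fails for two reasons. First, Lemma~\ref{jacobian-lemma} needs the boundary map of $\Psi$ to be injective on $E$, and it is not: distinct inverse orbits in $\widehat A$ through the same point $x_0\in A$ give distinct points of $E$ with the same image under $\Psi$. Second, $|\Psi'|=\prod_{n\ge1}F'(x_{-n})$ is not $1$, so saying that the Jacobian of ${\bf x}\mapsto\widetilde P_-({\bf x})$ ``cancels'' it just restates what is to be proved.

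The cylinder version has the same defect. Pointwise convergence $x_{-N}-\re c_{-N}\to\widetilde P_-({\bf x})$ gives no control of the Lebesgue measure of image sets. The tails $\sum_{n>N}\log F'(x_{-n})$ are not uniformly small on a cylinder. For a general inner function there need not be intervals $I_j$ carrying injective inverse branches.

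Your fallback does not repair this. The ``Fatou-type'' lower bound is never specified, and the step $\ell(P_-(\cdots))\le\liminf_N\ell(\pi_{-N}(\cdots))$ is unjustified for the same reason as above. The upper bound actually comes from L\"owner's lemma for $\Psi$: $\widetilde P_-(\widehat A\cap\LS)\subset\Psi^{-1}(A)$, and $\ell(\Psi^{-1}(A))\le\ell(A)$ because the Aleksandrov--Clark measures of $\Psi$ are probability measures. This is what absorbs the non-injectivity of $\Psi$.

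For the lower bound, the missing idea is to make the backward tails \emph{uniformly} small, so that they land in the thin set.
\begin{itemize}
\item[(a)] By Lemma~\ref{corr-log-sum}, for $0<\varepsilon<\log 2$ choose $N$ and $G\subset\widehat A$ with $\widehat\ell(G)>\ell(A)-\varepsilon$ and $\sum_{m>N}\log F'(x_{-m})<\varepsilon$ for every ${\bf x}\in G$.
\item[(b)] Then every $x_{-m}$ with $m>N$ lies in $\{F'<2\}$. By Lemma~\ref{thin-sets}, the single coordinate $x_{-n}$ ($n\ge N$) determines ${\bf x}\in G$.
\item[(c)] Put $G'=G\cap\LS$. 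Then $\Psi$ maps $E_{-n}=\widetilde P_-(G')-nT$ bijectively onto $\pi_{-n}(G')$.
\item[(d)] By Lemma~\ref{finite-log-sum}(ii) applied to $\widehat F^{-n}({\bf x})$, the angular derivative of $\Psi$ there is $\prod_{m>n}F'(x_{-m})\in[1,e^{\varepsilon}]$.
\item[(e)] Only now does Lemma~\ref{jacobian-lemma} apply. It gives $\ell(\widetilde P_-(G'))=\ell(E_{-n})\ge e^{-\varepsilon}\ell(\pi_{-n}(G'))\ge e^{-\varepsilon}\widehat\ell(G')>e^{-\varepsilon}(\ell(A)-\varepsilon)$. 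The middle inequality uses $G'\subset\pi_{-n}^{-1}(\pi_{-n}(G'))$ and $\widehat\ell(G')=\widehat\ell(G)$. Letting $\varepsilon\to0$ finishes.
\end{itemize}
Without the uniform tail bound you have neither injectivity of $\Psi$ nor a derivative close to $1$. That is why your sketch stalls exactly at the step you identified.
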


\begin{proof}
 Since no two inverse orbits in $\widehat{A}$ are related by an iterate of $\widehat{F}$, no two points of $\widetilde{P}_-(\widehat{A} \cap \LS)$ differ by a multiple of $T$. Consequently,
$$
\ell \bigl (P_-(\widehat{A} \cap \LS) \bigr ) \, = \, \ell \bigl (\widetilde{P}_-(\widehat{A} \cap \LS) \bigr) \, \le \, \ell(\Psi^{-1}(A)).
$$ 
Since $\Psi$ is a self-map of the upper half-plane with a parabolic Denjoy-Wolff point at infinity, L\"owner's lemma (see Appendix B for a proof using Aleksandrov-Clark measures) tells us that
$\ell(\Psi^{-1}(A)) \le \ell(A)$. Therefore,
$$
\ell \bigl (P_-(\widehat{A} \cap \LS) \bigr ) \le \ell(A).
$$

It remains to prove the opposite inequality
\begin{equation}
\label{eq:corr-lower-bound}
\ell \bigl (P_-(\widehat{A} \cap \LS) \bigr ) \ge \ell(A).
\end{equation}
Fix an $0 < \varepsilon < \log 2$. By Lemma \ref{corr-log-sum},  there exists an $N \ge 1$ sufficiently large such that the set $G \subset \widehat{A}$ of inverse orbits which satisfy
$$
\sum_{n = -\infty}^{-N-1} \log |F'(x_n)| < \varepsilon
$$
has measure $\widehat{\ell}(G) > \ell(\widehat{A}) - \varepsilon = \ell(A) - \varepsilon$. Let $G_{-n} = \pi_{-n}(G)$ be the projection of $G$ onto the $(-n)$-th coordinate. By Lemma \ref{thin-sets}, $F$ maps $G_{-n-1}$ to $G_{-n}$ injectively for all $n \ge N$. Since $F$ is expanding on the real line, i.e.~$|F'(x)| > 1$, the sequence $\{ \ell(G_{-n}) \}_{n \ge N}$ is decreasing.
Since $G = \bigcap_{n \ge 0} \widehat{F}^{\circ n} (\widehat{G_{-n}})$, by the definition of the natural extension measure, we have $\widehat{\ell}(G) = \lim_{n \to \infty} \ell(G_{-n})$.

 Let $G' = G \cap \LS$ and $G'_{-n} = \pi_{-n}(G')$ be the projection of $G'$ onto the $(-n)$-th coordinate. This operation changes each $G_{-n}$ by an asymptotically negligible amount, i.e.~$\ell(G_{-n} \setminus G'_{-n}) \to 0$ as $n \to \infty$.
 
Let $E_{-n} = \widetilde{P}_-(G') - nT \subset \mathbb{R}$. By construction, for $n \ge N$, the Poggi-Corradini conjugacy $\Psi$ maps $E_{-n}$ bijectively onto $G'_{-n}$.
Furthermore, by Lemma \ref{finite-log-sum}, at each point of $E_{-n}$, the angular derivative of $\Psi$ is between $1$ and $e^{\varepsilon}$.
Applying Lemma \ref{jacobian-lemma} shows that for all $n \ge N$ sufficiently large,
$$
\ell \bigl (P_-(\widehat{A} \cap \LS) \bigr ) \, \ge \, \ell ( \widetilde{P}_-(G')) \, = \,
\ell(E_{-n}) \, \ge \, e^{-\varepsilon} \ell(G'_{-n}) \, \ge \,  e^{-\varepsilon} (\ell(A) - 2\varepsilon).
$$
The inequality (\ref{eq:corr-lower-bound}) follows after taking $\varepsilon \to 0$.
\end{proof}

\begin{lemma}
\label{c-minus-correspondence}
The following statements hold:

{\em (i)} $P_-(\LS)$ is a full measure subset of $\partial C_-$.

{\em (ii)} The map $F^\infty: C_- \to C_+$ is an inner function.

{\em (iii)} Under the correspondence ${\bf x} \to P_-({\bf x})$, the Lebesgue measure on
 $\partial C_- \cong \mathbb{R} / (z \to z+T)$ corresponds to the quotient of the natural extension measure $\widehat{\ell}$ on the space of bi-infinite orbits.
 \end{lemma}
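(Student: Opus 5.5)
The plan is to obtain (i) and (iii) from Lemma \ref{wandering-equality} by exhausting a fundamental domain with wandering sets, and then to deduce (ii) from (i) together with the boundary correspondence of Lemma \ref{finite-log-sum}(iii).

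For (i), take a wandering set $A$ with $\ell(A) \ge T - \varepsilon$, which exists by Lemma \ref{T-minus-epsilon}. Since no two orbits in $\widehat{A}$ are related by a shift, $P_-$ is injective on $\widehat{A} \cap \LS$. Lemma \ref{wandering-equality} then gives
$$\ell\bigl(P_-(\widehat{A}\cap\LS)\bigr) = \ell(A) \ge T-\varepsilon.$$
Because $\partial C_- \cong \mathbb{R}/T\mathbb{Z}$ has total length $T$, we get $\ell(P_-(\LS)) \ge T - \varepsilon$ for every $\varepsilon>0$, which proves (i). (Measurability of $P_-(\LS)$ I would handle by inner regularity, or by working with the outer measure.)

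For (iii), let $A,B$ be two wandering sets. Replacing $B$ by $B \setminus \bigcup_{n\in\mathbb{Z}} F^{\circ n}(A)$, suitably interpreted via orbits, we may assume $\widehat A$ and $\widehat B$ project to disjoint subsets of $\widehat{\mathbb{R}}/\langle\widehat F\rangle$. The set function ${\bf x}\mapsto P_-({\bf x})$ then pushes $\widehat{\ell}|_{\widehat A}$ forward to a measure $\nu_A$ on $\partial C_-$. Applying Lemma \ref{wandering-equality} to every wandering subset $A'\subset A$ gives $\nu_A(P_-(\widehat{A'}\cap\LS)) = \ell(A') = \widehat\ell(\widehat{A'})$. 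The sets $P_-(\widehat{A'}\cap\LS)$ form a sufficiently rich family, closed under intersections, because $A'\mapsto \widehat{A'}$ generates the $\sigma$-algebra on $\widehat A$ modulo the shift. Hence $\nu_A$ equals Lebesgue measure restricted to $P_-(\widehat A\cap\LS)$. Decomposing $\widehat{\mathbb{R}}/\langle \widehat F\rangle$ into countably many such pieces, possible since $\mathbb{R}$ is a countable union of wandering sets up to a null set, and summing yields (iii).

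For (ii), recall that $F^\infty: C_-\to C_+$ is holomorphic. By Lemma \ref{finite-log-sum}(iii), at each point $P_-({\bf x})$ with ${\bf x}\in\LS$ it has a boundary value $P_+({\bf x})$ along vertical approach. This boundary value lies in $\partial C_+$ since $x_n\in\mathbb{R}$, and the angular derivative there is finite. By (i) these points have full measure in $\partial C_-$. So $F^\infty$ has unimodular radial limits almost everywhere in the disk model, and is therefore inner.

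The main obstacle is the identification step in (iii), namely showing that Lemma \ref{wandering-equality} for sets suffices to match the measures, rather than only the total mass. Two checks are needed there. First, the family $\{\widehat{A'}\}$ must generate modulo null sets. Here I would use that $\widehat\ell$ on $\widehat A$ is determined by its values on cylinder sets $\{x_{-k}\in E\}$, and that such cylinder sets are themselves of the form $\widehat{A''}$ for the wandering set $A''=F^{\circ k}(E\cap F^{-k}(A))$, up to injectivity issues handled by the thin-set Lemma \ref{thin-sets}. Second, $P_-$ must be measurable.
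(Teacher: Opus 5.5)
Your parts (i) and (ii) match the paper's proof. The paper simply says (ii) follows from (i), and your justification through the vertical boundary values in Lemma \ref{finite-log-sum}(iii) is the intended one.

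The gap is in (iii), at exactly the step you flagged. The claim that $\{\widehat{A'} : A' \subset A\}$ generates the $\sigma$-algebra on $\widehat{A}$ is false. Each $\widehat{A'}$ is determined by the coordinate $x_0$ alone. Since $F$ is many-to-one on $\mathbb{R}$, these sets cannot separate different backward orbits through the same $x_0$.

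Your proposed repair does not work either. With $A'' = F^{\circ k}(E \cap F^{-k}(A)) \subset A$, the set $\widehat{A''}$ contains every orbit with $x_0 \in A''$. That includes orbits whose $(-k)$-th coordinate is some other preimage of $x_0$ lying outside $E$. So in general $\widehat{A''} \supsetneq \{{\bf x} : x_0 \in A,\ x_{-k} \in E\}$, and no set $\widehat{A''}$ with $A'' \subset A$ can equal this cylinder. Lemma \ref{thin-sets} cannot help: it only gives injectivity of $F$ on $\{|F'| < 2\}$ and says nothing about $F^{\circ k}$ on $E \cap F^{-k}(A)$.

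The fix is to shift rather than push forward. This is what the paper does when it uses the sets $\widehat{A}$ together with their images under $\widehat{F}^{\circ n}$.

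\begin{itemize}
\item The set $A_k := E \cap F^{-k}(A)$ is itself wandering, being a subset of a preimage of a wandering set.
\item One has $\{{\bf x} : x_0 \in A,\ x_{-k} \in E\} = \widehat{F}^{\circ k}(\widehat{A_k})$.
\item Since $\widetilde{P}_-(\widehat{F}{\bf z}) = \widetilde{P}_-({\bf z}) + T$, the map $P_-$ takes the same values on both sets, and $\LS$ is shift-invariant.
\item Lemma \ref{wandering-equality} applied to $A_k$ then gives $\ell\bigl(P_-(\{x_0 \in A,\ x_{-k} \in E\} \cap \LS)\bigr) = \ell(A_k) = \widehat{\ell}(\{x_0 \in A,\ x_{-k} \in E\})$.
\end{itemize}

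Conditions on finitely many backward coordinates reduce to a single condition on $x_{-k}$, namely $E = E_k \cap F^{-1}(E_{k-1}) \cap \cdots$. Hence these cylinders form a $\pi$-system generating the $\sigma$-algebra of $\widehat{A}$. With $P_-$ injective on $\widehat{A} \cap \LS$, your $\pi$--$\lambda$ identification of $\nu_A$ with Lebesgue measure on $P_-(\widehat{A} \cap \LS)$ then goes through.
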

 
 \begin{proof}
(i) Let $A \subset \mathbb{R}$ be a wandering set of Lebesgue measure at least $T - \varepsilon$, provided by Lemma \ref{T-minus-epsilon}. By Lemma \ref{wandering-equality}, $\ell \bigl (P_-(\widehat{A} \cap \LS) \bigr ) \ge T - \varepsilon$.
Since $\partial C_-$ has Lebesgue measure $T$, letting $\varepsilon \to 0$ shows that $P_-(\LS)$
covers $\partial C_-$ up to a null set.

(ii) follows directly from (i).

(iii)  Since every measurable subset of $\mathbb{R}$ can be represented as a countable union of wandering sets (up to a null set), wandering sets generate the Borel $\sigma$-algebra of $\mathbb{R}$. It follows that sets $\widehat{A}$ with $A$ wandering, together with their images under $\widehat{F}^{\circ n}$, $n \in \mathbb{Z}$, generate the Borel sigma algebra of $\widehat{\mathbb{R}}$. Since $\ell \bigl (P_-(\widehat{A} \cap \LS) \bigr ) = \widehat{\ell} \bigl (\widehat{A} \cap \LS \bigr )$ for any wandering set $A$ by Lemma \ref{wandering-equality}, the map
$P_- : (\widehat{\mathbb{R}}, \widehat{\ell})/\langle \widehat{F} \rangle \to (\partial C_-, \ell)$ is measure-preserving. By (i), $P_-$ is a measure space isomorphism.
 \end{proof}

\begin{corollary}
\label{equality-of-lyapunov-exponents}
For any function $\phi \in L^1(\mathbb{R}, \ell)$, we have
$$\int_{\widehat{\mathbb{R}} / \langle \widehat{F} \rangle} \biggl \{  \sum_{n=-\infty}^\infty \phi(x_n) \biggr \} \, d\widehat{\ell} ({\bf x}) = \int_{\mathbb{R}} \phi (x) d\ell (x).$$
In particular,
$$\int_{\partial C_-} \log |(F^\infty)'| d\ell = \int_{\mathbb{R}} \log |F'| d\ell.$$
\end{corollary}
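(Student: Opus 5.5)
The plan is to reduce everything to the wandering-set computations already carried out, and then read off the Lyapunov identity from the boundary derivative formula for $F^\infty$.

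\emph{Fundamental domain.} First I will make the integral over $\widehat{\mathbb{R}}/\langle \widehat{F}\rangle$ concrete. Let $A \subset \mathbb{R}$ be a wandering set. Then $\widehat{A}$ meets each $\widehat{F}$-orbit at most once. The images $\widehat{F}^{\circ n}(\widehat{A})$, $n \in \mathbb{Z}$, are pairwise disjoint and together form a set $\widehat{\mathbb{R}}_A$ of bi-infinite orbits. Because $\mathbb{R}$ is, up to a null set, a countable union of wandering sets, I can choose disjoint wandering sets $A_j$ such that the $\widehat{A_j}$ together form a measurable fundamental domain $D$ for $\widehat{F}$ on $\widehat{\mathbb{R}}$, up to $\widehat{\ell}$-null sets. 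For this I take $A_j$ inductively, removing from each new wandering set all points whose orbit meets the previous ones. Integration over the quotient then means integration over $D$.

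\emph{The main identity.} By monotone convergence it suffices to treat $\phi \ge 0$, and then $\phi = \phi^+ - \phi^-$ gives the general case. The integrand $\sum_n \phi(x_n)$ is $\widehat{F}$-invariant, so it descends to the quotient. Unfolding gives
$$\int_D \sum_{n\in\mathbb{Z}} \phi(x_n)\, d\widehat{\ell} \, = \, \sum_{n \in \mathbb{Z}} \int_{\widehat{F}^{\circ (-n)}(D)} \phi(x_0)\, d\widehat{\ell} \, = \, \int_{\widehat{\mathbb{R}}} \phi(x_0)\, d\widehat{\ell} \, = \, \int_{\mathbb{R}} \phi\, d\ell .$$
The first equality uses $\widehat{F}$-invariance of $\widehat{\ell}$ and the fact that $x_n({\bf x}) = x_0(\widehat{F}^{\circ n}{\bf x})$. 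The second uses the fundamental domain property. The third holds because the coordinate projection pushes $\widehat{\ell}$ to $\ell$.

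The step I expect to need the most care is that the translates $\widehat{F}^{\circ n}(D)$ cover $\widehat{\mathbb{R}}$ up to a null set. This is a conservativity issue. It follows from the decomposition of $\mathbb{R}$ into wandering sets, since $\widehat{\ell}$-a.e.\ ${\bf x}$ has $x_0$ in some $A_j$. The other fact needed is that the integrand is finite a.e. For $\phi = \log|F'|$ this is Lemma~\ref{corr-log-sum}; in general it follows from the displayed identity itself, as in Step~2 of that lemma, where the weight $W(\xi) \le 1$ controls the contribution of forward orbits.

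\emph{The Lyapunov identity.} I now apply the main identity to $\phi = \log|F'| \in L^1(\ell)$, which is nonnegative since $|F'| \ge 1$ on $\mathbb{R}$. The lemma following Lemma~\ref{corr-log-sum} gives, for $\widehat{\ell}$-a.e.\ ${\bf x}$, the formula $\log|(F^\infty)'(P_-({\bf x}))| = \sum_n \log|F'(x_n)|$. Lemma~\ref{c-minus-correspondence}(iii) says that ${\bf x} \mapsto P_-({\bf x})$ is a measure-space isomorphism from $(\widehat{\mathbb{R}}, \widehat{\ell})/\langle \widehat{F}\rangle$ onto $(\partial C_-, \ell)$. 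Changing variables along this isomorphism therefore turns the left side of the main identity into $\int_{\partial C_-} \log|(F^\infty)'|\, d\ell$, which gives the second statement.
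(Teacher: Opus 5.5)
Your route is the one the paper intends. The corollary is stated without proof right after Lemma~\ref{c-minus-correspondence}, and two parts of your argument are correct: the Tonelli unfolding for the first identity, and the deduction of the second identity from $\log|(F^\infty)'(P_-({\bf x}))|=\sum_n\log|F'(x_n)|$ together with Lemma~\ref{c-minus-correspondence}(iii). (The unfolding produces $\widehat{F}^{\circ n}(D)$, not $\widehat{F}^{\circ(-n)}(D)$, but that is harmless after summing over $n$.)

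The genuine problem is the fundamental domain. First, your coverage argument is self-defeating. If $\widehat{\ell}$-a.e.\ ${\bf x}$ had $x_0$ in some $A_j$, then $D=\bigsqcup_j\widehat{A_j}$ would be conull in $\widehat{\mathbb{R}}$, and a.e.\ orbit would lie entirely inside it. That remark is only true for the unpruned wandering sets $B_j$.

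Second, the pruning cannot be carried out on $x_0$ alone. Whether the orbit through $x_0\in B_j$ has already met some $A_i$, $i<j$, at a negative time depends on $x_{-1},x_{-2},\dots$. Since $F$ is many-to-one, $x_0$ does not determine these.
\begin{itemize}
\item If you remove only the points whose forward orbit meets an earlier $A_i$, you double count. Take ${\bf x}$ with $x_{-1}\in A_1$; this is a set of measure $\ell(A_1)>0$. For a.e.\ such ${\bf x}$, the point $x_0\in F(A_1)$ lies in some $B_j$ with $j\ge 2$. Either $x_0\in A_j$, or $x_0$ was removed because $F^{\circ m}(x_0)\in A_i$ for some $i<j$, $m\ge 0$. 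Either way the orbit meets $D$ twice.
\item If you instead remove forward images $F^{\circ n}(A_i)$ or whole grand orbits, you discard orbits that never met $A_i$, since a point of $F(A_1)$ generally has preimages outside $A_1$. Nothing in your argument shows that those orbits are picked up later.
\end{itemize}

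The repair is to prune in $\widehat{\mathbb{R}}$, where the past is visible. Take $B_j$ wandering with $\bigcup_j B_j$ of full measure, and put
\[
D_j=\widehat{B_j}\setminus\bigcup_{i<j}\bigcup_{n\in\mathbb{Z}}\widehat{F}^{\circ n}(\widehat{B_i}),
\qquad D=\bigsqcup_j D_j .
\]
\begin{itemize}
\item At most once: an orbit can meet $D$ only inside $D_j$ for the least $j$ such that it passes through $B_j$, and then exactly once because $B_j$ is wandering.
\item Coverage: $\widehat{\ell}$-a.e.\ orbit does pass through some $B_j$, because $x_0\in\bigcup_j B_j$ for a.e.\ ${\bf x}$. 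This is where your coverage remark belongs.
\end{itemize}
Your unfolding never used that $D$ is a union of cylinder sets, so it then goes through verbatim.

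A fundamental domain of the form $\widehat{A}$ does in fact exist here, but for a different reason. By Theorem~\ref{summary2}(iv), the boundary values $\widetilde P_+(x)=\lim_n\bigl(x_n-\re F^{\circ n}(i)\bigr)$ exist a.e.\ and satisfy $\widetilde P_+(x_k)=\widetilde P_+(x_0)+kT$ along orbits. Hence $A=\{x:0\le\widetilde P_+(x)<T\}$ is wandering and is met exactly once by a.e.\ bi-infinite orbit.
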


\subsection{Applications to Lavaurs maps and semigroups}

Recall that the Lavaurs map $L_\sigma$ was defined as $F^\infty \circ \tau_\sigma$ where $\tau_\sigma: C_+ \to C_-$ is  translation by $\sigma$ modulo $T$. According to Lemma \ref{c-minus-correspondence}(ii) and Corollary \ref{equality-of-lyapunov-exponents}, $L_\sigma$ is an inner function with
$$\int_{\partial \mathbb{D}} \log |L_\sigma'| dm = \int_{\mathbb{R}} \log |F'| d\ell.$$

To see that the Poggi-Corradini conjugacy $\Psi$ is an inner function, note that by Lemma \ref{c-minus-correspondence}, up to sets of measure zero, $\Psi$ identifies sets of the form $\{ x + n T : n \in \mathbb{Z}\}$ with bi-infinite orbits in $\widehat{\mathbb{R}}$.

By the remark (iii) at the end of Section \ref{sec:canonical-conjugacies}, the Pommerenke conjugacy $\widetilde{P}_+: \mathbb{H} \to \mathbb{H}$ is an inner function and hence so is the induced map $\widetilde{P}_+: \mathbb{H} \to C_+$. 
From the relation $P_+ \circ G = L \circ P_+$, it follows that
$G: \mathbb{H} \to \mathbb{H}$ is inner. (Indeed, if $G$ were not inner, then $P_+ \circ G$ would also not be inner, whereas $L \circ P_+$ is inner being the composition of two inner functions.)
 
\subsection{Ergodicity}

A measure $\mu$ on a space $X$ is {\em ergodic} with respect to a semigroup if every measurable set $E \subset X$ which is invariant under all maps in the semigroup either has zero measure or full measure.

\begin{lemma}
\label{lavarus-ergodicity}
The Lebesgue measure on the real line $\ell$ is ergodic with respect to the Lavaurs semigroup $\mathcal G$.
\end{lemma}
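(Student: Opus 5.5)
Let $E \subset \mathbb{R}$ be a measurable set invariant under every map in $\mathcal G$. Invariance is understood in the usual sense for non-invertible maps, $G_{n,k}^{-1}(E) = E$ up to null sets, using vertical boundary values. We must show $\ell(E)=0$ or $\ell(\mathbb{R}\setminus E)=0$. The plan is to push $E$ down to the forward cylinder $\partial C_+ \cong \mathbb{R}/T\mathbb{Z}$, so that the problem becomes ergodicity of the Lavaurs map $L = L_\sigma$ on $\partial\mathbb{D}$. Two facts proved earlier do the work. By Theorem \ref{summary2}(i) and Section \ref{sec:the-lavaurs-map}, $L$ is an inner function with an attracting Denjoy--Wolff point at the origin. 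Inner functions fixing an interior point preserve and are ergodic (indeed exact) for Lebesgue measure on the circle; this is classical, for instance via Aaronson's theorem quoted in the introduction.

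\emph{Step 1: descend to $\partial C_+$.} Since $F = G_{1,0}\in\mathcal G$, the set $E$ is $F$-invariant: $F^{-1}(E)=E$ a.e. By Theorem \ref{summary2}(iv), for a.e.~$x$ the limit $\widetilde P_+(x)=\lim (F^{\circ n}(x)-\re F^{\circ n}(i))$ exists. Set
$$E_+ := P_+(E) \subset \partial C_+ .$$
I will show that $E$ and $P_+^{-1}(E_+)$ agree up to null sets.

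By Lemma \ref{finite-log-sum}(i) and Lemma \ref{corr-log-sum}, $P_+$ has a finite angular derivative a.e., equal to $\prod F'(x_n)$. Suppose $P_+(x)=P_+(x')$ with both points generic. Then Theorem \ref{forward-cylinder-thm} (or its boundary analogue, from $x_n-x'_{n+k}\to 0$ and the wandering-set decomposition) should give $F^{\circ k_1}(x)=F^{\circ k_2}(x')$. Combined with $F$-invariance of $E$, this shows that $x\in E$ iff $x'\in E$.

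The delicate point is the boundary version of this injectivity. I would handle it measure-theoretically: restrict to a wandering set $A$ as in Lemma \ref{wandering-equality}, and use the forward analogue of that lemma to see that $P_+$ restricted to $A$ is measure-preserving onto its image in $\partial C_+$.

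\emph{Step 2: $E_+$ is $L$-invariant.} Since $P_+\circ G = L\circ P_+$ holds on boundary values a.e. (all three maps are inner), and $G^{-1}(E)=E$, Step 1 gives $L^{-1}(E_+)=E_+$ up to null sets. This uses the fact that inner functions pull back null sets to null sets, by L\"owner's lemma.

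\emph{Step 3: conclude.} By ergodicity of $L$, the set $E_+$ is null or conull in $\partial C_+$. If $E_+$ is null, then $E\subset P_+^{-1}(E_+)$ is null, since $P_+$ is inner and, again by L\"owner's lemma, preimages of null sets are null. If $E_+$ is conull, apply the same argument to the complement $\mathbb{R}\setminus E$, which is also $\mathcal G$-invariant. Its image is disjoint from $E_+$ up to null sets by Step 1, hence null, so $\mathbb{R}\setminus E$ is null.

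The main obstacle is Step 1: showing that the fibres of $P_+$ on the real line are a.e.~exactly the grand orbits of $F$. Without this, invariance of $E_+$ under $L$ could fail. I expect to prove it by combining the a.e.~existence of limits in Theorem \ref{summary2}(iv) with the measure-preserving identification of Lemma \ref{c-minus-correspondence}, applied on the forward side.
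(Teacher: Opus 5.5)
Your Steps 2 and 3 are fine once Step 1 is known in the form \emph{$E = P_+^{-1}(E_+)$ up to a null set, for some measurable $E_+ \subset \partial C_+$}. Boundary values of compositions with inner maps, L\"owner's lemma for $P_+$, and ergodicity of an inner function with an interior fixed point are all standard. But Step 1 is a genuine gap, and neither tool you propose closes it.

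From $\widetilde P_+(x) = \widetilde P_+(F^{\circ k}(x'))$ you only get $x_n - x'_{n+k} \to 0$. On the real line this does not force the two orbits to merge, because the identity
\[
F(y)-F(x) = (y-x)\Bigl(1+\int_{\mathbb{R}}\frac{d\mu(a)}{(x-a)(y-a)}\Bigr)
\]
guarantees $|F(y)-F(x)| \ge |y-x|$ only when $\mu$ puts no mass between $x$ and $y$. Moreover, $\mu$ may charge a dense set even when the Lyapunov exponent is finite.

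The forward analogue of Lemma \ref{wandering-equality} is also not available. $P_-$ is injective on inverse orbits modulo shifts, since $\Psi$ recovers the orbit from $\widetilde P_-$. By contrast, $P_+$ collapses grand orbits, so it need not be injective on a wandering set $A$: two distinct preimages of one point may both lie in $A$. Where it is injective, Lemmas \ref{finite-log-sum}(i) and \ref{jacobian-lemma} give $\ell(\widetilde P_+(A)) = \int_A \prod_{n \ge 0} F'(x_n)\, d\ell(x) > \ell(A)$, so $P_+|_A$ is not measure-preserving onto its image. More to the point, no statement about the measure of $P_+(A)$ can tell you that $P_+$ does not identify points from distinct grand orbits, and that is exactly what Step 1 asserts. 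Also, $E_+$ should be produced as a set with $E = P_+^{-1}(E_+)$ a.e., not as the image $P_+(E)$, because L\"owner's lemma controls preimages under $P_+$, not images.

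The missing idea is to perform the descent in the interior, where the fibre statement is already proved. Let $u = P[\chi_E]$.
\begin{enumerate}
\item Each $\phi \in \mathcal G$ is inner by Theorem \ref{summary2}(i), so $u \circ \phi = P[\chi_{\phi^{-1}(E)}] = u$.
\item By Theorem \ref{forward-cylinder-thm}, $P_+(z_1) = P_+(z_2)$ implies $F^{\circ k_1}(z_1) = F^{\circ k_2}(z_2)$. Hence $u$ descends to a bounded harmonic function $v$ on $C_+ \cong \mathbb{D}^*$ with $v \circ L = v$.
\item The function $v$ extends across the puncture, and $v(w) = v(L^{\circ n}(w)) \to v(0)$ shows that $v$, and hence $u$, is constant.
\end{enumerate}
This is the paper's argument. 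It needs no boundary description of the fibres of $P_+$, does not use that $L$ is inner, and replaces your Steps 2--3 with the attracting fixed point of $L$.

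If you prefer to keep your structure, the same $v$ repairs Step 1. Since $\widetilde P_+$ is inner, $\chi_E = v^* \circ P_+$ a.e., where $v^*$ denotes the boundary values of $v$. L\"owner's lemma then forces $v^* = \chi_{E_+}$ a.e.\ for some measurable $E_+$, which gives $E = P_+^{-1}(E_+)$ a.e.
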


\begin{proof}
Let $E \subset \mathbb{R}$ be a measurable set that is invariant under the Lavaurs semigroup $\mathcal G$ and $u = P[\chi_E]$ be the Poisson extension of its characteristic function to the upper half-plane. Since $u(\phi(z))$, $\phi \in \mathcal G$ are bounded harmonic functions with the same boundary values, they are equal: $u(z) = u(\phi(z))$ for all $\phi \in \mathcal G$.

Since $u$ is $F$-invariant, it descends to a harmonic function $v$ on the forward cylinder $C_+$. The $G$-invariance of $u$ translates to the $L$-invariance of $v$\,:
$$
v(L(w)) = v(w), \qquad w \in C_+.
$$
As $v$ is a bounded harmonic function on $C_+ \cong \mathbb{D}^*$, it extends to a harmonic function on the whole unit disk, where it continues to satisfy $v(L(w)) = v(w)$. Since $L$ has an attracting fixed point at the origin, $v$ is constant on $\mathbb{D}$ and hence $u$ is constant on $\mathbb{H}$. Recalling the definition $u = P[\chi_E]$, we see that $E$ has either zero or full measure. 
\end{proof}

\begin{remark}
 If $G$ were doubly parabolic, then $\ell$ would already be ergodic with respect to the single map $G$, see Table \ref{tab:adm}. Unfortunately, we only know that $G$ has infinite height. (Since $G$ may not be a Polya-Szeg\"o inner function, we cannot use Table \ref{tab:adm2} to conclude that $G$ is doubly parabolic and that $\ell$ is ergodic with respect to $G$.)
\end{remark}

\section{Riemann surface laminations}
\label{sec:laminations}

In this section, we  review the theory of Riemann surface laminations for doubly parabolic case inner functions with finite Lyapunov exponent from \cite{laminations}. These constructions and results also hold for hyperbolic inner functions with minimal changes. We then associate a Riemann surface lamination $\widehat{X}_{\mathcal G}$ to a Lavaurs-Epstein semigroup $\mathcal G$. Rather than developing the theory of Riemann surface laminations from scratch for $\widehat{X}_{\mathcal G}$, we show that $\widehat{X}_{\mathcal G} \cong \widehat{X}_{L}$ and then transfer the volume form and geodesic flow from $\widehat{X}_{L}$ to $\widehat{X}_{\mathcal G}$. This allows the reader to take most of the inner workings of \cite{laminations} for granted.

\subsection{The doubly parabolic case}
\label{sec:doubly-parabolic-case}

Let $F: \mathbb{H} \to \mathbb{H}$ be a doubly parabolic inner function with the Denjoy-Wolff point at infinity whose Lyapunov exponent $\int_{\mathbb{R}} \log |F'| d\ell < \infty$. Form the space of bi-infinite orbits
$$
\widehat{\mathbb{H}} = \bigl \{ (z_n)_{n=-\infty}^\infty: z_n \in \mathbb{H}, \ z_{n+1} = F(z_{n}) \bigr \}.
$$
As in hyperbolic case, the Riemann surface lamination $\widehat{X}_F$ is obtained by taking the quotient $\widehat{\mathbb{H}}/\langle \widehat{F} \rangle$ by the map $\widehat{F}$ which applies $F$ to each coordinate. 
We also consider the {\em solenoid}
$$
\widehat{\mathbb{R}} = \bigl \{ (\zeta_n)_{n=-\infty}^\infty: \zeta_n \in \mathbb{R}, \ \zeta_{n+1} = F(\zeta_{n}) \bigr \}.
$$

\subsubsection{The volume form}

For a set $E \subset \mathbb{H}$, we write $\widehat{E} \subset \widehat{\mathbb{H}}$ for the set of bi-infinite orbits ${\bf z} = (z_n)_{n=-\infty}^\infty$ with $z_0 \in E$.
In \cite[Section 11]{laminations}, it is proved that
\begin{equation}
\label{eq:xi-def-dp}
\xi(\widehat{E}) = \lim_{n \to \infty} \int_{F^{-n}(E)} \frac{dA(z)}{\im z}
\end{equation}
defines an $\widehat{F}$-invariant measure on $\widehat{\mathbb{H}}$ which descends to a measure (also denoted $\xi$) on $\widehat{X}_F$ with total mass
$\xi(\widehat{X}_F) = \int_{\mathbb{R}} \log |F'| d\ell$.

Meanwhile, the solenoid possesses the measure $\widehat{\ell}$, which is the unique $\widehat{F}$-invariant measure on $\widehat{\mathbb{R}}$ which projects to $\ell$ under every coordinate.

\subsubsection{Rescaling limits}

For two points $w_1, w_2 \in \mathbb{H}$, we write $$L_{w_1 \to w_2}(z) = Az + B, \qquad A > 0, \quad B \in \mathbb{R},$$ for the unique element of $\aut(\mathbb{H}, \infty)$ which takes $w_1$ to $w_2$.

We say that $F$ has a (linear) rescaling limit along an inverse orbit
$
{\bf z} = (z_{-n})_{n=0}^\infty
$
if the sequence of maps
$$
F^{\circ m} \circ L_{i \to z_{-m}}, \qquad m = 0, 1, 2, \dots,
$$
converges uniformly on compact subsets of $\mathbb{H}$. If the limit exists, we denote it by $F^{\infty}_{{\bf z}} = F^{\infty}_{{\bf z}, 0}$ and say that backward iteration is asymptotically linear along ${\bf z}$. Evidently, if $F^{\infty}_{{\bf z}, 0}$ exists, then so does
$$
F_{{\bf z}, -n} = \lim_{m \to \infty} F^{\circ m} \circ L_{i \to z_{-n-m}}
$$
for any $n \ge 0$.

We define the {\em linear lamination} $\widehat{\mathbb{H}}_{\lin}$ as the set of inverse orbits along which $F$ has a rescaling limit.
According to \cite[Theorem 12.1]{laminations}, $\xi(\widehat{\mathbb{H}} \setminus \widehat{\mathbb{H}}_{\lin})$, i.e.~$F$ is asymptotically linear along $\xi$ a.e.~inverse orbit ${\bf z} \in \widehat{\mathbb{H}}$. 

\subsubsection{The geodesic flow}
\label{sec:2p-geodesic-flow}

The {\em geodesic flow} $\{ g_t: t \in \mathbb{R}\}$ acts on $\widehat{\mathbb{H}}_{\lin}$ by
$$
g_t({\bf z})_{-m} = F_{{\bf z}, -m}(e^t \cdot i).
$$
Since the geodesic flow on $\widehat{\mathbb{H}}_{\lin}$ is $\widehat{F}$-invariant, it descends to a geodesic flow defined
on a full measure subset $\widehat{X}_{F, \lin} \subset \widehat{X}_F$. Below, we will abuse notation and say that the flow acts on $\widehat{X}_F$.

We recall three properties of backward trajectories from \cite[Theorem 13.1]{laminations}:
\begin{enumerate}
\item For $\xi$ a.e.~${\bf z} = (z_{-n})_{n=0}^\infty \in \widehat{\mathbb{H}}$, the backward trajectory
$g_{-t}({\bf z})$
lands on an inverse orbit ${\bf x} = \zeta({\bf z}) = (x_{-n})_{n=0}^\infty \in \widehat{\mathbb{R}}$ on the real line:
$$
\lim_{t \to \infty} g_{-t}({\bf z})_{-n} = x_{-n}, \qquad n \ge 0.
$$

\item Let $\overline{\gamma}(t) \subset \mathbb{H}$ be the vertical geodesic that lands on $x_0 = \zeta_0({\bf z})$, parametrized by unit hyperbolic speed. The path $\gamma(t) = (g_{-t} {\bf z})_0$ is weakly shadowed by
$\overline{\gamma}(t)$\,: 
there exist an offset $t_0 \in \mathbb{R}$ and an increasing absolutely continuous reparametrization $\tau: [0, \infty) \to [0, \infty)$ satisfying
$$
\frac{1}{T}\int_0^T |\tau'(t)-1|\,dt \to 0, \qquad \text{as }T \to \infty,
$$
such that for every $\varepsilon > 0$,
$$ 
\frac{1}{T} \,
 \Bigr | \Bigl\{
t\in[0,T]:
d_{\mathbb H}\bigl(\gamma(t),\overline{\gamma}(t_0+\tau(t))\bigr)>\varepsilon
\Bigr\} \Bigr |
\to 0, \qquad \text{as }T \to \infty.
$$
Intuitively, this says that $\gamma$ stays close to $\overline{\gamma}$ on average and has the same large scale geometry.

\item The map $\zeta: (\widehat{\mathbb{H}}, \xi) \to (\widehat{\mathbb{R}}, \widehat{\ell})$ is non-singular in the sense that the pre-image of any measure zero set is a measure zero set.
\end{enumerate}

With help of Properties 1 and 3, one can deduce the ergodicity of the geodesic flow of $g_t: \widehat{X} \to  \widehat{X}$ from the ergodicity of $F: \mathbb{R} \to \mathbb{R}$, see \cite[Corollary 13.2]{laminations} for details. 

\subsubsection{Almost invariant functions}

 We say that a function $h: \mathbb{H} \to \mathbb{C}$ is {\em (weakly) almost invariant} under $F$ if for $\xi$ a.e.~backward orbit ${\bf z} = (z_{-n})_{n=0}^\infty \in \widehat{\mathbb{H}}$, the limit
 $$
\widehat{h}({\bf z}) := \lim_{n \to \infty} h(z_{-n})
$$
exists. We refer to $\widehat{h}$ as the {\em natural extension} of $h$ to $\widehat{X}_F$. 
We now recall \cite[Theorem 4.1]{laminations} which generalizes \cite[Theorem 10.6]{mcmullen} from finite Blaschke products to inner functions and slightly weakens the invariance assumption on $h$\,:

\begin{theorem}
\label{ergodic-theorem}
Let  $F: \mathbb{H} \to \mathbb{H}$ be a doubly parabolic inner function with finite Lyapunov exponent. If
 $h: \mathbb{H} \to \mathbb{C}$ is a bounded almost invariant function that is uniformly continuous in the hyperbolic metric, then for almost every $x \in \mathbb{R}$, we have
$$
\lim_{t \to 0} \frac{1}{|\log t|} \int_t^1 h(x+iy) \cdot \frac{dy}{y}  = \frac{1}{\int_{\mathbb{R}} \log|F'| d\ell} \int_{\widehat X} \widehat{h} d\xi.
$$
In particular, for any bounded interval $I \subset \mathbb{R}$,
$$
\lim_{t \to 0} \frac{1}{|\log t|} \int_I \int_t^1 h(x+iy) \cdot \frac{dx dy}{y}  = \frac{|I|}{\int_{\mathbb{R}} \log|F'| d\ell} \int_{\widehat X} \widehat{h} d\xi.
$$
\end{theorem}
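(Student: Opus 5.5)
The plan is to derive the ergodic average from Birkhoff's ergodic theorem for the geodesic flow on $(\widehat X_F,\xi)$, transported down to vertical geodesics in $\mathbb{H}$ by Properties 1--3 of backward trajectories. The substitution $y=e^{-s}$ turns $\frac{1}{|\log t|}\int_t^1 h(x+iy)\,\frac{dy}{y}$ into the time average $\frac{1}{S}\int_0^S h(\overline\gamma(s))\,ds$, with $S=|\log t|$, along the unit-speed vertical geodesic $\overline\gamma$ landing at $x$. So it suffices to show this time average tends to $\frac{1}{\xi(\widehat X_F)}\int \widehat h\, d\xi$ for a.e.~$x$, using $\xi(\widehat X_F)=\int_{\mathbb{R}}\log|F'|\,d\ell$. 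The second statement then follows by integrating over $I$ and applying bounded convergence, since $h$ is bounded.

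\emph{Step 1 (Birkhoff on the lamination).} The flow $g_t$ preserves $\xi$, which is finite, and is ergodic by \cite[Corollary 13.2]{laminations}. Since $\widehat h$ is bounded, for $\xi$-a.e.~${\bf z}$ we get
$$\frac1S\int_0^S \widehat h(g_{-s}{\bf z})\,ds\to \frac{1}{\xi(\widehat X_F)}\int_{\widehat X_F}\widehat h\,d\xi .$$

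\emph{Step 2 (replacing $\widehat h$ by $h$ at coordinate $0$).} We need $h((g_{-s}{\bf z})_0)$ to be close to $\widehat h(g_{-s}{\bf z})$ for most times $s$. Fix $\varepsilon>0$ and use Egorov's theorem to find $N$ and a set $\mathcal{B}$, of $\xi$-measure close to full in a fundamental domain, on which $|h(w_{-n})-\widehat h({\bf w})|<\varepsilon$ for all $n\ge N$. The trick is to use $\widehat F$-invariance: $({\bf w})_0=(\widehat F^{N}{\bf w})_{-N}$ and $\widehat h(\widehat F^N{\bf w})=\widehat h({\bf w})$. Hence $|h(w_0)-\widehat h({\bf w})|<\varepsilon$ whenever $\widehat F^N{\bf w}\in\mathcal{B}$. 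The flow commutes with $\widehat F$, and $\widehat F^N$ preserves $\xi$. Applying Birkhoff to the indicator of $\widehat F^{-N}\mathcal{B}$ (lifted to $\widehat{\mathbb{H}}$ and pushed to $\widehat X_F$) shows that the proportion of times $s\in[0,S]$ with $g_{-s}{\bf z}\notin\widehat F^{-N}\mathcal{B}$ is asymptotically small. Since $h$ is bounded, the time averages of $h((g_{-s}{\bf z})_0)$ and of $\widehat h(g_{-s}{\bf z})$ differ asymptotically by $O(\varepsilon)$.

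\emph{Step 3 (shadowing).} By Property 2, $\gamma(s)=(g_{-s}{\bf z})_0$ stays within hyperbolic distance $\varepsilon$ of $\overline\gamma(t_0+\tau(s))$ except on a set of times of density $o(1)$. Uniform continuity of $h$ in the hyperbolic metric makes $h(\gamma(s))$ and $h(\overline\gamma(t_0+\tau(s)))$ close at all other times. The reparametrization is handled by the change of variables $u=t_0+\tau(s)$ together with $\frac1S\int_0^S|\tau'-1|\to0$ and boundedness of $h$. This converts the average along $\gamma$ into the average along $\overline\gamma$, which lands at $x_0=\zeta_0({\bf z})$. Letting $\varepsilon\to0$ gives the limit for every $x$ in $\zeta_0(\mathcal{E})$, where $\mathcal{E}$ is a $\xi$-full-measure set.

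\emph{Step 4 (main obstacle: from $\xi$-a.e.~orbit to Lebesgue-a.e.~$x$).} We must show $\zeta_0(\mathcal{E})$ has full $\ell$-measure. Non-singularity (Property 3) only says that preimages of null sets are null, which is the wrong direction. I would first try to show that the pushforward $\zeta_*\xi$ is equivalent to $\widehat\ell$, not merely absolutely continuous with respect to it. Using the definition (\ref{eq:xi-def-dp}) of $\xi$, for any $\widehat\ell$-positive set $B\subset\widehat{\mathbb{R}}$, the $\xi$-measure of the set of inverse orbits lying in thin Stolz-type neighbourhoods above $B$ should be positive. Granting this, the complement of $\zeta_0(\mathcal{E})$ lifts to a $\widehat\ell$-null set, and since $\widehat\ell$ projects to $\ell$ it is $\ell$-null. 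Justifying this comparison of $\xi$ with $\widehat\ell$ near the boundary is the step I expect to require the most care.
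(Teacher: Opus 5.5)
Your Steps 1--3 follow the route the paper sketches (Birkhoff for $g_t$ on $\widehat X_F$, then Properties 1--2 and uniform continuity of $h$ to pass to the vertical geodesic). The deduction of the second formula by bounded convergence is also fine.

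The gap is Step 4, which is exactly where the paper invokes Property 3. You leave it as ``granting this'', and the route you suggest would not close it. Knowing that $z_0$ lies in a Stolz region above $B$ says nothing about where the backward trajectory $g_{-t}({\bf z})$ lands, since weak shadowing only describes the trajectory asymptotically, after an uncontrolled offset. You are right that the one-sided non-singularity quoted in the paper does not by itself give $\widehat\ell\ll\zeta_*\xi$, but you do not need that. The missing ingredient is that $F$ is conservative and ergodic on $(\mathbb R,\ell)$: doubly parabolic maps are ergodic, and with finite Lyapunov exponent $F$ is Polya--Szeg\"o with $T=0$, hence recurrent. Combine this with the equivariance $\zeta\circ\widehat F=\widehat F\circ\zeta$ (the flow commutes with $\widehat F$) and the $\widehat F$-invariance of $\xi$, as follows.
\begin{itemize}
\item Let $B\subset\mathbb R$ be the set where the limit fails and $\widehat B=\{{\bf y}: y_0\in B\}$. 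Your Steps 1--3 give $\xi(\zeta^{-1}(\widehat B))=0$.
\item Hence $\xi(\zeta^{-1}(\widehat F^{-k}\widehat B))=\xi(\widehat F^{-k}\zeta^{-1}(\widehat B))=0$ for all $k\ge 0$.
\item If $\ell(B)>0$, then a.e.~$x$ satisfies $F^{\circ k}(x)\in B$ for some $k\ge0$. So $N=\widehat{\mathbb R}\setminus\bigcup_{k\ge0}\widehat F^{-k}\widehat B$ is $\widehat\ell$-null, and Property 3 in exactly the stated direction gives $\xi(\zeta^{-1}(N))=0$.
\item Then $\zeta$ would be undefined $\xi$-a.e., contradicting Property 1. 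Hence $\ell(B)=0$.
\end{itemize}

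There is also a smaller issue in Step 2. Birkhoff on $\widehat X_F$ cannot distinguish $\widehat F^{-N}\mathcal B$ from $\mathcal B$, since they have the same image in the quotient. So it only tells you that $g_{-s}{\bf z}\in\widehat F^{m}\mathcal B$ for some $m\in\mathbb Z$. Your estimate $|h((g_{-s}{\bf z})_0)-\widehat h(g_{-s}{\bf z})|<\varepsilon$, however, requires $m\le -N$. Taken literally in $\widehat{\mathbb H}$, the fraction of time spent in the single set $\widehat F^{-N}\mathcal B$ actually tends to $0$. The repair is to also require $\im b_{-N}\ge\eta$ for ${\bf b}\in\mathcal B$. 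Heights increase along orbits, and $(g_{-s}{\bf z})_0\to\mathbb R$ by Property 1, so once $\im (g_{-s}{\bf z})_0<\eta$ the index $m$ is forced below $-N$. This is the role Property 1 plays in the paper's sketch.
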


The proof uses all three properties of backward trajectories above: By ergodicity of the geodesic flow, for a generic backward trajectory $\gamma(t)$,
the average of $\widehat{h}$ along $\gamma([0,T])$  converges to $$\frac{1}{\int_{\mathbb{R}} \log|F'| d\ell} \int_{\widehat X} \widehat{h} d\xi.$$ Properties 1 and 2 together with the uniform continuity of $h$ imply that the average of $\widehat{h}$ along $\gamma([0,T])$ is close to the average of $h$
along the vertical geodesic $\overline{\gamma}([0,T])$. Finally, Property 3 allows one to conclude that the endpoints of the resulting vertical geodesics are distributed with respect to the Lebesgue measure.

\subsubsection{Orbit Counting up to a Ces\`aro average}

For doubly parabolic inner functions, the Orbit Counting Theorem up to a Ces\`aro average reads as follows:

\begin{theorem}
\label{main-thm4c}
Suppose $F: \mathbb{H} \to \mathbb{H}$ is a doubly parabolic inner function with finite Lyapunov exponent $\int_{\mathbb{R}} \log |F'| d\ell < \infty$.
For a bounded interval $I$ in the real line, let
$$
\mathcal N_I(z, R) = \# \bigl \{ w \in I \times [e^{-R}, 1] : F^{\circ n}(w) = z \text{ for some }n \ge 0 \bigr \}.
$$
 For $z \in \mathbb{H}$ outside a set of zero measure, we have
$$
\frac{1}{R} \int_0^R \frac{\mathcal N_I(z, S)}{e^S} dS \, \sim \,  |I| \cdot \frac{1}{\int_{\mathbb{R}} \log |F'| d\ell}
$$
as $R \to \infty$. 
\end{theorem}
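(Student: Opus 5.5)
The plan is to follow the strategy of \cite{laminations}: apply Theorem \ref{ergodic-theorem} to a suitable almost invariant function $h$ that counts nearby preimages, and then convert the resulting logarithmic average into the Ces\`aro average of $\mathcal N_I(z,S)/e^S$.

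\emph{The function $h$.} Fix a small hyperbolic ball $B = B_{\hyp}(z_0, r)$ and a bump function $\chi$ supported on it. For $w \in \mathbb{H}$ set
$$
h(w) = \sum_{n \ge 0} \sum_{F^{\circ n}(u) = w} \chi_u(w), \qquad \text{equivalently} \qquad h(w) = \sum_{n\ge 0} \chi\bigl(F^{\circ n}(w)\bigr)\cdot(\text{local weight}),
$$
chosen so that $\int_t^1 h(x+iy)\, dy/y$, integrated over $x \in I$, equals a smoothed count of the points $w \in I \times [t,1]$ with $F^{\circ n}(w) \in B$. The normalization uses $dA/\im w$: each preimage of $B$ near height $y$ has $dA/y$-area comparable to $y \cdot \mathrm{area}_{\hyp}(B)$. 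Boundedness of $h$ will come from the Schwarz lemma together with the bound $\sum_{F(w)=z}\im w \le \im z$ (Corollary \ref{exceptional3}), which controls how many preimages can lie in a hyperbolic ball. Uniform continuity in the hyperbolic metric follows if $\chi$ is Lipschitz. Almost invariance means that along $\xi$-a.e.\ inverse orbit $h(z_{-n})$ converges. Here I would use that $h(F(w)) - h(w)$ differs only by the $n=0$ term, which vanishes once $z_{-n}$ leaves a neighbourhood of $\overline{B}$; inverse orbits tend to the boundary, so the limit exists.

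\emph{Identifying the constant and converting the count.} By invariance of $\xi$ and the definition (\ref{eq:xi-def-dp}), $\int_{\widehat X}\widehat h \, d\xi$ equals $\int_{\widehat X}\sum_n \chi(\cdot)\,d\xi$. Unfolding over $\langle\widehat F\rangle$ reduces this to the $dA/\im z$-mass of $\chi$, namely $\int \chi \, dA/\im z$. Applying the second statement of Theorem \ref{ergodic-theorem} with $t = e^{-R}$ gives
$$
\frac1R \int_I \int_{e^{-R}}^1 h \,\frac{dx\,dy}{y} \to \frac{|I|}{\int\log|F'|\,d\ell}\int \chi\,\frac{dA}{\im z}.
$$
Next I write the left side as $\int_B \chi(z)\, \tfrac1R\int_0^R \mathcal N_I(z,S) e^{-S}\, dS \; dA(z)/\im z$ up to boundary errors. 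The factor $e^{-S}$ comes from the Jacobian: a preimage at height $y=e^{-S}$ contributes $y$ to the $dx\,dy/y$ integral, and the change of variables $y = e^{-S}$ inserts it. This yields the theorem averaged against $\chi$, for every bump $\chi$.

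\emph{Main obstacle.} The hard step is passing from the statement averaged over $z$ to a statement for a.e.\ individual $z$. I would handle it as in \cite{laminations}. First, choose a countable dense family of bumps $\chi_k$ shrinking to points. Second, use a monotonicity/comparison argument: $\mathcal N_I(z,S)$ for $z' \in B(z,r)$ is sandwiched between counts for slightly enlarged or shrunk intervals and height windows, with errors $O(r)$ uniformly in $S$, because preimages of a small hyperbolic ball are small hyperbolic balls. Letting $r\to 0$ along the countable family then gives the asymptotic for $z$ outside a null set. A secondary technicality is the boundary effect at height $1$ and at the endpoints of $I$, since preimages of $B$ straddle these lines. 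This effect is $O(1)$ in $R$ and disappears after dividing by $R$.
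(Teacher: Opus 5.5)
There is a genuine gap, and it is not the de-averaging step you flag. It is the conversion between $h$ and the count, where you implicitly treat every inverse branch of $F^{\circ n}$ on $B$ as a hyperbolic near-isometry. Take $h=\sum_{n\ge 0}\chi\circ F^{\circ n}$, which is the reading of your definition that is bounded and almost invariant (your first formula sums over preimages of $w$, so it is not equivalent to the second). The change of variables $z=F^{\circ n}(w)$ gives exactly
\begin{equation*}
\int_{I\times[e^{-R},1]} h\,\frac{dA(w)}{\im w}\;=\;\int \chi(z)\,\frac{1}{(\im z)^2}\sum_{(n,w)}\frac{\im w}{\|DF^{\circ n}(w)\|^{2}}\,dA(z),\qquad \|DF^{\circ n}(w)\|=\frac{|(F^{\circ n})'(w)|\,\im w}{\im F^{\circ n}(w)},
\end{equation*}
where the sum runs over $n\ge 0$ and $w\in I\times[e^{-R},1]$ with $F^{\circ n}(w)=z$.

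By Schwarz--Pick, $\|DF^{\circ n}(w)\|<1$, and the defect picked up in the first steps from $z$ is inherited by every deeper preimage. For example, $F(z)=z-1/z$ is doubly parabolic with Lyapunov exponent $2\pi$, and both preimages of $3i$ have $\|DF\|=\sqrt{5}/3$. So for all $z$ near $3i$, every deeper preimage carries a weight of roughly $9/5$ or more. This weight neither averages out as $R\to\infty$ nor disappears as $r\to 0$. It also breaks your sandwich argument: a component of $F^{-n}(B)$ only contains $B_{\hyp}(w,r)$ and can be much larger.

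The same defect sits on the other side. By (\ref{eq:xi-def-dp}), $(\pi_0)_*\xi=\lim_n (F^{\circ n})_*(dA/\im z)$, and $F_*(dA/\im z)$ has density $(\im z)^{-2}\sum_{F(w)=z}\im w\,\|DF(w)\|^{-2}>1/\im z$ at non-exceptional $z$. So unfolding gives $\int\chi\,d(\pi_0)_*\xi$, which is strictly larger than the $dA/\im z$-mass of $\chi$. What your $h$ actually yields is an asymptotic for the defect-weighted sum $\sum \im w\,\|DF^{\circ n}(w)\|^{-2}$. That is only an upper bound for the count, off by the density of $(\pi_0)_*\xi$ relative to $dA/\im z$. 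Changing the unspecified local weight just moves the problem. Weighting by $\|DF^{\circ n}\|^2$ makes the left side an exact count, but then $\int\widehat h\,d\xi=\int\chi(z_0)\lim_m\|DF^{\circ m}(z_{-m})\|^2\,d\xi$, and identifying this with $\int\chi\,dA/\im z$ is a nontrivial statement about backward branches that you do not address.

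The paper's construction, taken from \cite[Sections 4 and 14]{laminations}, is organized around exactly this issue. Its function $h=h_{z,\delta,\varepsilon,\eta}$ is built around the given point $z$. It is supported only on $\square_z$ and on the boxes $\square_w=F^{-n}(\square_z)$ of \emph{good} preimages, namely those for which the branch of $F^{-n}$ is $\varepsilon$-linear, so that $\square_w$ has essentially the hyperbolic size and shape of $\square_z$. The function is transported to these boxes by backward invariance, smoothed, and set to $0$ elsewhere, and only afterwards are $\delta,\varepsilon,\eta$ sent to $0$. On a good box, $\int_{\square_w}h\,dA/\im$ really is approximately $(\im w/\im z)\int_{\square_z}h\,dA/\im$, and this is what lets Theorem \ref{ergodic-theorem} be read as a count. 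Your proposal has no substitute for this good/bad separation. Also, because the paper's $h$ is attached to $z$ itself, the de-averaging you single out is not where the difficulty lies.

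Separately, your bookkeeping drops a factor even for near-isometric branches. A copy of $B$ at $w$ carries about $(\im w/\im z)\int\chi\,dA/\im z$, so the left side is $\int\chi(z)\,\bigl(\frac{1}{R}\sum_w \im w\bigr)\,dA(z)/(\im z)^2$, not your expression. Keep track of this factor $1/\im z$; conjugating $F$ by $z\mapsto\lambda z$ is a quick check on the final constant.
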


As explained in \cite[Sections 4 and 14]{laminations}, to count the number of repeated pre-images $w$ of a point $z \in \mathbb{H}$ with $\im w > e^{-R}$, one picks small parameters $\delta, \varepsilon, \eta > 0$ and applies Theorem \ref{ergodic-theorem} to an appropriate almost invariant function $h = h_{z, \delta, \varepsilon, \eta}$ and then takes $\delta, \varepsilon, \eta \to 0$ appropriately. In this overview, we will limit only to informally describing the almost invariant function $h$ used. The function $h$ is concentrated on the box
$$
\square_z = \{ z' \in \mathbb{H} \, : \, z - \delta < \re z' < z + \delta, \, (1 - \delta) \im z < \im z' < (1 + \delta) \im z \}
$$
and its  $\varepsilon$-linear inverse images.
The construction is as follows:

\begin{enumerate}
\item Let $w$ be a repeated pre-image of $z$, i.e.~$F^{\circ n}(w) = z$ for some $n \ge 0$. We say that $w$ is a {\em good} pre-image of $z$ if $F^{-n}$ admits an $\varepsilon$-linear inverse branch on $\square_z$ which takes $w$ to $z$, in which case,
$\square_w := F^{-n}(\square_z)$ is a topological box of roughly the same hyperbolic size and shape as $\square_z$.
We define $h_{\rough}(w') = 1$ if $w'$ belongs to a good box $\square_w$, and $h_{\rough}(w') = 0$ otherwise.

\item We now smoothen the function from the previous step. To that end, consider a slightly smaller box $\square_z(\delta - \eta)$ with $\eta <\!\!< \delta$. Define $h$ to be a smooth function on $\square_z$ which is $1$ on $\square_z(\delta - \eta)$, $0$ on $\partial \square$, and takes values between 0 and 1. Extend $h$ to the good boxes $\square_w$ by backward invariance. Finally, set $h  = 0$  on the rest of the upper half-plane. From the Schwarz lemma, it follows that $h$ is uniformly continuous in the hyperbolic metric.
\end{enumerate}

\subsubsection{Orbit Counting for one component inner functions}
\label{sec:oc-p1c}

We now examine Riemann surface laminations of doubly parabolic one component inner functions with finite Lyapunov exponent.
By \cite[Theorem 9.1]{inner-tdf}, a parabolic inner function is a parabolic one component inner function if and only if it is a covering map over an infinite strip $\{ -\rho < \im z < \rho \}$ for some $\rho > 0$. Together with Koebe's distortion theorem, this assumption implies that backward trajectories enjoy a stronger shadowing property which strengthens Property 2 from Section \ref{sec:2p-geodesic-flow}:

\begin{enumerate}
\item[$2'$.]The trajectory $\gamma(t) = (g_{-t} {\bf z})_0$ satisfies
$$
d_{\mathbb{H}} ( \gamma(t), \, \overline{\gamma}(t_0 + t))  \to 0, \qquad \text{as }T \to \infty,
$$
for some offset $t_0 \in \mathbb{R}$ depending on ${\bf z}$.
\end{enumerate}

This strong shadowing property plays an important role in the proofs of the following assertions, which we record as a single theorem:

\begin{theorem}
\label{parabolic-mixing-theorem}
{\em (i)} The geodesic flow $g_t: \widehat{X}_F \to \widehat{X}_F$ is mixing.

{\em (ii)} If $h: \mathbb{H} \to \mathbb{C}$ is a bounded almost invariant function that is uniformly continuous in the hyperbolic metric, then
$$
\lim_{y \to 0} \int_{I} h(x+iy) d\ell(x) = \frac{|I|}{\int_{\mathbb{R}} \log|F'| d\ell} \int_{\widehat X} \widehat{h} d\xi.
$$

{\em (iii)}
$$
\mathcal N_I(z, R) \, \sim \,  |I| \cdot \frac{e^R}{\int_{\mathbb{R}} \log |F'| d\ell}, \qquad \text{as }R \to \infty.
$$
\end{theorem}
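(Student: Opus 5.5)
The plan is to prove the three assertions in order: (i) from a Babillot-type criterion, (ii) from (i) by a Margulis thickening argument, and (iii) from (ii) by repeating the counting argument behind Theorem \ref{main-thm4c} with Theorem \ref{ergodic-theorem} replaced by (ii). Throughout, the one component hypothesis enters only through the strong shadowing Property $2'$, which comes from \cite[Theorem 9.1]{inner-tdf} and Koebe distortion.

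For (i), ergodicity of $g_t$ is already known (\cite[Corollary 13.2]{laminations}), so it suffices to rule out a nonconstant eigenfunction. I would use a Babillot-type criterion: if mixing fails, there is a nonconstant bounded measurable $\phi$ on $\widehat X_F$ and $a \ne 0$ with $\phi \circ g_t = e^{iat}\phi$, and $\phi$ may be taken invariant along strong stable leaves.

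The strong stable leaves are sets of points whose backward trajectories are asymptotic. By Property $2'$, two generic ${\bf z}, {\bf z}'$ with the same landing orbit $\zeta({\bf z}) = \zeta({\bf z}') = {\bf x}$ have backward trajectories asymptotic, after a time shift, to the same vertical geodesic over $x_0$. Hence $\phi$ is determined by ${\bf x}$ together with a height parameter $s = \log \im$. Concretely,
$$
\phi({\bf z}) = e^{ias}\,\psi({\bf x})
$$
for a measurable $\psi$ on $\widehat{\mathbb R}$, well defined by Property 3 (non-singularity of $\zeta$).

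Imposing $\widehat F$-invariance, applying $F$ rescales the vertical geodesic over $x_0$ by $F'(x_0)$, so $s$ shifts by $\log |F'(x_0)|$. This gives the cohomological equation
$$
\psi(\widehat F {\bf x}) \;=\; e^{-ia\log|F'(x_0)|}\,\psi({\bf x}).
$$
Since $F$ is exact on $\mathbb R$ (Aaronson), $|\psi|$ is constant, and $\psi$ can be pushed down to the base using past-independence.

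The main obstacle is to show that $\log|F'|$ is not cohomologous to a function with values in $(2\pi/a)\mathbb Z$. My first attempt would use real analyticity of $F$ on $\mathbb R$ off the support of $\nu$. Solutions of the equation are then locally analytic along inverse branches, which are defined on the strip $\{|\im z|<\rho\}$ by the covering property. Comparing two inverse branches of a long iterate at a common point produces a nonconstant continuous family of values of $\log|(F^{\circ n})'|$ that would have to lie in a discrete set, which is a contradiction.

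For (ii), fix $y = e^{-t}$ and thicken the horocyclic segment $I \times \{y\}$ to a small box $B$ in the lamination: vary the height by $\pm\eta$ and take all inverse orbits over it, with $\xi$-measure comparable to $|I|\eta$. Property $2'$ and hyperbolic uniform continuity of $h$ give
$$
\int_I h(x+iy)\,dx \approx \frac{1}{\eta}\int_{g_{-t}B} \widehat h \, d\xi
$$
up to $o(1)$. Mixing, from (i), applied to $\chi_B$ and $\widehat h$ then yields the limit $\frac{|I|}{\int_{\mathbb{R}} \log|F'| d\ell}\int_{\widehat X}\widehat h\,d\xi$; the normalisation is fixed by $\xi(\widehat X_F)=\int_{\mathbb{R}}\log|F'|d\ell$. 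Finally let $\eta\to0$.

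For (iii), I rerun the box-counting argument with the almost invariant functions $h_{z,\delta,\varepsilon,\eta}$ described above. Now (ii) gives the asymptotic for each horizontal slice, with no Cesàro average in $S$. Integrating over heights $\log(1/y)\in[0,R]$ and sending $\delta,\varepsilon,\eta\to0$ yields $\mathcal N_I(z,R)\sim |I|e^R/\int_{\mathbb{R}}\log|F'|d\ell$.
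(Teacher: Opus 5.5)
The paper gives no proof here beyond citing \cite[Lemma 5.6, Theorem 4.2 and Theorem 1.2]{laminations} and pointing to the strong shadowing Property $2'$ as the key input. Your chain (i)$\Rightarrow$(ii)$\Rightarrow$(iii) is consistent with that outline. However, your self-contained proof of mixing, on which (ii) and (iii) rest, has two genuine gaps.

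The first gap is the reduction itself. Failure of mixing does \emph{not} produce an eigenfunction $\phi\circ g_t=e^{iat}\phi$; that characterizes failure of weak mixing, which is strictly weaker. Babillot's argument only yields a non-constant weak limit of $\phi\circ g_{t_n}$ that is invariant under \emph{both} the strong stable and strong unstable foliations. Turning that into an arithmeticity statement requires a local product structure of $\xi$ with respect to these foliations, and you never establish one. Concretely, you would need ${\bf z}\mapsto(\zeta({\bf z}),s)$ to identify $(\widehat X_F,\xi,g_t)$ with a suspension over $(\widehat{\mathbb R},\widehat\ell)$ with roof function $\log F'$ and $\xi$ corresponding to $\widehat\ell\times ds$. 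The total masses do agree, both being $\int_{\mathbb R}\log|F'|\,d\ell$, but Property 3 (mere non-singularity of $\zeta$) does not give this. Your thickening step in (ii) silently uses the same identification: converting $\frac1\eta\int_{g_{-t}B}\widehat h\,d\xi$ into $\int_I h(x+iy)\,dx$ requires the landing points and offsets of orbits in $B$ to be distributed approximately like $\ell|_I\times ds$.

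The second gap is the non-arithmeticity step, which you rightly call the main obstacle but do not resolve. Suppose $\psi$ lives on the base with $\psi(Fx)=e^{-ia\log F'(x)}\psi(x)$. Comparing two preimages $x,x'$ of the same point under $F^{\circ n}$ gives
$$
a\bigl(\log (F^{\circ n})'(x)-\log (F^{\circ n})'(x')\bigr)\equiv\arg\psi(x)-\arg\psi(x')\pmod{2\pi},
$$
so the transfer function does not cancel and no continuous family is forced into a discrete set. What is forced into $\frac{2\pi}{a}\mathbb Z$ is $\log(F^{\circ n})'(p)$ at periodic points $p$. Even that holds only after you upgrade the measurable $\psi$ to a continuous one, e.g.\ $\psi=e^{iau}$ with $u$ real analytic via density points and normal families of inverse branches on the strip; you merely assert this. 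Periodic multipliers are isolated values, so you still need an argument that they do not all lie in a discrete group. One route uses the parabolic point. By Lemma \ref{p1c-structure}(i) $\mu$ is compactly supported, and $T=0$ since $F$ is doubly parabolic, so $F'(x)=1+\mu(\mathbb R)/x^2+O(|x|^{-3})$ near $\pm\infty$. Periodic orbits that are thrown out to $X_n\to\infty$ and crawl back in $n$ steps then have multipliers $\lambda_n\to\infty$ with $\lambda_{n+1}/\lambda_n\to1$, by the Fatou coordinate and Koebe for inverse branches. That is impossible if every $\lambda_n$ lies in $e^{2\pi\mathbb Z/a}$.
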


\smallskip

See  \cite[Lemma 5.6, Theorem 4.2 and Theorem 1.2]{laminations} respectively.

\subsection{Lavaurs semigroups}

Let $F$ be a simply parabolic inner function with finite Lyapunov exponent and $G = G_\sigma$ be a commuting holomorphic map. Form the space of extended bi-infinite orbits
$$
\widehat{\mathbb{H}}_{\mathcal G} = \bigl \{ (z_{n, k})_{n, k =-\infty}^\infty: z_{n, k} \in \mathbb{H}, \ z_{n+1, k} = F(z_{n,k}), \ z_{n, k+1} = G(z_{n,k}) \bigr \},
$$
and define the Riemann surface lamination as the quotient $\widehat{X}_{\mathcal G} := \widehat{\mathbb{H}}/\mathcal G$, where two bi-infinite orbits ${\bf z} = (z_{n, k})_{n=-\infty}^\infty$ and ${\bf z'} = (z_{n, k})_{n=-\infty}^\infty$ are equivalent if there exist integers $n_0, k_0 \in \mathbb{Z}$ such that $z_{n, k} = z_{n+n_0,k+k_0}$ for all $n, k \in \mathbb{Z}^2$. We will also be interested in 
the intermediate space
$
\widehat{\mathbb{H}}_{\mathcal G} / \langle \widehat{F} \rangle
$
where ${\bf z} \sim {\bf z'}$ if there exists an integer $n_0 \in \mathbb{Z}$ such that $z_{n, k} = z'_{n+n_0,k}$ for all $n, k \in \mathbb{Z}^2$.
Finally, we define the extended solenoid
$$
\widehat{\mathbb{R}}_{\mathcal G} = \bigl \{ (z_{n, k})_{n, k =-\infty}^\infty: z_{n, k} \in \mathbb{R}, \ z_{n+1, k} = F(z_{n,k}), \ z_{n, k+1} = G(z_{n,k}) \bigr \},
$$
which fibers over the usual solenoid
$$
\widehat{\mathbb{R}}_{F} = \bigl \{ (z_{n})_{-\infty}^\infty: z_{n} \in \mathbb{R}, \ z_{n+1} = F(z_{n+1}) \bigr \},
$$
considered previously. The extended solenoid carries the natural extension measure $\widehat{\ell}_{\mathcal G}$, which is the unique measure that is invariant under both $\widehat{F}$ and $\widehat{G}$ and projects to the Lebesgue measure under every coordinate.

As any bi-infinite orbit ${\bf z} \in \widehat{\mathbb{H}}_{\mathcal G}$ ``passes'' through the forward half-cylinder $C^+$ bi-infinitely many times, it induces a bi-infinite orbit 
$${\bf w} = \widehat{P}_+({\bf z}), \qquad w_n = P_+(z_{n,k}),$$
 of the Lavaurs map $L = L_\sigma: C_+ \to C_+$. Completing the half-cylinder to a disk by adding a point at infinity, 
we may view $\widehat{P}_+({\bf z})$ as a bi-infinite orbit in $\widehat{\mathbb{D}}_{L}$. From the construction, it is clear that $\widehat{P}_+$ descends to bijections $$\widehat{\mathbb{H}}_{\mathcal G}/ \langle \widehat{F} \rangle \to \widehat{\mathbb{D}}_{L} \qquad \text{and} \qquad \widehat{X}_{\mathcal G} \to \widehat{X}_{L}.$$

Since the Lavaurs map $L$ is a hyperbolic inner function (with a Denjoy-Wolff point at the origin), the orbit space $\widehat{\mathbb{D}}_{L}$ carries a natural volume form $\xi_{L}$ and a geodesic flow $g_{t, L}$.
We define a volume form $\xi_{\mathcal G}$ and a geodesic flow $g_{t, \mathcal G}$ on $\widehat{\mathbb{H}}_{\mathcal G}$ by transferring these structures from $\widehat{\mathbb{D}}_{L}$. In view of Theorem \ref{summary2}(ii), the total volume of $\widehat{X}_{\mathcal G}$ is
$$
\xi_{\mathcal G}(\widehat{X}_{\mathcal G}) \, = \, \xi_{L}(\widehat{X}_L) \, = \, \int_{\partial \mathbb{D}} \log |L'| dm \, = \, \int_{\mathbb{R}} \log |F'| d\ell.
$$
Since $g_{t, L}$ on $\widehat{X}_L$ is ergodic, so is $g_{t, \mathcal G}$ on $\widehat{X}_{\mathcal G}$.

Recall from Lemma \ref{c-minus-correspondence}(iii) that $P_-:  (\widehat{\mathbb{R}}_F, \widehat{\ell}_F ) / \langle \widehat{F} \rangle \to (\partial C_-,  \ell)$ is a measure space isomorphism. Since $F^\infty: C_- \to C_+$ is an inner function by Lemma \ref{c-minus-correspondence}(ii) and $P_+ = F^{\infty} \circ P_-$, L\"owner's lemma implies that $P_+: (\widehat{\mathbb{R}}_F, \widehat{\ell}_F) / \langle \widehat{F} \rangle \to (\partial C_+, \ell)$ is also a measure space isomorphism.

Let $\widehat{S}^1_L$ be the solenoid associated to the Lavaurs map $L: C_+ \to C_+$.  The solenoid $\widehat{S}^1_L$  carries the natural extension measure $\widehat{\ell}_L$, the unique $L$-invariant measure on ${\widehat S}^1_{L}$ which projects to the Lebesgue measure under every coordinate. It is readily seen that 
$P_+:  (\widehat{\mathbb{R}}_{\mathcal G}, \widehat{\ell}_{\mathcal G} )  / \langle \widehat{F} \rangle \to (\widehat{S}^1_{L}, \widehat{\ell}_L)$ is a measure space isomorphism.

 \begin{lemma}
 \label{backard-trajectories-for-lavaurs-semigroups}
Backward geodesic trajectories in $\widehat{\mathbb H}_{\mathcal G}$ enjoy Properties 1--3 from Section \ref{sec:2p-geodesic-flow}.
Namely, they land at points in the solenoid $\widehat{\mathbb R}_{\mathcal G}$, are weakly shadowed by vertical geodesics, and the endpoint mapping $\zeta: (\widehat{\mathbb H}_{\mathcal G}, \xi_{\mathcal G}) \to (\widehat{\mathbb{R}}, \widehat{\ell}_{\mathcal G})$ is non-singular.
 \end{lemma}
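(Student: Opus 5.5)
The plan is to transfer Properties 1--3 from the hyperbolic inner function $L = L_\sigma$ to $\widehat{\mathbb H}_{\mathcal G}$ through the projection $\widehat{P}_+$. We take for granted that the lamination of $L$ has these properties (\cite{laminations}, hyperbolic case). Since $L$ is inner with finite Lyapunov exponent (Theorem \ref{summary2}), for $\xi_L$ a.e.~${\bf w}\in\widehat{\mathbb D}_L$ the backward geodesic trajectory $g_{-t,L}({\bf w})$ lands on a point $\zeta_L({\bf w})\in\widehat S^1_L$ and is weakly shadowed by a radial geodesic. Moreover, $\zeta_L:(\widehat{\mathbb D}_L,\xi_L)\to(\widehat S^1_L,\widehat\ell_L)$ is non-singular. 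The key point is that the identification $\widehat{P}_+:\widehat{\mathbb H}_{\mathcal G}/\langle\widehat F\rangle\to\widehat{\mathbb D}_L$ is, coordinatewise, the map $P_+=\widetilde P_+ \bmod T$, which is a holomorphic covering onto its image near the relevant points. We may therefore lift trajectories one coordinate at a time.

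\textbf{Property 1.} Fix a generic ${\bf z}$. Its trajectory $g_{-t,\mathcal G}({\bf z})$ is defined so that the coordinates $(g_{-t}{\bf z})_{n,k}$ satisfy $P_+((g_{-t}{\bf z})_{0,k})=(g_{-t,L}{\bf w})_k$. To show that $(g_{-t}{\bf z})_{0,k}$ converges to a real point as $t\to\infty$, we use Lemma \ref{finite-log-sum}(i). By Lemma \ref{corr-log-sum}, for $\widehat\ell_{\mathcal G}$-a.e.~limiting orbit the forward log-sum $\sum_n\log F'(x_{n,k})$ is finite. Hence $\widetilde P_+$ has a finite angular derivative at $x_{0,k}$ and is a local conformal bijection on a truncated Stolz angle there, by Lemma \ref{ad-estimate} together with the injectivity argument that follows it. The curve $(g_{-t}{\bf z})_{0,k}$ is obtained by pulling back a path in $C_+$ that lands at $P_+({\bf x})$ under this local inverse. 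It therefore lands at $x_{0,k}$, and by applying $F$ all other coordinates land as well.

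The matching of landing points is supplied by the measure isomorphism $P_+:(\widehat{\mathbb R}_{\mathcal G},\widehat\ell_{\mathcal G})/\langle\widehat F\rangle\to(\widehat S^1_L,\widehat\ell_L)$ established just before the statement. We set $\zeta=P_+^{-1}\circ\zeta_L\circ\widehat P_+$.

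\textbf{Property 3.} Non-singularity of $\zeta$ is then formal. It is the composition of the non-singular map $\zeta_L$ with two measure space isomorphisms, since $\xi_{\mathcal G}$ was defined by transport from $\xi_L$. The $\widehat F$-quotient is harmless because $\widehat\ell_{\mathcal G}$ is $\widehat F$-invariant.

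\textbf{Property 2} is the main obstacle. The trajectory in $C_+\cong\mathbb D^*$ is weakly shadowed by a radial geodesic landing at $P_+(x_{0,0})$, and we must transport this shadowing to $\mathbb H$ via the local inverse of $\widetilde P_+$ near $x_{0,0}$. This inverse has a finite nonzero angular derivative. By Lemma \ref{ad-estimate}-type control, in the truncated Stolz angle $\widetilde P_+$ is close to an affine map $z\mapsto x'+\widetilde P_+'(x_{0,0})(z-x_{0,0})$, with error tending to $0$ in the hyperbolic metric as $\im z\to0$. Consequently, radial (vertical) geodesics in the cylinder pull back to curves asymptotic, in hyperbolic distance, to vertical geodesics landing at $x_{0,0}$, up to a fixed time offset coming from $\log\widetilde P_+'(x_{0,0})$.

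The difficulty is that weak shadowing only keeps $\gamma$ near $\overline\gamma$ for a density-one set of times; at exceptional times $\gamma$ may leave the Stolz angle where the inverse branch is controlled. To handle this, I would first apply Lemma \ref{ad-estimate} in a slightly wider Stolz angle of opening close to $\pi$, which costs only a constant. I would then argue that the exceptional time set has density zero. Off that set the points lie within $\varepsilon$ of the vertical geodesic, hence inside the controlled angle, so the distortion is $o(1)$ there. This preserves the Ces\`aro conditions on $\tau$ and on the exceptional set, and the reparametrization $\tau$ is inherited unchanged.
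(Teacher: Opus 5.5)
There is a genuine gap in your Property 1, and it carries over to Property 2.

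\textbf{Where your argument fails.} The sentence ``the curve $(g_{-t}{\bf z})_{0,k}$ is obtained by pulling back a path in $C_+$ that lands at $P_+({\bf x})$ under this local inverse'' is exactly what needs proof. All you know is the identity $P_+((g_{-t}{\bf z})_{0,k})=(g_{-t,L}{\bf w})_k$. But $P_+=\widetilde P_+ \bmod T$ is neither injective nor a covering: $\widetilde P_+$ is itself an inner function, with critical points and boundary singularities in general. So this identity does not tell you which preimage the trajectory follows.

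Nothing in your argument shows that for large $t$ the point $(g_{-t}{\bf z})_{0,k}$ lies in the truncated Stolz angle at $x_{0,k}$ where your local inverse is defined. Yet that is essentially the assertion that the trajectory lands at $x_{0,k}$. The exceptional times you flag illustrate the problem: during an excursion, the path in $C_+$ may leave the region where the local inverse is defined, and nothing forces the trajectory in $\mathbb H$ to return on the same branch. Defining $\zeta:=P_+^{-1}\circ\zeta_L\circ\widehat P_+$ through the measure isomorphism only names a candidate endpoint. It does not show that the trajectory converges to it, or that it is shadowed by the vertical geodesic there.

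\textbf{The missing idea.} Run the transfer in the opposite direction, through the explicit inverse of $\widehat P_+$ supplied by the construction of $G_\sigma$. The row $(z_{n,k+1})_n$ is recovered from $w_k$ in three steps: apply $\tau_\sigma$, lift through the covering $\mathbb H\to C_-$, and apply the globally defined Poggi-Corradini map $\Psi$. Thus $(g_{-t}{\bf z})_{n,k+1}=\Psi(\tilde\gamma_n(t))$, where the $\tilde\gamma_n$ are the lifts of the path $\tau_\sigma((g_{-t}{\bf w})_k)$. These lifts land and are weakly shadowed by vertical geodesics, because the covering is a local isometry.

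By Lemma \ref{finite-log-sum}(ii) and Lemma \ref{c-minus-correspondence}(i), $\Psi$ has a finite angular derivative at a.e.\ point of $\mathbb R$. Non-singularity of $\zeta_L$ then lets you discard a null set of ${\bf w}$, so that all landing points are such points. Two facts now finish the transfer:
\begin{itemize}
\item $\Psi$ maps a vertical geodesic ending at such a point to a curve asymptotic in $d_{\mathbb H}$ to a vertical geodesic (Julia--Carath\'eodory, with a time offset determined by the angular derivative).
\item By the Schwarz lemma, $d_{\mathbb H}(\Psi(\gamma(t)),\Psi(\overline\gamma(s)))\le d_{\mathbb H}(\gamma(t),\overline\gamma(s))$.
\end{itemize}
Hence landing and weak shadowing transfer with the same reparametrization $\tau$. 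No branch selection is needed, and nothing has to be controlled at exceptional times.

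This is the paper's argument. Once $\zeta$ is identified with this geometric endpoint map, your formal argument for Property 3 goes through as in the paper.
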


\begin{proof}
From the background material in Section \ref{sec:doubly-parabolic-case}, we know that Properties 1--3 are satisfied for backward  trajectories of the geodesic flow in $\widehat{\mathbb{D}}_L$. The lemma says that these properties transfer to backward trajectories in 
$\widehat{\mathbb{H}}_{\mathcal G}$.

We know that for $\xi_L$ a.e.~${\bf w} \in \widehat{\mathbb{D}}_L$, for any $k \in \mathbb{Z}$, the path $g_{-t}({\bf w})_k$ lands at a point $\zeta_{k} \in \partial C_+$ and is weakly shadowed by the vertical geodesic with endpoint $\zeta_k$.

Applying the map $\tau_\sigma$, we see that the path
$\tau_\sigma(g_{-t}({\bf w})_k) \subset C_-$ lands at $\tau_\sigma(\zeta_k) \in \partial C_-$ and is weakly shadowed by the vertical geodesic with endpoint $\tau_\sigma(\zeta_k)$.

We consider all lifts of the path $\tau_\sigma(g_{-t}({\bf w})_k)$ under the projection map $z \to z \!\!\mod T$ from $\mathbb{H} \to C_-$. Thus, we obtain infinitely many paths $\tau_{\sigma,n}(g_{-t}({\bf w})_k)$, $n \in \mathbb{Z}$, in the upper half-plane that land are weakly shadowed by vertical geodesics.

Applying the map $\Psi$, we obtain the paths
$g_{-t}({\bf z})_{n, k+1}$, $n \in \mathbb{Z}$, where ${\bf z} \in \widehat{\mathbb{H}}_{\mathcal G}$ projects onto ${\bf w} \in \widehat{\mathbb{D}}_L$ under $\widehat{P}_+$. 

Since $\Psi$ has an angular derivative $\ell$ a.e.~on $\mathbb{R}$, after removing a $\xi_{\mathcal G}$ measure zero set of inverse orbits ${\bf w} \in \widehat{\mathbb{D}}_L$, we may assume that the paths $\tau_{\sigma,n}(g_{-t}({\bf w})_k)$, $n \in \mathbb{Z}$, land at points on the real line where $\Psi$ has an angular derivative.
Consequently, the paths $g_{-t}({\bf z})_{n, k+1} = \Psi \bigl (\tau_{\sigma,n}(g_{-t}({\bf w})_k) \bigr )$ land on the real line and are weakly shadowed by vertical geodesics.

From the compatibility of the measures $\xi_{\mathcal G}$ and $\xi_L$, it follows that for $\xi_{\mathcal G}$ a.e.~${\bf z} = (z_{n, k})_{n, k=0}^\infty \in \widehat{\mathbb{H}}_{\mathcal G}$, the backward trajectory
$g_{-t}({\bf z})$ lands on an extended bi-infinite orbit $\zeta({\bf z}) \in \widehat{\mathbb{R}}_{\mathcal G}$, and for each $n, k \in \mathbb{Z}$, $g_{-t}({\bf z})_{n,k}$ is weakly shadowed by a vertical geodesic in $\mathbb{H}$. Finally, in view of the compatibility of the measures $\widehat{\ell}_{\mathcal G}$ and $\widehat{\ell}_L$, the non-singularity of the map $\zeta_{\mathcal G}: (\widehat{\mathbb H}_{\mathcal G}, \xi_{\mathcal G}) \to (\widehat{\mathbb{R}}, \widehat{\ell}_{\mathcal G})$ follows from the non-singularity of the map $\zeta_{L}: (\widehat{\mathbb D}_{L}, \xi_{L}) \to (\widehat{S}^1_L, \widehat{\ell}_{L})$.
\end{proof}

With help of the above lemma, one can show that Theorem \ref{ergodic-theorem} holds in the simply parabolic setting. From here, the proof of the Orbit Counting Theorem up to a Ces\`aro average (Theorem \ref{main-thm3c}) follows the same general strategy as in the doubly parabolic case. The only difference is that when designing the almost invariant function $h$, one uses extended iteration instead of regular iteration.

\section{Parabolic one component inner functions}
\label{sec:p1c}

Recall from the introduction that an inner function $f: \mathbb{D} \to \mathbb{D}$ is a classical one component inner function if it is a covering map over an annulus $\{ r < |z| < 1 \}$ for some $0 < r < 1$, while a parabolic inner function $F: 
\mathbb{H} \to \mathbb{H}$ is a parabolic one component inner function if and only if it is a covering map over an infinite strip $\{ 0 < \im z < \rho \}$ for some $\rho > 0$. As explained in \cite[Theorems 7.2 and 9.1]{inner-tdf}, these characterizations remain valid if one replaces these domains by symmetric annuli
$\{ r < |z| < 1/r \}$ and strips $\{ -\rho < \im z < \rho \}$.

In this section, we study Lavaurs semigroups associated to simply parabolic one component inner functions with finite Lyapunov exponent. We show that in this setting, the Lavaurs map $L = L_\sigma$ is a classical one component inner function (Lemma \ref{p1c-structure}). As an application, we derive the full Orbit Counting Theorem for Lavaurs semigroups (Theorem \ref{main-thm3d}).

\begin{proof}[Proof of Lemma \ref{p1c-structure}]
(i) Suppose $F$ is a parabolic one component inner function with finite Lyapunov exponent. To show that the measure $\mu$ in its Polya-Szeg\"o representation is compactly supported, it is enough to check that $F'(x) \to 1$ as $x \to \pm \infty$.

From Koebe's distortion theorem and the covering property of parabolic one component inner functions, it follows that if $F'(x_0) > c > 1$ for some $x_0 \in \mathbb{R}$, then $F'(x) > \frac{1+c}{2}$ on an interval $I(x_0, r) = \{ x \in \mathbb{R} : | x - x_0| < r\}$, where $r = r(c, \rho) > 0$ depends only on $c$ and $\rho$.
Therefore, if $F$ has a finite Lyapunov exponent, then the set of $x_0 \in \mathbb{R}$ with $F'(x_0) > c$ must be bounded and so $F'(x) \to 1$ as $x \to \pm \infty$ as desired.

 Since $\mu$ is compactly supported, Lemma \ref{properties-of-Pplus} admits a stronger conclusion: the Pommerenke conjugacy 
$\widetilde{P}_+(z) = \lim_{n \to \infty} F^{\circ n}(z) -  \re F^{\circ n}(i)$
 injective on $$\Omega_+ = \{ x + iy \in \mathbb{H} : x > X \}.$$ Similarly, Lemma \ref{properties-of-Pminus}  strengthens to the injectivity of $\widetilde{P}_-(z) = \lim_{n \to \infty} \bigl ( F^{-n}(z) - \re c_{-n} \bigr )$
on $$\Omega_- = \{ x + iy \in \mathbb{H} : x < - X \}.$$

(ii) Let $F$ be a simply parabolic inner function with $T > 0$ and $\sigma \in \mathbb{R} / T\mathbb{Z}$. We show that if $F$ is a covering map over  $\{z \in \mathbb{H}: 0 < \im z < \rho \}$, then the Lavaurs map $L: C_+ \to C_+$ is a covering map over $\{ z \in C_+ : 0 < \im z < \rho \}$.

For this purpose, we fix a point $w_1 \in C_+$ with $\im w_1 < \rho$ and a pre-image $w_0 \in L^{-1}(w_1)$. Let $U = B_{\hyp}(w_1, \delta) \subset \{ z \in C_+ : \im z < \rho \}$ be a  small hyperbolic ball centered at $w_1$. (Here, $\delta > 0$ is chosen sufficiently small so that any hyperbolic ball of radius $\delta$ centered at a point with imaginary part less than $\rho$ is embedded in $C_+$). We construct a holomorphic branch of $L^{-1}$ on $U$ which takes $w_1$ to $w_0$.

Since $L = F^{\infty} \circ \tau_\sigma$, there exists a bi-infinite orbit 
${\bf z} = (z_{n,0})_{n \in \mathbb{Z}}$ in $\mathbb{H}$ such that $P_-({\bf z}) = \tau_\sigma(w_0)$ and $P_+({\bf z}) = w_1$.
Choose $N > 0$ sufficiently large so that:
\begin{itemize}
\item For $n \ge N$, the points $z_{n, 0} \in \Omega_+$,
\item For $n \le {-N}$, the points $z_{n, 0} \in \Omega_-$.
\end{itemize}
By enlarging $N$ if necessary, we may assume that hyperbolic balls of radius $\delta$ centered at these orbit points will also be in $\Omega_+$ or $\Omega_-$.

Since $\widetilde{P}_+$ is injective on $\Omega_+$, for each $n \ge N$, there exists a unique inverse branch of $P_+$ sending 
$w_1$ to $z_{n, 0}$. Let $U_n$ be the connected component of $P_+^{-1}(U)$ containing $z_{n,0}$. Since $\widetilde{P}_+$ is a parabolic inner function, it increases the imaginary part, and hence each $U_n$, $n \ge N$, is contained in the strip $\{ z \in \mathbb{H} : \im z < \rho \}$.

Since $F$ acts as a covering map on the horizontal strip $\{ z \in \mathbb{H} : \im z < \rho \}$, there exists a branch of $F^{-1}$ defined on $U_N$ which takes $z_{N, 0}$ to $z_{N-1,0}$. We define $U_{N-1}$ as the image of $U_N$ under this inverse branch. As parabolic inner functions increase the imaginary part, $U_{N-1}$ is again contained in the strip  $\{ z \in \mathbb{H} : \im z < \rho \}$. Continuing inductively, we obtain a sequence of simply-connected domains $\{U_k\}_{k \in \mathbb{Z}}$ such that $F$ conformally maps $U_k$ to $U_{k+1}$ and sends $z_{k,0}$ to $z_{k+1,0}$.

By the Schwarz lemma, $U_{-N} \subset B_{\hyp}(z_{-n,0}, \delta) \subset \Omega_-$. Since $\widetilde{P}_-$ is injective on $\Omega_-$, the image $P_-(U_{-N}) \subset B_{\hyp}(\tau_\sigma(w_0), \delta) \subset C_-$ is a simply-connected domain which maps univalently to $U$ under $F^\infty$. Therefore, $V := \tau_\sigma^{-1}(P_-(U_{-N}))$ is the desired neighbourhood of $w_1$.
The proof is complete.
 \end{proof}

The proof of Orbit Counting Theorem for one component Lavaurs semigroups (Theorem \ref{main-thm3d}) largely mimics that of Theorem \ref{parabolic-mixing-theorem} which concerns doubly parabolic one component inner functions. 
By the hyperbolic analogue of Theorem \ref{parabolic-mixing-theorem}, the geodesic flow on $\widehat{X}_L$ is mixing and backward geodesic trajectories $g_{-t}({\bf w})_n$ enjoy the strong shadowing property $2'$ from Section \ref{sec:oc-p1c}.
An argument similar to the one in Lemma \ref{backard-trajectories-for-lavaurs-semigroups} shows that backward geodesic trajectories $g_{-t}({\bf z})_{n, k} \subset \mathbb{H}$ also enjoy the strong shadowing property. From here, one can prove Theorem \ref{main-thm3d} as in \cite{laminations}.

\appendix

\section{An example when $G$ is not inner}
\label{sec:bad-example}

We now give an example of a Polya-Szeg\"o inner function $F$ for which the commuting holomorphic self-map $G$ is not an inner self-map of the upper half-plane, that is, $\im F(x) = 0$ for a.e.~$x \in \mathbb{R}$. In this example, the Lavaurs map $L$ is also not inner.

\begin{lemma}
There exists a Polya-Szeg\"o function $F$ and $c > 0$ such that $\im G(z) > c$ for all $z \in \mathbb{H}$.
\end{lemma}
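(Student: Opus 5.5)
Since $G$ commutes with $F$, the commutation relation will propagate a height gain from one region to all of $\mathbb{H}$. So the plan is to choose $F$ so that every bi-infinite orbit gains at least $c$ in height, and to check that $G$ then inherits this gain everywhere.

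I will take a Polya-Szeg\"o function
$$F(z) = z + T - \sum_j \frac{m_j}{z-a_j},$$
with $\mu = \sum_j m_j\delta_{a_j}$ a finite sum of atoms of total mass $\mu(\mathbb{R})<\infty$. The atoms $a_j$ will be spaced along the whole real line, for instance $a_j = jT$ for $j \in \mathbb{Z}$, with masses $m_j$ decaying slowly enough that every bi-infinite orbit picks up a definite amount of height. Equivalently, I could place the atoms in a bounded window and exploit the fact that every orbit must cross the window region.

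The key observation is the following. Let $z\in\mathbb{H}$ and let $w=G(z)$. By construction, $w=z_{0,1}$ lies on a bi-infinite orbit ${\bf z_1}=(z_{n,1})$ whose backward end has positive height and whose forward end satisfies $P_+({\bf z_1}) = L(P_+(z))$. The height gained along this orbit from $n=-\infty$ to $n=+\infty$ is
$$\sum_{n\in\mathbb{Z}} \int_{\mathbb{R}} \frac{y_n\, d\mu(a)}{(x_n-a)^2+y_n^2}.$$
I want to bound this sum below by a constant $c>0$ that is independent of the orbit. This is the main obstacle, because orbits with tiny height $y_n\approx y$ see almost no contribution unless they pass very close to an atom. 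The integrand near an atom behaves like $m/y$ when $|x_n-a|\lesssim y$, but it is small when $x_n$ misses $a$. By Step 1 of Lemma \ref{dyn-PS-only-if}, the real parts of the orbit sweep all of $\mathbb{R}$ with steps close to $T$. To guarantee a hit, I plan to use an atom $a$ of fixed mass $m$ together with the behaviour of $F$ near a pole of $-m/(z-a)$: a low orbit passing near $a$ would be thrown far away, and the local dynamics near the pole forces the orbit point closest to $a$ to satisfy $|z_n-a|\lesssim \sqrt{m}$. Indeed, if $|z_n-a|$ is small then $|F(z_n)|$ is huge, which contradicts $x_{n+1}-x_n\approx T$ once the orbit is near the asymptotic translation regime. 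Combined with the fact that $\im F(z)\ge m\,\im z/|z-a|^2$, this should give a gain bounded below uniformly in $y$, roughly of order $\min(m/T, \dots)$. If this argument is too lossy, I will fall back on designing $\mu$ with many atoms of comparable mass spread so that at least one is within distance $O(\sqrt{m})$ of some $x_n$.

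Given a uniform gain $c$, the conclusion follows directly. The orbit ${\bf z_1}$ starts, as $n\to-\infty$, at height $\lim \im z_{n,1} \ge \lim_{n \to \infty}\im F^{\circ n}(z) \ge \im z$, since $P_-({\bf z_1})$ is a translate of $P_+(z)$ and translation preserves height in $C_\pm$. So $\im G(z) \ge \im z_{-\infty,1} + (\text{gain before } n=0)$. To avoid the problem that the gain may occur only after time $0$, I will instead conclude $\im G^{\circ}(z)$ at some $G_{n,1}$ and use the commutation $G\circ F^{\circ n}=F^{\circ n}\circ G$ together with the fact that $\im$ is bounded by the limit. More simply, I will apply the argument to $G_\sigma$ with $\sigma$ shifted by $-kT$, which lets me choose the labeling of the orbit so that time $0$ lies after the gain. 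Then $\im G(z) > c$ for every $z \in \mathbb{H}$, which prevents $G$ from having real boundary values and shows that $G$, and hence $L$, is not inner.
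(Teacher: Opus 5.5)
\textbf{There is a genuine gap: the central step is missing.} You never produce an $F$ for which every BISBS gains a definite height, and the construction you commit to cannot work.

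\emph{The finite-atom version is ruled out by the paper itself.} If $\mu$ is a finite sum of atoms (more generally, atomic with $\sum_j \sqrt{m_j} < \infty$), then $\log F'(x) \le \sum_j \log\bigl(1 + m_j/(x-a_j)^2\bigr)$. Since $\int_{\mathbb{R}} \log(1+m/x^2)\,dx = 2\pi\sqrt{m}$, the Lyapunov exponent is finite. Theorem \ref{summary2}(i) then says $G$ and $L$ \emph{are} inner, so $\im G$ cannot be bounded below.

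\emph{Your mechanism for a uniform gain runs backwards.} The pole estimate shows that an orbit in the translation regime stays a definite distance $d$ from an atom of mass $m$. Its gain there is then about $my/d^2 = O(y)$, which vanishes with the height. Isolated atoms can therefore be threaded: e.g.\ with $a_j = jT$ and summable masses, a BISBS at height $h$ shadowing a generic real bi-infinite orbit stays away from the atoms and gains only $O(h)$. Your fallback, an atom within a \emph{fixed} distance $O(\sqrt m)$ of some $x_n$, has the same defect.

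\emph{The missing idea is multi-scale.} For each height scale $1/j$ there must be a region, receding toward $-\infty$ where BISBS come from, in which atoms are at most $1/j$ apart. Then an orbit at height $\ge 1/j$ sees a Poisson average and gains a fixed fraction of the local mass wherever its real parts fall. The paper puts mass $M/j^2$, split into $j$ equal atoms, on $I_j = [-2j,-2j+1]$, with $T = 1/3$. A point above $I_{j+1}$ at height in $(1/(j+1),1)$ gains $\gtrsim M/j^2 > 1/j - 1/(j+1)$ in one step. Horizontal steps differ from $T$ by $O(M/j) < T$, so $I_j$ cannot be skipped. Positive backward height puts every BISBS above threshold on $I_j$ for all large $j$, and downward induction yields $\lim_n \im z_n > 1/j_0 =: c$.

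\textbf{Your last step also fails, and no version of it can give the literal statement.} Replacing $\sigma$ by $\sigma - kT$ moves index $0$ \emph{earlier} on ${\bf z_1}$, since $G_\sigma = F^{\circ k}\circ G_{\sigma-kT}$. More fundamentally, no relabeling can give $\im G > c$ on all of $\mathbb{H}$. Suppose $F^{\circ k}(z_{-k}) = z_0$; such points exist for $z_0$ off a polar set. Then $\widetilde P_+(z_{-k}) = \widetilde P_+(z_0) - kT$, so for large $k$ the point $G(z_{-k}) = \widetilde P_-^{-1}\bigl(\widetilde P_+(z_0) + \sigma - kT\bigr)$ has height tending to $\im \widetilde P_+(z_0)$. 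This can be made arbitrarily small: in the paper's example $\mu$ lives on $(-\infty,-1]$, so take $z_0 = x + i\varepsilon$ with $x \gg 1$.

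What the orbit bound does give is the following.
The orbit ${\bf z_1}$ has backward height $\im \widetilde P_+(z)$, and heights increase along orbits, so $\im G(w) > \im \widetilde P_+(w)$ for all $w$.
For $w = G(z)$ one has $\im \widetilde P_+(G(z)) = \lim_n \im z_{n,1} > c$.
Hence $\im G^{\circ 2} > c$ on $\mathbb{H}$.
This already shows that $G$ is not inner, and neither is $L$, whose image lies in $\{\im > c\}$.
The paper's closing step via $\im G(w) \ge \im P_+(w)$ has exactly this reach, since $\im P_+(w) > c$ is only known for $w \in G(\mathbb{H})$.
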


Our example is of the form
$$
F(z) = z + 1/3 - \int_{\mathbb{R}} \frac{d\mu(a)}{z-a},
$$
where
$$
\mu \, = \, \sum_{j=1}^\infty \mu_j \, = \, \sum_{j=1}^\infty \frac{M}{j^2} \cdot \nu_{-t_j, j},
\qquad
\nu_{-t, n} = \frac{1}{n} \sum_{k=0}^{n- 1} \delta_{- t + k/n}.
$$
In other words, the measure $\mu$ is composed of the infinitely many pieces $\mu_j$, where the piece $\mu_j$ is supported on the platform $I_j = [-2j, -2j + 1]$ of length 1. 

\begin{lemma}
\label{up-up-away}
For any $M > 0$ sufficiently large, there exists a $c > 0$ such that for any bi-infinite orbit $(z_n)_{n=-\infty}^\infty$ which satisfies
\begin{equation*}
\label{eq:up-up-away-BISBS}
z_{n+1} - z_n \to T, \qquad z_{n} \!\!\mod T \, \to \, w \, \in \, C_-, \qquad \text{as }n \to -\infty,
\end{equation*}
 we have
$\lim_{n \to \infty} \im z_n > c$.
\end{lemma}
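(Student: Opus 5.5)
The plan is to track the height $y_n = \im z_n$ as the orbit travels from $-\infty$ to $+\infty$ across the platforms $I_j = [-2j,-2j+1]$, which it meets in the order $j = \infty, \dots, 2, 1$. The goal is to show that once $M$ is large, the platforms force $y_n$ up to an absolute size by the time the orbit reaches $I_1$. The hypothesis that $z_n \bmod T \to w \in C_-$ enters only through the fact that $y_{-\infty} := \lim_{n\to-\infty} y_n = \im w$ is positive; it can be arbitrarily small. So the entire content is a lower bound on $\lim_{n\to\infty} y_n$ that is uniform in $y_{-\infty} > 0$. Since heights increase along orbits, it suffices to bound $y_n$ from below once $z_n$ has passed $I_1$.

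\emph{Step 1 (one platform at low height).} Let $y$ be the height when the orbit crosses $I_j$, and suppose $y \le 1/j$. Suppose also that some orbit point $x_n$ lands in $I_j$. Such a point lies within $1/(2j)$ of an atom of mass $M/j^3$, so by (\ref{eq:imaginary-increment-forward}),
$$
y_{n+1} - y_n \, \ge \, \frac{M}{j^3}\cdot\frac{y}{(1/2j)^2 + y^2} \, \gtrsim \, \frac{M y}{j}.
$$
Thus crossing $I_j$ multiplies the height by at least $1 + c_0 M/j$, with $c_0$ absolute. The same formula shows that when $y \ll 1/j$ the gain comes almost entirely from the nearest atoms. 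Conversely, the far field of the platforms $I_{j'}$ with $j' \neq j$ contributes at most $O(M y / j'^2)$, which cannot destroy this estimate.

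\emph{Step 2 (bootstrap).} Write $\eta_j$ for the height of the orbit on arrival at $I_j$. Let $j_0$ be the first index with $\eta_{j_0} \ge 1/j_0$; since $\eta_j \ge y_{-\infty} > 0$ and $1/j \to 0$, we have $j_0 < \infty$. From $j_0$ onwards, Step 1 gives
$$
\log \eta_{j-1} \, \ge \, \log \eta_j + c_1 M/j, \qquad \text{hence} \qquad \eta_j \, \gtrsim \, \eta_{j_0}\,(j_0/j)^{c_1 M}.
$$
If $c_1 M > 1$, this growth outpaces $1/j$. So the invariant $\eta_j \ge 1/j$ propagates, after adjusting constants or stopping as soon as the height exceeds a fixed threshold $c$. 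If the height exceeds $1/j$ by a lot, the orbit is already high and we are done; otherwise Step 1 keeps applying down to $j = 1$. Either way the height after $I_1$ is at least an absolute $c > 0$, independent of $y_{-\infty}$.

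\emph{Main obstacle.} Step 1 assumes that while crossing $I_j$ at low height, the orbit still moves right with steps close to $T = 1/3$, so that some $x_n$ lands in each platform of length $1 > T$. At height $y \ll 1/j$ this is not automatic. The real perturbation $\re\int d\mu(a)/(z-a)$ from nearby atoms of mass $M/j^3$ at distance $\sim 1/j$ is of size $M/j^2$, which is large for small $j$ and large $M$. The orbit could then jump over a platform or move backwards.

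I would first try to handle this with the hyperbolic-step bound from the Schwarz lemma together with the gaps of length $1$ between platforms. Between platforms the map is close to translation by $T$ once $j$ is large, because the nearest mass is at distance $\ge 1$. For small $j$, where the perturbation could be large, I would argue that the orbit is already high. Alternatively, one can note that any jump large enough to skip a platform forces $|z_{n+1} - z_n|$ to be comparable to the atom scale. By (\ref{eq:imaginary-increment-forward}) this in turn yields an imaginary increment at least comparable to $y$ times a large factor, which gives the same multiplicative gain. This case analysis is where I expect most of the work to lie.
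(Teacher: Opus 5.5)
There is a genuine gap, and it comes from working in the wrong height regime. Your Step~1 is proved only for $y \le 1/j$, i.e.\ below the atom spacing of $\mu_j$. The invariant you propagate in Step~2, however, is $\eta_j \ge 1/j$, and there Step~1 does not apply. A single atom of mass $M/j^3$ contributes at most $M/(j^3 y)$ to the increment (\ref{eq:imaginary-increment-forward}), which is a relative gain of at most $M/(j^3y^2) \ll M/j$ once $y \gg 1/j$. So the recursion $\log \eta_{j-1} \ge \log \eta_j + c_1 M/j$ is unjustified in the regime where you use it. The fallback ``if the height exceeds $1/j$ by a lot, the orbit is already high'' is also false, since $100/j$ is tiny for large $j$. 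The estimate actually needed is collective: at scales above $1/j$, the $j$ atoms of $\mu_j$ act like a density $M/j^2$ on $I_j$. Hence for $1/j < \im z_n < 1$ and $\re z_n \in I_j$ one gets the \emph{additive} gain $\im z_{n+1} - \im z_n \gtrsim M/j^2$, which exceeds $\frac{1}{j-1} - \frac{1}{j}$ once $M$ is large. (You may assume $\im z_n < 1$ for all $n$; otherwise there is nothing to prove.)

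The no-skipping issue, which you leave open, also disappears in this regime. Suppose $\im z > 1/(j+1)$ and $\re z \in [-2j-2,-2j+1]$. Then $|z-a| > 1/(j+1)$ for every atom, so the nearby platforms contribute $O(M\log j/j^2)$ to $\re \int_{\mathbb{R}} \frac{d\mu(a)}{z-a}$. The far field of the heavy platforms $I_{j'}$ with $j' \ll j$ contributes $O(M/j)$. Hence $\re z_{n+1} - \re z_n = 1/3 + O(M/j)$, and for $j \ge j_0 \asymp M$ the orbit can neither skip $I_j$ nor move backwards. Since $\im z_n \ge y_{-\infty} > 1/j$ for all large $j$, the orbit starts out above the platforms, so your $j_0$ (``the first index with $\eta_{j_0} \ge 1/j_0$'') is not the right notion. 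The induction ``passes above $I_{j+1}$ at height $>1/(j+1)$ $\Rightarrow$ passes above $I_j$ at height $>1/j$'' then keeps the orbit out of the low regime, where kicks of size $M/(j^3y)$ really are uncontrolled. The induction is stopped at $j_0$ (as you half-suggest for small $j$), not run down to $j=1$. Monotonicity of heights then gives $\lim_{n\to\infty}\im z_n > c := 1/j_0$, which depends on $M$ but not on $y_{-\infty}$. This is the paper's argument. Your low-height multiplicative estimate and the proposed case analysis of large jumps are not needed, and as set up they do not close the proof.
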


The lemma implies that the image of $F^\infty: C_- \to C_+$ is contained in the set $$\bigl \{ w \in \mathbb{H} : \im F^\infty(w) > c \bigr \} \, \bigl / \, (z \to z + T) \, \subset \, C_+.$$
Since $L_\sigma = F^\infty \circ \tau_\sigma$, the same is true for the Lavaurs map $L_\sigma: C_+ \to C_+$. As $\im G(w) \ge \im P_+(w)$, we also have $\im G(w) > c$ for any $w \in \mathbb{H}$. Thus assuming the above lemma, $G$ is not an inner self-map of the upper half-plane.

We say that a bi-infinite orbit ${\bf z} = (z_n)_{n=-\infty}^\infty$ {\em passes above} the platform $I_j$ if there is an orbit point $z_n$ with $\re z_n \in I_j$ and $\im z_n > 1/j$.

\begin{proof}
We assume that $\im z_n < 1$ for all $n \in \mathbb{Z}$, otherwise there is little to show. We show that when $M$ is sufficiently large, $z_n$ passes above all platforms $I_j$ with $j \ge j_0$, where $j_0$ is to be determined.

Suppose $z_n$ is located above the platform $I_{j+1}$. If $M > 0$ is sufficiently large, then after one step, ${\bf z}$ has enough height to pass over $I_{j}$\,:
$$
\im z_{n+1} - \im z_n \gtrsim \, \mu(I_j) \, > \, 1/j^2 \, > \, 1/(j+1) - 1/j.
$$
Additionally, if $\im z_n > 1/(j+1)$ and $\re z_n \in [-2j-2, -2j + 1]$, then
$$
\re z_{n+1} - (1/3 + \re z_n) \lesssim M/j.
$$
When $j \ge j_0$ is sufficiently large, the right hand side is less than $1/3$. Therefore, if ${\bf z}$ passes above $I_{j+1}$, it will not skip $I_{j}$. Consequently, any bi-infinite orbit ${\bf z}$ which satisfies the hypotheses of the lemma passes above all platforms with indices $j \ge j_0$.
\end{proof}

\section{Aleksandrov-Clark measures}
\label{sec:ac-measures}

In this appendix, we review the classical definition of Aleksandrov-Clark measures for holomorphic self-maps of the unit disk. Afterwards, we consider an analogue of Aleksandrov-Clark measures for parabolic self-maps of the upper half-plane.

Let $f$ be a holomorphic self-map of the unit disk. 
For $\alpha \in \partial \mathbb{D}$, consider the function
$$
h_\alpha(z) = \frac{\alpha+f(z)}{\alpha-f(z)}, \qquad z \in \mathbb{D}.
$$
Since $\re h_\alpha$ is a positive harmonic function, it can be represented as the Poisson extension of a positive measure on the unit circle:
\begin{equation}
\label{eq:ac-def}
\re h_\alpha(z) = \re \int_{\partial \mathbb{D}} \frac{\zeta+z}{\zeta-z} \, d\mu_\alpha(\zeta).
\end{equation}
The collection of measures $\{ \mu_\alpha \}_{\alpha \in \partial \mathbb{D}}$ are known as the {\em Aleksandrov-Clark measures} of $f$. The measures $\mu_\alpha$ enjoy the following properties:

\begin{enumerate}
\item With the normalization $f(0) = 0$, each measure $\mu_\alpha$ has unit mass. When $f$ is an inner function, for each $\alpha \in \partial \mathbb{D}$, the function $\re h_\alpha$ has radial limit zero a.e.~on the unit circle, implying that $\mu_\alpha$ is singular. 

\item The singular part of the Aleksandrov-Clark measure $\mu_\alpha$ is concentrated on the set of points on the unit circle where $f$ has radial limit $\alpha$. Moreover, 
$\mu_\alpha$ has a point mass at $\zeta \in \partial \mathbb{D}$ if and only if $f$ has an angular derivative at $\zeta$ with $f(\zeta) = \alpha$, in which case, $$\mu_\alpha(\{\zeta\}) = |f'(\zeta)|^{-1}.$$

\item (Aleksandrov's Disintegration Theorem) For any $\phi \in L^1(\partial \mathbb{D}, m)$,
$$
\int_{\partial \mathbb{D}} \biggl ( \int_{\partial \mathbb{D}}  \phi(\zeta) d\mu_\alpha(\zeta) \biggr ) dm(\alpha) = \int_{\partial \mathbb{D}} \phi(\zeta)dm(\zeta).
$$
 \end{enumerate}
 For the proofs, we refer the reader to \cite{cima, PS, saksman}.

We now examine Aleksandrov-Clark measures for parabolic holomorphic self-maps $F: \mathbb{H} \to \mathbb{H}$ of the upper half-plane. For $\xi \in \mathbb{R}$, consider the function
$$
h_\xi(z) = \frac{i}{F(z)-\xi}, \qquad z \in \mathbb{H}.
$$
Since $\re h_\xi$ is a positive harmonic function on the upper half-plane, there exists a constant $c_\xi > 0 $ and a positive (but not necessarily finite) measure $\mu_\xi$ on $\mathbb{R}$ such that
$$
\re h_\xi(z) = c_\xi y + \frac{1}{\pi} \int_{\mathbb{R}} \frac{y}{x^2 + y^2} \, d\mu_\xi(x).
$$
As $F$ is parabolic with a Denjoy-Wolff point at infinity, $F(iy) = iy + o(y)$ as $y \to \infty$. In particular,
$\re h_\xi(z) = 1/y + o(1/y)$ as $y \to \infty$, which implies that $c_\xi = 0 $ and $\mu_\xi$ is a probability measure.
 
As in the unit disk setting, $\mu_\xi$ is singular with respect to Lebesgue measure and is supported on the set of points $\zeta \in \mathbb{R}$ where $F$ has vertical limit $\xi$. Furthermore,
$\mu_\xi$ has an atom at $\zeta \in \mathbb{R}$ if and only if $F$ has a finite angular derivative at $\zeta$ with $F(\zeta) = \xi$, in which case, $\mu_\xi(\{\zeta\}) = |F'(\zeta)|^{-1}$. These properties may be proved directly or deduced from the classical Aleksandrov-Clark theory on the unit disk via a M\"obius change of variables.
More precisely, if $f = m^{-1} \circ F \circ m$ where $m: \mathbb{D} \to \mathbb{H}$ is the M\"obius transformation which maps the unit disk to the upper half-plane and takes $p$ to $\infty$, then
$$
\mu_{\alpha} = |m'| \, d(m^* \mu_\xi), \qquad
\text{where }\xi = m(\alpha).
$$

We now show that a parabolic inner function $F$ is injective on the thin part of the real line:

\begin{proof}[Proof of Lemma \ref{thin-sets}]
Let $\{ \mu_\xi : \xi \in \mathbb{R}\}$ be the Aleksandrov-Clark measures associated to a parabolic inner function $F$. 
If two distinct points in $\mathbb{R}_{\thin} = \{ x \in \mathbb{R} : |F'(x)| < 2 \}$ were mapped to the same point $\xi$, then $\mu_\xi$ would have two atoms of mass greater than $1/2$, contradicting that $\mu_\xi$ is a probability measure.
\end{proof}

The upper half-plane analogue of Aleksandrov's disintegration theorem says that for any $\phi \in L^1(\mathbb{R}, \ell)$,
$$
\int_{\mathbb{R}} \biggl ( \int_{\mathbb{R}}  \phi(\zeta) d\mu_\xi(\zeta) \biggr ) d\ell(\xi) = \int_{\mathbb{R}} \phi(\xi)d\ell(\xi).
$$
We now deduce an upper half-plane analogue of L\"owner's lemma from this identity.
Suppose $A \subset \mathbb{R}$ has finite Lebesgue measure. Taking $\phi = \chi_{F^{-1}(A)}$, we get
$$
\int_A \mu_\xi  (F^{-1}(A) ) d\ell = \ell(F^{-1}(A)).
$$
Since each $\mu_\xi$ is a probability measure, $\ell(F^{-1}(A)) \le \ell(A)$.
For parabolic inner functions, each $\mu_\xi$ with $\xi \in A$ is supported on $F^{-1}(A)$ and so equality holds: $\ell(F^{-1}(A)) = \ell(A)$.

 \section{The exceptional set}
\label{sec:exceptional-set}

In this appendix, we discuss several characterizations of exceptional sets of inner functions on the unit disk. We then discuss an analogous result for parabolic inner functions acting on the upper half-plane.

 Let $f: \mathbb{D} \to \mathbb{D}$ be an inner function. A point $a \in \mathbb{D}$ is {\em exceptional} for $f$ if the Frostman shift
$$
f_a(z) = \frac{f(z)-a}{1-\overline{a} f(z)}
$$
fails to be a Blaschke product. Frostman showed that the exceptional set of an inner function has logarithmic capacity zero, e.g.~see \cite[Theorem 2.5]{mashreghi}. We write 
$$
G(z, w) = \log \biggl | \frac{1 - z\overline{w}}{z - w} \biggr | \qquad \text{and} \qquad P_z (\zeta) = \frac{1-|z|^2}{|z-\zeta|^2}
$$
 for the Green's function and the Poisson kernel of the unit disk respectively. 

\begin{lemma}
\label{exceptional1}
Suppose $f: \mathbb{D} \to \mathbb{D}$ is an inner function.
For any $a \in \mathbb{D}$,
\begin{equation}
\label{eq:exceptional1}
\sum_{f(b) = a} G(b, z) \le G(a, f(z)), \qquad z \in \mathbb{D} \setminus f^{-1}(a).
\end{equation}
Equality holds if and only if $z$ is not exceptional.
\end{lemma}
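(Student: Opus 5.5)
The plan is to reduce everything to the canonical factorization of the Frostman shift $f_a$. First, I would observe that the right-hand side of (\ref{eq:exceptional1}) is just $-\log|f_a(z)|$. Indeed, by the definition of the Green's function,
$$
G(a, f(z)) = \log \biggl| \frac{1 - f(z)\overline{a}}{f(z) - a} \biggr| = -\log |f_a(z)|.
$$
Since $f_a$ is the composition of the inner function $f$ with a disk automorphism, it is again an inner function. Its zero set is exactly $f^{-1}(a)$, counted with multiplicity.

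Next, I would apply the canonical factorization of bounded analytic functions to $f_a$. As a bounded non-zero holomorphic function, $f_a$ has zeros satisfying the Blaschke condition $\sum_{f(b)=a}(1-|b|) < \infty$. Hence the Blaschke product
$$
B(z) = \prod_{f(b)=a} \frac{|b|}{b}\cdot\frac{b-z}{1-\overline{b}z}
$$
converges (with the usual convention for a zero at the origin), and $f_a = c\,B\,S$. Here $|c|=1$, and $S$ is a singular inner function: $f_a$ is inner, so it has no outer factor. Directly from the formula for $B$, we have
$$
-\log|B(z)| = \sum_{f(b)=a} G(b,z).
$$
Therefore
$$
G(a,f(z)) - \sum_{f(b)=a} G(b,z) = -\log|S(z)|, \qquad z \in \mathbb{D}\setminus f^{-1}(a).
$$
Since $|S| \le 1$, the right-hand side is non-negative, which gives the inequality (\ref{eq:exceptional1}).

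For the equality statement (where ``$z$ is not exceptional'' should be read as ``$a$ is not exceptional''), I would write
$$
-\log|S(z)| = \int_{\partial\mathbb{D}} P_z(\zeta)\, d\sigma(\zeta),
$$
the Poisson integral of the singular measure $\sigma \ge 0$ defining $S$. This quantity is strictly positive at every point $z \in \mathbb{D}$ unless $\sigma = 0$. Consequently, equality at a single point $z$ forces equality everywhere, and it happens precisely when $S$ is constant. That is the same as $f_a$ being a Blaschke product up to a unimodular constant, i.e.\ $a$ not being exceptional.

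The only delicate step is justifying that the factorization contains no outer part and that the Blaschke product built from $f^{-1}(a)$ converges. Both follow from standard $H^\infty$ theory, because $f_a$ is inner. I therefore do not expect a genuine obstacle; the main care needed is bookkeeping multiplicities and the possible zero at the origin.
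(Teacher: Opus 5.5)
Your proposal is correct and follows essentially the paper's route. The paper factors $f$ into a Blaschke product and a singular inner factor, takes logarithms to get $\int_{\partial\mathbb{D}} P_z\, d\sigma_f + \sum_{f(b)=0} G(b,z) = G(0,f(z))$, and then passes to general $a$ via the Frostman shift $f_a$ and M\"obius invariance of $G$, which is exactly your direct computation $G(a,f(z)) = -\log|f_a(z)|$. Your handling of the equality case (strict positivity of the Poisson integral of a nonzero singular measure) and your reading of ``$z$ is not exceptional'' as ``$a$ is not exceptional'' supply details the paper leaves implicit.
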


\begin{proof}
We factor $f$ into a singular inner function and Blaschke product:
$$
\exp \biggl ( - \int_{\partial \mathbb{D}} P_z(\zeta) d\sigma_f \biggr ) \, \prod_{f(b) = 0} \, \biggl | \frac{z-b}{1-\overline{b}z} \biggr | = |f(z)|,
$$
Assuming $f(z) \ne 0$, taking logarithms yields
$$
\int_{\partial \mathbb{D}} P_z(\zeta) d\sigma_f + \sum_{f(b) = 0} G(b, z) = G(0, f(z)),
 $$
which implies (\ref{eq:exceptional1}) for $a = 0$.
The general case follows by applying the above argument to the Frostman shift $f_a$ and using the M\"obius invariance of the Green's function.
\end{proof}

\begin{lemma}
Suppose $f$ has an angular derivative at $p \in \partial \mathbb{D}$. For any $a \in \mathbb{D}$,  
\begin{equation}
\label{eq:exceptional2}
\sum_{f(b) = a} P_b(p) \le |f'(p)| \cdot P_a(f(p)).
\end{equation}
Equality holds if and only if $a$ is not exceptional.
\end{lemma}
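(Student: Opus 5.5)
The strategy is to deduce the boundary statement from Lemma \ref{exceptional1} by letting $z \to p$ radially and dividing by $1-|z|$, since $G(w,z)/(1-|z|) \to P_w(p)$ as $z \to p$. Fix $a \in \mathbb{D}$ and put $z = rp$ with $r \to 1$; for $r$ close to $1$, $z \notin f^{-1}(a)$. Lemma \ref{exceptional1} gives
$$
\sum_{f(b)=a} \frac{G(b, rp)}{1-r} \, \le \, \frac{G(a, f(rp))}{1-r},
$$
with equality for every $r$ exactly when $a$ is not exceptional. Two elementary facts handle the terms. First, for fixed $w \in \mathbb{D}$, $G(w,z) = \tfrac12 \log\bigl(1 + \tfrac{(1-|z|^2)(1-|w|^2)}{|z-w|^2}\bigr)$, so $G(w,z) \sim \tfrac12 (1-|z|^2) P_w(\zeta)$ as $z \to \zeta \in \partial\mathbb{D}$. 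Second, since $f$ has an angular derivative at $p$ with $|f(p)|=1$, Julia–Carathéodory gives $(1-|f(rp)|)/(1-r) \to |f'(p)|$ and $f(rp) \to f(p)$ nontangentially.

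For the right-hand side, these two facts give
$$
\frac{G(a,f(rp))}{1-r} = \frac{G(a,f(rp))}{1-|f(rp)|}\cdot\frac{1-|f(rp)|}{1-r} \to P_a(f(p))\,|f'(p)|.
$$
For the left-hand side, each term tends to $P_b(p)$. Fatou's lemma for sums then yields $\sum_{f(b)=a} P_b(p) \le \liminf \mathrm{LHS} \le |f'(p)| P_a(f(p))$, which is (\ref{eq:exceptional2}).

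The main obstacle is the equality case. I would not pass limits termwise, but instead use the exact identity from the proof of Lemma \ref{exceptional1}, applied to the Frostman shift $f_a = B_a S_a$:
$$
\int_{\partial\mathbb{D}} P_z\,d\sigma_{f_a} + \sum_{f(b)=a} G(b,z) = G(a,f(z)).
$$
Dividing by $1-r$ and letting $r \to 1$, the right side converges to $|f'(p)|P_a(f(p))$. Both terms on the left are nonnegative.

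Since $|f_a| \le |B_a|$ and $|f_a|$, $|S_a|$ are bounded by $1$, the angular-derivative quantity $\liminf (1-|f_a(rp)|)/(1-r)$ is finite. The same finiteness holds for each factor $B_a$ and $S_a$ separately. By Julia–Carathéodory (or direct monotone convergence for Blaschke factors, since each $G(b,rp)/(1-r)$ increases to $P_b(p)$ up to $\tfrac{1+r}{2}$ factors), the Blaschke part converges to exactly $\sum P_b(p)$. The singular part converges to $\int |\zeta-p|^{-2}\,2\,d\sigma_{f_a}(\zeta)$, which is the angular derivative of $S_a$ and is strictly positive whenever $\sigma_{f_a} \ne 0$.

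Hence equality in (\ref{eq:exceptional2}) holds iff $\sigma_{f_a}=0$, i.e.\ iff $f_a$ is a Blaschke product, i.e.\ iff $a$ is not exceptional. The delicate point is justifying convergence of the Blaschke sum to $\sum P_b(p)$ rather than merely a $\liminf$ bound. I would handle it by monotonicity in $r$ of $\frac{1-r^2}{|rp-b|^2}\cdot\frac{1-|b|^2}{\,}$-type expressions, or alternatively by invoking the multiplicativity of angular derivatives, $|f_a'(p)| = |B_a'(p)|+|S_a'(p)|$ in logarithmic form, with $|B_a'(p)| = \sum P_b(p)$ as the standard formula for Blaschke products.
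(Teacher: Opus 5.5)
Your proof of the inequality is correct and takes a different route from the paper. You pass to the boundary in Lemma~\ref{exceptional1} along the radius to $p$, using $G(w,z)\sim\frac12(1-|z|^2)P_w(\zeta)$, Julia--Carath\'eodory for the right-hand side, and Fatou's lemma for the sum. The paper instead quotes the exact formula $|f'(p)|=\sum_{f(b)=0}P_b(p)+2\int|\zeta-p|^{-2}\,d\sigma_f(\zeta)$ from \cite[Theorem 4.15]{mashreghi}. It applies this to the Frostman shift $f_a$, whose zeros are the $a$-points of $f$ and whose angular derivative at $p$ is $|f'(p)|P_a(f(p))$. The inequality and the equality criterion $\sigma_{f_a}=0$ then follow together.

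The equality case has a gap as written. The direction ``$a$ exceptional $\Rightarrow$ strict inequality'' is fine, since it needs only Fatou's lemma on both nonnegative terms: $\liminf_{r\to1}(1-r)^{-1}\int P_{rp}\,d\sigma_{f_a}\ge 2\int|\zeta-p|^{-2}\,d\sigma_{f_a}\ge\frac12\sigma_{f_a}(\partial\mathbb{D})>0$. The direction ``$a$ not exceptional $\Rightarrow$ equality'' needs the upper bound $\lim_{r\to1}(1-r)^{-1}\sum_{f(b)=a}G(b,rp)\le\sum_{f(b)=a}P_b(p)$. Julia--Carath\'eodory only shows the limit exists, and Fatou only gives $\ge$.

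Your monotonicity justification is false. For $b=sp$, the quantity $(1-|b|^2)/|rp-b|^2=(1-s^2)/(r-s)^2$ blows up at $r=s$, where $G(b,rp)=\infty$, before decreasing to $P_b(p)$.

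Dominated convergence repairs it:
\begin{itemize}
\item By your Fatou step, $\sum_b P_b(p)<\infty$.
\item On the Stolz angle $\{|p-b|<3(1-|b|)\}$ we have $P_b(p)\ge 1/(9(1-|b|))$, so only finitely many $a$-points lie in that angle.
\item Any $b$ with $|rp-b|<\frac12(1-r)$ lies in this angle and satisfies $|b|>r-\frac12(1-r)$. Hence for $r$ close to $1$, every $a$-point satisfies $|rp-b|\ge\frac12(1-r)$.
\item This gives $|p-b|\le 3|rp-b|$, and therefore $G(b,rp)/(1-r)\le(1-|b|^2)/|rp-b|^2\le 9P_b(p)$, a summable bound uniform in $r$.
\end{itemize}

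Your alternative, citing $|B_a'(p)|=\sum P_b(p)$ together with additivity of $-\log|\cdot|$, is also legitimate. However, that is Frostman's half of the same theorem the paper cites. In that version, the detour through Lemma~\ref{exceptional1} only buys a self-contained proof of the inequality.
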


\begin{proof}
By \cite[Theorem 4.15]{mashreghi}, the angular derivative of an inner function $f$ at a point on the unit circle is given by
$$
\sum_{f(b) = 0} P_b(p) + 2 \int_{\partial \mathbb{D}} \frac{d\sigma_f(\zeta)}{|\zeta - p|^2} = |f'(p)|,
$$
which implies (\ref{eq:exceptional2}) for $a = 0$. The general case follows after applying the above identity with $f_a$ in place of $f$.
\end{proof}

When $f: \mathbb{D} \to \mathbb{D}$ has a parabolic Denjoy-Wolff point $p \in \partial \mathbb{D}$, the previous lemma says that 
$$
\sum_{f(b) = a} P_b(p) \le P_a(p),
$$
with equality if and only if $a \in \mathbb{D}$ is not exceptional. Let $m$ be M\"obius transformation which maps the unit disk to the upper half-plane and takes $p$ to $\infty$. We say that $z \in \mathbb{H}$ is an exceptional point for $F = m \circ f \circ m^{-1}$ if $m^{-1}(z) \in \mathbb{D}$ is an exceptional point for $f$.

\begin{corollary}
\label{exceptional3}
Suppose $F: \mathbb{H} \to \mathbb{H}$ is a parabolic inner function with a Denjoy-Wolff point at infinity. For any $z \in \mathbb{H}$, we have
$$
\sum_{F(w) = z} \im w \le \im z.
$$
Equality holds if and only if $z$ is not exceptional.
\end{corollary}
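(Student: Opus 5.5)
The plan is to transport the preceding lemma, inequality (\ref{eq:exceptional2}), to the upper half-plane via the M\"obius map $m$, as the paragraph before the corollary suggests. Let $f = m^{-1}\circ F\circ m$, with parabolic Denjoy-Wolff point $p$, so that $f(p)=p$ and $|f'(p)|=1$. The preceding lemma then gives, for every $a\in\mathbb{D}$,
$$
\sum_{f(b)=a} P_b(p) \le P_a(p),
$$
with equality if and only if $a$ is not exceptional.

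The key computation is the identity $P_{m^{-1}(z)}(p) = c\,\im z$ for a constant $c>0$ independent of $z$. To verify it, one route is to observe that $z\mapsto P_{m^{-1}(z)}(p)$ is a positive harmonic function on $\mathbb{H}$. It is the Poisson kernel with pole at $p$, so it extends continuously by zero to $\partial\mathbb{H}\setminus\{\infty\}=\mathbb{R}$. Such functions are exactly the multiples $c\,\im z$: this is the standard Herglotz representation with vanishing boundary measure. Alternatively, one can compute directly with the explicit map, for instance with $p=1$ and $m(w) = i(1+w)/(1-w)$. Then
$$
\im m(w) = \frac{1-|w|^2}{|1-w|^2} = P_w(1),
$$
so in fact $c=1$ for this normalization. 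A general $p$ is handled by a rotation.

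With this identity, set $a=m^{-1}(z)$. Since $m$ conjugates $f$ to $F$, the preimages $b$ with $f(b)=a$ correspond bijectively to the points $w=m(b)$ with $F(w)=z$, with multiplicity. Substituting $P_b(p)=c\,\im w$ and $P_a(p)=c\,\im z$ and dividing by $c$ gives
$$
\sum_{F(w)=z}\im w\le \im z.
$$
The equality case transfers directly from the definition of exceptional points for $F$ via $m^{-1}$. The only point needing care is that the preceding lemma requires an angular derivative at $p$. This holds because $p$ is the Denjoy-Wolff point with $f'(p)=1$ in the parabolic case, so both $f(p)=p$ and $|f'(p)|=1$ enter the inequality, and there is no real obstacle beyond the Poisson kernel identity.
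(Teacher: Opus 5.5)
Your proposal is correct and is the argument the paper intends. You specialize the preceding lemma using $f(p)=p$ and $|f'(p)|=1$, then transport it to $\mathbb{H}$ via $P_w(p)=c\,\im m(w)$, and carry the equality case over through the definition of exceptional points for $F$ via $m^{-1}$; the paper leaves this Poisson-kernel identity implicit, and your computation with $m(w)=i(1+w)/(1-w)$ supplies it.
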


\bibliographystyle{amsplain}

\end{document}